\definecolor{rouge}{rgb}{0.7,0.00,0.00}
\definecolor{vert}{rgb}{0.00,0.5,0.00}
\definecolor{bleu}{rgb}{0.00,0.00,0.8}
\newtheorem{theorem}{Theorem}[section]
\newtheorem*{theorem*}{Theorem}
\newtheorem{lemma}[theorem]{Lemma}
\newtheorem{corollary}[theorem]{Corollary}
\newtheorem{proposition}[theorem]{Proposition}
\newtheorem{condition}{Condition}
\renewcommand\dots{\hbox to 1em{.\hss.\hss.}}
\theoremstyle{definition}
\newtheorem{remark}[theorem]{Remark}
\numberwithin{equation}{section}
\newcommand*{\abs}[1]{\left\lvert#1\right\rvert}
\def\bb#1{\mathbb{#1}}
\def\bf#1{\mathbf{#1}}
\def\geq{\geqslant}
\def\leq{\leqslant}
\def\bb{\mathbb}
\def\vphi{\varphi}
\def\..{\ldots} 
\def\…{\ldots}
\def\HH{H\"older}
\def\scr{\mathscr}
\newcommand\ee{\varepsilon}
\DeclareMathOperator{\Leb}{Leb}
\begin{document}

\title[Conditioned limit theorems]
{Conditioned limit theorems for hyperbolic dynamical systems}

\author{Ion Grama}
\author{Jean-Fran\c cois Quint}
\author{Hui Xiao}

\curraddr[Grama, I.]{ Universit\'{e} de Bretagne-Sud, LMBA UMR CNRS 6205, Campus de Tohannic 56017, Vannes, France.}
\email{ion.grama@univ-ubs.fr}

\curraddr[Quint, J.-F.]{ CNRS-Universit\'{e} de Bordeaux, 33405, Talence, France.}
\email{Jean-Francois.Quint@math.u-bordeaux.fr}

\curraddr[Xiao, H.]{ Universit\"{a}t Hildesheim, Institut f\"{u}r Mathematik und Angewandte Informatik, 31141, Hildesheim, Germany.}
\email{xiao@uni-hildesheim.de}


\begin{abstract}
Let $(\bb X, T)$ be a subshift of finite type equipped with the Gibbs measure $\nu$ and let 
$f$ be a real-valued H\"older continuous function on $\bb X$ such that $\nu(f) = 0$.
Consider the Birkhoff sums $S_n f = \sum_{k=0}^{n-1} f \circ T^{k}$,  $n\geq 1$. 
For any $t \in \bb R$, denote by $\tau_t^f$ the first time when the sum  $t+ S_n f$ 
leaves the positive half-line for some $n\geq 1$.
By analogy with the case of random walks with independent identically distributed increments, 
we study the asymptotic as $ n\to\infty $ of the probabilities
$ \nu(x\in \bb X: \tau_t^f(x)>n) $ and $ \nu(x\in \bb X: \tau_t^f(x)=n) $.
We also establish integral and local type limit theorems for the sum $t+ S_n f(x)$ 
conditioned on the set $\{ x \in \bb X: \tau_t^f(x)>n \}.$
 \end{abstract}

\date{\today}
\subjclass[2020]{Primary 37A30, 37A50, 37C05; Secondary 60F05, 60J05}
\keywords{Conditioned limit theorems; conditioned local limit theorems; exit time; dynamical systems}
\maketitle

\tableofcontents


\section{Statement of the results and motivation}

\subsection{Main results}
Consider a subshift of finite type $(\bb X, T)$ endowed with a Gibbs measure $\nu$ and 
let  $f$ be a real-valued H\"older continuous function on $\bb X$ 
(the precise definitions are given in Section \ref{sec-background}). 
Define the Birkhoff sums
\begin{align*}
S_n f = f + f \circ T + \ldots + f \circ T^{n-1},  \quad n\geq 1. 
\end{align*}
A fundamental result of the theory of dynamic systems 
is the celebrated Birkhoff ergodic theorem which asserts that $\nu$-almost surely,
\begin{align*}
\lim_{n \to \infty} \frac{S_nf}{n} = \int_{\bb X} f(x) \nu(dx) =: \nu(f). 
\end{align*}
Much effort was made to establish another important property -- the central limit theorem for $S_nf$.  
To formulate the corresponding statement, we first note that the following limit exists:
\begin{align*} 
\sigma_{f }^2 = \lim_{n\to \infty} \frac{1}{n}\int_{\bb X} (S_n f- n \nu(f))^2 d\nu. 
\end{align*}
It is known that $\sigma_f^2=0$ if and only if $f$ is a coboundary w.r.t.\ to $T$, which means that   
there exists a H\"{o}lder continuous function $g$ on $\bb X$ such that $f(x) = g(T x) - g(x)$ for any $x \in \bb X$.  
In the case when $\sigma_{f }>0$  (or equivalently when $f$ is not a coboundary) the following central limit theorem holds: 
for any bounded continuous function $F: \bb R \mapsto \bb R$, 
\begin{align} \label{CLT-forDS-001}
\lim_{n \to \infty} \int_{\bb X} F \left( \frac{S_n f(x)  -n \nu(f)    }{ \sigma_f \sqrt{n}} \right) \nu(dx)
= \frac{1}{\sqrt{2 \pi}} \int_{\bb R} F(t) e^{- \frac{t^2}{2}} dt. 
\end{align}

All these statements, which can be found in the excellent book of Parry and Pollicott \cite{PP90},  
are consequences of many successive works such as Sinai \cite{Sin60, Sin72}, Ratner \cite{Rat73}, Ruelle \cite{Rue78} and  
Denker-Phillip \cite{DP84}, 
to cite only a few. 
The goal of this paper is to complement the central limit theorem \eqref{CLT-forDS-001} by proving limit theorems for the Birkhoff sum $t+S_nf$ under the condition that the trajectory
$(t+S_kf)_{1\leq k\leq n}$ stays positive, where $t\in \bb R$ is a starting point.
A brief historical foray into the subject of conditioned limit theorems and our motivation are presented 
in Section \ref{sec: previous work - motivation}. 

To state our results assume that $\nu(f) = 0$ and that $f$ is not a coboundary. 
For any $t \in \bb R$, the following exit time is finite for $\nu$-almost every $x \in \bb X$: 
\begin{align*}
\tau_t^f(x) := \inf \left\{ k\geq 1, \, t + S_k f(x) < 0 \right\}.
\end{align*}
Thus, by definition, 
the set $\{x \in \bb X: \tau_t^f(x) >n\}$ is the set
where the trajectory $(t+S_kf)_{1\leq k\leq n}$ stays non-negative.

Our first theorem states the existence of a special Radon measure which will play a central role in the paper and 
will  be used in the formulations of the subsequent results. 

\begin{theorem}\label{Thm_Radon_Measure}
Let $f$ be a H\"older continuous function on $\bb X$ such that  $\nu (f) = 0$ and $f$ is not a coboundary.
Then, there exists a unique Radon measure $\mu^f$ on $\bb X \times \bb R$ such that 
for any continuous compactly supported function  $\vphi$ on $\bb X \times \bb R$,
\begin{align} \label{Harmonic_rho_f_Thm000}
\lim_{n \to \infty}  \int_{\bb X \times \bb R} \vphi(x, t)  S_{n} f (x) \mathds 1_{ \left \{ \tau_t^f(x) >n \right\} }  \nu(dx) dt
= \int_{\bb X \times \bb R}   \vphi(x, t) \mu^f(dx, dt).
\end{align}
Moreover, the Radon measure $\mu^f$ satisfies the following quasi-invariance property: 
for every continuous compactly supported function  $\vphi$ on $\bb X \times \bb R$,
\begin{align} \label{Harmonic_property_rho}
 \int_{\bb X \times \bb R}  \vphi(x, t) \mu^f(dx, dt)
 =    \int_{\bb X \times \bb R} \vphi \left( T^{-1} x, t - f(T^{-1} x) \right) \mathds 1_{\{ t \geq 0 \}} \mu^f(dx, dt). 
\end{align}
\end{theorem}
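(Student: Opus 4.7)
I plan to construct $\mu^f$ by showing that, for varying $\vphi\in C_c(\bb X\times\bb R)$, the left-hand side of \eqref{Harmonic_rho_f_Thm000} defines a positive linear functional on $C_c(\bb X\times\bb R)$, and then to invoke the Riesz representation theorem; uniqueness is then automatic from the defining convergence relation. Since $\vphi$ is compactly supported, $|t|$ is bounded on $\supp\vphi$, and one expects $\nu(\tau_t^f>n)\to 0$; consequently
\begin{align*}
\int_{\bb X\times\bb R}\vphi(x,t)\,t\,\mathds 1_{\tau_t^f(x)>n}\,\nu(dx)\,dt\longrightarrow 0,
\end{align*}
so \eqref{Harmonic_rho_f_Thm000} is equivalent to the same statement with $S_nf(x)$ replaced by the non-negative quantity $t+S_nf(x)$, which is the natural dynamical analogue of the random-walk quantity used to build the Doob $h$-transform of a centred walk conditioned to stay positive.

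\textbf{Existence.} The core analytic task is an integral conditional limit theorem: for H\"older $\psi$ on $\bb X$ and $t>0$,
\begin{align*}
\lim_{n\to\infty}\int_{\bb X}\psi(x)\bigl(t+S_nf(x)\bigr)\mathds 1_{\tau_t^f(x)>n}\,\nu(dx)=V(\psi,t),
\end{align*}
with $V(\psi,\cdot)$ continuous, behaving like a constant multiple of $t$ as $t\to\infty$, locally integrable near $0$, and admitting a uniform-in-$n$ integrable majorant on compact sets of $t$. For product test functions $\vphi=\psi\otimes\eta$ this yields $\ell(\vphi):=\lim_n I_n(\vphi)$ via Fubini and dominated convergence; positivity of $\ell$ and a density argument extend it to a positive functional on $C_c(\bb X\times\bb R)$, and the Riesz theorem furnishes the unique Radon measure $\mu^f$. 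The required integral limit theorem is expected to follow from the spectral gap of the Ruelle transfer operator associated to $T$, combined with a coupling/martingale approximation of the Birkhoff sum by a Brownian motion conditioned to stay positive, in the spirit of the random-matrix-product constructions of Grama, Le Page, Peign\'e and of the authors.

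\textbf{Quasi-invariance.} In the defining limit, apply the substitution $x=Tx'$, $t=t'+f(x')$. Since $\nu$ is $T$-invariant and Lebesgue measure is translation-invariant, $\nu(dx)\,dt=\nu(dx')\,dt'$. A direct computation gives $S_nf(Tx')=S_{n+1}f(x')-f(x')$ and $t+S_kf(Tx')=t'+S_{k+1}f(x')$ for every $k$, so the event $\{t\ge 0\}\cap\{\tau_t^f(Tx')>n\}$ coincides with $\{\tau_{t'}^f(x')>n+1\}$. Hence, for every finite $n$,
\begin{align*}
&\int\vphi\bigl(T^{-1}x,t-f(T^{-1}x)\bigr)\mathds 1_{t\ge 0}\,S_nf(x)\,\mathds 1_{\tau_t^f(x)>n}\,\nu(dx)\,dt \\
&\qquad=\int\vphi(x',t')\bigl(S_{n+1}f(x')-f(x')\bigr)\mathds 1_{\tau_{t'}^f(x')>n+1}\,\nu(dx')\,dt'.
\end{align*}
Letting $n\to\infty$, the first piece on the right converges to $\int\vphi\,d\mu^f$ by definition, while the second piece tends to $0$ because $f$ is bounded and, by dominated convergence, $\int\vphi(x',t')\mathds 1_{\tau_{t'}^f(x')>n+1}\,\nu(dx')\,dt'\to 0$. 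The left-hand side converges to $\int(\vphi\circ\theta)\,\mathds 1_{t\ge 0}\,d\mu^f$, where $\theta(x,t)=(T^{-1}x,t-f(T^{-1}x))$; this is legitimate because $\vphi\circ\theta$ is again continuous and compactly supported, thanks to the continuity of $T^{-1}$ and the boundedness of $f$. This yields \eqref{Harmonic_property_rho}.

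\textbf{Main obstacle.} The hardest step is the integral conditional limit theorem used in the existence phase: establishing it with uniform-in-$t$ control, including fine asymptotics for $\nu(\tau_t^f>n)$ and for the corresponding truncated first moment near the boundary $t\to 0^+$, where singular behaviour typically appears. This demands the full machinery of perturbed Ruelle transfer operators tailored to the positivity constraint, together with an approximation linking the dynamical Birkhoff sum to a Brownian motion conditioned to stay positive.
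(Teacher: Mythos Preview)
Your overall architecture---replace $S_nf$ by $t+S_nf$, obtain a positive functional, apply Riesz, and derive quasi-invariance by the change of variables $(x,t)\mapsto(Tx,t+f(x))$---matches the paper. The quasi-invariance computation is essentially the paper's (see the proof of \eqref{Harmonic_rho_aa}), modulo one detail: you apply \eqref{Harmonic_rho_f_Thm000} to $(\vphi\circ\theta)\mathds 1_{\{t\geq 0\}}$, which is not continuous; the paper handles this by sandwiching the indicator between continuous cutoffs $\chi_\ee^\pm$ and passing to the limit.

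There is, however, a genuine gap in the existence step. You posit the pointwise-in-$t$ limit
\[
\lim_{n\to\infty}\int_{\bb X}\psi(x)\bigl(t+S_nf(x)\bigr)\mathds 1_{\{\tau_t^f(x)>n\}}\,\nu(dx)=V(\psi,t)
\]
and then integrate against $\eta(t)$. But the paper states explicitly (see the remark after Corollary~\ref{Cor_Radon_Measure000}) that the existence of this limit for fixed $t$ is an \emph{open problem} for general H\"older $f$. The paper's route around this is structural rather than analytic: pass to the reversed shift $T^{-1}$ and disintegrate $\nu$ along the projection $\bb X\to\bb X^+$, so that for $g\in\scr B^+$ the problem becomes a genuine Markov chain and the pointwise limit follows from the optional stopping theorem (Lemma~\ref{Lem_def_V}). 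For general $g\in\scr B$ one then approximates by cohomologous $g_m$ depending on finitely many past coordinates (Lemma~\ref{Lem_Appro_g}); the key monotonicity $\check V_n^{g_m}(z,t-c\alpha^m)\leq\check V_n^g(z,t)\leq\check V_n^{g_m}(z,t+c\alpha^m)$ yields convergence only after integration in $t$ (Lemmas~\ref{Lem_harm_func_001} and~\ref{Lem_harm_func_001_rho}), never pointwise. So the ``integral conditional limit theorem'' you invoke is precisely what fails to hold in the form you wrote; the actual argument trades pointwise control for a cohomological approximation that is only effective in the weak topology on $\bb R$.
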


The limit \eqref{Harmonic_rho_f_Thm000} takes a simpler form when the function $\varphi$ does not depend on the first argument.
Indeed, we will show in Section \ref{Sect_harmonic_measure} that the marginal of $\mu^f$ on $\bb R$
is absolutely continuous with respect to the Lebesgue measure.
Its density function is a non-decreasing function on $\bb R$ that will be denoted by $V^f$.
In particular, by standard arguments,  the asymptotic \eqref{Harmonic_rho_f_Thm000} is valid for functions $\varphi$
of the form $\vphi(x, t) = \mathds 1_{[a, b]} (t)$ for $x \in \bb X$ and $t \in \bb R$. 
This leads to the following:

\begin{corollary}\label{Cor_Radon_Measure000}
Let $f$ be a H\"older continuous function on $\bb X$ such that  $\nu (f) = 0$ and $f$ is not a coboundary. 
Then, for any real numbers $-\infty < a < b < \infty$, we have, as $n\to \infty$, 
\begin{align}\label{Harmonic_rho_f_Thm000bb}
\lim_{n \to \infty} \int_a^b  \int_{\bb X}     S_{n} f (x) \mathds 1_{ \left \{ \tau_t^f(x) >n \right\} }  \nu(dx) dt
=   \mu^f(\bb X \times [a,b]) = \int_a^b V^f(t) dt. 
\end{align}
\end{corollary}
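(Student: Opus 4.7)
The plan is to deduce Corollary \ref{Cor_Radon_Measure000} from Theorem \ref{Thm_Radon_Measure} by an approximation-and-sandwich argument. The single external input I would use, beyond Theorem \ref{Thm_Radon_Measure} itself, is the fact stated in the paragraph preceding the corollary and proved in Section \ref{Sect_harmonic_measure}: the $\bb R$-marginal of $\mu^f$ is absolutely continuous with locally bounded, non-decreasing density $V^f$. In particular $\mu^f$ assigns no mass to the boundary $\bb X \times \{a,b\}$, so $\bb X \times [a,b]$ is a $\mu^f$-continuity set, and the target identity $\mu^f(\bb X \times [a,b]) = \int_a^b V^f(t)\,dt$ is automatic from the definition of $V^f$.

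For each $\delta > 0$ I would choose continuous compactly supported functions $\psi_\delta^-, \psi_\delta^+ \colon \bb R \to [0,1]$ satisfying
\begin{align*}
\mathds 1_{[a+\delta,\,b-\delta]} \,\leq\, \psi_\delta^- \,\leq\, \mathds 1_{[a,b]} \,\leq\, \psi_\delta^+ \,\leq\, \mathds 1_{[a-\delta,\,b+\delta]},
\end{align*}
and set $\vphi_\delta^\pm(x,t) := \psi_\delta^\pm(t)$. Since $\bb X$ is compact these lift to continuous compactly supported functions on $\bb X \times \bb R$, to which Theorem \ref{Thm_Radon_Measure} applies directly. The step that requires real thought is that the integrand $S_n f(x)\,\mathds 1_{\{\tau_t^f(x) > n\}}$ is not non-negative, so a direct monotone sandwich by $\psi_\delta^\pm$ against the indicator $\mathds 1_{[a,b]}(t)$ is unavailable. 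My workaround is to sandwich instead the non-negative quantity $(t + S_n f(x))\,\mathds 1_{\{\tau_t^f(x) > n\}}$, which is $\geq 0$ by the definition of $\tau_t^f$, and then to subtract the simpler piece $t\,\mathds 1_{\{\tau_t^f > n\}}$ by hand.

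This decomposition produces, for each fixed $\delta$, a three-way sandwich whose outer bounds are $\int_{\bb X \times \bb R} \psi_\delta^\pm(t)\, S_n f(x)\, \mathds 1_{\{\tau_t^f > n\}}\, \nu(dx)\,dt$ and whose inner term is the quantity of interest $\int_a^b \!\int_\bb X S_n f(x)\, \mathds 1_{\{\tau_t^f > n\}}\, \nu(dx)\, dt$, with an additive correction on each side equal to $\int_\bb R (\psi_\delta^\pm(t) - \mathds 1_{[a,b]}(t))\,t\,\nu(\tau_t^f > n)\,dt$. The latter is $O(\delta)$ uniformly in $n$, using only the trivial bound $\nu(\tau_t^f > n) \leq 1$ together with the fact that $|\psi_\delta^\pm - \mathds 1_{[a,b]}|$ is bounded by $1$ on a set of Lebesgue measure $\leq 4\delta$. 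Applying Theorem \ref{Thm_Radon_Measure} to $\vphi_\delta^\pm$ and sending $n \to \infty$, then $\delta \to 0$ (using dominated convergence to send $\int \psi_\delta^\pm V^f\,dt \to \int_a^b V^f\,dt$, via local integrability of $V^f$), squeezes the middle term to $\int_a^b V^f(t)\,dt$. The only conceptual obstacle is the sign issue, resolved by the decomposition $S_n f = (t + S_n f) - t$; the remainder is a routine portmanteau-type extension from continuous compactly supported test functions to the indicator of a $\mu^f$-continuity set.
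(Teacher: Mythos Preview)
Your proposal is correct and matches what the paper has in mind when it says the corollary follows from Theorem \ref{Thm_Radon_Measure} ``by standard arguments'': a portmanteau-type approximation of $\mathds 1_{[a,b]}$ by continuous compactly supported $\psi_\delta^\pm$, using that the $\bb R$-marginal of $\mu^f$ is absolutely continuous so $\bb X\times\{a,b\}$ carries no mass. Your key observation that one must pass to the non-negative integrand $(t+S_n f)\mathds 1_{\{\tau_t^f>n\}}$ before sandwiching is exactly the device the paper itself uses throughout Section \ref{sec: harmonic measure} (see e.g.\ the proof of Lemma \ref{Lem_def_V} and the definition of $W_n(z,t)$ in Lemma \ref{Lem_harm_func_001}); indeed the function $t\mapsto\int_{\bb X}(t+S_nf)\mathds 1_{\{\tau_t^f>n\}}\nu(dx)$ is non-decreasing, which is what makes the whole machinery of Lemma \ref{Lem_Conver_Func} work. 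A minor simplification: rather than tracking the cross-term $\int(\psi_\delta^\pm-\mathds 1_{[a,b]})\,t\,\nu(\tau_t^f>n)\,dt$ as an $O(\delta)$ correction uniform in $n$, you could simply note that $\int_a^b t\,\nu(\tau_t^f>n)\,dt\to 0$ by dominated convergence (using Lemma \ref{Lem_tau_finite}), so the $-t\mathds 1$ piece vanishes in the limit regardless of the test function, and only the non-negative $(t+S_nf)\mathds 1$ piece needs sandwiching.
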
 

Note that \eqref{Harmonic_rho_f_Thm000}  and \eqref{Harmonic_rho_f_Thm000bb} 
are stated in integral forms with respect to $t$. 
It is an open question whether it is possible to give an asymptotic of the integral 
$\int_{\bb X}  S_{n} f (x) \mathds 1_{ \{ \tau_t^f(x) >n \} }  \nu(dx)$ 
for a fixed value of $t$.

The Radon measure $\mu^f$ appearing in Theorem \ref{Thm_Radon_Measure}
will be called the {\it harmonic measure} associated to the dynamical system $(\bb X,  T, \nu)$
with the observable $f$. 
The reason for this is that the measure $\mu^f$ is related to the harmonicity property 
that appears in the study of killed random walks on the half line.
 We refer 
 to Section \ref{Sect_harmonic_measure} 
 for precise statements.  


The following results describe the limit behavior of the Birkhoff sum $t + S_n f$  
under the condition that the trajectory $(t + S_k f)_{1 \leq k \leq n}$ stays non-negative. 
We start by giving the equivalent of the probability that the trajectory $(t + S_k f)_{1 \leq k \leq n}$ stays non-negative. 
Denote by $\check \mu^f$ the harmonic measure related to the reversed dynamical system $(\bb X, T^{-1}, \nu)$ 
with the observable $f \circ T^{-1}$. 

\begin{theorem}\label{Thm-exit-time001} 
Let $f$ be a H\"older continuous function on $\bb X$ such that $f$ is not a coboundary and $\nu (f) = 0$.
Then, for any continuous compactly supported function $\vphi$ on $\bb X \times \bb R$, we have, as $n\to \infty$,
\begin{align*} 
 \int_{\bb X \times \bb R } \vphi(x, t)  \mathds 1_{\{\tau_t^f(x) >n\}} 
  \nu(dx) dt 
\sim \frac{2 }{\sigma_f \sqrt{2 \pi n} }  \int_{\bb X \times \bb R }  \vphi(x, t) \mu^{f}(dx, dt)  
\end{align*}
and    
\begin{align*} 
  \int_{\bb X \times \bb R } 
\vphi(T^n x, t + S_n f(x) )  &\mathds 1_{\{\tau_t^f(x) >n\}}  \nu(dx) dt 
\sim  \frac{2}{\sigma_f \sqrt{2 \pi n}}  \int_{\bb X \times \bb R }  \vphi(x, t) \check \mu^{(-f)}(dx, dt).
\end{align*}
\end{theorem}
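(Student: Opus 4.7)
The plan is to derive the first asymptotic from Theorem \ref{Thm_Radon_Measure} together with a conditioned central limit theorem for the Birkhoff sum on $\{\tau_t^f > n\}$, and then obtain the second asymptotic from the first by time reversal. Theorem \ref{Thm_Radon_Measure} already supplies
\[
\int_{\bb X \times \bb R} \vphi(x,t)\, S_n f(x)\, \mathds 1_{\{\tau_t^f(x) > n\}}\, \nu(dx)\,dt \;\longrightarrow\; \int \vphi\, d\mu^f,
\]
so what is missing is the identification of the shape of $S_n f / (\sigma_f \sqrt n)$ conditional on $\{\tau_t^f > n\}$, which I expect to be a Rayleigh law with density $y e^{-y^2/2}$ on $(0, \infty)$. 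Since $\int_0^\infty y^2 e^{-y^2/2}\,dy = \sqrt{\pi/2}$, this would give $\int \vphi\, S_n f\, \mathds 1_{\{\tau_t^f > n\}} \sim \sigma_f \sqrt{n}\,\sqrt{\pi/2} \cdot \int \vphi\, \mathds 1_{\{\tau_t^f > n\}}$; combined with Theorem \ref{Thm_Radon_Measure}, this yields exactly $\int \vphi\, \mathds 1_{\{\tau_t^f > n\}} \sim \frac{2}{\sigma_f \sqrt{2\pi n}} \int \vphi\, d\mu^f$.

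To establish the Rayleigh shape I would use a truncation argument. Fix $K \ll n$ and decompose $\{\tau_t^f > n\} = \{\tau_t^f > K\} \cap \{t + S_k f \geq 0 \text{ for } K < k \leq n\}$. On the initial segment $[0, K]$, combine $\mathds 1_{\{\tau_t^f > K\}}$ with the quasi-invariance property \eqref{Harmonic_property_rho} of $\mu^f$ in order to transfer the integration into one against $d\mu^f$ evaluated at the intermediate position $t + S_K f(x)$; on the remainder $[K, n]$ apply the unconditioned CLT \eqref{CLT-forDS-001} to $S_{n-K}f \circ T^{K}$ combined with a standard barrier / ballot-type argument for the positivity constraint. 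The resulting limit is the Brownian meander, whose marginal at time $1$ has density $y e^{-y^2/2}$.

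For the second asymptotic I apply time reversal. Substituting $y = T^n x$ (which preserves $\nu$) and $s = t + S_n f(x)$ (a translation in $t$ for each fixed $y$, so $ds = dt$), a direct computation gives $t + S_k f(x) = s - \sum_{i=1}^{n-k} f(T^{-i} y)$ for $k = 1, \ldots, n$. Hence $\{\tau_t^f(x) > n\}$ rewrites as $\{s \geq 0\} \cap \{\check\tau_s^{(-f)}(y) > n - 1\}$, where $\check\tau_s^{(-f)}$ is the exit time for the reversed dynamics $(\bb X, T^{-1}, \nu)$ with observable $-f \circ T^{-1}$. The asymptotic variance is invariant under reversal and sign change, so $\sigma_{-f \circ T^{-1}}^2 = \sigma_f^2$; applying the first asymptotic to the reversed system then produces the second.

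The hardest step is the conditioned CLT. Unlike the i.i.d.\ case, where ballot identities and Wiener-Hopf factorizations supply the result almost for free, here the Birkhoff increments are strongly correlated. The natural remedy is to exploit the spectral gap of the Ruelle-Perron-Frobenius transfer operator together with a martingale-coboundary decomposition of $S_n f$, and to prove a uniform tail estimate of the form $\nu(\tau_t^f > n) \leq C(1 + |t|)/\sqrt n$, which provides the domination needed to justify the truncation and the interchange of limits.
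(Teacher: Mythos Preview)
Your overall logical scheme --- deduce the first asymptotic by combining Theorem~\ref{Thm_Radon_Measure} with a conditioned (Rayleigh) CLT, then get the second by time reversal --- is attractive, and the duality computation for the second assertion is essentially what the paper does (Lemma~\ref{Corollary_Duality}). But the first half inverts the paper's logical order and, as written, contains a genuine gap.

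The paper does \emph{not} prove Theorem~\ref{Thm-exit-time001} from a conditioned CLT. It goes the other way: for $g\in\scr B^+$ the reversed process $(\check S_n g(y\cdot z))$ under $\nu_z^-$ is a Markov chain, and the paper imports the tail asymptotic $\nu_z^-(\check\tau_t^g>n)\sim 2\check V^g(z,t)/(\sigma_g\sqrt{2\pi n})$ directly from \cite[Theorem~2.3]{GLL18} (Lemma~\ref{Lem_Markov_chain}). This is then upgraded to carry a target function (Lemma~\ref{Lem_Markov_chain_Target}), extended to $g$ depending on finitely many negative coordinates by conditioning (Lemma~\ref{Corollary_Markov_chain}), and finally to all $g\in\scr B$ by the cohomological approximation of Lemma~\ref{Lem_Appro_g} (Theorem~\ref{Lemma_Harmonic_V_10}). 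The conditioned CLT (Theorem~\ref{Thm-cnodi-lim-theor001}) is proved afterwards, by the same scheme, importing \cite[Theorem~2.5]{GLL18}.

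Your route would require an independent proof of the Rayleigh limit, and your sketch for this has two problems. First, the truncation step ``transfer the integration into one against $d\mu^f$ at position $t+S_Kf$'' is not justified: the quasi-invariance \eqref{Harmonic_property_rho} is a property of the \emph{limiting} measure $\mu^f$, not a device that converts the finite-$K$ integral $\int \vphi\,\mathds 1_{\{\tau_t^f>K\}}\,d\nu$ into a $\mu^f$-integral --- that conversion is precisely Theorem~\ref{Thm-exit-time001}, so the argument is circular. Meanwhile, the ``standard barrier/ballot-type argument'' on $[K,n]$ is exactly the step that fails outside the i.i.d.\ setting; there is no reflection principle or Sparre Andersen identity here, and the spectral-gap/martingale remedy you allude to is the content of \cite{GLL18}, not something one can invoke in a line. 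Second, even granting weak convergence to the Rayleigh law, passing to $\mathbb E[S_nf\mid\tau_t^f>n]\sim\sigma_f\sqrt{n}\sqrt{\pi/2}$ requires uniform integrability of $S_nf/\sqrt n$ on $\{\tau_t^f>n\}$, which you do not address and which is not free (the bound $\nu(\tau_t^f>n)\le C(1+|t|)/\sqrt n$ you mention controls the denominator, not the tail of $S_nf$).
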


In particular, as the measure $\mu^f$ has absolutely continuous marginal on $\bb R$, 
Theorem \ref{Thm-exit-time001} also applies to the function $\vphi(x, t) = \mathds 1_{[a, b]} (t)$ 
for $x \in \bb X$ and $t \in \bb R$. 
Therefore, this gives the following corollary. 

\begin{corollary}\label{Cor-exit-time001}
Let $f$ be a H\"older continuous function on $\bb X$ such that $f$ is not a coboundary and $\nu (f) = 0$.
Then, for any real numbers $-\infty < a < b < \infty$, we have, as $n\to \infty$,
\begin{align*} 
 \int_{a}^{b}  \nu \left( x \in \bb X:  \tau_t^f(x) >n \right) dt 
\sim  
 \frac{2}{\sigma_f \sqrt{2 \pi n} } \int_a^b V^f(t) dt.   
\end{align*}
\end{corollary}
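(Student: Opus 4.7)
The plan is to deduce the corollary directly from Theorem \ref{Thm-exit-time001} by choosing the test function $\vphi$ to depend only on $t$ and to approximate the indicator $\mathds 1_{[a,b]}(t)$ by continuous compactly supported functions. The only subtlety is that $\mathds 1_{[a,b]}$ is not continuous, so one needs a sandwich argument.

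First, note that if $\vphi$ in Theorem \ref{Thm-exit-time001} depends only on $t$, the integral on the right-hand side collapses to $\int_{\bb R} \vphi(t)\, \pi_{\bb R}(\mu^f)(dt)$, where $\pi_{\bb R}(\mu^f)$ is the marginal of $\mu^f$ on $\bb R$. By the discussion in the paragraph following Theorem \ref{Thm_Radon_Measure}, this marginal is absolutely continuous with respect to Lebesgue measure and has density $V^f$. Thus, for every continuous compactly supported function $\psi : \bb R \to \bb R$,
\begin{equation*}
\int_{\bb R} \psi(t)\, \nu\!\left( x \in \bb X : \tau_t^f(x) > n \right) dt
\sim \frac{2}{\sigma_f \sqrt{2\pi n}} \int_{\bb R} \psi(t)\, V^f(t)\, dt,
\quad n\to\infty.
\end{equation*}

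Next, to handle the indicator $\mathds 1_{[a,b]}$, I would fix $\varepsilon>0$ small and build two continuous compactly supported functions $\psi_\varepsilon^-, \psi_\varepsilon^+ : \bb R \to [0,1]$ such that $\psi_\varepsilon^- \leq \mathds 1_{[a,b]} \leq \psi_\varepsilon^+$, with $\psi_\varepsilon^-$ supported in $[a+\varepsilon, b-\varepsilon]$ and $\psi_\varepsilon^+$ supported in $[a-\varepsilon, b+\varepsilon]$. Applying the above asymptotic to $\psi_\varepsilon^\pm$ and sandwiching yields
\begin{equation*}
\int_{\bb R} \psi_\varepsilon^-(t)\, V^f(t)\, dt
\leq \liminf_{n\to\infty} \frac{\sigma_f \sqrt{2\pi n}}{2} \int_a^b \nu\!\left( \tau_t^f > n \right) dt
\leq \limsup_{n\to\infty} \frac{\sigma_f \sqrt{2\pi n}}{2} \int_a^b \nu\!\left( \tau_t^f > n \right) dt
\leq \int_{\bb R} \psi_\varepsilon^+(t)\, V^f(t)\, dt.
\end{equation*}
Since $V^f$ is locally integrable (being the density of a Radon measure), letting $\varepsilon \to 0$ by dominated convergence the two extremes converge to $\int_a^b V^f(t)\, dt$. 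This yields the claimed equivalent.

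There is no real obstacle: the measure-theoretic work — existence of $V^f$ as a density and the validity of Theorem \ref{Thm-exit-time001} — has been established upstream. The only thing to watch is that the limits of $\psi_\varepsilon^\pm$ really do converge, which is guaranteed by the absolute continuity of the marginal of $\mu^f$ on $\bb R$ (so in particular $\mu^f(\bb X \times \{a\}) = \mu^f(\bb X \times \{b\}) = 0$).
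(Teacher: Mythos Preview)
Your proposal is correct and is exactly the standard approximation argument the paper has in mind: the paper simply remarks that since the marginal of $\mu^f$ on $\bb R$ is absolutely continuous (with density $V^f$), Theorem \ref{Thm-exit-time001} applies to $\vphi(x,t)=\mathds 1_{[a,b]}(t)$, and your sandwiching by $\psi_\varepsilon^{\pm}$ is precisely how one justifies that extension.
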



Now we give a conditioned central limit theorem for the Birkhoff sum $S_n f$,
which states that the law of $S_n f$ conditioned to stay positive converges weakly to the Rayleigh law. 
Let $\phi^+$ be the Rayleigh density function:
\begin{align*} 
\phi^+(u) = u e^{-u^2/2} \mathds 1_{\{ u \geq 0 \}},  \quad  u \in \bb R.  
\end{align*}


\begin{theorem}\label{Thm-cnodi-lim-theor001} 
Let $f$ be a H\"older continuous function on $\bb X$ such that $f$ is not a coboundary and $\nu (f) = 0$.
Then, for any continuous compactly supported function $F$ on $\bb X \times \bb X \times \bb R \times \bb R$, 
we have, as $n\to\infty$,
\begin{align*}  
 \int_{\bb X \times \bb R }  
  F &\left(x, T^n x, t, \frac{S_n f (x)}{\sigma_f\sqrt{n}} \right) 
  \mathds 1_{\left\{ \tau_t^f(x) >n \right\} } \nu(dx)  dt  \nonumber \\
& \quad\quad\quad\sim  
\frac{2 }{\sigma_f  \sqrt{2 \pi n} }  
   \int_{\bb X \times \bb R }  \int_{\bb X \times \bb R }  F(x, x', t, t') \phi^+(t')    \nu(dx') dt' \mu^f(dx, dt). 
\end{align*}
\end{theorem}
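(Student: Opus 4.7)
The idea is to combine Theorems \ref{Thm_Radon_Measure} and \ref{Thm-exit-time001} with exponential mixing of the Gibbs measure $\nu$, in order to identify both the initial data $(x,t)$ and the rescaled endpoint $(T^n x,S_n f(x)/(\sigma_f\sqrt n))$ of a trajectory conditioned to stay positive. By a density/tensor-product argument, it suffices to treat test functions of the form $F(x,x',t,t')=\psi(x,t)\chi(x',t')$ with $\psi$ and $\chi$ continuous and compactly supported.

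Pick an auxiliary scale $m=m_n$ with $m_n\to\infty$ and $m_n=o(n)$, for instance $m_n=\lfloor n^{1/3}\rfloor$, and use the cocycle identity
\[
\mathds 1_{\{\tau_t^f(x)>n\}}
=\mathds 1_{\{\tau_t^f(x)>m\}}\,\mathds 1_{\{\tau_{t+S_m f(x)}^f(T^m x)>n-m\}}
\]
together with $S_n f(x)=S_m f(x)+S_{n-m}f(T^m x)$ to split the left-hand side of the statement into a \emph{first} block and a \emph{late} block. On the event $\{\tau_t^f(x)>m\}$ the Birkhoff sum obeys $S_m f(x)=O(\sqrt m)$, so $S_m f(x)/\sqrt n\to 0$; uniform continuity of $\chi$ then allows $S_n f(x)/(\sigma_f\sqrt n)$ to be replaced by $S_{n-m}f(T^m x)/(\sigma_f\sqrt n)$ up to a vanishing error.

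The first block is converted to an integral against $\mu^f(dx,dt)$ by a refined form of Theorem \ref{Thm_Radon_Measure} applied to a test function that also depends on the current position $(T^m x,t+S_m f(x))$. The late block requires the joint conditioned CLT
\[
\int_{\bb X}\chi\bigl(T^{n-m}y,\tfrac{S_{n-m}f(y)}{\sigma_f\sqrt n}\bigr)\mathds 1_{\{\tau_s^f(y)>n-m\}}\,\nu(dy)\sim\frac{2}{\sigma_f\sqrt{2\pi n}}\int_{\bb X\times\bb R}\chi(y',u)\phi^+(u)\,\nu(dy')\,du,
\]
valid uniformly for $s$ in a compact set. Here the Rayleigh factor $\phi^+$ arises as the terminal density of a Brownian meander, while $\nu(dy')$ appears through exponential mixing, which asymptotically decouples the endpoint $T^{n-m}y$ from the height. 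Combining this with the first-block asymptotic and letting $n\to\infty$ yields the announced product measure $\mu^f(dx,dt)\otimes\phi^+(t')\nu(dx')dt'$ with prefactor $2/(\sigma_f\sqrt{2\pi n})$.

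The principal obstacle is the joint asymptotic for the late block. In the absence of a Markov property, Brownian meander convergence must be obtained via spectral analysis of the transfer operator associated with the Gibbs measure, using the quasi-invariance identity \eqref{Harmonic_property_rho} to iterate the conditioning through the dynamics and exponential mixing of $\nu$ under $T$ to produce the invariant-measure factor on the endpoint. The constant $2/(\sigma_f\sqrt{2\pi n})$ can be read off either directly from Theorem \ref{Thm-exit-time001} or from a parallel analysis of the time-reversed harmonic measure $\check\mu^{(-f)}$, providing a useful cross-check of the normalisation.
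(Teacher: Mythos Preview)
Your proposal has a genuine gap. The ``joint conditioned CLT'' you assume for the late block is both incorrectly stated and unproved. As written, the asymptotic
\[
\int_{\bb X}\chi\bigl(T^{n-m}y,\tfrac{S_{n-m}f(y)}{\sigma_f\sqrt n}\bigr)\mathds 1_{\{\tau_s^f(y)>n-m\}}\,\nu(dy)\sim\frac{2}{\sigma_f\sqrt{2\pi n}}\int\chi(y',u)\phi^+(u)\,\nu(dy')\,du
\]
cannot hold for fixed $s$: the right-hand side does not depend on $s$, whereas the left-hand side certainly does (take $\chi\equiv 1$ and compare Corollary~\ref{Cor-exit-time001}, where a factor $V^f$ appears after integration in $t$). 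More importantly, even a correctly stated pointwise-in-$s$ version of this asymptotic is at least as hard as the theorem itself --- indeed the paper explicitly leaves the fixed-$t$ asymptotic of $\nu(\tau_t^f>n)$ as an open problem. You acknowledge this as ``the principal obstacle'' but offer only a gesture at spectral analysis and mixing; that is precisely where the entire content lies, so the argument is circular. A secondary issue: the claim $S_m f(x)=O(\sqrt m)$ on $\{\tau_t^f(x)>m\}$ is not a deterministic bound (on that event one only has $S_k f(x)\geq -t$, no upper control).

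The paper takes a different route that avoids this circularity. Its key structural insight --- which your phrase ``in the absence of a Markov property'' suggests you have missed --- is that after passing to the reversed dynamics and conditioning on the future $z\in\bb X^+$, the process $y\mapsto\check S_n g(y\cdot z)$ on $(\bb X^-_z,\nu^-_z)$ \emph{is} a genuine Markov chain whenever $g\in\scr B^+$. The conditioned CLT for such chains from \cite{GLL18} then gives the pointwise-in-$(z,t)$ Rayleigh limit directly (Lemma~\ref{Lem_CondiCLT}). From there the paper extends to past-dependent targets (Lemma~\ref{Lem_CondiCLT_target1}), adds a source function via the measures $\check\mu^{g,-}_{z,t}$ (Lemma~\ref{Lem_CondiCLT_target2}), handles $g$ depending on finitely many negative coordinates by conditioning, and finally reaches arbitrary $g\in\scr B$ through the cohomological approximation of Lemma~\ref{Lem_Appro_g} combined with Lemma~\ref{Lem_Continuity_rho_g}. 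No time-splitting or mixing argument is used for this theorem.
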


As above, we can actually apply Theorem \ref{Thm-cnodi-lim-theor001} 
to the function $F(x, x', t, t') = \mathds 1_{[a, b]}(t) \mathds 1_{[a', b']}(t')$ for $x, x' \in \bb X$ and $t, t' \in \bb R$. 
Therefore, this implies the following corollary: 

\begin{corollary}
Let $f$ be a H\"older continuous function on $\bb X$ such that $f$ is not a coboundary and $\nu (f) = 0$.
Then, for any real numbers $-\infty < a < b < \infty$ and $-\infty < a' < b' < \infty$, we have, as $n\to \infty$,
\begin{align*}  
& \int_{a}^{b}  
 \nu \left( x \in \bb X:  \frac{S_n f (x)}{\sigma_f\sqrt{n}} \in [a',b'],  \tau_t^f(x) >n \right) dt   \notag\\
& \qquad\qquad\qquad \sim  
\frac{2 }{\sigma_f  \sqrt{2 \pi n} }  \int_a^b V^f(t) dt
   \int_{a'}^{b'}  \phi^+(t')  dt'. 
\end{align*}
\end{corollary}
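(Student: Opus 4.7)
The plan is to deduce this corollary from Theorem \ref{Thm-cnodi-lim-theor001} via a standard approximation argument, as suggested by the sentence preceding the statement. We apply the theorem with the discontinuous choice $F(x, x', t, t') = \mathds{1}_{[a,b]}(t) \mathds{1}_{[a',b']}(t')$, which is not admissible directly since Theorem \ref{Thm-cnodi-lim-theor001} requires $F$ continuous and compactly supported. First I would rewrite the left-hand side of the corollary as
\begin{align*}
\int_{\bb X \times \bb R} F\Big(x, T^n x, t, \frac{S_n f(x)}{\sigma_f \sqrt{n}}\Big) \mathds{1}_{\{\tau_t^f(x) > n\}} \nu(dx)\, dt,
\end{align*}
and similarly recognize $\int_a^b V^f(t)\, dt \int_{a'}^{b'} \phi^+(t')\, dt'$ as $\int \int F(x, x', t, t') \phi^+(t')\, \nu(dx')\, dt'\, \mu^f(dx, dt)$, using that the marginal of $\mu^f$ on $\bb R$ is absolutely continuous with density $V^f$.

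Next, for each $\ee > 0$, I would sandwich $F$ between continuous compactly supported functions $F_\ee^- \leq F \leq F_\ee^+$ on $\bb X \times \bb X \times \bb R \times \bb R$ (both depending only on $(t, t')$), such that $F_\ee^+$ is supported in an $\ee$-neighborhood of $[a, b] \times [a', b']$ and $F_\ee^-$ equals $1$ on its $\ee$-interior. Since all three functions are nonnegative, Theorem \ref{Thm-cnodi-lim-theor001} applied to $F_\ee^\pm$ yields asymptotic upper and lower bounds on the quantity of interest, of the form $\frac{2}{\sigma_f \sqrt{2\pi n}}$ times the corresponding integrals of $F_\ee^\pm$ against $\phi^+(t')\,\nu(dx')\, dt'\, \mu^f(dx, dt)$.

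Finally, I would close the sandwich by letting $\ee \to 0$. The two key ingredients are: the marginal of $\mu^f$ on $\bb R$ is absolutely continuous with density $V^f$, so that $\mu^f(\bb X \times \partial[a, b]) = 0$ and
\begin{align*}
\int_{[a-\ee, a+\ee] \cup [b-\ee, b+\ee]} V^f(t)\, dt \xrightarrow[\ee \to 0]{} 0;
\end{align*}
and the Rayleigh density $\phi^+$ is locally integrable, so the analogous integrals over $\ee$-neighborhoods of $a'$ and $b'$ vanish as $\ee \to 0$. Dominated convergence then squeezes the upper and lower asymptotic constants to the desired common value.

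The argument is entirely routine and presents no real obstacle; the substantive content is already contained in Theorem \ref{Thm-cnodi-lim-theor001} and in the absolute continuity of the marginal of $\mu^f$ established in Section \ref{Sect_harmonic_measure}.
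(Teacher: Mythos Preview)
Your proposal is correct and is exactly the standard approximation argument the paper has in mind: the paper does not give a separate proof of this corollary but simply remarks that Theorem \ref{Thm-cnodi-lim-theor001} can be applied to $F(x,x',t,t')=\mathds 1_{[a,b]}(t)\mathds 1_{[a',b']}(t')$ thanks to the absolute continuity of the marginal of $\mu^f$ on $\bb R$. Your sandwich with $F_\ee^\pm$ and the use of $\mu^f(\bb X\times\partial[a,b])=0$ together with local integrability of $\phi^+$ is precisely the routine justification needed.
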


Next we formulate a conditioned local limit theorem for $S_n f$,
which is a refinement of the previous result. 

\begin{theorem}\label{Thm-conLLT}  
Let $f$ be a H\"older continuous function on $\bb X$ such that  $\nu (f) = 0$.  
Assume that for any $p \neq 0$ and $q \in \bb R$, 
the function $p f + q$ is not cohomologous to a function with values in $\bb Z$. 
Then, for any continuous compactly supported function $F$ on $\bb X \times \bb X \times \bb R \times \bb R$, 
we have, as $n\to\infty$, 
\begin{align*}
    \int_{\bb X \times \bb R} F&(x, T^nx, t, t + S_n f(x))
    \mathds 1_{\left\{ \tau_t^f(x) >n - 1 \right\} }  \nu(dx) dt   \notag\\
&\quad\quad \sim  \frac{2}{\sqrt{2 \pi} \sigma_f^3  n^{3/2} } 
\int_{\bb X \times \bb R }  \int_{\bb X \times \bb R }  F(x, x', t, t') \mu^f(dx, dt)  \check \mu^{(-f)} (dx', dt'). 
\end{align*}
\end{theorem}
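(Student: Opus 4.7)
The plan is to mimic, in the dynamical setting, the standard decomposition for random-walk bridges conditioned to stay positive: split the trajectory at the midpoint $m = \lfloor (n-1)/2 \rfloor$, apply the conditioned central limit theorem (Theorem \ref{Thm-cnodi-lim-theor001}) to the first half, its time-reversed analogue to the second, and glue them via a local limit theorem for $S_m f$ at the midpoint.

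Using the cocycle identity $S_n f(x) = S_m f(x) + S_{n-m} f(T^m x)$, the conditioning event factors as
\[
\{\tau_t^f(x) > n-1\} = \{\tau_t^f(x) > m\} \cap \{\tau_{t + S_m f(x)}^f(T^m x) > n-m-1\}.
\]
On the first half, Theorem \ref{Thm-cnodi-lim-theor001} applied to a test function of $(x, T^m x, t, (t + S_m f(x))/(\sigma_f\sqrt m))$ yields an asymptotic of order $m^{-1/2}$ governed by $\mu^f(dx,dt)\,\phi^+(u)\,du$. For the second half, reparametrize by $y = T^m x$ and $z = T^n x$; by the $T$-invariance of $\nu$ and the reindexing $S_{n-m} f(y) = \sum_{j=1}^{n-m} f(T^{-j} z)$, the forward conditioning on $(y,u)$ becomes a forward conditioning for the reversed system $(\bb X, T^{-1}, -f \circ T^{-1})$ starting at $z$. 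The dual version of Theorem \ref{Thm-cnodi-lim-theor001} then produces an asymptotic governed by $\check \mu^{(-f)}(dx',dt')$ together with a second Rayleigh density $\phi^+(v)\,dv$.

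Combining the two halves leaves an integral in which the midpoint value $s := t + S_m f(x)$ enters at scale $s/(\sigma_f\sqrt m)$ on one side and at scale $(s - (t+S_n f(x)))/(\sigma_f\sqrt{n-m})$ on the other. A direct Gaussian--Rayleigh computation, coupling the two Rayleigh densities through the local Gaussian gluing kernel produced by the mid-point local limit theorem, reduces the double integral to the tensor product $\mu^f \otimes \check \mu^{(-f)}$ with the announced constant $\frac{2}{\sqrt{2\pi}\sigma_f^3 n^{3/2}}$.

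The main obstacle is the gluing step. Theorem \ref{Thm-cnodi-lim-theor001} provides only weak convergence at scale $\sqrt n$, whereas the statement of Theorem \ref{Thm-conLLT} evaluates $F$ at the precise value of $t + S_n f(x)$, so a genuine local limit theorem at scale $O(1)$ is needed. This is addressed through spectral analysis of the transfer operator of the subshift perturbed by the imaginary character $e^{i\theta f}$: the non-lattice hypothesis that $pf + q$ is not cohomologous to a $\bb Z$-valued function for $p \neq 0$ is exactly what guarantees a uniform spectral gap of this perturbed operator for $\theta$ in compact sets away from zero, yielding the exponential decay of the associated Fourier transform needed to recover the local asymptotic by inversion. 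This local refinement, combined with uniform error control across the midpoint (where one must show the tail contributions from the overshoot are $o(n^{-3/2})$ uniformly in the relevant parameters), constitutes the essential additional content beyond Theorem \ref{Thm-cnodi-lim-theor001}.
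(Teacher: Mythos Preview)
Your overall strategy---split the trajectory, use conditioned limit theorems on the pieces, glue via duality---is the right architecture, and you correctly identify that the non-lattice hypothesis enters through the spectral gap of the perturbed transfer operator. But the gluing step as you describe it does not work, and the gap is not merely a matter of filling in details.

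The problem is this. After splitting at $m \approx n/2$, you propose to apply the conditioned CLT (Theorem~\ref{Thm-cnodi-lim-theor001}) to each half and couple them through an ordinary local limit theorem at the midpoint. But the conditioned CLT only sees the midpoint value $s = t + S_m f(x)$ at scale $\sqrt{m}$; it gives no control at scale $O(1)$, which is what you would need to match the two halves pointwise in $s$. Your suggested fix---``a genuine local limit theorem at scale $O(1)$''---would have to be a \emph{conditioned} local limit theorem for the first half, which is precisely the content of the theorem you are trying to prove. The argument is circular unless you produce an intermediate object.

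The paper resolves this by first proving an \emph{effective} conditioned local limit theorem (Theorem~\ref{t-A 001}) with explicit, uniform-in-target-function error bounds of order $n^{-1}$. That result is itself obtained by a different splitting, at $m = [\sqrt{\ee}\, n]$ (a short final block), combining the ordinary LLT on the short block with the conditioned CLT on the long one. Only then does the paper split at $n/2$: it applies Theorem~\ref{t-A 001} to the first half with the second-half probability $\Psi_m$ as the target function, and handles the resulting integral via duality (Lemma~\ref{Corollary_Duality}) and the conditioned CLT on the second half. The uniformity in Theorem~\ref{t-A 001} is exactly what replaces your vague ``uniform error control across the midpoint''; without it the $o(n^{-3/2})$ tail estimates you mention cannot be established. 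Your sketch is missing this entire intermediate layer.
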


In Theorem \ref{Thm-conLLT}, we assumed the function $f$ to satisfy a non-arithmeticity condition.
When this is not the case but $f$ is still not cohomologous to $0$, 
we could still get an analogue of this result by the same method. 
 

In the particular case when the function $F$ has the form 
$F(x, x', t, t') = \mathds 1_{[a, b]}(t) \mathds 1_{[a', b']}(t')$ for $x, x' \in \bb X$ and $t, t' \in \bb R$,
from the previous theorem we obtain the following: 

\begin{corollary}\label{Cor-LLT-32-007}
Let $f$ be a H\"older continuous function on $\bb X$ such that  $\nu (f) = 0$.  
Assume that for any $p \neq 0$ and $q \in \bb R$, 
the function $p f + q$ is not cohomologous to a function with values in $\bb Z$. 
Then, for any real numbers $-\infty < a < b < \infty$ and $-\infty < a' < b' < \infty$, we have, as $n\to \infty$,
\begin{align*}
    \int_{a}^b  & \nu \left( x \in \bb X:   t + S_n f (x) \in [a', b'],  \tau_t^f(x) > n - 1 \right) dt   \notag\\
&\quad\quad \sim  \frac{2}{\sqrt{2 \pi} \sigma_f^3  n^{3/2} } 
    \int_a^b V^f(t) dt   \int_{a'}^{b'}  \check V^{(-f)}(t') dt'.
\end{align*}
\end{corollary}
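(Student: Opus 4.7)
The plan is to deduce the corollary from Theorem~\ref{Thm-conLLT} by applying it to continuous compactly supported tensor product functions that sandwich the indicator. The main difficulty is that the function $F(x, x', t, t') = \mathds 1_{[a,b]}(t) \mathds 1_{[a', b']}(t')$ is not continuous, so a direct application is impossible; however, since both $\mu^f$ and $\check \mu^{(-f)}$ have absolutely continuous marginals on $\bb R$, the set of discontinuities of $F$ is negligible for the limit measure, which allows a standard approximation argument.

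First, I would rewrite the right-hand side of the asymptotic in Theorem~\ref{Thm-conLLT} for product test functions $F(x, x', t, t') = \phi(t) \psi(t')$ with $\phi, \psi$ continuous and compactly supported on $\bb R$. Using the fact that the marginal of $\mu^f$ on $\bb R$ equals $V^f(t) dt$ and the marginal of $\check \mu^{(-f)}$ equals $\check V^{(-f)}(t') dt'$, the double integral factorizes as
\begin{align*}
\int_{\bb X \times \bb R} \int_{\bb X \times \bb R} \phi(t) \psi(t') \mu^f(dx, dt) \check \mu^{(-f)}(dx', dt')
= \Bigl( \int_{\bb R} \phi(t) V^f(t) dt \Bigr) \Bigl( \int_{\bb R} \psi(t') \check V^{(-f)}(t') dt' \Bigr).
\end{align*}
Meanwhile, the left-hand side of Theorem~\ref{Thm-conLLT} becomes
\begin{align*}
I_n(\phi, \psi) := \int_{\bb R} \int_{\bb X} \phi(t) \psi(t + S_n f(x)) \mathds 1_{\{\tau_t^f(x) > n - 1\}} \nu(dx) dt.
\end{align*}

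Next, since $V^f \in L^1_{\mathrm{loc}}(\bb R)$ (being the density of a Radon measure) and likewise for $\check V^{(-f)}$, for any $\varepsilon > 0$ I can construct continuous compactly supported functions $\phi_\varepsilon^\pm$ with $0 \leq \phi_\varepsilon^- \leq \mathds 1_{[a,b]} \leq \phi_\varepsilon^+$ such that $\int_{\bb R} (\phi_\varepsilon^+ - \phi_\varepsilon^-)(t) V^f(t) dt < \varepsilon$, and symmetrically $\psi_\varepsilon^\pm$ for $\mathds 1_{[a', b']}$ with respect to $\check V^{(-f)}(t') dt'$. Crucially, because the integrand in $I_n(\cdot, \cdot)$ is pointwise non-negative and everything is separated in a product, monotonicity gives
\begin{align*}
I_n(\phi_\varepsilon^-, \psi_\varepsilon^-) \leq I_n(\mathds 1_{[a,b]}, \mathds 1_{[a',b']}) \leq I_n(\phi_\varepsilon^+, \psi_\varepsilon^+).
\end{align*}

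Finally, multiplying by $n^{3/2}$ and applying Theorem~\ref{Thm-conLLT} to each of the continuous compactly supported product functions $\phi_\varepsilon^\pm \otimes \psi_\varepsilon^\pm$, both outer terms converge as $n \to \infty$ to $\frac{2}{\sqrt{2\pi} \sigma_f^3} \int \phi_\varepsilon^\pm V^f dt \cdot \int \psi_\varepsilon^\pm \check V^{(-f)} dt'$. Letting $\varepsilon \to 0$, both bounds converge to $\frac{2}{\sqrt{2\pi}\sigma_f^3} \int_a^b V^f(t) dt \cdot \int_{a'}^{b'} \check V^{(-f)}(t') dt'$, so the squeeze pins down the limit of $n^{3/2} I_n(\mathds 1_{[a,b]}, \mathds 1_{[a', b']})$ to the same value, which is exactly the claim. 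The only subtlety in this plan is confirming the factorization of the marginals of $\mu^f$ and $\check \mu^{(-f)}$ and their absolute continuity, but this was already established in the discussion following Theorem~\ref{Thm_Radon_Measure}.
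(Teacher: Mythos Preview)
Your proposal is correct and follows essentially the same approach as the paper. The paper does not write out a proof for this corollary but simply asserts that it follows from Theorem~\ref{Thm-conLLT} applied to $F(x,x',t,t')=\mathds 1_{[a,b]}(t)\mathds 1_{[a',b']}(t')$, relying (as noted after Theorem~\ref{Thm_Radon_Measure}) on the absolute continuity of the marginals of $\mu^f$ and $\check\mu^{(-f)}$ on $\bb R$ and the ``standard arguments'' of sandwiching the indicator by continuous compactly supported functions---exactly what you have spelled out.
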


In this corollary we have denoted by $\check V^{(-f)}$ the density function with respect to the Lebesgue measure
of the marginal on $\bb R$ of the Radon measure $\check \mu^{(-f)}$. 

As $f$ is bounded, by taking the interval $[a', b')$ to be $[-c, 0)$ for $c>0$ large enough, 
we get from Corollary \ref{Cor-LLT-32-007} the following:

\begin{corollary}\label{Cor-LLT-32-008}
Let $f$ be a H\"older continuous function on $\bb X$ such that  $\nu (f) = 0$.  
Assume that for any $p \neq 0$ and $q \in \bb R$, 
the function $p f + q$ is not cohomologous to a function with values in $\bb Z$. 
Then, for any real numbers $-\infty < a < b < \infty$, we have, as $n\to \infty$,
\begin{align*}
    \int_{a}^b   \nu \left( x \in \bb X:  \tau_t^f(x) = n  \right) dt   
 \sim  \frac{2}{\sqrt{2 \pi} \sigma_f^3  n^{3/2} } 
    \int_a^b V^f(t) dt   \int_{-\infty}^{0}  \check V^{(-f)}(t') dt'. 
\end{align*}
\end{corollary}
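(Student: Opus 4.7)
The plan is to reduce Corollary \ref{Cor-LLT-32-008} to Corollary \ref{Cor-LLT-32-007} by exploiting the boundedness of $f$, exactly as hinted in the paragraph preceding the statement.

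First I would unpack the event $\{\tau_t^f(x)=n\}$: by definition of the exit time, it is precisely
\begin{align*}
\{\tau_t^f(x)=n\}=\{\tau_t^f(x)>n-1\}\cap\{t+S_n f(x)<0\}.
\end{align*}
Set $M:=\|f\|_\infty<\infty$. If $\tau_t^f(x)>n-1$, then $t+S_{n-1}f(x)\geq 0$, hence
\begin{align*}
t+S_n f(x)=\bigl(t+S_{n-1}f(x)\bigr)+f(T^{n-1}x)\geq -M.
\end{align*}
Consequently, for every $c\geq M$,
\begin{align*}
\{\tau_t^f(x)=n\}=\{\tau_t^f(x)>n-1\}\cap\{\,t+S_n f(x)\in[-c,0)\,\}.
\end{align*}

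Next I would apply Corollary \ref{Cor-LLT-32-007} with the interval $[a',b')=[-c,0)$, which gives, as $n\to\infty$,
\begin{align*}
\int_a^b \nu\bigl(x\in\bb X:\tau_t^f(x)=n\bigr)\,dt
\;\sim\;\frac{2}{\sqrt{2\pi}\,\sigma_f^3\, n^{3/2}}
\int_a^b V^f(t)\,dt\int_{-c}^{0}\check V^{(-f)}(t')\,dt'.
\end{align*}

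The final step is to replace $\int_{-c}^{0}$ by $\int_{-\infty}^{0}$. Since the left-hand side is independent of $c$, the value $\int_{-c}^{0}\check V^{(-f)}(t')\,dt'$ must be the same for every $c\geq M$. Because $\check V^{(-f)}$ is non-negative, this forces $\check V^{(-f)}=0$ almost everywhere on $(-\infty,-M)$; equivalently,
\begin{align*}
\int_{-c}^{0}\check V^{(-f)}(t')\,dt'=\int_{-\infty}^{0}\check V^{(-f)}(t')\,dt'\qquad\text{for all }c\geq M,
\end{align*}
and this integral is finite (local integrability of the density of the Radon measure $\check\mu^{(-f)}$ on the compact interval $[-M,0]$). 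Substituting this identity into the asymptotic above yields the desired statement.

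There is really no main obstacle: the argument is a two-line set-theoretic observation combined with a direct invocation of Corollary \ref{Cor-LLT-32-007}. The only mildly subtle point is the consistency argument for dropping $c$, which is an immediate consequence of the $c$-independence of the left-hand side together with the non-negativity of $\check V^{(-f)}$.
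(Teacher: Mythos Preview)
Your proposal is correct and follows exactly the route indicated in the paper: rewrite $\{\tau_t^f=n\}=\{\tau_t^f>n-1\}\cap\{t+S_nf\in[-c,0)\}$ for $c\geq\|f\|_\infty$ and invoke Corollary~\ref{Cor-LLT-32-007}. The only remark is that your ``consistency'' argument for replacing $\int_{-c}^0$ by $\int_{-\infty}^0$ is a detour---the paper already establishes directly (see the bound \eqref{Bound_V_f} and its analogue for $\check V^{(-f)}$) that $\check V^{(-f)}(t')=0$ for $t'$ below some fixed threshold, so one simply takes $c$ large enough from the start.
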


Our Corollary \ref{Cor-LLT-32-008} could be extended
without difficulties to the case when one only assumes that $f$ is not cohomologous to $0$. 
This  assertion could be deduced from a version of Theorem \ref{Thm-conLLT} 
for functions $f$ that are cohomologous to functions with values 
in a set of the form $\alpha \bb Z + \beta$ for some $\alpha, \beta \in \bb R$. 

Again, similarly to the comment after Corollary \ref{Cor_Radon_Measure000}, 
Theorems \ref{Thm-exit-time001}, \ref{Thm-cnodi-lim-theor001}  and \ref{Thm-conLLT} are stated in integral forms with respect to $t$. 
It is an open problem to obtain asymptotics for a fixed value of $t \in \bb R$ of the following probabilities: 
\begin{align*}
& \nu \left( x \in \bb X:  \tau_t^f(x) >n \right),   \quad \nu \left( x \in \bb X:  \frac{S_n f (x)}{\sigma_f\sqrt{n}} \in [a',b'],  \tau_t^f(x) >n \right),  \notag\\
& \nu \left( x \in \bb X:  S_n f (x) \in [a',b'],  \tau_t^f(x) >n \right). 
\end{align*}

Due to the theory of Markov partitions (see Appendix III of \cite{PP90} and Chapter 18.7 of \cite{Has-Katok-96}), 
Theorems \ref{Thm_Radon_Measure}, \ref{Thm-exit-time001}, \ref{Thm-cnodi-lim-theor001}
and \ref{Thm-conLLT}  can be applied without any changes to hyperbolic dynamical systems.
Finally, using the approach of this paper, one can obtain analogous results for hyperbolic flows. 
The latter is out of the scope of this article and will be done in another work. 



\subsection{Previous work and motivation} \label{sec: previous work - motivation}
The first examples of conditioned limit theorems for 
sums of independent random variables are due to the pioneering work of Spitzer \cite{Spitzer} and Feller \cite{Fel64}. 
Since then integral and local limit theorems  for random walks conditioned to stay positive attracted
a lot of attention.  
Very many authors contributed to this subject,  among them 
  Borovkov \cite{Borovk62, Borovkov04a, Borovkov04b}, 
Bolthausen \cite{Bolth}, Iglehart \cite{Igl74}, Eppel \cite{Eppel-1979},  Bertoin and Doney \cite{BertDoney94},  
Caravenna \cite{Carav05},  Vatutin and Wachtel \cite{VatWacht09}, Doney \cite{Don12}, Kersting and Vatutin \cite{KV17}. 
Most of this work is based on the Wiener-Hopf factorization and
various factorization identities. 
Recently Denisov and Wachtel \cite{Den Wachtel 2011, DW19} have studied the setting of random walks in cones and have developed a new approach 
based on the construction of the harmonic function thus avoiding
the use of the Wiener-Hopf factorization.
Following this method, in the case of dependent random variables very recent 
progress was made in 
\cite{GLP17, GLL18}, 
where conditioned integral limit theorems for products of random matrices and
for Markov chains satisfying spectral gap properties have been obtained.
In \cite{GLL20} a conditioned local limit theorem for a Markov chain with finite state space was considered. 

The question of establishing  
conditioned integral and local limit theorems for Birkhoff sums remained for a very long time 
a challenging problem mainly because the Wiener-Hopf factorisation 
does not work in the dynamical system framework. 
The emergence of new approaches for proving the integral and local limit theorems for Markov chains 
\cite{Den Wachtel 2011, GLP17, GLL18, GLL20}
 opens an opportunity to investigate these problems for dynamical systems.
However, there are two main difficulties which have motivated our study.

The first difficulty is that the 
alternative approaches described above are based on the existence of the harmonic function. 
For Markov chains its existence  is proved in \cite{GLP17, GLL18}. 
Or a subshift of finite type exhibits a dependence which in general is not of 
Markovian type and therefore the 
existence of the harmonic function is not granted, except in some very special cases. 
Indeed, one of the major findings of the paper is that in the case for the subshift of finite type $(\bb X, T,\nu)$
with a general H\"older continuous observable $f$ 
a more general object -- the harmonic measure $\mu^f$ -- has to be introduced. 
The conditioned integral and local limit theorems in this case are stated in terms of the harmonic measures 
$\mu^f$ and $\check \mu^{-f}$ (the second corresponding to the reversed subshift $(\bb X, T^{-1}, \nu)$ 
with observable $f \circ T^{-1}$), 
which therefore cannot be avoided and constitute essential characteristics of the model.
The construction of $\mu^f$ is performed first for a Birkhoff sum $t+S_nf$, where the observable 
$f$ depends only on the future coordinates 
(in the reversed setting it corresponds to studying a Markov chain). 
Then it is extended gradually to a function $f$ depending on the whole set of coordinates using some smoothing
techniques and a vague convergence argument, see Section \ref{sec: harmonic measure}.

The second difficulty resides in adapting the approaches 
from the Markov chains \cite{GLL18, GLL20} to the case of dynamical systems.
Our approach is close to that of the recent paper \cite{GX21}, where random walks with independent increments 
are considered.
We shall first establish the corresponding theorems for the Birkhoff sum $t+S_nf$ with an observable 
$f$ depending only on the future coordinates (which corresponds to dealing with some Markov chain). 
Then using the technique similar to that in the previous point we extend them to the general case.
The main difficulty here consists in the construction of the dual Markov chain, 
which is an important step in establishing a conditioned local limit theorem.
Fortunately, in the case of the subshift of finite type the dual Markov chain 
corresponds again to a dynamical system 
-- the reversed subshift $(\bb X, T^{-1}, \nu)$ with observable $f \circ T^{-1}$.

%

To put it a perspective, the approach developed in our paper can be applied to Birkhoff sums with drift, via a Cram\'er Type change of measure,
see Parry and Pollicott \cite{PP90} and  Waddington \cite{Wadd96}. 
It is also possible to apply the developed approach for  
studying local limit theorems for products of random matrices and more generally for Markov chains 
 with values in general state spaces in contrast to \cite{GLL20}, where the chain with finite state spaces have been considered.   
This will be the subject of a forthcoming paper.


%
%
%
\section{Background and auxiliary statements} \label{sec-background} 

\subsection{Subshift of finite type and Gibbs measure}\label{SubSec_Shift}
We start by precisely introducing the subshift of finite type. 
Let $k \geq 2$ be an integer and $A=\{1, 2, \ldots, k\}$. 
Let $M$ be a transition matrix on $A$, that is to say, 
$M = (M (i,j))_{ i, j \in A }$ is a matrix with coefficients in $\{0, 1\}$.  
We assume that the transition matrix $M$ is irreducible in the sense that
there exists an integer $p \geq 1$ such that all the coefficients of the matrix $M^p$ are strictly positive.   
Consider the associated subshift of finite type 
\begin{align*}
\bb X = \left\{ x = (x_n)_{n \in \bb Z} \in A^{\bb Z}: M(x_n, x_{n+1}) = 1,  n \in \bb Z \right\} \subset A^{\bb Z}, 
\end{align*}
equipped with the shift map $T$ defined by $(T x)_n = x_{n+1}$ for $x \in \bb X$ and $n \in \bb Z$.
The set $\{1, 2, \ldots, k\}$ 
is equipped with the discrete topology,
so the space $A^{\bb Z}$ is compact with the corresponding Tychonov product topology.
We equip $\bb X$ with the induced topology, which is also compact. 
For any $x = (x_n)_{n \in \bb Z} \in \bb X$ and $y = (y_n)_{n \in \bb Z} \in \bb X$, define
\begin{align*}
\omega (x, y) = \min \{k \geq 0: x_k \neq y_k \  \mbox{or} \  x_{-k} \neq y_{-k}  \}. 
\end{align*}
Note that for any constant $\alpha \in (0,1)$, the function $(x, y) \mapsto \alpha^{\omega(x, y)}$ is a distance on $\bb X$
which induces the natural product topology. 

The space of  real-valued continuous functions $f: \bb X \mapsto \bb R$ is denoted by $\mathcal C(\bb X)$.
For any function $f \in \mathcal C(\bb X)$, we say that $f$ is $\alpha$-H\"{o}lder continuous on $\bb X$ 
if there exist constants $C >0$ and $\alpha \in (0,1)$ such that for all $x, y \in \bb X$,
\begin{align*}
|f(x) - f(y)| \leq C \alpha^{\omega(x,y)}. 
\end{align*}
Denote by $\scr B_{\alpha}$ the space of all real-valued $\alpha$-H\"{o}lder  continuous functions on $\bb X$,
equipped with the following norm
\begin{align}\label{Def_HolderNorm1}
\| f \|_{\mathscr B_{\alpha}}: = \sup_{x \in \bb X} |f(x)| + \sup_{x, y \in \bb X: x \neq y} \frac{|f(x) - f(y)|}{\alpha^{\omega(x,y)}}. 
\end{align}
In the sequel we shall say that a real-valued function on $\bb X$ is H\"{o}lder continuous if there exists $\alpha\in (0,1)$ 
such that $f$ is $\alpha$-H\"{o}lder continuous. 
Let $\scr B = \cup_{0 < \alpha < 1} \scr B_{\alpha}$ be the set of all real-valued H\"{o}lder continuous functions on $\bb X$.

For any $f \in \scr B$, we consider the Birkhoff sum process $(S_nf)_{n\geq 0}$ by setting $S_0f=0$ and 
\begin{align*}
S_n f = f + \ldots + f\circ T^{n-1}, \quad n \geq 1. 
\end{align*}
Let us denote by $\bb X^+ \subset A^{\bb N}$ the set 
\begin{align*}
\bb X^+ = \left\{ x = (x_n)_{n \in \bb N} \in A^{\bb N}: M(x_n, x_{n+1}) = 1,  n \in \bb N \right\} \subset A^{\bb N}.  
\end{align*}
The set of continuous functions on $\bb X^+$ is denoted by $\mathcal{C}(\bb X^+)$. 
By abuse of notation, the one-sided shift map $\bb X^+ \mapsto \bb X^+$ will still be denoted by $T$. 

The Ruelle operator $\mathcal L_f: \mathcal C(\bb X^+) \mapsto \mathcal C(\bb X^+)$ related to $f \in \mathcal C(\bb X^+)$
is defined as follows: 
for any $g \in \mathcal C(\bb X^+)$, 
\begin{align}
\mathcal L_{f} g(x) = \sum_{y: \, Ty = x} e^{-f(y)} g(y),  \quad x \in \bb X^+.  \label{Def_OperatorP}
\end{align}
One can easily see that $\mathcal L_{f}$ is a bounded linear operator on $\mathcal C(\bb X^+)$.
From \eqref{Def_OperatorP}, by iteration, it follows that for any $n \geq 1$,
\begin{align*}
\mathcal L_{f}^n g(x) = \sum_{y: \, T^n y = x} e^{-S_nf(y)} g(y),  \quad x \in \bb X^+.
\end{align*}
Also, if $h \in \mathcal{C}(\bb X^+)$, we have the conjugacy relation 
\begin{align}\label{ConjugacyRelation}
\mathcal L_{f + h \circ T - h} g = e^{-h} \mathcal L_{f}  \left(e^{h} g\right),
\end{align}
which tells us that the spectral properties of the transfer operator $\mathcal L_{f}$ only depend on the cohomology class of $f$. 
We say that a real valued and H\"older continuous function $\psi$ on $\bb X^+$ is normalized if  $\mathcal L_{\psi} \mathds 1 = \mathds 1$. 
By \cite[Chapter 2, Theorem 2.2]{PP90}, there exist a real valued H\"older continuous function $h$ and a real number $\lambda$
such that $\mathcal L_{\psi} e^{h} = e^{\lambda + h}$.
From the conjugacy relation \eqref{ConjugacyRelation}, this tells us that the function $\psi - h\circ T + h +\lambda$ is also normalized. 
 Therefore,  all over the paper, we will assume that $\psi$ is normalized. 
In this case, it is well known (e.g. \cite{PP90}) that the operator $\mathcal L_{\psi}$ admits a unique invariant probability measure
$\nu^+$. 
The measure $\nu^+$ is called the Gibbs measure related to the potential $\psi$.
Since $\psi$ is normalized, the measure $\nu^+$ is $T$-invariant, that is, for any $f\in \mathcal C(\bb X^+)$,
\begin{align} \label{transl-invar001}
\nu^+ (f\circ T) = \nu^+ (f),
\end{align}
see \cite[Chapter 2]{PP90}. 


Note that $\nu^+$  is also $T$-ergodic, meaning that any $T$-invariant Borel subset $B$ 
of $\bb X^+$ has $\nu^+$ measure $0$ or $1$:
\begin{align*} 
T^{-1}B=B  \quad \Rightarrow \quad  \nu^+(B)\in \{0,1\}.
\end{align*}

Thanks to the following lemma, the measure $\nu^+$ allows to define a $T$-invariant measure on $\bb X$.

\begin{lemma}\label{Lem_Unique_nu}
Let $\nu^+$ be a Borel probability measure on $\bb X^+$ which is $T$-invariant.
Then there exists a unique $T$-invariant Borel probability measure $\nu$ on $\bb X$
such that the image of $\nu$ under the natural projection map $\bb X \mapsto \bb X^+$ is equal to $\nu^+$.  
\end{lemma}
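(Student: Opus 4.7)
The plan is to separate uniqueness from existence and, in both cases, to reduce every question about $\bb X$ to a cylinder-level computation in $\bb X^+$ by shifting cylinders forward with powers of $T$. Throughout I write $\pi\colon\bb X\to\bb X^+$ for the natural projection $(x_n)_{n\in\bb Z}\mapsto(x_n)_{n\in\bb N}$ and recall that $\pi\circ T=T\circ\pi$. For uniqueness, assume $\nu$ is a $T$-invariant Borel probability on $\bb X$ with $\pi_*\nu=\nu^+$, and let $C\subset\bb X$ be a cylinder depending on coordinates $-n,-n+1,\dots,m$ for some $n\geq 0$. Since $(T^{-1}y)_i=y_{i-1}$, the set $T^{-n}(C)$ is a cylinder depending only on the non-negative coordinates $0,1,\dots,m+n$, so it has the form $\pi^{-1}(D)$ for a unique cylinder $D\subset\bb X^+$. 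The $T$-invariance of $\nu$ and the projection hypothesis then force
\begin{align*}
\nu(C)=\nu\bigl(T^{-n}(C)\bigr)=\nu\bigl(\pi^{-1}(D)\bigr)=\nu^+(D).
\end{align*}
Since cylinders generate the Borel $\sigma$-algebra of the compact metric space $\bb X$, this determines $\nu$ uniquely.

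For existence, I would promote the same formula to a definition: for each cylinder $C\subset\bb X$ pick $n\geq 0$ large enough that $T^{-n}(C)=\pi^{-1}(D)$ descends to a cylinder $D\subset\bb X^+$, and set $\nu(C):=\nu^+(D)$. Independence of the choice of $n$ is exactly where the $T$-invariance \eqref{transl-invar001} of $\nu^+$ is used: passing from $n$ to $n+1$ replaces $D$ by its one-sided shift preimage $T^{-1}(D)\subset\bb X^+$, whose $\nu^+$-mass equals $\nu^+(D)$. This yields a finitely additive $[0,1]$-valued set function on the algebra $\mathcal A$ of cylinders in $\bb X$; because $\bb X$ is compact and cylinders are clopen, any disjoint cylinder cover of a cylinder is in fact finite, so the set function is automatically $\sigma$-additive on $\mathcal A$, and the Carath\'eodory extension theorem produces a Borel probability measure $\nu$ on $\bb X$. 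Taking $n=0$ in the definition gives $\pi_*\nu=\nu^+$ on cylinders of $\bb X^+$, hence everywhere; and if $C$ is a cylinder on $[-n,m]$ then $T^{-1}(C)$ is a cylinder on $[-n+1,m+1]$ which descends via the recipe (with depth $n-1$) to the same $D$, so $\nu(T^{-1}C)=\nu^+(D)=\nu(C)$ on $\mathcal A$, and $T$-invariance propagates to all Borel sets by a monotone class argument.

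The step I expect to require the most care is the well-definedness of the cylinder premeasure, because this is precisely where the hypothesis $T_*\nu^+=\nu^+$ enters the proof; everything else is organisational. A conceptually nicer alternative would be to identify $\bb X$ with the inverse limit of the system $\bb X^+\xleftarrow{T}\bb X^+\xleftarrow{T}\cdots$ of copies of $\bb X^+$ under the one-sided shift, and to invoke the standard inverse-limit construction of Radon probabilities from the compatible family furnished by $T_*\nu^+=\nu^+$; this route bypasses Carath\'eodory altogether and makes both halves of the statement essentially automatic.
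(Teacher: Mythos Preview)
Your proposal is correct and is essentially the same argument the paper has in mind: the paper merely states that the lemma ``is just a consequence of Kolmogorov's extension theorem'' and later records an explicit formula via conditional measures, whereas you spell out the cylinder-level consistency check (which is exactly the Kolmogorov/inverse-limit verification) and the uniqueness argument in detail. Your closing remark identifying $\bb X$ with the inverse limit of $(\bb X^+,T)$ is precisely the paper's one-line appeal to Kolmogorov.
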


The proof of this lemma is just a consequence of Kolmogorov's extension theorem. 
We will actually give an explicit formula for the measure $\nu$ below. 

\subsection{Conditional measures on the past}

For any $z\in \bb X^+ $, we shall construct 
a measure $\nu_z^- $, which is the conditional measure of $\nu$ with respect to the map 
$x\in \bb X \mapsto  x_+ \in \bb X^+$.  
To this end, for any $a \in A$, let 
\begin{align*}
\bb X^-_a 
= \left\{ y \in A^{- \bb N^*}: M(y_{-1}, a) = 1, M(y_{-n-1}, y_{-n}) = 1, \forall n \geq 1 \right\}, 
\end{align*}
where $M$ is the transition matrix on the set $A$ 
which was used to define the finite type subshift $\bb X \subset A^{\bb Z}$. 
For any $z \in \bb X^+$, we set $\bb X^-_z = \bb X^-_{z_0}$, where $z_0$ is the first coordinate of $z \in \bb X^+$.  
We have the decomposition 
\begin{align*}
\bb X = \bigcup_{z \in \bb X^+} \bb X^-_z \times \{z\}. 
\end{align*}
The point $z$ may be thought of as the future of the trajectory whereas the elements of $\bb X^-_z$ describe
the pasts which are compatible with this future.  
Let us introduce some notation related to this decomposition. 
For any $z \in \bb X^+$ and $y \in \bb X^-_z$, we denote $y\cdot z = (y, z) \in \bb X$.
For $z\in \bb X^+ $ and $k \geq 1$, we set 
\begin{align*}
A^k_z  &  = \Big\{ (y_{-k}, \ldots, y_{-1})   \in A^{\{ -k, \ldots, -1 \} }:   \nonumber\\
 &  \qquad M(y_{-1}, z_0) = 1, M(y_{-n - 1}, y_{-n}) = 1, \forall  1 \leq n \leq k -1  \Big\}. 
\end{align*}
For $(y_{-k}, \ldots, y_{-1}) \in A^k_z$, we set 
$
y_{-k} \ldots y_{-1} \cdot z
$
to be the element $w \in \bb X^+$ defined by 
\begin{equation*}
w_n = 
\begin{cases}
y_{n -k}  &  \mbox{if} \ 0 \leq n \leq k - 1  \\
z_{n- k}  &  \mbox{if} \ n \geq k. 
\end{cases}
\end{equation*}
For $a \in A^k_z$, let 
\begin{align} \label{Def-CYLIND-001}
\bb C_{a, z} = \{ y \in \bb X^-_z:  y_{-k} = a_{-k}, \ldots,  y_{-1} = a_{-1} \}
  \end{align} 
  be the associated cylinder of length $k$ in $\bb X^-_z$. 

Recall that the two-sided shift map 
$T: \bb X \mapsto \bb X $ and its inverse $T^{-1}$ are defined by 
$(T x)_n = x_{n +1}$ and $(T^{-1} x)_n = x_{n-1}$ for any $x \in \bb X$ and $n \in \bb Z$. 
By abuse of notation, the one-sided forward shift map will be denoted by $T:  \bb X^+ \mapsto \bb X^+ $ 
and is defined by $T(x) =(x_{1},x_{2},x_{3},\ldots),$ 
for any $x=(x_{0},x_{1},x_{2},\ldots)\in \bb X^+$.
Let us define conditional measures on the past of trajectories.
For $k \geq 0$, define $\nu^k_{z}$ as a function on cylinders of length $k$ in $\bb X^-_z$ 
by the formula 
\begin{align}\label{Def_nu_kz}
\nu^k_z (\bb C_{a, z}) =   \exp( - S_k \psi (a \cdot z)  ), 
\end{align}
for $a\in A_z^k$.
Since $\mathcal{L}_\psi \mathds 1 = 1$, 
we have that for any $a \in A^k_z$, 
\begin{align}\label{Extension_measure_nu-initial}
\nu^k_z (\bb C_{a, z}) 
= \sum_{\substack{b \in A \\ M(b, a_{-k}) = 1}}  \nu^{k+1}_z (\bb C_{b \cdot a, z}).  
\end{align}
By Kolmogorov's extension theorem, from
equation \eqref{Extension_measure_nu-initial} it follows that
there exists a unique Borel probability measure $\nu^-_z$ on $\bb X^-_z$ 
such that for any $k \geq 0$, 
$\nu^k_z$ is the restriction of $\nu^-_z$ to cylinders of length $k$. 

We can now give an explicit formula for the measure $\nu$ in terms of the measures $\nu^+$ and $\nu_z^-$.

\begin{lemma}\label{Lem_Fubini}
Let $\vphi \in \mathcal{C(\bb X)}$. Then we have 
\begin{align*}
\nu(\varphi) = \int_{\bb X^+} \int_{\bb X^-_z} \vphi(y \cdot z) \nu^-_z(dy) \nu^+(dz). 
\end{align*}
\end{lemma}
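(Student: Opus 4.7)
The plan is to define a candidate Borel probability measure $\wt\nu$ on $\bb X$ by the right-hand side of the formula and to show that $\wt\nu$ satisfies the two properties characterizing $\nu$ in Lemma~\ref{Lem_Unique_nu}: it is $T$-invariant and its image under the natural projection $\bb X\to\bb X^+$ is $\nu^+$. For $\vphi\in\mathcal{C}(\bb X)$ set
\[
\wt\nu(\vphi):=\int_{\bb X^+}\int_{\bb X^-_z}\vphi(y\cdot z)\,\nu^-_z(dy)\,\nu^+(dz).
\]
Continuity of the inner integral in $z$ is routine for functions depending on finitely many coordinates, where it is a finite sum of continuous terms $e^{-S_k\psi(a\cdot z)}\vphi(a\cdot z)$, and follows in general by uniform approximation. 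Taking $\vphi\equiv 1$ shows $\wt\nu$ is a probability. The projection property is immediate: if $\vphi(y\cdot z)=g(z)$ depends only on the future, then $\wt\nu(\vphi)=\int g\,d\nu^+$ since $\nu^-_z(\bb X^-_z)=1$.

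The main step is the $T$-invariance of $\wt\nu$, which by density of locally constant functions reduces to verifying $\wt\nu(\vphi\circ T)=\wt\nu(\vphi)$ for $\vphi$ depending only on coordinates $x_{-k},\ldots,x_l$ with $k,l\geq 0$. To such a $\vphi$ attach $\widehat\vphi\in\mathcal{C}(\bb X^+)$ defined by $\widehat\vphi(w):=\vphi(w_0,w_1,\ldots,w_{k+l})$, where the $i$-th coordinate $w_i$ plays the role of $x_{i-k}$. Writing the inner integral as a sum over past cylinders of length $k$ and invoking the definition $\nu^-_z(\bb C_{a,z})=e^{-S_k\psi(a\cdot z)}$, the sum is recognized as a Ruelle iterate:
\[
\int_{\bb X^-_z}\vphi(y\cdot z)\nu^-_z(dy)=\sum_{a\in A^k_z}e^{-S_k\psi(a\cdot z)}\widehat\vphi(a\cdot z)=\mathcal L_\psi^k\widehat\vphi(z).
\]
Integrating against $\nu^+$ and using the $\mathcal L_\psi$-invariance of $\nu^+$ (from the normalization $\mathcal L_\psi\mathds 1=\mathds 1$) yields $\wt\nu(\vphi)=\nu^+(\widehat\vphi)$. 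Applying the same procedure to $\vphi\circ T$, a direct bookkeeping check shows that either $\widehat{\vphi\circ T}=\widehat\vphi$ (when one can use the natural window $[-k+1,l+1]$ with $k\geq 1$) or $\widehat{\vphi\circ T}=\widehat\vphi\circ T$ (when the window has to be extended, e.g. $k=0$). In both cases $\nu^+(\widehat{\vphi\circ T})=\nu^+(\widehat\vphi)$ by $T$-invariance of $\nu^+$, so $\wt\nu(\vphi\circ T)=\wt\nu(\vphi)$. Lemma~\ref{Lem_Unique_nu} then forces $\wt\nu=\nu$.

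I expect the main technical obstacle to be precisely the Ruelle-operator identification of the inner integral together with the coordinate bookkeeping for $\widehat{\vphi\circ T}$: this is the unique place where the specific Gibbs-type construction of $\nu^-_z$ through the weights $e^{-S_k\psi(a\cdot z)}$ interacts with the $\mathcal L_\psi$-invariance of $\nu^+$, and it is what makes the conditional construction compatible with the two-sided shift. Every other step is either a consequence of this identity, of the $T$-invariance of $\nu^+$, or of Kolmogorov extension-type arguments already packaged in Lemma~\ref{Lem_Unique_nu}.
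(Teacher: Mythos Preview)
Your proposal is correct and follows essentially the same approach as the paper: define the candidate measure by the right-hand side, verify the projection property, and prove $T$-invariance via the identification of the inner integral with $\mathcal L_\psi^k\widehat\vphi$ together with the $\mathcal L_\psi$-invariance of $\nu^+$, then invoke Lemma~\ref{Lem_Unique_nu}. The paper's own proof is a two-line sketch that simply asserts $T$-invariance ``is a direct consequence of the definition of the measures $\nu_z^-$ and of the fact that $\nu^+$ is $\mathcal L_\psi$-invariant''; your argument is precisely the unpacking of that sentence, and the bookkeeping you carry out (in particular the cases $\widehat{\vphi\circ T}=\widehat\vphi$ versus $\widehat{\vphi\circ T}=\widehat\vphi\circ T$) is accurate.
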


\begin{proof}
By Lemma \ref{Lem_Unique_nu}, it suffices to prove that the measure $\nu$ on $\bb X$
defined by the above equation is $T$-invariant. 
This property is a direct consequence of the definition of the measures $\nu_z^-$, $z \in \bb X^+$,
and of the fact that $\nu^+$ is $\mathcal{L}_{\psi}$-invariant. 
\end{proof}

We shall use the fact that the measures $\nu^-_z$ and $\nu^-_{z'}$ are equivalent. 

\begin{lemma}\label{Lem_Absolute_Contin}
There exists a real-valued continuous function $\theta$ on the set 
\begin{align*}
\bb X_3 :=  
\left\{ (y, z, z') \in A^{-\bb N^*} \times \bb X^+ \times \bb X^+: z_0 = z_0',  \,  y \in \bb X^-_z = \bb X^-_{z'}  \right\}
\end{align*}
such that for any $z, z' \in \bb X^+$ and any continuous function $\varphi$ on $\bb X^-_z$, one has
\begin{align*}
\int_{ \bb X^-_z } \varphi(y) \nu^-_{z'}(dy) =  \int_{ \bb X^-_z } \varphi(y) e^{\theta (y, z, z')} \nu^-_{z}(dy). 
\end{align*}
Besides, there exists a constant $c>0$ such that for any $(y, z, z') \in \bb X_3$, 
\begin{align*}
|\theta(y, z, z')| \leq c \alpha^{\omega(z,z')}.   
\end{align*}
\end{lemma}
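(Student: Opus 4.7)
The strategy I would follow is to construct $\theta$ explicitly as a uniformly convergent infinite series, read off the H\"older-type bound and continuity from summability, and then verify the integral identity first on cylinder sets and extend by density.

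For $(y,z,z') \in \bb X_3$ and each $m \geq 1$, the truncation $(y_{-m},\ldots,y_{-1})$ lies in $A^m_z = A^m_{z'}$, so the finite concatenations $y_{-m}\ldots y_{-1}\cdot z$ and $y_{-m}\ldots y_{-1}\cdot z'$ are well-defined points of $\bb X^+$. The natural candidate is
$$
\theta(y,z,z') := \sum_{m=1}^{\infty}\Bigl[\psi(y_{-m}\ldots y_{-1}\cdot z) - \psi(y_{-m}\ldots y_{-1}\cdot z')\Bigr].
$$
The two points of $\bb X^+$ appearing in the $m$-th summand agree at coordinates $0,\ldots,m-1$ (the common $y$-block) and, using $z_0 = z_0'$, also at coordinates $m,\ldots,m+\omega(z,z')-1$ (the common prefix of $z$ and $z'$), so H\"older continuity of $\psi$ bounds each summand by $C\alpha^{m+\omega(z,z')}$. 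Summing the geometric series yields the estimate $|\theta(y,z,z')| \leq c\,\alpha^{\omega(z,z')}$, and uniform convergence of the series on $\bb X_3$ together with continuity of each term in $(y,z,z')$ produces continuity of $\theta$.

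For the integral identity, the key algebraic input is a telescoping formula obtained by the reindexing $m = k-j$: for $a \in A^k_z$ and any $y \in \bb C_{a,z}$, one has
$$
S_k\psi(a\cdot z) - S_k\psi(a\cdot z') = \sum_{m=1}^{k}\bigl[\psi(y_{-m}\ldots y_{-1}\cdot z) - \psi(y_{-m}\ldots y_{-1}\cdot z')\bigr],
$$
since $y_{-i} = a_{-i}$ for $1 \leq i \leq k$. Writing $M_k(y) := \exp\bigl(S_k\psi(a^{(k)}(y)\cdot z) - S_k\psi(a^{(k)}(y)\cdot z')\bigr)$ with $a^{(k)}(y) := (y_{-k},\ldots,y_{-1})$, this function is constant on each length-$k$ cylinder, and formula \eqref{Def_nu_kz} immediately gives $\int_{\bb C_{a',z}} M_k\,d\nu^-_z = \nu^-_{z'}(\bb C_{a',z'})$ for every $a' \in A^k_z$. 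Summing over the length-$k$ cylinders contained in a cylinder $\bb C_{a,z}$ of length $k_0 \leq k$ extends this to $\int_{\bb C_{a,z}} M_k\,d\nu^-_z = \nu^-_{z'}(\bb C_{a,z'})$ for every cylinder. The bound from the preceding paragraph shows that $M_k \to e^{\theta(\cdot,z,z')}$ uniformly on $\bb X^-_z$, so dominated convergence yields $\int_{\bb C_{a,z}} e^\theta\,d\nu^-_z = \nu^-_{z'}(\bb C_{a,z'})$ for every cylinder, and uniform density of finite linear combinations of cylinder indicators in $\mathcal C(\bb X^-_z)$ extends this to every continuous $\varphi$. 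The only point requiring any care is precisely the telescoping identification: without the reindexing $m=k-j$, the explicit Gibbs cocycle $\exp(-S_k\psi(a\cdot z))$ does not visibly line up with the partial sums defining $\theta$, and one could not read off the Radon--Nikodym density from \eqref{Def_nu_kz} directly; once this is observed, the rest is routine approximation and H\"older bookkeeping.
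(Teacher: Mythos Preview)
Your proposal is correct and takes essentially the same approach as the paper: the paper's proof consists of nothing more than writing down the formula $\theta(y,z,z') = \sum_{k=1}^\infty\bigl(\psi(T^{-k}(y\cdot z)) - \psi(T^{-k}(y\cdot z'))\bigr)$, which is exactly your series once one observes $(T^{-m}(y\cdot z))_+ = y_{-m}\ldots y_{-1}\cdot z$. You have simply supplied the verification (H\"older summation, the telescoping identity linking $S_k\psi$ to the partial sums, and the cylinder-then-density argument) that the paper leaves implicit.
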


\begin{proof}
Indeed, it suffices to set 
\begin{align*}
\theta(y, z, z') = \sum_{k=1}^{\infty} \left( \psi(T^{-k} (y \cdot z)) - \psi(T^{-k} (y \cdot z')) \right). 
\end{align*}
\end{proof}



\subsection{General properties of exit times}\label{General-Gibbs-Mea}

From the following lemma it follows that the function $x \mapsto \tau_t^f(x)$ is finite $\nu$-almost surely. 

\begin{lemma}\label{Lem_tau_finite}
Let $f \in \scr B$ with $\nu(f) = 0$. Assume that $f$ is not a coboundary. 
Then for $\nu$-almost every $x \in \bb X$, one has
\begin{align*}
\inf_{n \geq 1} S_n f(x) = - \infty. 
\end{align*}
\end{lemma}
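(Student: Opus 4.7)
The plan is to show that the set $B := \{x \in \bb X : \inf_{n \geq 1} S_n f(x) = -\infty\}$ has full $\nu$-measure by combining $T$-invariance, the ergodicity of the Gibbs measure $\nu$, and the central limit theorem \eqref{CLT-forDS-001}.

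First I would verify that $B$ is $T$-invariant. Using the identity $S_n f(T^{-1} x) = f(T^{-1} x) + S_{n-1} f(x)$ for $n \geq 1$ (with $S_0 f = 0$) and the boundedness of $f$ on the compact space $\bb X$, one checks that $\inf_{n \geq 1} S_n f(x) = -\infty$ if and only if $\inf_{n \geq 1} S_n f(T^{-1} x) = -\infty$. Hence $T^{-1} B = B$. Since $\nu$ is $T$-ergodic (a classical property of Gibbs measures on subshifts of finite type, which follows from the ergodicity of $\nu^+$ recalled above together with Lemma \ref{Lem_Fubini}; see \cite{PP90}), we conclude $\nu(B) \in \{0, 1\}$.

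Second, I would rule out $\nu(B) = 0$. Write $B^c = \bigcup_{M > 0} C_M$, where
\begin{align*}
C_M = \{ x \in \bb X : S_n f(x) > -M \text{ for all } n \geq 1 \}.
\end{align*}
If $\nu(B) = 0$, then $\nu(C_M) \to 1$ as $M \to \infty$. On the other hand, since $f$ is not a coboundary we have $\sigma_f > 0$, so the central limit theorem \eqref{CLT-forDS-001} (combined with the continuity of the normal distribution function and the Portmanteau theorem) yields, for every fixed $M > 0$,
\begin{align*}
\lim_{n \to \infty} \nu \left( x \in \bb X : S_n f(x) < -2M \right) = \tfrac{1}{2}.
\end{align*}
Yet by definition $C_M \cap \{ S_n f < -2M \} = \emptyset$ for every $n \geq 1$, whence $\nu(C_M) \leq 1 - \nu(S_n f < -2M)$, and letting $n \to \infty$ we obtain $\nu(C_M) \leq \tfrac{1}{2}$ for every $M > 0$. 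This contradicts $\nu(C_M) \to 1$, so $\nu(B) = 1$.

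I do not expect a genuine obstacle: the argument is essentially the classical CLT-based dichotomy for random walks with zero mean and positive variance, transplanted to the dynamical setting. The only delicate point is invoking the $T$-ergodicity of $\nu$ on the two-sided space $\bb X$, which is not stated explicitly in the excerpt but is a standard consequence of the construction of $\nu$ from $\nu^+$ and should be cited from \cite{PP90}.
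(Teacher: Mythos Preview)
Your proof is correct, but it follows a different route from the paper's. The paper also uses the $T$-invariance of the set $A=\{x:\inf_{n\geq 1}S_nf(x)>-\infty\}$ and ergodicity to reduce to the dichotomy $\nu(A)\in\{0,1\}$, but it rules out $\nu(A)=1$ structurally rather than via the CLT: assuming $\nu(A)=1$, it sets $h(x)=\liminf_{n\to\infty}S_nf(x)$, argues that $h$ is finite $\nu$-a.e.\ (using $\nu(f)=0$ to guarantee finite limit points of $S_nf$), checks the cocycle relation $h(Tx)=h(x)-f(x)$, and then invokes the Liv\v{s}ic-type result \cite[Proposition~6.2]{PP90} to upgrade the measurable coboundary equation to a H\"older one, contradicting the hypothesis. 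Your argument instead exploits only the consequence $\sigma_f>0$ of non-cohomology through the CLT, which is more elementary (no Liv\v{s}ic theorem, no recurrence statement for $S_nf$) and would work verbatim in any setting where the CLT and ergodicity are available. The paper's approach, on the other hand, makes transparent \emph{why} the hypothesis ``$f$ not a coboundary'' is exactly the right one: finiteness of the infimum would literally produce a transfer function.
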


\begin{proof}
Consider the Borel set
\begin{align*}
A = \left\{ x \in \bb X:  \inf_{n \geq 1} S_n f(x) > - \infty \right\}. 
\end{align*}
It is clear that the set $A$ is $T$-invariant. Therefore $\nu(A) = 0$ or $\nu(A) = 1$. 
Assume that $\nu(A) = 1$, then let us show that $f$ is a coboundary.
Indeed, for any $x \in A$, we have that $h(x): = \liminf_{n \to \infty} S_n f(x) > -\infty$. 
Since $\nu(f) = 0$, it is well known that $S_n f(x)$ admits finite limit points for $\nu$-almost every $x \in \bb X$,
so that $h(x) < \infty$. 
Now by definition, we have $h(Tx) = h(x) - f(x)$, hence $f$ is a coboundary as a measurable function on $\bb X$. 
Therefore, by \cite[Proposition 6.2]{PP90}, we get that $f$ is a coboundary as a H\"{o}lder continuous function on $\bb X$. 
\end{proof}

For notational reasons, it is more convenient to study objects defined by the reverse shift $T^{-1}$.
Note that the two studies are equivalent.

Indeed,  let us define the flip map $\iota: A^{\bb Z} \mapsto A^{\bb Z}$ by the following relation: for any 
$x = (\ldots, x_{-1}, x_0, x_{1}, \ldots)\in A^{\bb Z}$ it holds $\iota(x) = (\ldots,x_{1}, x_0, x_{-1},\ldots ) \in A^{\bb Z}$,
that is $(\iota x)_n = x_{-n}$ for $n \in \bb Z$. 
The following lemma is classical (see \cite[Chapter 2]{PP90}). 

\begin{lemma}\label{Lem-iotanu-01}
The set $\iota \bb X$ is a subshift of finite type 
and the measure $\iota_* \nu$ is a Gibbs measure on $\iota \bb X$. 
\end{lemma}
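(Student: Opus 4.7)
The plan is to prove the two assertions separately: the first is a direct rewriting of the SFT condition, while the second follows from transporting the Gibbs cylinder estimates through $\iota$ and then reducing the resulting past-dependent potential to a one-sided future-dependent one via a standard cohomology argument.

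\textbf{The SFT assertion.} For $x\in\bb X$ and $y=\iota x$, the transition condition $M(x_n,x_{n+1})=1$ for all $n\in\bb Z$ becomes, after substituting $y_n=x_{-n}$, the condition $M(y_{n+1},y_n)=1$ for all $n$, i.e., $M^T(y_n,y_{n+1})=1$. Thus $\iota\bb X$ is the two-sided subshift of finite type associated with the transpose matrix $M^T$. Since $(M^T)^p=(M^p)^T$, the irreducibility of $M$ immediately gives the irreducibility of $M^T$, so $\iota\bb X$ is a mixing SFT.

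\textbf{The Gibbs assertion.} One has the conjugacy $T\circ\iota=\iota\circ T^{-1}$ on $\bb X$, and since $\nu$ is $T$-invariant (hence also $T^{-1}$-invariant), the measure $\iota_*\nu$ is $T$-invariant on $\iota\bb X$. To produce a normalized H\"older potential realizing it as a Gibbs measure in the sense of Section \ref{SubSec_Shift}, lift $\psi$ to a H\"older continuous function $\hat\psi$ on $\bb X$ by composition with the natural projection $\bb X\to\bb X^+$. Lemma \ref{Lem_Fubini} combined with \eqref{Def_nu_kz} and the relation $\mathcal{L}_\psi\mathds 1=\mathds 1$ yields the standard two-sided Gibbs estimate
\begin{align*}
C_1\,e^{-S_{m+n+1}\hat\psi(T^{-m}x)}\;\leq\;\nu([a_{-m}\ldots a_n])\;\leq\;C_2\,e^{-S_{m+n+1}\hat\psi(T^{-m}x)},
\end{align*}
valid for every cylinder $[a_{-m}\ldots a_n]$ in $\bb X$, every $x$ in this cylinder, and with constants independent of $m,n,a,x$. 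Pushing this estimate forward by $\iota$ gives an analogous estimate on $\iota\bb X$ with the H\"older function $\hat\psi\circ\iota^{-1}$, which however depends only on the non-positive coordinates of $\iota\bb X$.

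To return to the one-sided setting used to define Gibbs measures in this paper, invoke the classical fact that every H\"older continuous function on a mixing two-sided SFT is cohomologous to a H\"older function depending only on non-negative coordinates (see \cite[Proposition 1.2]{PP90}). Applied to $\hat\psi\circ\iota^{-1}$, this produces a H\"older function on the one-sided factor $(\iota\bb X)^+$; after normalizing it via \cite[Chapter 2, Theorem 2.2]{PP90} one obtains a normalized potential $\tilde\psi$. The $\mathcal{L}_{\tilde\psi}$-invariant probability measure on $(\iota\bb X)^+$ extends uniquely to a $T$-invariant measure on $\iota\bb X$ by the analogue of Lemma \ref{Lem_Unique_nu} and satisfies the same two-sided Gibbs cylinder bounds as $\iota_*\nu$ (up to multiplicative constants, the conjugacy relation \eqref{ConjugacyRelation} absorbing the cohomology). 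Uniqueness of probability measures satisfying such bounds on a mixing SFT then forces equality, which identifies $\iota_*\nu$ as the Gibbs measure associated with $\tilde\psi$.

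\textbf{Main obstacle.} The only delicate step is the cohomology reduction from the past-dependent function $\hat\psi\circ\iota^{-1}$ to a one-sided normalized potential on $(\iota\bb X)^+$, and the verification that the resulting Gibbs measure coincides with $\iota_*\nu$. Both points are purely standard inputs from the thermodynamic formalism of \cite{PP90} once the two-sided cylinder estimate is in place; no new ideas beyond what has been recalled in Section \ref{sec-background} are required.
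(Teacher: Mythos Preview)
The paper does not give its own proof of this lemma; it simply records it as classical and refers to \cite[Chapter 2]{PP90}. Your sketch is correct and is essentially the standard argument one would extract from that reference: identify $\iota\bb X$ as the SFT for $M^T$, transport the two-sided Gibbs cylinder bounds through $\iota$, then use the Sinai--Bowen cohomology reduction (\cite[Proposition 1.2]{PP90}) and normalization (\cite[Theorem 2.2]{PP90}) to land on a one-sided normalized potential on $(\iota\bb X)^+$, with uniqueness of Gibbs measures closing the loop. One small point: to obtain the two-sided cylinder estimate you also implicitly need the one-sided Gibbs bound for $\nu^+$ on $\bb X^+$ (not just \eqref{Def_nu_kz} for the past), but this is immediate from the spectral theory of $\mathcal L_\psi$ recalled in Section~\ref{SubSec_Shift}, so the sketch stands.
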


%

For $f \in \scr B$, consider the reversed Birkhoff sum process $(\check S_n f)_{n \geq 1}$ 
which is defined as follows: for any $x\in \bb X$, 
\begin{align*}
\check S_n f (x) = f(T^{-1} x) + f(T^{-2} x)  +  \ldots + f( T^{-n} x ) = S_n f (T^{-n} x), \quad n \geq 1. 
\end{align*}
In the same way, denote by $ \check \tau_t^f(x)$ the first time when $t + \check S_n f(x)$ becomes negative:
for any $x\in \bb X$, 
\begin{align} \label{timecheck001}
\check \tau_t^f(x) := \inf \left\{ k\geq 1, \, t + \check S_k f(x) < 0 \right\}. 
\end{align}
Then the relation between the exit times $\tau_t^f$ and $\check\tau_t^{f \circ \iota}$ is given by 
\begin{align*} 
\tau_t^{f} (Tx) = \check \tau_t^{f \circ \iota} (\iota x),  \quad x \in \bb X. 
\end{align*}
%
%
%
In the present paper we deal with the measure 
\begin{align} \label{probabexit001}
 \nu \left(x  \in \bb X:  \tau_t^f(x) >n \right) 
\end{align}
which, by the discussion above, is equivalent to studying the measure 
\begin{align} \label{probabexit002}
  \nu \left(x  \in \bb X:  \check \tau_t^f(x) >n \right). 
\end{align}
In turn, Lemma \ref{Lem_Fubini} shows that in order to study \eqref{probabexit002}, 
it suffices to investigate
\begin{align} \label{equiv-formulation001}
\nu^-_z \big( y \in \bb X^-_z: \check \tau_t^f(y \cdot z) >n \big),
\end{align}
for $z \in \bb X^+$.
We will do it 
by using tools from the theory of Markov chains \cite{GLL18}. 
In particular, we will make use of the martingale approximation for the process $(\check S_n f)_{n \geq 1}$. 



\subsection{Martingale approximation} \label{Sec_CohomologicalEq}

Recall that $\scr B = \cup_{0 < \alpha < 1} \scr B_{\alpha}$, where $\scr B_{\alpha}$ is
the space of  real-valued $\alpha$-H\"{o}lder continuous  functions on $\bb X$ endowed with the norm \eqref{Def_HolderNorm1}. 
In the same way, we denote by $\scr B^+_{\alpha}$ 
the space of  real-valued $\alpha$-H\"{o}lder continuous  functions on $\bb X^+$ endowed with the norm
\begin{align*}
\| f \|_{\mathscr B^+_{\alpha}}: = \sup_{x \in \bb X^+} |f(x)| + \sup_{x, y \in \bb X^+: x \neq y} \frac{|f(x) - f(y)|}{\alpha^{\omega(x,y)}}. 
\end{align*}
Let $\scr B^+ = \cup_{0 < \alpha < 1} \scr B_{\alpha}^+$. 
Note that every H\"older continuous function $f$ on $\bb X^+$ can be extended to a 
\HH \ continuous function on $\bb X$  defined by
\begin{align*} 
x = (\ldots, x_{-1}, x_0, x_{1}, \ldots) \in \bb X \mapsto f(x_0, x_{1}, \ldots),
\end{align*}
so we can identify $\scr B^+$ with a subspace of $\scr B$.

Let $f\in \scr B$. 
Define the cohomology class of $f$ as the following set of H\"older continuous functions:
\begin{align*} 
\scr C(f)= \{ f_0 \in \scr B  \ |  \  f_0 = f - h\circ T + h, \ h \in \scr B   \}. 
\end{align*}
The following proposition tells us that there exists a natural choice in  $\scr C(f)$.

\begin{proposition}\label{lemma-martingale001}
Let $f\in \scr B$ be such that $\nu (f)=0$.
Then there exists a unique 
function $f_0 \in \scr B^+ $ such that $\mathcal L_{\psi} f_0 = 0$
and its extension on $\bb X$ belongs to $\scr C (f)$. 
\end{proposition}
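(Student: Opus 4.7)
The plan is to reduce to the one-sided setting via a Sinai--Bowen telescoping, then use the spectral gap of the Ruelle operator to build $f_0$, and finally establish uniqueness by combining the cohomology relation with the dual action of $\mathcal{L}_\psi$. For the reduction, I invoke the classical Sinai--Bowen lemma: for each symbol $a \in A$ one fixes a reference past $\bar y^a \in \bb X^-_a$, sends $x \in \bb X$ to the point $\bar x \in \bb X$ whose non-negative coordinates agree with those of $x$ while its negative ones come from $\bar y^{x_0}$, and sets $h_1(x) = \sum_{n \geq 0} \bigl(f(T^n x) - f(T^n \bar x)\bigr)$. The elementary bound $\omega(T^n x, T^n \bar x) \geq n+1$ forces absolute convergence (with a geometric tail) and yields $h_1 \in \scr B$; by construction $\tilde f := f + h_1 - h_1 \circ T$ depends only on the future coordinates, and $\nu^+(\tilde f) = \nu(\tilde f) = \nu(f) = 0$ by $T$-invariance of $\nu$. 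Thus from now on I assume $f \in \scr B^+$ with $\nu^+(f) = 0$.

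\emph{Existence.} Here I appeal to the Ruelle--Perron--Frobenius theorem on $\scr B^+_\alpha$, recorded in \cite[Ch.~2]{PP90}: the operator $\mathcal{L}_\psi$ has $1$ as a simple eigenvalue with eigenfunction $\mathds 1$, and its spectral radius on the codimension-one subspace $\{g \in \scr B^+_\alpha : \nu^+(g) = 0\}$ is strictly less than one. Since $\nu^+(f) = 0$, the series
\begin{align*}
h := \sum_{n \geq 1} \mathcal{L}_\psi^{\, n} f
\end{align*}
converges in $\scr B^+_\alpha$ and a direct telescoping gives $h - \mathcal{L}_\psi h = \mathcal{L}_\psi f$. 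Setting $f_0 := f - h \circ T + h \in \scr B^+$ and using $\mathcal{L}_\psi \mathds 1 = \mathds 1$ together with the identity $\mathcal{L}_\psi(v \cdot w \circ T) = w \cdot \mathcal{L}_\psi v$ (so in particular $\mathcal{L}_\psi(h \circ T) = h$), one finds $\mathcal{L}_\psi f_0 = \mathcal{L}_\psi f - h + \mathcal{L}_\psi h = 0$. Combined with the reduction step, $f_0$ lies in $\scr C(f)$.

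\emph{Uniqueness.} Suppose $f_0, f_0' \in \scr B^+$ are both annihilated by $\mathcal{L}_\psi$ and both cohomologous to $f$ within $\scr B$. Set $g := f_0 - f_0' \in \scr B^+$; then $g = u \circ T - u$ for some $u \in \scr B$ and $\mathcal{L}_\psi g = 0$. The duality relation $\int \mathcal{L}_\psi \phi \cdot \chi\, d\nu^+ = \int \phi \cdot (\chi \circ T)\, d\nu^+$ gives $\nu^+(g \cdot g \circ T^k) = \nu^+(\mathcal{L}_\psi^{\, k} g \cdot g) = 0$ for every $k \geq 1$, hence
\begin{align*}
\nu^+\bigl((S_n g)^2\bigr) = n\, \nu^+(g^2).
\end{align*}
On the other hand $S_n g = u \circ T^n - u$ is uniformly bounded on $\bb X$, so $\nu^+((S_n g)^2) = \nu((S_n g)^2)$ remains bounded in $n$; dividing by $n$ and letting $n \to \infty$ forces $\nu^+(g^2) = 0$. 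Continuity of $g$ together with the fact that $\nu^+$ has full support then yields $g \equiv 0$.

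\emph{Main obstacle.} The delicate point is the uniqueness step, where one must reconcile the transfer-operator identity $\mathcal{L}_\psi g = 0$ with a cohomology equation $g = u \circ T - u$ whose transfer function $u$ is a priori only guaranteed to belong to $\scr B$, not to $\scr B^+$. The one-sided Green--Kubo-type variance identity displayed above is the bridge that converts the transfer-operator information into an a priori linear-in-$n$ growth of $\nu^+((S_n g)^2)$, which the cohomology bound then contradicts unless $g$ vanishes.
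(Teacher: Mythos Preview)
Your existence argument is essentially the paper's: both reduce to $\scr B^+$ via the Sinai--Bowen telescoping (the paper just cites \cite[Prop.~1.2]{PP90}) and then solve $h - \mathcal L_\psi h = \mathcal L_\psi f$ by the spectral gap; you make the series $h=\sum_{n\ge 1}\mathcal L_\psi^n f$ explicit, but the mechanism is identical. One small slip: with your definition of $h_1$ the function that depends only on the future is $\tilde f = f - h_1 + h_1\circ T$, not $f + h_1 - h_1\circ T$ (check: if $x_+ = x'_+$ then $h_1(x)-h_1(x') - \bigl(h_1(Tx)-h_1(Tx')\bigr) = f(x)-f(x')$, so your sign doubles the past-dependence instead of cancelling it). This is cosmetic---replace $h_1$ by $-h_1$---and does not affect the argument.

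Your uniqueness proof is genuinely different from the paper's and worth noting. The paper argues that if $f_0-f_0' = h_1\circ T - h_1$ with $f_0,f_0'\in\scr B^+$, then $h_1$ itself can be taken in $\scr B^+$ (invoked as ``well known''); from there $\mathcal L_\psi(h_1\circ T - h_1)=0$ gives $\mathcal L_\psi h_1 = h_1$, and simplicity of the eigenvalue $1$ forces $h_1$ constant. You instead bypass the question of where $u$ lives: the transfer-operator orthogonality $\nu^+(g\cdot g\circ T^k)=0$ gives the exact Green--Kubo identity $\nu^+((S_n g)^2)=n\,\nu^+(g^2)$, while the coboundary relation bounds $S_n g$ uniformly on $\bb X$, forcing $\nu^+(g^2)=0$ and hence $g\equiv 0$ by continuity and full support of $\nu^+$. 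Your route is more self-contained (no appeal to the Liv\v{s}ic-type fact that the transfer function descends to $\scr B^+$) and highlights the variance interpretation of $\mathcal L_\psi g=0$; the paper's route is shorter once that fact is granted and connects more directly to the spectral picture.
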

\begin{proof}
First we prove the existence of $f_0$. 
By Proposition 1.2 in \cite{PP90},  
there exists a H\"older continuous function $g$ on $\bb X^+$, whose extension to $\bb X$ is cohomologous to $f$.
As $\nu (f)=0$, we have $\nu^+ (g)=0$.
Now we choose $\alpha \in (0,1)$ close enough to $1$ so that
$\mathcal L_{\psi}$ is bounded on $\scr B_\alpha$ and $g\in \scr B_\alpha$.
By the spectral gap property for the operator $\mathcal L_{\psi}$ (see Theorem 2.2 of \cite{PP90}), 
there exists a H\"older continuous function $h \in \scr B_\alpha$ such that
\begin{align}
h - \mathcal L_{\psi} h = \mathcal L_{\psi} g. 
\end{align}
Since $h = h \mathcal L_{\psi} 1 = \mathcal L_{\psi} (h \circ T)$, 
it follows that 
\begin{align*}
\mathcal L_{\psi} (g - h \circ T + h) = 0. 
\end{align*}
Hence there exists a function $f_0: = g - h \circ T + h \in \scr C(f)$ satisfying $\mathcal L_{\psi} f_0 = 0$.

Now we prove the uniqueness of $f_0$. 
Suppose that there exist $f_0, f_0' \in \scr C(f)$  such that 
$\mathcal L_{\psi} f_0 = \mathcal L_{\psi} f_0' = 0$. 
Then $f_0 - f_0'$ is a coboundary, namely, there exists $h_1 \in \scr B$ such that
$f_0 - f_0' = h_1 \circ T - h_1$. 
As $f_0$ and $f_0'$ depend only on the future coordinates,
it is well known that $h_1$ depends only on the future coordinates.
It follows that $\mathcal L_{\psi} (h_1 \circ T - h_1) = 0$ and hence $\mathcal L_{\psi} h_1 = h_1$. 
This implies that $h_1$ is a constant and therefore $f_0' = f_0$. 
\end{proof}


For any $z \in \bb X^+$, we have defined a probability measure $\nu^-_z$ on the set $\bb X^-_z \subset A^{- \bb N^*}$ of past sequences 
which are compatible with $z$. 
For $n \geq 1$, we let $\scr F_n$ denote the $\sigma$-algebra of subsets of $A^{- \bb N^*}$ generated by the coordinate maps
$y \mapsto (y_{-1}, \ldots, y_{-n})$. 
By convention, we also define $\scr F_0$ as the trivial $\sigma$-algebra. 
And we let $\scr F_n^z$ be the $\sigma$-algebra induced on $\bb X^-_z$. 
The following proposition is a classical result from \cite{PP90}:

\begin{proposition}\label{lemma-martingale002}
Let $f_0\in \mathcal{C} (\bb X^+)$. Then $\mathcal L_{\psi} f_0 = 0$ if and only if for any $z\in \bb X^+$,
the sequence of random variables 
\begin{align*} 
y\in \bb X^-_z \mapsto \check S_n f_0 ( y \cdot z ), \ \ n \geq 0 
\end{align*}
is a martingale on $\bb X^-_z$ equipped with the probability measure $\nu^-_z$
w.r.t.\ the filtration $(\scr F_n^z)_{n \geq 0}$. 
\end{proposition}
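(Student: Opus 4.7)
The plan is to show that the martingale property and the relation $\mathcal{L}_\psi f_0 = 0$ are equivalent by directly computing the one-step conditional expectation under $\nu_z^-$ and identifying it with $\mathcal{L}_\psi f_0$ evaluated at a suitable point.

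First I would observe that since $f_0 \in \mathcal{C}(\bb X^+)$ depends only on the future coordinates, the quantity $f_0(T^{-k}(y\cdot z))$ depends on $y$ only through $(y_{-k},\ldots,y_{-1})$, so $\check S_n f_0(y\cdot z)$ is $\scr F_n^z$-measurable. The martingale property is therefore equivalent to
\begin{align*}
\bb E_{\nu_z^-}\!\left[ f_0\bigl(T^{-(n+1)}(y\cdot z)\bigr) \,\Big|\, \scr F_n^z \right] = 0,\qquad n\geq 0.
\end{align*}
The main computation is to evaluate this conditional expectation. Fix $a=(a_{-n},\ldots,a_{-1}) \in A_z^n$. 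From the defining formula \eqref{Def_nu_kz} and the cocycle identity $S_{n+1}\psi(b\cdot a\cdot z) = \psi(b\cdot a\cdot z) + S_n\psi(a\cdot z)$, the conditional probability that $y_{-(n+1)}=b$, given $(y_{-n},\ldots,y_{-1})=a$, equals
\begin{align*}
\frac{\nu_z^{n+1}(\bb C_{b\cdot a,z})}{\nu_z^n(\bb C_{a,z})} = e^{-\psi(b\cdot a\cdot z)},
\end{align*}
for every $b\in A$ with $M(b,a_{-n})=1$.

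Consequently, writing $a\cdot z = (a_{-n},\ldots,a_{-1},z_0,z_1,\ldots) \in \bb X^+$, the conditional expectation above, on the cylinder $\bb C_{a,z}$, equals
\begin{align*}
\sum_{b:\,M(b,a_{-n})=1} e^{-\psi(b\cdot a\cdot z)} f_0(b\cdot a\cdot z) = \mathcal{L}_\psi f_0(a\cdot z),
\end{align*}
by the very definition \eqref{Def_OperatorP} of $\mathcal{L}_\psi$ (the preimages of $a\cdot z$ under $T:\bb X^+\to\bb X^+$ are exactly the sequences $b\cdot a\cdot z$ for admissible $b$). Thus I obtain the key identity
\begin{align*}
\bb E_{\nu_z^-}\!\left[ f_0\bigl(T^{-(n+1)}(y\cdot z)\bigr) \,\Big|\, \scr F_n^z \right](y) = \mathcal{L}_\psi f_0\bigl(T^{-n}(y\cdot z)\bigr),
\end{align*}
where $T^{-n}(y\cdot z)$ is viewed as an element of $\bb X^+$ via its non-negative coordinates.

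From this identity both implications are immediate. If $\mathcal{L}_\psi f_0 \equiv 0$ on $\bb X^+$, then the right-hand side vanishes for every $n$ and $z$, which yields the martingale property. Conversely, if the sequence is a martingale for every $z\in \bb X^+$, taking $n=0$ (where $\scr F_0^z$ is trivial so the conditional expectation is an ordinary $\nu_z^-$-integral constant on $\bb X_z^-$) gives $\mathcal{L}_\psi f_0(z)=0$ for every $z\in \bb X^+$, as desired. No step is truly an obstacle here; the only care required is bookkeeping the identifications between pasts, futures and one-sided sequences, and making sure the cocycle simplification of $S_{n+1}\psi$ produces exactly the single factor $e^{-\psi}$ that matches the definition of $\mathcal{L}_\psi$.
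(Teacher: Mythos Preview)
Your proof is correct and follows essentially the same approach as the paper: both compute the one-step conditional expectation of the increment $f_0(T^{-n}(y\cdot z))$ under $\nu_z^-$ and identify it with $\mathcal{L}_\psi f_0$ evaluated at the appropriate one-sided point, from which both implications follow immediately. Your version is simply more explicit about the cocycle computation and the handling of the converse via $n=0$.
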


\begin{proof}
Denote by $g_n^z: \bb X^-_z \mapsto \bb R$ the function $y \mapsto \check S_n f_0 ( y \cdot z )$. 
Then for $y \in \bb X^-_z$ and $n \geq 1$, we have by the definition of the measure $\nu^-_z$, 
\begin{align*}
\nu^-_z \left( g_n^z \big| \scr F_{n-1}^z  \right)(y)
 = g_{n-1}^z (y) + \mathcal L_{\psi} f_0 (T^{-n} (y \cdot z)). 
\end{align*}
From this identity, the assertion follows. 
\end{proof}

The following result shows that the difference $\check S_n f - \check S_n g$ is bounded,
for $f$ and $g$ in the same cohomology class. 

\begin{lemma} \label{Martingale approx} 
Let $f\in \scr B$ and $g\in\scr C(f) $. Let $h\in \scr B$ be such that $f-g= h\circ T-h$. 
Then, for any $x\in\bb X$ and any $n \geq 1$
we have  
\begin{align*}
\left| \check S_n f ( x ) - \check S_n g ( x ) \right| \leq c=2 \| h \|_{\infty}.
\end{align*}
\end{lemma}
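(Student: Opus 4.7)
The plan is to exploit the coboundary relation $f-g = h\circ T - h$ and observe that the reverse Birkhoff sum of a coboundary telescopes, leaving only two boundary terms that are each controlled by $\|h\|_\infty$.

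More precisely, I would first write, by linearity of the reverse Birkhoff sum,
\begin{align*}
\check S_n f(x) - \check S_n g(x) = \check S_n (f-g)(x) = \sum_{k=1}^n (f-g)(T^{-k}x) = \sum_{k=1}^n \bigl( h(T^{-(k-1)} x) - h(T^{-k} x) \bigr),
\end{align*}
where I used $T\circ T^{-k} = T^{-(k-1)}$. The sum on the right is telescoping, and it collapses to
\begin{align*}
\check S_n f(x) - \check S_n g(x) = h(x) - h(T^{-n} x).
\end{align*}

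Applying the triangle inequality and the fact that $h$ is bounded (being H\"older continuous on the compact space $\bb X$), I would conclude
\begin{align*}
\bigl| \check S_n f(x) - \check S_n g(x) \bigr| \leq |h(x)| + |h(T^{-n}x)| \leq 2\|h\|_\infty,
\end{align*}
which is the desired bound with $c = 2\|h\|_\infty$.

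There is essentially no obstacle in this argument; the only point to be careful about is the correct direction of the shift in the definition of $\check S_n$, so that the indices $T^{-k}$ and $T^{-(k-1)}$ line up properly to produce a genuine telescoping. Once this bookkeeping is done, the bound is immediate and independent of $n$ and $x$.
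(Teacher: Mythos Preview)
Your proof is correct and essentially identical to the paper's: both obtain the exact identity $\check S_n f(x) - \check S_n g(x) = h(x) - h(T^{-n}x)$ by telescoping, and then bound by $2\|h\|_\infty$. The only cosmetic difference is that the paper first telescopes the forward sum to get $S_n f - S_n g = h\circ T^n - h$ and then composes with $T^{-n}$, whereas you telescope the reverse sum directly.
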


\begin{proof}
Indeed, we have $S_n f - S_n g = h \circ T^{n} - h.$
Since $\check S_n f = (S_n f) \circ T^{-n}$, we get
$\check S_n f - \check S_n g = h - h \circ T^{-n}$, which proves the assertion. 
\end{proof}

\subsection{The H\"older continuity and approximation}

We establish several technical results which will be used in the proofs of the main results of the paper. 
In particular, they will allow us to prove that several convergences hold uniformly in $z \in \bb X^+$. 

\begin{lemma}\label{Lem_sum_Ine}
For any $g \in \scr B$,   
there exists a constant $c_0 >0$ such that for any $n \geq 1$, $z, z' \in \bb X^+$
with $z_0 = z_0'$ and $y \in \bb X^-_z (= \bb X^-_{z'})$, one has
\begin{align}\label{Second_Ine_Sn_z_z}
\left| \check S_n g(y \cdot z) - \check S_n g(y \cdot z')  \right| \leq  c_0 \alpha^{ \omega (z, z')}. 
\end{align}
In particular, 
for any $g \in \scr B$,   
there exists a constant $c_0 >0$ such that for any $n \geq 1$, 
$z, z' \in \bb X^+$
with $z_0 = z_0'$ and $y \in \bb X^-_z (= \bb X^-_{z'})$,  it holds
\begin{align*}
\check S_n g(y \cdot z) \leq \check S_n g(y \cdot z') + c_0. 
\end{align*}
\end{lemma}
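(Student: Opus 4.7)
The plan is to write
$$\check S_n g(y \cdot z) - \check S_n g(y \cdot z') = \sum_{k=1}^n \bigl[ g(T^{-k}(y \cdot z)) - g(T^{-k}(y \cdot z')) \bigr]$$
and bound each summand using the H\"older regularity of $g$. The gain in $k$ will come from the fact that applying $T^{-k}$ pushes the point of disagreement between $y \cdot z$ and $y \cdot z'$ far out along the positive-index side, so that the two shifted sequences agree on a long central window.

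Concretely, fix $k \geq 1$ and compare the coordinates of $T^{-k}(y \cdot z)$ and $T^{-k}(y \cdot z')$. At index $j \geq 0$ one has $(T^{-k}(y\cdot z))_j = (y\cdot z)_{j-k}$, which equals $y_{j-k}$ for $j \leq k-1$, equals $z_0 = z_0'$ for $j=k$, and equals $z_{j-k}$ for $j \geq k+1$; the analogous formula holds for $y\cdot z'$ with $z'$ in place of $z$. At any index $-j$ with $j \geq 0$ the coordinate is $y_{-j-k}$ on both sides. Hence the two points agree at every index of absolute value at most $k$, and the earliest index at which they can disagree is $k + \omega(z,z')$ (recall $\omega(z,z') \geq 1$ since $z_0 = z_0'$). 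Therefore
$$\omega\bigl(T^{-k}(y \cdot z), T^{-k}(y \cdot z')\bigr) \geq k + \omega(z, z').$$

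Now choose $\alpha \in (0,1)$ so that $g \in \scr B_\alpha$ (using that $\scr B = \cup_\alpha \scr B_\alpha$). Combining the previous display with the H\"older bound for $g$ gives
$$\bigl| g(T^{-k}(y \cdot z)) - g(T^{-k}(y \cdot z')) \bigr| \leq \|g\|_{\scr B_\alpha}\, \alpha^{k + \omega(z, z')}.$$
Summing the geometric series in $k$,
$$\bigl| \check S_n g(y \cdot z) - \check S_n g(y \cdot z') \bigr| \leq \|g\|_{\scr B_\alpha}\, \alpha^{\omega(z,z')} \sum_{k=1}^{\infty} \alpha^{k} = c_0\, \alpha^{\omega(z, z')},$$
where $c_0 := \|g\|_{\scr B_\alpha}\, \alpha / (1 - \alpha)$ is independent of $n$, $y$, $z$, $z'$. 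This proves the first inequality. The ``in particular'' assertion is then immediate from $\alpha^{\omega(z,z')} \leq 1$.

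There is no substantive obstacle here. The only care required is the coordinate bookkeeping for $\omega(T^{-k}(y\cdot z), T^{-k}(y\cdot z'))$ above, which crucially uses $z_0 = z_0'$ (so that the window of agreement includes the $0$-th coordinate) and $k \geq 1$ (so that all strictly negative indices lie inside the common $y$-past); these two facts together explain why the bound grows like $\alpha^{k+\omega(z,z')}$ rather than merely $\alpha^{\omega(z,z')}$, which is what makes the sum over $k$ converge.
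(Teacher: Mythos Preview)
Your proof is correct and follows essentially the same approach as the paper's own proof: both expand the difference $\check S_n g(y\cdot z) - \check S_n g(y\cdot z')$ term by term, observe that $\omega(T^{-k}(y\cdot z), T^{-k}(y\cdot z')) \geq k + \omega(z,z')$, apply the H\"older estimate, and sum the resulting geometric series to obtain $c_0 = \|g\|_{\scr B_\alpha}\,\alpha/(1-\alpha)$. Your version is more explicit about the coordinate bookkeeping justifying the $\omega$-bound, but the argument is the same.
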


\begin{proof}
Since $g \in \scr B$, there exists a constant $L_{g}$ such that for any $x, x' \in \bb X$, 
\begin{align*} 
|g(x) - g(x')| \leq L_{g} \alpha^{ \omega (x,x')},
\end{align*}
where $0< \alpha <1$. 
Hence for any $z, z' \in \bb X^+$
with $z_0 = z_0'$ and $y \in \bb X^-_z$,  and $n \geq 1$, one has
\begin{align*} 
\left| \check S_n g(y \cdot z) - \check S_n g(y \cdot z') \right|  
& \leq \sum_{k=0}^{n-1}  L_{g} \alpha^{n- k + \omega(z,z')}    \nonumber\\
& \leq  L_{g} \frac{\alpha^{1 + \omega(z,z')}}{1- \alpha} = : c_0 \alpha^{w(z, z')}.
\end{align*}
The desired result follows. 
\end{proof}

\begin{corollary}\label{Cor_Holder_minimum}  
For any $g \in \scr B$,   
there exist constants $c_0 >0$ and $\alpha \in (0,1)$ such that for any $n \geq 1$, $z, z' \in \bb X^+$
with $z_0 = z_0'$ and $y \in \bb X^-_z (= \bb X^-_{z'})$, we have
\begin{align}\label{Second_Ine_Sn_z_z_min}
\left| \min_{1 \leq j \leq n} \check S_j g(y \cdot z) - \min_{1 \leq j \leq n} \check S_j g(y \cdot z')  \right|
 \leq  c_0 \alpha^{w(z, z')}.
\end{align}
\end{corollary}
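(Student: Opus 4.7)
The plan is to reduce Corollary \ref{Cor_Holder_minimum} directly to Lemma \ref{Lem_sum_Ine}, using only the elementary fact that the minimum is a $1$-Lipschitz function of its arguments with respect to the sup-norm.

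First, I would note that Lemma \ref{Lem_sum_Ine} furnishes constants $c_0 > 0$ and $\alpha \in (0,1)$ (depending only on $g$) such that, for every single index $j \geq 1$ and every $z, z' \in \bb X^+$ with $z_0 = z_0'$ and $y \in \bb X^-_z$,
\begin{equation*}
\bigl| \check S_j g(y \cdot z) - \check S_j g(y \cdot z') \bigr| \leq c_0\, \alpha^{\omega(z,z')}.
\end{equation*}
Crucially, the constant $c_0$ produced in the proof of Lemma \ref{Lem_sum_Ine} is uniform in $j$, since the geometric series $\sum_{k \geq 0} \alpha^k$ bounds all the partial contributions from the H\"older oscillation of $g$ at each time step.

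Next, I would apply the deterministic inequality
\begin{equation*}
\bigl| \min_{1 \leq j \leq n} a_j - \min_{1 \leq j \leq n} b_j \bigr| \leq \max_{1 \leq j \leq n} |a_j - b_j|,
\end{equation*}
valid for any finite sequences of real numbers, with $a_j = \check S_j g(y \cdot z)$ and $b_j = \check S_j g(y \cdot z')$. Combining this with the uniform-in-$j$ bound from Lemma \ref{Lem_sum_Ine} immediately yields
\begin{equation*}
\bigl| \min_{1 \leq j \leq n} \check S_j g(y \cdot z) - \min_{1 \leq j \leq n} \check S_j g(y \cdot z') \bigr| \leq c_0\, \alpha^{\omega(z,z')},
\end{equation*}
which is exactly \eqref{Second_Ine_Sn_z_z_min}.

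There is no real obstacle here; the corollary is a one-line consequence of the preceding lemma once one observes that the bound in Lemma \ref{Lem_sum_Ine} is independent of $n$ and hence applies simultaneously to every partial Birkhoff sum $\check S_j g$, $1 \leq j \leq n$. The only thing worth being mindful of is to state explicitly that the same $c_0$ and $\alpha$ work for all $j$, which is why the uniform-in-$j$ geometric bound in the proof of Lemma \ref{Lem_sum_Ine} matters.
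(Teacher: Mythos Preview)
Your proof is correct and essentially the same as the paper's: the paper just spells out, via two one-sided inequalities, the fact that $\min$ is $1$-Lipschitz in the sup-norm, while you invoke that fact directly. There is nothing to add.
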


\begin{proof}
By Lemma \ref{Lem_sum_Ine},  there exist constants $c_0 >0$ and $\alpha \in (0,1)$ such that for any $n \geq j \geq 1$,  
\begin{align*}
\min_{1 \leq j \leq n} \check S_j g(y \cdot z) 
\leq \check S_j g(y \cdot z)   \leq   \check S_j g(y \cdot z') +  c_0 \alpha^{w(z, z')}. 
\end{align*}
Taking the minimum over $1 \leq j \leq n$ on the right hand side, we get
\begin{align}\label{Upper_bound_DS}
\min_{1 \leq j \leq n} \check S_j g(y \cdot z) 
   \leq  \min_{1 \leq j \leq n} \check S_j g(y \cdot z') +  c_0 \alpha^{w(z, z')}. 
\end{align}
In the same way, again by Lemma \ref{Lem_sum_Ine},  there exist constants $c_0 >0$ and $\alpha \in (0,1)$ such that for any $n \geq j \geq 1$,  
\begin{align*}
\check S_j g(y \cdot z)   \geq   \check S_j g(y \cdot z') -  c_0 \alpha^{w(z, z')}
 \geq  \min_{1 \leq j \leq n} \check S_j g(y \cdot z') -  c_0 \alpha^{w(z, z')}. 
\end{align*}
Taking the minimum over $1 \leq j \leq n$ on the left hand side, we get
\begin{align}\label{Lower_bound_DS}
\min_{1 \leq j \leq n} \check S_j g(y \cdot z) 
   \geq  \min_{1 \leq j \leq n} \check S_j g(y \cdot z') -  c_0 \alpha^{w(z, z')}. 
\end{align}
Combining \eqref{Upper_bound_DS} and \eqref{Lower_bound_DS}, we conclude the proof of \eqref{Second_Ine_Sn_z_z_min}.
\end{proof}

We will also need the following technical lemma that allows us to approximate the function $g \in \scr B$
by a function $x \mapsto g_m(x)$ on $\bb X$ which only depends  on the coordinates $\{x_k\}_{k \geq -m}$. 

\begin{lemma}\label{Lem_Appro_g}
Let $g \in \scr B$. Then there exist constants $\alpha \in (0,1)$, $c_1>0$ 
and a sequence of H\"older continuous functions $(g_m)_{m \geq 0}$ on $\bb X$ 
which only depend  on the coordinates $\{x_k\}_{k \geq -m}$ such that $\mathcal{L}_{\psi} g_0 = 0$ and for any $m \geq 0$, 
\begin{align} \label{Basic_Inequality}
\sup_{n \geq 1} \left\| \check S_n g_m -  \check S_n g  \right\|_{\infty} \leq  c_1 \alpha^m.
\end{align}
\end{lemma}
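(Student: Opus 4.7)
The plan is to combine the martingale decomposition from Proposition \ref{lemma-martingale001} with a truncation of the coboundary term. Note that the statement implicitly requires $\nu(g) = 0$, since $\mathcal L_{\psi} g_0 = 0$ forces $\nu^+(g_0) = 0$, and a uniformly bounded $\check S_n(g - g_0)$ then imposes $\nu(g) = \nu^+(g_0) = 0$ via the ergodic theorem.

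First, by Proposition \ref{lemma-martingale001} I would write $g = g_0 + h\circ T - h$, with $g_0 \in \scr B^+$ satisfying $\mathcal L_{\psi} g_0 = 0$ and $h \in \scr B$. Viewed on $\bb X$, the function $g_0$ depends only on the coordinates $(x_k)_{k \geq 0}$, which handles the case $m = 0$: by Lemma \ref{Martingale approx} we have $\sup_n \|\check S_n g - \check S_n g_0\|_\infty \leq 2\|h\|_\infty$.

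For $m \geq 1$, I would truncate $h$ as follows. Using the irreducibility of $M$, fix once and for all, for every letter $a \in A$, a distinguished admissible infinite past $(\ldots, \xi_{-2}^a, \xi_{-1}^a, a)$. This defines a map $\pi_m: \bb X \to \bb X$ by $\pi_m(x)_k = x_k$ for $k \geq -m$ and $\pi_m(x)_k = \xi_{k+m}^{x_{-m}}$ for $k < -m$. Let $h_m := h \circ \pi_m$, and define
\begin{align*}
g_m := g_0 + h_m \circ T - h_m.
\end{align*}
Since $h_m$ depends only on $(x_k)_{k \geq -m}$, similarly $h_m \circ T$ on $(x_k)_{k \geq -m+1}$, and $g_0$ on $(x_k)_{k \geq 0}$, the function $g_m$ indeed depends only on $(x_k)_{k \geq -m}$, and a direct substitution yields $g - g_m = (h - h_m)\circ T - (h - h_m)$.

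By construction $\omega(x, \pi_m(x)) \geq m+1$ for every $x \in \bb X$, so the H\"older regularity of $h$ gives $\|h - h_m\|_\infty \leq L_h \alpha^{m+1}$, where $L_h$ denotes the H\"older constant of $h$ and $\alpha$ its exponent. Applying Lemma \ref{Martingale approx} to $g$ and $g_m$ then yields
\begin{align*}
\sup_{n \geq 1} \|\check S_n g - \check S_n g_m\|_\infty \leq 2\|h - h_m\|_\infty \leq 2 L_h \alpha^{m+1},
\end{align*}
and taking $c_1 := \max\{2\|h\|_\infty,\, 2 L_h \alpha\}$ provides the required uniform bound $c_1 \alpha^m$. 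The only mildly delicate point is the construction of $\pi_m$ consistent with admissibility, which is settled by the irreducibility of $M$; once $\pi_m$ is in hand the H\"older estimates and the final bound are routine.
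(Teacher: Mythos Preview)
Your proof is correct and follows essentially the same approach as the paper: write $g = g_0 + h\circ T - h$ via Proposition~\ref{lemma-martingale001}, then set $g_m = g_0 + h_m\circ T - h_m$ for a function $h_m$ depending only on $(x_k)_{k\geq -m}$ with $\|h-h_m\|_\infty \leq c\alpha^m$. The paper simply asserts the existence of such $h_m$, whereas you supply the standard explicit construction via the truncation map $\pi_m$; otherwise the arguments are identical.
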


\begin{proof}
By Proposition \ref{lemma-martingale001}, there exist $g_0 \in \scr B^+$ and $h \in \scr B$ with $\mathcal{L}_{\psi} g_0 = 0$
and  
\begin{align}\label{Cohomological_Equ_g}
g_0 = g - h\circ T + h. 
\end{align}
Since $h \in \scr B$, there is $\alpha \in (0,1)$ such that $h \in \scr B_\alpha$. 
Then, for any $m\geq 0$, there exists
a H\"older continuous function $h_m$ on $\bb X$ which only depends on the coordinates $\{x_k\}_{k \geq -m}$ such that
\begin{align} \label{Bound_h_hp_001}
\| h-h_m   \|_{\infty} \leq c_1 \alpha^{m},
\end{align}
where $c_1 >0$ is a fixed constant not depending on $p$, 
and by convention $h_0 = 0$.  
We define for any $m \geq 0$,
\begin{align}\label{Bound_gp}
g_m = g_0 + h_{m} \circ T -  h_{m}.
\end{align}
From \eqref{Cohomological_Equ_g}, \eqref{Bound_h_hp_001} and \eqref{Bound_gp}, we get \eqref{Basic_Inequality}.
\end{proof}

\subsection{Duality}

The next duality property will be crucial in the proof of the main results.

\begin{lemma} \label{Corollary_Duality} 
Let $g \in \scr B$. 
For any $n\geq 1$ and any non-negative measurable function 
$F: \bb X \times \bb R \times \bb X \times \bb R \mapsto \mathbb R$, 
we have
\begin{align*} 
& \int_{\mathbb R} \int_{\bb X}  F\left( x, t, T^{-n} x,  t + \check S_n g(x) \right) 
  \mathds 1_{ \left\{ \check \tau_{t}^g (x) > n - 1 \right\}}  \nu(dx) dt
\notag \\
&= \int_{\mathbb R}  \int_{\bb X}  F \big( T^{n} x, u -  S_n g(x), x, u \big) 
   \mathds 1_{ \left\{ \tau_{u}^{-g} (x) > n - 1 \right\}} \nu(dx) du. 
\end{align*}
\end{lemma}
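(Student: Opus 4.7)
The plan is to prove the identity by a change of variables $y = T^{-n}x$ in the spatial integral (using $T$-invariance of $\nu$) combined with a translation $u = t + S_n g(y)$ in the real variable.

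First I would rewrite the left-hand side using the fact that $T$ preserves $\nu$: substitute $x = T^n y$ so that $\nu(dx) = \nu(dy)$, the point $T^{-n}x$ becomes $y$, and the forward/reverse Birkhoff sum identity
\begin{align*}
\check S_k g(T^n y) \;=\; \sum_{i=1}^{k} g(T^{n-i} y) \;=\; S_n g(y) - S_{n-k} g(y), \qquad 1 \leq k \leq n,
\end{align*}
allows me to express every reverse object in terms of forward Birkhoff sums of $g$ at $y$. In particular $\check S_n g(T^n y) = S_n g(y)$, so the fourth argument of $F$ becomes $t + S_n g(y)$, which motivates the second change of variables.

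Next I would set $u = t + S_n g(y)$. For fixed $y$ this is an affine translation, so $du = dt$ and by Fubini I can swap the order of integration freely (note that $F \geq 0$, so Tonelli applies even before any integrability check). The fourth argument of $F$ becomes exactly $u$, the second argument becomes $u - S_n g(y)$, the first argument is $T^n y$, and the third is $y$ — matching the integrand on the right-hand side after renaming $y$ back to $x$.

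The only point that needs a careful check is the indicator. Under the substitutions above the event $\{\check \tau_t^g(x) > n-1\}$ translates into the requirement that $t + \check S_k g(T^n y) \geq 0$ for all $1 \leq k \leq n-1$, i.e.\ $u - S_{n-k}g(y) \geq 0$ for $1 \leq k \leq n-1$. Reindexing $j = n-k$, this is $u - S_j g(y) \geq 0$ for $1 \leq j \leq n-1$, which by the definition of $\tau^{-g}_u$ is precisely $\{\tau_u^{-g}(y) > n-1\}$. This is the one substantive verification in the argument; once it is in place, the identity follows by combining the two changes of variables and Fubini. I do not anticipate a serious obstacle: the lemma is essentially a clean bookkeeping statement reflecting that reversing time via $T^{-n}$ and shifting the starting point $t$ by $S_n g$ converts the constraint on the reverse trajectory of $g$ starting at $t$ into the constraint on the forward trajectory of $-g$ ending at $u$.
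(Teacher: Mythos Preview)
Your proof is correct and follows essentially the same approach as the paper: both use the $T$-invariance of $\nu$ together with a translation in the real variable, and both verify the indicator via the identity $\check S_k g(T^n y) = S_n g(y) - S_{n-k} g(y)$. The only cosmetic difference is the order of the two substitutions (the paper shifts $t$ first and then applies $T^{-1}$-invariance, you do it the other way around).
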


\begin{proof}
By a change of variable $t = u - \check S_n g(x)$, it follows that 
\begin{align*}  
I:  & =   \int_{\mathbb R} \int_{\bb X}  F\left( x, t, T^{-n} x,  t + \check S_n g(x) \right)  
  \mathds 1_{\left\{ t + \check S_{n-1} g(x) \geq 0,  \ldots,   t + \check S_1 g(x) \geq 0  \right\}} \nu(dx) dt
      \notag \\
& =   \int_{\mathbb R} \int_{\bb X}  F \left( x, u - \check S_n g(x),  T^{-n} x,  u \right)    \notag\\
& \qquad\quad  \times  \mathds 1_{ \left\{  u - g(T^{-n} x) \geq 0, 
          \ldots, u - g(T^{-n} x)-  \ldots - g(T^{-2} x) \geq 0 \right\}}   \nu(dx) du. 
\end{align*}
Since the measure $\nu$ is $T^{-1}$-invariant,  we obtain
\begin{align*}
& I = \int_{\mathbb R}  \int_{\bb X} F \big( T^{n} x, u -  S_n g(x), x, u \big)  
   \mathds 1_{ \left\{ u -  S_1 g(x) \geq 0, \ldots, u -  S_{n-1} g(x) \geq 0 \right\} } \nu(dx) du, 
\end{align*}
which ends the proof of the lemma. 
\end{proof}


\section{Harmonicity for dynamical system}\label{sec: harmonic measure}

\subsection{Existence of the harmonic function}

The aim of this section is to prove the existence of a function $V^{f}$ on the state space $\bb R$ 
which we call the harmonic function of $f$ by analogy with the theory developed for Markov chains in \cite{GLL18}.
Our main result is the following theorem:

\begin{theorem}\label{Proposition exist harm func}
Let $f$ be a H\"older continuous function on $\bb X$ such that $f$ is not a coboundary and $\nu (f) = 0$.
Then there exists a unique non decreasing and right continuous function $V^{f}: \bb R \mapsto \bb R_+$ such that
for any continuous compactly supported function  $\vphi$ on $\bb R$,
\begin{align} \label{Harmonic_V_f}
\lim_{n \to \infty} \int_{\bb R} \vphi(t) \int_{\bb X}   S_{n} f (x) \mathds 1_{ \left \{ \tau_t^f(x) >n \right\} }  \nu(dx) dt
= \int_{\bb R} \vphi(t) V^{f}(t) dt.
\end{align}
Besides, there exists a constant $c>0$ such that for any $t\in \bb R$, 
\begin{align}\label{Bound_V_f}
\max \{t - c, 0\} \leq V^f(t) \leq  \max \{t, 0 \} +  c.
\end{align}
\end{theorem}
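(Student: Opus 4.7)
The plan is to construct $V^f$ as the density of the vague limit of the Radon measures
$d\mu_n(t):=\bigl(\int_{\bb X} S_nf(x)\mathds 1_{\{\tau_t^f(x)>n\}}\,\nu(dx)\bigr)\,dt$.
Three ingredients come together: the duality of Lemma \ref{Corollary_Duality} to pass to the reversed partial sums $\check S_nf$; the Fubini decomposition $\nu=\int\nu_z^-\,\nu^+(dz)$ (Lemma \ref{Lem_Fubini}) together with the cohomology representative $f_0\in\scr B^+$ with $\mathcal L_\psi f_0=0$ (Proposition \ref{lemma-martingale001}), which makes $(\check S_kf_0(y\cdot z))_k$ a genuine martingale on each fiber $(\bb X_z^-,\nu_z^-)$ by Proposition \ref{lemma-martingale002}; and the classical optional-stopping construction of the harmonic function for a martingale conditioned to stay positive.

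First I would apply Lemma \ref{Corollary_Duality} with $g=-f$ and the test function $F(x,t,x',t')=\varphi(t')(t-t')$ to obtain
\begin{align*}
\int_{\bb R}\varphi(s)\int_{\bb X}S_nf(x)\mathds 1_{\{\tau_s^f(x)>n-1\}}\nu(dx)\,ds = \int_{\bb R}\varphi(s)\int_{\bb X}\check S_nf(x)\mathds 1_{\{\bar Q_n(x)\le s\}}\nu(dx)\,ds,
\end{align*}
where $\bar Q_n(x):=\max_{1\le k\le n-1}(\check S_kf(x)-\check S_nf(x))$. Using Lemma \ref{Martingale approx} to replace $f$ by $f_0$ (the resulting perturbations of $\check S_nf$ and $\bar Q_n$ being uniformly bounded, hence absorbed by the uniform continuity of $\varphi$) and then decomposing the integral via Lemma \ref{Lem_Fubini}, I would reduce the problem to analyzing, for each $z\in\bb X^+$, the fiberwise expression $\int_{\bb X_z^-}\check S_nf_0(y\cdot z)\mathds 1_{\{\bar Q_n^{f_0}(y\cdot z)\le s\}}\nu_z^-(dy)$. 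A time-reversal of the finite-horizon martingale trajectory $(\check S_kf_0(y\cdot z))_{0\le k\le n}$ identifies the event $\{\bar Q_n^{f_0}\le s\}$ with the exit event $\{\tilde\tau_s>n-1\}$ for the companion process $\tilde M_k:=\check S_nf_0-\check S_{n-k}f_0$; the latter, viewed through the flip $\iota$ together with Lemma \ref{Lem-iotanu-01}, corresponds to the forward Birkhoff sum of a cohomologous function on the flipped Gibbs system, which is itself a martingale on the appropriate fiber.

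For the companion martingale, optional stopping at $\tilde\tau_s:=\inf\{k\ge 1:s+\tilde M_k<0\}$ yields
\begin{align*}
\int_{\bb X_z^-}(s+\tilde M_n)\mathds 1_{\{\tilde\tau_s>n-1\}}\nu_z^-(dy) = s-\int_{\bb X_z^-}(s+\tilde M_{\tilde\tau_s})\mathds 1_{\{\tilde\tau_s\le n-1\}}\nu_z^-(dy),
\end{align*}
the overshoot $-(s+\tilde M_{\tilde\tau_s})$ is bounded by $\|f_0\|_\infty$, and $\tilde\tau_s<\infty$ $\nu_z^-$-almost surely by a fiberwise adaptation of Lemma \ref{Lem_tau_finite} based on $\mathcal L_\psi f_0=0$. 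Dominated convergence produces a non-decreasing right continuous limit $U^f(z,s)$ satisfying $\max(s-c,0)\le U^f(z,s)\le\max(s,0)+c$ for some $c>0$. Then $V^f(s):=\int_{\bb X^+}U^f(z,s)\,\nu^+(dz)$ is the required function; monotonicity, right continuity, the bounds \eqref{Bound_V_f}, and uniqueness all transfer from $U^f$ via integration and the Radon-Nikodym theorem applied to the vague limit.

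The main obstacle will be the rigorous implementation of the time-reversal step and the uniform-in-$z$ control required to interchange the limit in $n$ with the integral $\int\nu^+(dz)$: the former corresponds to the construction of the dual Markov chain alluded to in the introduction, and the latter should follow from the Hölder-in-$z$ regularity of Lemmas \ref{Lem_sum_Ine} and \ref{Cor_Holder_minimum} together with the finite-past approximation of Lemma \ref{Lem_Appro_g}, which reduces the general case $f_0\in\scr B^+$ to observables depending only on finitely many coordinates, for which the classical martingale machinery applies cleanly.
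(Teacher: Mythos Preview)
There is a genuine gap at the heart of your argument: the optional stopping you want to apply on each fiber $(\bb X_z^-,\nu_z^-)$ to the companion process $\tilde M_k=\check S_nf_0-\check S_{n-k}f_0$ does not go through, because $(\tilde M_k)_k$ is \emph{not} a martingale there for any natural filtration. The increment $\tilde M_k-\tilde M_{k-1}=f_0(T^{-(n-k+1)}(y\cdot z))$ depends on all of $y_{-(n-k+1)},\ldots,y_{-1}$ (since $f_0\in\scr B^+$), so the process cannot be made adapted to the reverse filtration $\sigma(y_{-n},\ldots,y_{-(n-k+1)})$; and with respect to the forward filtration $(\scr F_k^z)$ (where $(\check S_kf_0)$ \emph{is} a martingale) your $\tilde\tau_s$ is not a stopping time. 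Invoking $\iota$ does not rescue this at the fiber level: the flip carries $f_0\in\scr B^+(\bb X)$ to a function in $\scr B^-(\iota\bb X)$, so a \emph{fresh} cohomology representative in $\scr B^+(\iota\bb X)$ would be needed, and the fibers of $\iota\bb X$ bear no simple relation to $\bb X_z^-$. In fact the global substitution $x\mapsto T^{-n}x$ in your integrand $\check S_nf\,\mathds 1_{\{\bar Q_n\le s\}}$ returns exactly $\int S_nf\,\mathds 1_{\{\tau_s^f>n-1\}}d\nu$, so the duality detour has gained nothing.

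The paper's route avoids this trap by reversing roles from the outset: it proves the backward analogue (Theorem \ref{Proposition exist harm func_aaa}) for $\check S_ng$ and $\check\tau_t^g$, where for $g\in\scr B^+$ the exit time $\check\tau_t^g(y\cdot z)$ \emph{is} an honest $(\scr F_k^z)$-stopping time, so optional stopping (Lemma \ref{Lem_def_V}) yields $\check V^g(z,t)$ directly; Lemmas \ref{Lem_Harmonic_Function_V}, \ref{Lem_Appro_g} and \ref{Lem_harm_func_001} then extend this to general $g\in\scr B$ in the weak sense, and integrating over $z$ gives $\check V^g(t)$. Theorem \ref{Proposition exist harm func} is then a one-line consequence via the flip, setting $g=f\circ\iota\circ T$ on $(\iota\bb X,\iota_*\nu)$ (Lemma \ref{Lem-iotanu-01}). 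You should abandon the duality step here and work directly with the backward exit time, for which the fiberwise martingale machinery applies cleanly.
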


Note that, the bound \eqref{Bound_V_f} implies that $V^f(t)/t \to 1$ as $t \to \infty$.


The proof of Theorem \ref{Proposition exist harm func} will be given at the end of this section. 
At this point we start by giving an explicit formula for the harmonic function 
in the case where the observable only depends on future coordinates. 
Let $g \in \scr B^+$ with $\nu(g) = 0$ and assume that $g$ is not a coboundary. 
Let $g_0$ be the unique element of $\scr B^+$ such that $\mathcal{L}_{\psi} g_0 = 0$ and $g_0$ is cohomologous to $g$,
as in Proposition \ref{lemma-martingale001}. 
For $z \in \bb X^+$ and $t \in \bb R$, we set
\begin{align}\label{Def_V_g_001}
\check V^{g}(z, t) 
= - \int_{\bb X^-_z} \check{S}_{\check \tau_{t}^{g} (y \cdot z)} g_0 (y \cdot z)  \nu^-_z (dy). 
\end{align}
This integral makes sense. Indeed, first, by Lemma \ref{Lem_Limit_Tau}, 
the stopping time $y \mapsto \check \tau_{t}^{g} (y \cdot z)$ is finite $\nu^-_z$-almost everywhere.
Second, the Birkhoff sum $t + \check{S}_{\check \tau_{t}^{g} (y \cdot z)} g (y \cdot z)$ 
takes values in the interval $[ -\|g\|_{\infty}, 0]$ 
when $t$ is non-negative, and in the interval $[t-\|g\|_{\infty}, 0]$ when $t$ is negative. 
Third, by Lemma \ref{Martingale approx}, the difference of the Birkhoff sums for $g$ and $g_0$ is uniformly bounded.  

The function $\check V^{g}(z, \cdot)$ plays a crucial role in proving conditioned limit theorems for 
products of random matrices and more generally for Markov chains, see \cite{GLP17} and \cite{GLL18}. 
From the results of  \cite{GLL18} it follows that $\check V^{g}(z, \cdot)$ has the following harmonicity property. 

\begin{lemma}\label{Lem_Harmonic_equation}
Let $g$ be in $\scr B^+$ such that $\nu^+(g) = 0$ and $g$ is not a coboundary. 
Then for any $(z,t) \in \bb X^+ \times \bb R$, we have 
\begin{align}\label{Harmonicity_Property_V}
\check V^g(z, t) = \sum_{z' \in \bb X^+: T(z') = z} e^{- \psi(z')} \mathds 1_{\{ t + g(z') \geq 0 \}}  \check V^g(z', t + g(z')). 
\end{align}
\end{lemma}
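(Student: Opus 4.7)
The plan is to carry out a one-step analysis of the integral defining $\check V^g(z, t)$ by decomposing the past space $\bb X^-_z$ according to the value of the first coordinate $y_{-1}$, and then to invoke the identity $\mathcal L_\psi g_0 = 0$ from Proposition \ref{lemma-martingale001} to cancel an unwanted first-step contribution. The central observation is that the passage from a past $y \in \bb X^-_z$ with $y_{-1} = z'_0$ to its shifted version $y' = (y_{-2}, y_{-3}, \ldots) \in \bb X^-_{z'}$, where $z' := z'_0 \cdot z$, corresponds exactly to one step of the reverse shift on $\bb X$, in the sense that $T^{-1}(y \cdot z) = y' \cdot z'$.

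The first key step is to establish the following transition identity: for any bounded measurable $F$ on $\bb X^-_z$,
\begin{align*}
\int_{\bb X^-_z} F(y) \nu^-_z(dy) = \sum_{z' \in \bb X^+:\, T(z') = z} e^{-\psi(z')} \int_{\bb X^-_{z'}} F(y) \nu^-_{z'}(dy'),
\end{align*}
where on the right-hand side $y \in \bb X^-_z$ is recovered from $y'$ via the bijection described above. This identity is verified by evaluating both sides on cylinders $\bb C_{a, z}$, using the definition \eqref{Def_nu_kz} together with the elementary computation $S_k \psi(a \cdot z) = S_{k-1} \psi((a_{-k}, \ldots, a_{-2}) \cdot z') + \psi(z')$, which holds because the concatenations $(a_{-k}, \ldots, a_{-2}, a_{-1}) \cdot z$ and $(a_{-k}, \ldots, a_{-2}) \cdot z'$ coincide as points of $\bb X^+$ when $z' = a_{-1} \cdot z$.

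The second step is the one-step decomposition of the stopping time. Since $g \in \scr B^+$ depends only on future coordinates, for $y \in \bb X^-_z$ and $z' = y_{-1} \cdot z$ we have $g(T^{-1}(y \cdot z)) = g(z')$. Hence either $t + g(z') < 0$, in which case $\check\tau_t^g(y \cdot z) = 1$ and $\check S_1 g_0(y \cdot z) = g_0(z')$, or $t + g(z') \geq 0$, in which case $\check\tau_t^g(y \cdot z) = 1 + \check\tau_{t+g(z')}^g(y' \cdot z')$ and
\begin{align*}
\check S_{\check\tau_t^g(y \cdot z)} g_0(y \cdot z) = g_0(z') + \check S_{\check\tau_{t+g(z')}^g(y' \cdot z')} g_0(y' \cdot z').
\end{align*}
Combining this decomposition with the transition identity and the fact that each $\nu^-_{z'}$ is a probability measure, the definition \eqref{Def_V_g_001} yields
\begin{align*}
\check V^g(z, t) = \sum_{z':\, T(z') = z} e^{-\psi(z')} \Bigl[ -g_0(z') + \mathds 1_{\{t + g(z') \geq 0\}} \check V^g(z', t + g(z')) \Bigr].
\end{align*}

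To conclude, the identity $\mathcal L_\psi g_0 = 0$ evaluated at $z$ reads $\sum_{z' :\, T(z') = z} e^{-\psi(z')} g_0(z') = 0$, which annihilates the first sum and delivers exactly \eqref{Harmonicity_Property_V}. The main obstacle I anticipate is the careful bookkeeping for the transition identity, namely verifying that the disintegration of $\nu^-_z$ along the first past coordinate coincides with the weighted mixture of the measures $\nu^-_{z'}$ for $T(z') = z$, with weights $e^{-\psi(z')}$; once this Markovian structure is made explicit, the remainder of the argument is essentially a direct calculation that makes the role of the martingale property of $g_0$ transparent.
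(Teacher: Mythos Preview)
Your proof is correct. The paper does not actually include a proof of this lemma: it simply states that the harmonicity property follows from the general Markov-chain results of \cite{GLL18}, and then remarks that ``in the case of the subshift of finite type (since the jumps are bounded) it is possible to give a much shorter direct proof, which is not included because of the space limitations.'' Your argument is precisely such a direct proof.

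The comparison is therefore that the paper appeals to an external reference treating general Markov chains under spectral-gap assumptions, whereas you work directly from the explicit formula \eqref{Def_V_g_001} for $\check V^g$ in terms of the stopped martingale sum $\check S_{\check\tau_t^g} g_0$. Your one-step disintegration of $\nu^-_z$ along the first past coordinate (with weights $e^{-\psi(z')}$), the recursion $\check\tau_t^g(y\cdot z) = 1 + \check\tau_{t+g(z')}^g(y'\cdot z')$ on the survival event, and the cancellation of the $g_0(z')$ contributions via $\mathcal L_\psi g_0 = 0$ are all correct and make the role of the martingale cocycle $g_0$ completely transparent. What your approach buys is a self-contained argument that avoids importing the heavier machinery of \cite{GLL18}; what the paper's deferral buys is brevity at the cost of a black box. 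Your version is exactly what the authors had in mind when they mentioned the omitted shorter proof.
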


The proof of the existence of the harmonic function $\check V^g$ given in \cite{GLL18} is rather difficult. 
In the case of the subshift of finite type (since the jumps are bounded) it is possible to give a much shorter direct proof, which is not included because of the space limitations.

We shall  extend the definition of $\check V^{g}(z, \cdot)$ to the case of any function $g \in \scr B$,
that is, the case of a function $g$ that depends on both the past and the future coordinates. 
We will use the following technical assertion:

\begin{lemma}\label{Lem_Limit_Tau}
Let $g \in \scr B$ such that $\nu(g) = 0$ and $g$ is not a coboundary with respect to T. 
Then, for any $t \in \bb R$, it holds uniformly in $z \in \bb X^+$ that 
\begin{align*}
\lim_{n \to \infty} \nu^-_z \left( y \in \bb X^-_z:  \check \tau_t^g(y \cdot z) >n \right) = 0. 
\end{align*}
\end{lemma}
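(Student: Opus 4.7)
The plan is to exploit two complementary facts. First, by applying Lemma~\ref{Lem_tau_finite} to the reversed subshift (which is again a subshift of finite type with a Gibbs measure by Lemma~\ref{Lem-iotanu-01}), one obtains $\check\tau_s^g<\infty$ for $\nu$-a.e.\ $x$ and every $s\in\bb R$, so that $\nu(\check\tau_s^g>n)\to 0$ as $n\to\infty$. Second, the map $z\mapsto \nu^-_z(\check\tau_t^g(y\cdot z)>n)$ satisfies a H\"older-type continuity estimate in $z$ with a bound uniform in $n$. These two ingredients are then combined via a covering argument.

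To derive the continuity estimate, fix $z,z'\in\bb X^+$ with $z_0=z_0'$. By Lemma~\ref{Lem_sum_Ine}, for every $y\in\bb X^-_z=\bb X^-_{z'}$ and every $k\geq 1$ one has $\check S_k g(y\cdot z)\leq \check S_k g(y\cdot z')+c_0\alpha^{\omega(z,z')}$, which entails the event containment
\[
\{y: \check\tau_t^g(y\cdot z)>n\}\subset\{y: \check\tau_{t+c_0\alpha^{\omega(z,z')}}^g(y\cdot z')>n\}.
\]
Combining this with the bounded-density estimate in Lemma~\ref{Lem_Absolute_Contin} yields
\[
\nu^-_z(\check\tau_t^g(y\cdot z)>n) \leq e^{c\alpha^{\omega(z,z')}}\,\nu^-_{z'}(\check\tau_{t+c_0\alpha^{\omega(z,z')}}^g(y\cdot z')>n).
\]

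Next, for a fixed integer $N\geq 1$, cover $\bb X^+$ by its finitely many non-empty cylinders of length $N$; by the Gibbs property and irreducibility, each such cylinder $C$ has $\nu^+(C)>0$. For any $z,z'\in C$ one has $\omega(z,z')\geq N$, whence the previous inequality specializes to
\[
\nu^-_z(\check\tau_t^g(y\cdot z)>n) \leq e^{c\alpha^N}\,\nu^-_{z'}(\check\tau_{t+c_0\alpha^N}^g(y\cdot z')>n),
\]
valid for every $z'\in C$. Averaging the right-hand side over $z'\in C$ against $\nu^+$ and invoking Lemma~\ref{Lem_Fubini} then yields
\[
\sup_{z\in C}\nu^-_z(\check\tau_t^g(y\cdot z)>n) \leq \frac{e^{c\alpha^N}}{\nu^+(C)}\,\nu(\check\tau_{t+c_0\alpha^N}^g>n).
\]
By the first fact, the right-hand side tends to $0$ as $n\to\infty$ for every fixed $N$, and taking the maximum over the finitely many cylinders of length $N$ delivers the announced uniform convergence.

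The only real difficulty is the uniformity in $z$. Pointwise convergence $\nu^-_z(\check\tau_t^g>n)\to 0$ for $\nu^+$-a.e.\ $z$ follows immediately from Fubini, and the entire content of the argument lies in upgrading this to a genuine supremum bound. This upgrade is made possible by the H\"older control of the conditional kernel $\nu^-_z$ (Lemma~\ref{Lem_Absolute_Contin}) combined with the stability of the Birkhoff sums under perturbation of the future part $z$ (Lemma~\ref{Lem_sum_Ine}); neither ingredient alone suffices, but together they transfer the averaged decay provided by Lemma~\ref{Lem_tau_finite} into a uniform one.
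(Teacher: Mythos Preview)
Your proof is correct and follows essentially the same approach as the paper's: both use Lemma~\ref{Lem_sum_Ine} to obtain the event inclusion $\{\check\tau_t^g(y\cdot z)>n\}\subset\{\check\tau_{t+c_0}^g(y\cdot z')>n\}$ for $z_0=z'_0$, and then Lemma~\ref{Lem_Absolute_Contin} to transfer from $\nu^-_z$ to $\nu^-_{z'}$. The only organizational difference is that the paper, for each letter $a\in A$, simply picks via Fubini a single $z'$ with $z'_0=a$ for which $\check\tau_{t+c_0}^g(\cdot\,z')$ is $\nu^-_{z'}$-a.s.\ finite, whereas you average over a cylinder and bound by the global quantity $\nu(\check\tau_{t+c_0\alpha^N}^g>n)$; in particular the parameter $N$ is unnecessary and $N=1$ already suffices.
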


\begin{proof}
Let $c_0 >0$ be as in Lemma \ref{Lem_sum_Ine}. 
By Lemma \ref{Lem_tau_finite}  and Fubini's theorem, 
for any $a \in A$, 
there exists $z' \in \bb X^+$ such that $z'_0 = a$ and the function
$y \mapsto \tau_{t+c_0}^g(y \cdot z')$ on $\bb X^-_{z'}$ is finite $\nu^-_{z'}$-almost everywhere. 
Then for any $z \in \bb X^+$ with $z_0 = a$, we have 
\begin{align*}
\left\{ y \in \bb X^-_z: \check \tau_t^g(y \cdot z) >n \right\}  
\subseteq  \left\{ y \in \bb X^-_{z'}: \check \tau_{t+c_0}^g(y \cdot z') >n \right\}. 
\end{align*}
From Lemma \ref{Lem_Absolute_Contin}, we get that for some constant $c>0$, 
\begin{align*}
\nu^-_{z} \left\{ y \in \bb X^-_z: \check \tau_t^g(y \cdot z) >n \right\}
\leq c  \nu^-_{z'}\left\{ y \in \bb X^-_{z'}: \check \tau_{t+c_0}^g(y \cdot z')  \right\}.  
\end{align*}
Thus, the lemma follows from the fact that 
$\nu^-_{z'}\left\{ y \in \bb X^-_{z'}: \check \tau_{t+c_0}^g(y \cdot z') >n \right\}$
converges to $0$ as $n \to \infty$. 
\end{proof}

Now we give an alternative definition of the function $\check V^{g}(z, \cdot)$ for $g \in \scr B^+$,
where the key point is that in this case, the function $y \mapsto \check \tau_t^{g} (y \cdot z)$ is a stopping time 
with respect to the filtration $\{ \scr{F}_k^z \}_{k \geq 0}$.

\begin{lemma}\label{Lem_def_V}
Let $g \in \scr B^+$ with $\nu(g) = 0$ and assume that $g$ is not a coboundary.
Let $g_0$ be the unique element of $\scr B^+$ such that $\mathcal{L}_{\psi} (g_0) = 0$ and $g_0$ is cohomologous to $g$. 
Then, for any $z \in \bb X^+$ and $t \in \bb R$, we have
\begin{align} \label{limit V}
\check V^{g}(z, t) 
& = \lim_{n \to \infty} \int_{\bb X^-_z} \left( t + \check S_{n} g (y \cdot z) \right)
    \mathds 1_{ \left\{ \check \tau_t^{g} (y \cdot z) >n  \right\} }  \nu^-_z(dy)   \nonumber\\
& = \lim_{n \to \infty} \int_{\bb X^-_z} \check S_{n} g_0 (y \cdot z) 
    \mathds 1_{ \left\{ \check \tau_t^{g} (y \cdot z) >n  \right\} }  \nu^-_z(dy).  
\end{align}
In addition, there is a constant $c>0$ such that, for any $z \in \bb X^+$ and $t \in \bb R$,
\begin{align}\label{Uniform_bound_V}
\int_{\bb X^-_z} \check S_{n} g (y \cdot z)
    \mathds 1_{ \left\{ \check \tau_t^{g} (y \cdot z) >n  \right\} }  \nu^-_z(dy)
 \leq \max\{t, 0\} + c, 
\end{align}
for any $z \in \bb X^+$ and $t \in \bb R_+$,
\begin{align}\label{Bound_Har_V_g0}
t - c \leq  \check V^{g}(z, t) \leq t + c, 
\end{align}
and for any $z \in \bb X^+$ and $t < -c$, it holds that $\check V^{g_0}(z, t) = 0$. 

Moreover, for any $z \in \bb X^+$, the function $\check V^{g}(z, \cdot)$ is  non decreasing on $\mathbb{R}$. 
\end{lemma}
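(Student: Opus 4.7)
My plan is to ground everything on the martingale $n \mapsto \check S_n g_0(y\cdot z)$ on $(\bb X^-_z, \nu^-_z, (\scr F^z_n))$ furnished by Proposition \ref{lemma-martingale002} (since $\mathcal L_\psi g_0 = 0$). Because $g \in \scr B^+$ depends only on future coordinates, each $\check S_k g(y\cdot z)$ is $\scr F^z_k$-measurable, so $\check\tau^g_t$ is a stopping time for this filtration and, by Lemma \ref{Lem_Limit_Tau}, it is $\nu^-_z$-a.s.\ finite. A first observation I would record is that $\check S_{\check\tau^g_t} g$ is bounded: on $\{\check\tau^g_t \geq 2\}$ the inequalities $t + \check S_{\check\tau^g_t - 1} g \geq 0$ and $t + \check S_{\check\tau^g_t} g < 0$ force $\check S_{\check\tau^g_t} g \in [-t-\|g\|_\infty, -t)$, while on $\{\check\tau^g_t = 1\}$ one simply has $|\check S_1 g| \leq \|g\|_\infty$. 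Together with $|\check S_n g - \check S_n g_0| \leq c$ from Lemma \ref{Martingale approx}, this shows that $\check V^g(z,t)$ is finite.

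The central step is Doob's optional stopping theorem applied at the bounded stopping time $\check\tau^g_t \wedge n$, which yields
\[
\int_{\bb X^-_z} \check S_n g_0\,\mathds 1_{\{\check\tau^g_t > n\}}\,d\nu^-_z
= -\int_{\bb X^-_z}\check S_{\check\tau^g_t} g_0\,\mathds 1_{\{\check\tau^g_t \leq n\}}\,d\nu^-_z.
\]
Letting $n \to \infty$ and invoking dominated convergence (using the uniform bound on the stopped value), the right-hand side tends to $-\int \check S_{\check\tau^g_t} g_0\,d\nu^-_z = \check V^g(z,t)$, which proves the second equality in \eqref{limit V}; the first equality then follows by writing $t + \check S_n g = \check S_n g_0 + (t + \check S_n g - \check S_n g_0)$ and noting that the correction, uniformly bounded by $|t| + c$, is supported on $\{\check\tau^g_t > n\}$, whose $\nu^-_z$-mass vanishes by Lemma \ref{Lem_Limit_Tau}. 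The uniform bound \eqref{Uniform_bound_V} comes from pushing the same identity further: $-\check S_{\check\tau^g_t} g \leq \max\{t,0\} + \|g\|_\infty$ gives $-\check S_{\check\tau^g_t} g_0 \leq \max\{t,0\} + c'$, and adding back $|\check S_n g - \check S_n g_0| \leq c$ yields the claim. Specialising to $t \geq 0$ and using the two-sided inclusion $-\check S_{\check\tau^g_t} g \in (t, t+\|g\|_\infty]$ in the representation $\check V^g(z,t) = -\int \check S_{\check\tau^g_t} g_0\,d\nu^-_z$ delivers \eqref{Bound_Har_V_g0}.

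For the vanishing of $\check V^{g_0}(z,t)$ when $t<-c$ (taking $c \geq \|g_0\|_\infty$), I would argue that $g_0(T^{-1}(y\cdot z)) \leq \|g_0\|_\infty < -t$ identically on $\bb X^-_z$, so $\check\tau^{g_0}_t \equiv 1$ and therefore $\check V^{g_0}(z,t) = -\int \check S_1 g_0\,d\nu^-_z = 0$ by the martingale property (the martingale starts at zero). Finally, monotonicity of $t \mapsto \check V^g(z,t)$ is read off the first limit formula: for $t \leq t'$ one has $\{\check\tau^g_t > n\} \subseteq \{\check\tau^g_{t'} > n\}$, since raising the starting point can only delay the exit, and the non-negative integrand satisfies $(t + \check S_n g)\mathds 1_{\{\check\tau^g_t > n\}} \leq (t' + \check S_n g)\mathds 1_{\{\check\tau^g_{t'} > n\}}$ pointwise, so passing to the limit preserves the inequality. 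The only slightly delicate point in this plan is the passage to the limit in the optional stopping identity, which hinges on dominated convergence via Lemma \ref{Lem_Limit_Tau} together with the uniform stopped-value bound.
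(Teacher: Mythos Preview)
Your proposal is correct and follows essentially the same route as the paper: both hinge on Doob's optional stopping for the martingale $(\check S_n g_0)$ at the bounded stopping time $\check\tau^g_t \wedge n$, then pass to the limit via dominated convergence using Lemma~\ref{Lem_Limit_Tau} and the uniform bound on the stopped value, with the bounds \eqref{Uniform_bound_V}--\eqref{Bound_Har_V_g0} and the monotonicity read off exactly as you describe. The only cosmetic difference is that the paper writes the optional stopping step as $\int \check S_n g_0\,d\nu^-_z - \int \check S_n g_0 \mathds 1_{\{\check\tau \leq n\}}\,d\nu^-_z$ before applying the theorem, while you apply it directly at $\check\tau \wedge n$; these are equivalent.
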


\begin{proof}
By Lemmas \ref{Martingale approx} and \ref{Lem_Limit_Tau}, 
the limits in \eqref{limit V} coincide  since the difference of the Birkhoff sums for $g$ and $g_0$ is uniformly bounded and 
 $\nu^-_z(y \in \bb X^-_z: \check \tau_t^{g} (y \cdot z) >n) \to 0$ as $n \to \infty$. 
By the optional stopping theorem, 
\begin{align*}
& \int_{\bb X^-_z} \check S_{n} g_0 (y \cdot z) \mathds 1_{ \left\{ \check \tau_t^{g} (y \cdot z) >n \right\} } \nu^-_z(dy)  \nonumber\\
& = \int_{\bb X^-_z} \check S_{n} g_0 (y \cdot z) \nu^-_z(dy)  
 - \int_{\bb X^-_z} \check S_{n} g_0 (y \cdot z) \mathds 1_{ \left\{ \check \tau_t^{g} (y \cdot z) \leq n  \right\} } \nu^-_z(dy)
  \nonumber\\
& =  - \int_{\bb X^-_z} \check S_{\check \tau_t^{g} (y \cdot z)} g_0 (y \cdot z) 
    \mathds 1_{ \left\{ \check \tau_t^{g} (y \cdot z) \leq n  \right\} } \nu^-_z(dy). 
\end{align*}
By Lemma \ref{Lem_Limit_Tau}, we have $\check \tau_t^{g} (y \cdot z) < + \infty$ for $\nu^-_z$-almost every $y \in \bb X^-_z$.
Using the Lebesgue convergence theorem,
we obtain \eqref{limit V} as well as \eqref{Uniform_bound_V}. 

Since for any $z \in \bb X^+$ and $t \in \bb R_+$, 
the function $y \mapsto t + \check S_{\check \tau_t^{g} (y \cdot z)} g_0 (y \cdot z)$ takes values in $[-\|g\|_{\infty} - c, c]$, 
where $c>0$ is from Lemma \ref{Martingale approx}, 
we get $\check V^{g}(z, t) \in [t-c, t + \|g\|_{\infty} + c ]$. 
Besides, if $t < - \|g\|_{\infty}$, we get $\check \tau_{t}^{g} (y \cdot z) = 1$ everywhere, for all $z \in \bb X^+$, 
thus by \eqref{Def_nu_kz} and \eqref{Def_OperatorP} we have 
\begin{align*}
\check V^{g}(z, t) = \int_{A^1_z} \left( g_0(y_{-1} \cdot z) \right) \nu^1_z(dy_{-1}) = \mathcal{L}_{\psi} (g_0) (z) = 0. 
\end{align*}

As $\check \tau_{t_1}^{g} \leq \check \tau_{t_2}^{g}$ for any $t_1 \leq t_2$,
and $t_2 + \check S_n g \geq 0$ on the set $\{\check \tau_{t_2}^{g} >n\}$, it follows that
\begin{align*}
& \int_{\bb X^-_z} \left( t_1 + \check S_n g (y \cdot z) \right)  
  \mathds 1_{ \left\{ \check \tau_{t_1}^{g} (y \cdot z) > n  \right\} } \nu^-_z(dy) \nonumber\\
& \leq \int_{\bb X^-_z} \left( t_2 + \check S_n g (y \cdot z) \right)  
   \mathds 1_{ \left\{ \check \tau_{t_2}^{g} (y \cdot z) > n  \right\} } \nu^-_z(dy).
\end{align*}
Letting $n\to \infty$ yields that the function $\check V^{g}(z, \cdot)$ is non decreasing on $\mathbb{R}$. 
\end{proof}

By using Lemma \ref{Lem_def_V}, we can now give a definition of $\check{V}^g$ 
for a function $g$ only depending on finitely many negative coordinates. 

\begin{lemma}\label{Lem_Harmonic_Function_V}
Let $g \in \scr B$ such that $\nu(g) = 0$ and $g$ is not a coboundary. 
Assume that $g$ only depends on $m$ negative coordinates for some $m \geq 0$.
In other words, the function $h = g \circ T^{m} \in \scr B^+$.
Then, for any $t \in \bb R$, we have uniformly in $z \in \bb X^+$, 
\begin{align*}
\lim_{n \to \infty} \int_{\bb X^-_z} \check{S}_n g (y \cdot z) 
 \mathds 1_{ \left\{ \check{\tau}_t^g (y \cdot z) >n \right\} } \nu^-_z(dy) 
 = \mathcal{L}_{\psi}^m \left( \check{V}^h (\cdot, t) \right)(z). 
\end{align*} 
\end{lemma}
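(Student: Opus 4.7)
The plan is to reduce the statement to Lemma \ref{Lem_def_V} via a change of variables that promotes the first $m$ past coordinates of $y$ to positive-index coordinates of a new base point in $\bb X^+$. First, since $g$ depends only on $x_k$ for $k \geq -m$, the function $h = g \circ T^m$ lies in $\scr B^+$, inherits the non-coboundary property from $g$, and satisfies $\nu^+(h) = \nu(g) = 0$ by $T$-invariance of $\nu$. The identity $g = h \circ T^{-m}$ then yields
\begin{align*}
\check S_n g(w) = \check S_n h(T^{-m}w), \qquad \check \tau_t^g(w) = \check \tau_t^h(T^{-m}w), \qquad w \in \bb X, \ n \geq 1.
\end{align*}

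Second, I would disintegrate $\nu^-_z$ along the first $m$ past coordinates. Given $y \in \bb X^-_z$, write $a = (y_{-m}, \ldots, y_{-1}) \in A^m_z$ and let $y' = (y_{-m-1}, y_{-m-2}, \ldots)$; then for $z' = a \cdot z \in \bb X^+$, one has $y' \in \bb X^-_{z'}$ and $T^{-m}(y\cdot z) = y' \cdot z'$ as elements of $\bb X$. Using definition \eqref{Def_nu_kz} together with the cocycle identity $S_{m+k}\psi((b,a)\cdot z) = S_m\psi(a\cdot z) + S_k\psi(b\cdot z')$ for $b \in A^k_{z'}$ --- which follows from the equality $(b,a)\cdot z = b \cdot z'$ in $\bb X^+$ and from $T^k(b\cdot z') = z'$ --- I would verify that for any bounded measurable $F$ on $\bb X^-_z$,
\begin{align*}
\int_{\bb X^-_z} F(y)\, \nu^-_z(dy) = \sum_{a \in A^m_z} e^{-S_m\psi(a\cdot z)} \int_{\bb X^-_{z'}} \widetilde F_a(y')\, \nu^-_{z'}(dy'),
\end{align*}
where $\widetilde F_a(y')$ denotes the value of $F$ at the element of $\bb X^-_z$ reconstructed from the word $a$ and the tail $y'$.

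Applying this formula with $F(y) = \check S_n g(y\cdot z) \mathds 1_{\{\check\tau_t^g(y\cdot z)>n\}}$ and invoking the identities of the first step, the left-hand side of the claimed limit becomes the finite sum
\begin{align*}
\sum_{a \in A^m_z} e^{-S_m\psi(a\cdot z)} \int_{\bb X^-_{z'}} \check S_n h(y'\cdot z')\, \mathds 1_{\{\check\tau_t^h(y'\cdot z')>n\}}\, \nu^-_{z'}(dy').
\end{align*}
By Lemma \ref{Lem_def_V}, combined with Lemma \ref{Lem_Limit_Tau} to discard the term $t\, \nu^-_{z'}(\check\tau_t^h>n) \to 0$, each inner integral tends to $\check V^h(z',t)$ as $n\to\infty$. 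Since the outer sum has at most $|A|^m$ terms with uniformly bounded weights, the limit equals $\sum_{a\in A^m_z} e^{-S_m\psi(a\cdot z)} \check V^h(a\cdot z, t) = \mathcal L_\psi^m(\check V^h(\cdot,t))(z)$ by the iterated formula for the transfer operator.

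The main obstacle is establishing that this convergence is uniform in $z\in\bb X^+$, as asserted. For this I would combine Lemma \ref{Martingale approx} to replace $\check S_n h$ by $\check S_n h_0$ (with $h_0$ the martingale representative from Proposition \ref{lemma-martingale001}), and then use the optional-stopping identity $\int \check S_n h_0 \mathds 1_{\{\check\tau_t^h>n\}} d\nu^-_{z'} = -\int \check S_{\check\tau_t^h} h_0\, \mathds 1_{\{\check\tau_t^h \leq n\}}\, d\nu^-_{z'}$, so that the gap to the limit $\check V^h(z',t) = -\int \check S_{\check\tau_t^h} h_0\, d\nu^-_{z'}$ is bounded by a constant multiple of $\nu^-_{z'}(\check\tau_t^h>n)$, which vanishes uniformly in $z'$ by Lemma \ref{Lem_Limit_Tau}. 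The verification of the cocycle identity and the disintegration formula, and the uniform version of Lemma \ref{Lem_def_V}, are the only technical points; everything else is routine bookkeeping.
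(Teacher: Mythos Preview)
Your proposal is correct and follows essentially the same route as the paper: condition on the first $m$ past coordinates, use the identities $\check S_n g = \check S_n h \circ T^{-m}$ and $\check\tau_t^g = \check\tau_t^h \circ T^{-m}$ to reduce to an integral over $\bb X^-_{a\cdot z}$, and then invoke Lemma~\ref{Lem_def_V} and the iterated transfer operator. Your explicit treatment of the uniformity in $z$ via optional stopping and Lemma~\ref{Lem_Limit_Tau} is a welcome addition, since the paper's proof leaves this point implicit.
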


Let $g$ and $h$ be as in Lemma \ref{Lem_Harmonic_Function_V}. We set for $z \in \bb X^+$ and $t \in \bb R$,  
\begin{align*}
\check{V}^g (z,t) = \mathcal{L}_{\psi}^m \left( \check{V}^h (\cdot, t) \right)(z). 
\end{align*}
Lemma \ref{Lem_Harmonic_Function_V} implies that this notation is coherent with that introduced in \eqref{Def_V_g_001}.  

\begin{proof}[Proof of Lemma \ref{Lem_Harmonic_Function_V}]
By conditioning over the $m$ first coordinates of $y$, we get for $n \geq 0$,  
\begin{align*}
& \int_{\bb X^-_z} \check{S}_n g (y \cdot z) 
 \mathds 1_{ \left\{ \check{\tau}_t^g (y \cdot z) >n \right\} } \nu^-_z(dy) \notag\\
 & = \sum_{a \in A_z^m} \exp(- S_m \psi(a \cdot z))   
  \int_{\bb X^-_{a \cdot z}}   \check{S}_n  g (  (y \cdot a)  \cdot z) 
 \mathds 1_{ \left\{ \check{\tau}_t^{g} ( (y \cdot a)  \cdot z) > n \right\} }  \nu^-_{a \cdot z}(dy)   \notag\\
 & = \sum_{a \in A_z^m} \exp(- S_m \psi(a \cdot z))   
  \int_{\bb X^-_{a \cdot z}}   \check{S}_n  h (T^{-m}  (y \cdot a)  \cdot z) 
 \mathds 1_{ \left\{ \check{\tau}_t^{h \circ T^{-m}} ( (y \cdot a)  \cdot z) > n \right\} }  \nu^-_{a \cdot z}(dy)   \notag\\
& = \sum_{a \in A_z^m} \exp(- S_m \psi(a \cdot z))   
  \int_{\bb X^-_{a \cdot z}}   \check{S}_{n} h ( y \cdot (a \cdot z))   
   \mathds 1_{ \left\{ \check{\tau}_t^h (y \cdot (a \cdot z)) >n \right\} }   \nu^-_{a \cdot z}(dy), 
\end{align*} 
where we have used the relations $(y \cdot a) \cdot z = T^m (y \cdot (a \cdot z))$
and $\check{\tau}_t^{h \circ T^{-m}} = \check{\tau}_t^{h} \circ T^{-m}$.  
The conclusion now follows from Lemma \ref{Lem_def_V} and the definition of the transfer operator $\mathcal L_{\psi}^m.$ 
\end{proof}

We will prove that the convergence in Lemma \ref{Lem_Harmonic_Function_V} 
holds in a weak sense for every function $g \in \scr B$.
The key step to prove Theorem \ref{Proposition exist harm func} is the following technical lemma which shows that
the convergence of Lemma \ref{Lem_Harmonic_Function_V} holds for all functions $g \in \scr B$ in a weak sense. 

\begin{lemma}\label{Lem_harm_func_001}
Assume that $g\in \scr B$ is not a coboundary w.r.t.\ to $T$ and $\nu (g)=0$. 
Then, for any continuous compactly supported function  $\vphi$ on $\bb R$,  
uniformly in $z \in \bb X^+$, the following limit exists and is finite: 
\begin{align*} 
\lim_{n \to \infty} \int_{\bb R} \vphi(t) \int_{\bb X^-_z} 
     \check S_{n} g (y \cdot z) \mathds 1_{ \left\{ \check \tau_t^g(y \cdot z) >n \right\} } \nu^-_z(dy)  dt. 
\end{align*}
\end{lemma}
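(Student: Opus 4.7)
The plan is to approximate $g$ by the sequence $(g_m)_{m \geq 0}$ from Lemma \ref{Lem_Appro_g} --- each $g_m$ depends only on $\{x_k\}_{k \geq -m}$ and satisfies $\|\check S_n g - \check S_n g_m\|_\infty \leq c_1 \alpha^m$ uniformly in $n$ --- to apply the already-proved Lemma \ref{Lem_Harmonic_Function_V} to each $g_m$, and to pass to the limit in $m$ via a Cauchy argument in $n$.

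The first step would be to Fubini-dualise the $t$-integral. Writing $\Phi(s) := \int_{-s}^{\infty} \vphi(u)\,du$ and $m_n(y,z) := \min_{1 \leq k \leq n} \check S_k g(y \cdot z)$, the event $\{\check \tau_t^g(y\cdot z) > n\}$ is $\{t \geq -m_n(y,z)\}$, so
\[
I_n(z) := \int_{\bb R} \vphi(t) \int_{\bb X^-_z} \check S_n g(y \cdot z) \mathds 1_{\{\check \tau_t^g(y\cdot z) > n\}}\, \nu^-_z(dy)\,dt = \int_{\bb X^-_z} \check S_n g(y\cdot z)\,\Phi(m_n(y,z))\,\nu^-_z(dy).
\]
The function $\Phi$ is Lipschitz with constant $\|\vphi\|_\infty$, bounded by $\|\vphi\|_1$, and vanishes on $(-\infty, -M]$ when $\mathrm{supp}\,\vphi \subset [-M, M]$. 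Let $I_n^{(m)}(z)$ and $m_n^{(m)}(y,z)$ denote the analogous quantities for $g_m$. Splitting $I_n(z) - I_n^{(m)}(z)$ into a part where the Birkhoff sums differ and a part where the weight $\Phi$ differs, the first is bounded by $c_1\alpha^m \|\vphi\|_1$ via Lemma \ref{Lem_Appro_g}; for the second, Lipschitzness gives a factor $\|\vphi\|_\infty c_1\alpha^m$ and the $\Phi$-difference is nonzero only where both minima exceed $-M - c_1\alpha^m$, hence where $\check \tau^{g_m}_{M+c_1\alpha^m}(y\cdot z) > n$. On that set, \eqref{Uniform_bound_V} --- extended from $\scr B^+$ to $\scr B$ via Lemma \ref{Martingale approx} --- bounds $\int |\check S_n g_m|\,d\nu^-_z$ uniformly in $n, z, m$, yielding $\sup_{n, z}|I_n(z) - I_n^{(m)}(z)| \leq C(\vphi)\alpha^m$.

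For fixed $m$, Lemma \ref{Lem_Harmonic_Function_V} gives $H_n^{(m)}(z, t) := \int_{\bb X^-_z} \check S_n g_m(y \cdot z) \mathds 1_{\{\check \tau_t^{g_m}(y\cdot z) > n\}}\,\nu^-_z(dy) \to \mathcal L_\psi^m \check V^{h_m}(z, t)$ uniformly in $z \in \bb X^+$ for each $t$, with uniform bound $|H_n^{(m)}(z,t)| \leq 2|t| + c$. The function $t \mapsto \sup_{z \in \bb X^+} |H_n^{(m)}(z, t) - \mathcal L_\psi^m \check V^{h_m}(z, t)|$ therefore tends to $0$ pointwise in $t$ while remaining $\vphi$-dominated, so dominated convergence yields $I_n^{(m)}(z) \to \int_{\bb R} \vphi(t) \mathcal L_\psi^m \check V^{h_m}(z, t)\,dt$ uniformly in $z$. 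Combined with the previous step through the triangle inequality
\[
|I_n(z) - I_{n'}(z)| \leq 2 C(\vphi)\,\alpha^m + |I_n^{(m)}(z) - I_{n'}^{(m)}(z)|,
\]
sending $n, n' \to \infty$ first and then $m \to \infty$ shows that $(I_n)_{n \geq 1}$ is Cauchy uniformly in $z$, which gives the claim.

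The main obstacle will be the approximation estimate: the pointwise error $\check S_n g - \check S_n g_m$ is small, but the integrand grows in $t$, so controlling the difference uniformly in $n$ and $z$ requires simultaneously exploiting the half-line support of $\Phi$ (to restrict the integration to the exit-time-constrained set) and the uniform a priori bound \eqref{Uniform_bound_V}, extended to $\scr B$ by cohomology. Once this is in place, the rest is dominated convergence combined with a Cauchy argument.
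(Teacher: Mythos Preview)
Your argument is correct and takes a genuinely different route from the paper. The paper never performs your Fubini rewriting $I_n(z)=\int_{\bb X^-_z}\check S_n g\,\Phi(m_n)\,d\nu^-_z$; instead it works with $W_n(z,t)=\int_{\bb X^-_z}(t+\check S_n g)\mathds 1_{\{\check\tau_t^g>n\}}\,d\nu^-_z$ and exploits the \emph{monotonicity} of $t\mapsto W_n(z,t)$ together with the set inclusions $\{\check\tau_{t-2c_1\alpha^m}^{g_m}>n\}\subseteq\{\check\tau_t^g>n\}\subseteq\{\check\tau_{t+2c_1\alpha^m}^{g_m}>n\}$ to sandwich $W_n(z,t)$ between $W_{n,m}(z,t\pm 2c_1\alpha^m)$. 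After passing to the limit in $n$, the gap $\int\vphi(t)[\check V^{g_m}(z,t+2c_1\alpha^m)-\check V^{g_m}(z,t-2c_1\alpha^m)]\,dt$ is closed by translating $\vphi$ and using dominated convergence with the uniform growth bound $\check V^{g_m}(z,t)\leq c_2+\max\{t,0\}$.

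Your approach instead produces a \emph{quantitative} Cauchy estimate $\sup_{n,z}|I_n(z)-I_n^{(m)}(z)|\leq C(\vphi)\alpha^m$ directly, which is a bit stronger than what the paper extracts. The price is the second term in your splitting: you need $\int|\check S_n g_m|\,\mathds 1_{\{\check\tau^{g_m}_{M+c_1\alpha^m}>n\}}\,d\nu^-_z$ bounded uniformly in $n,z,m$, and your citation of \eqref{Uniform_bound_V} ``extended via Lemma \ref{Martingale approx}'' compresses a small computation (on the event $\check S_n g_m\geq -t$, so $|\check S_n g_m|\leq \check S_n g_m+2t$, and then $\int(t+\check S_n g_m)\mathds 1\leq\int(t+2c_1+\check S_n g_0)\mathds 1_{\{\check\tau_{t+2c_1}^{g_0}>n\}}$, to which \eqref{Uniform_bound_V} applies). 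The paper's sandwiching avoids handling absolute values of $\check S_n g_m$ altogether, since its integrand $t+\check S_n g_m$ is automatically non-negative on the event; your approach is more hands-on but yields an explicit rate in $m$.
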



\begin{proof}
Assume that $g\in \scr B$.  Let $(g_m)_{m \geq 0}$, $c_1>0$ and $\alpha \in (0,1)$ be as in Lemma \ref{Lem_Appro_g}. 
Set 
\begin{align*}
W_n (z, t) 
= \int_{\bb X^-_z} \left( t +  \check S_{n} g (y \cdot z) \right) 
   \mathds 1_{ \left\{ \check \tau_t^g(y \cdot z) >n \right\} } \nu^-_z(dy) 
\end{align*}
and 
\begin{align*}
W_{n,m} (z, t) 
= \int_{\bb X^-_z} \left( t +  \check S_{n} g_m (y \cdot z) \right)
     \mathds 1_{ \left\{ \check \tau_t^{g_m} (y \cdot z) >n \right\} } \nu^-_z(dy). 
\end{align*}
By \eqref{Basic_Inequality}, we have the inclusions
\begin{align*}
\left\{  \check \tau_{t-2c_1  \alpha^{m}}^{g_{m}} > n  \right\}
\subseteq    \left\{  \check \tau_t^{g} > n \right\}   
\subseteq  \left\{  \check \tau_{t+2c_1  \alpha^{m}}^{g_{m}} > n \right\}, 
\end{align*}
which imply that
\begin{align}\label{Pf_W_n_Wnp}
W_{n,m} (z, t - 2 c_1 \alpha^m) \leq  W_{n} (z, t)  \leq  W_{n,m} (z, t + 2 c_1 \alpha^m). 
\end{align}
In the same way, we have
\begin{align}\label{Pf_Wnp_02}
W_{n,m} (z, t) \leq  W_{n, 0} (z, t + 2 c_1)  \leq  c_2 + 2 \max \{t, 0\}, 
\end{align}
where the last bound follows from \eqref{Uniform_bound_V}. 

By lemma \ref{Lem_Harmonic_Function_V}, for fixed $m \geq 0$, as $n \to \infty$, 
the function $W_{n,m} (z, t)$ converges to $\check{V}^{g_m} (z, t)$, uniformly in $z \in \bb X^+$. 
From \eqref{Pf_W_n_Wnp} we get
\begin{align*}
\check{V}^{g_m} (z, t - 2 c_1 \alpha^m) 
\leq \liminf_{n \to \infty}  W_{n} (z, t) 
\leq \limsup_{n \to \infty}  W_{n} (z, t)
\leq \check{V}^{g_m} (z, t + 2 c_1 \alpha^m). 
\end{align*}
Now we have 
\begin{align*}
& \int_{\bb R} \vphi(t) \left[ \check{V}^{g_m} (z, t + 2 c_1 \alpha^m) - \check{V}^{g_m} (z, t - 2 c_1 \alpha^m) \right] dt  \nonumber\\
& =  \int_{\bb R} \left[ \vphi(t - 2 c_1 \alpha^m) - \vphi(t + 2 c_1 \alpha^m) \right] \check{V}^{g_m} (z, t) dt. 
\end{align*}
Using \eqref{Pf_Wnp_02} and Lemma \ref{Lem_Harmonic_Function_V}, we have that $\check{V}^{g_m} (z, t) \leq c_2 + \max \{t, 0\}$. 
As $\vphi$ is continuous on $\bb R$ with compact support, 
by the Lebesgue dominated convergence theorem, we get that uniformly in $z \in \bb X^+$, 
\begin{align*}
\lim_{m \to \infty} 
\int_{\bb R} \vphi(t) \left[ \check{V}^{g_m} (z, t + 2 c_1 \alpha^m) - \check{V}^{g_m} (z, t - 2 c_1 \alpha^m) \right] dt 
= 0. 
\end{align*}
This tells us that $\int_{\bb R} \vphi(t) W_n(z,t) dt$ has a uniform limit as $n \to \infty$. 
\end{proof}

We will use the previous lemma to build a function $\check V^g(z,t)$.
The existence of this function will be deduced from the following elementary fact from the  theory of distributions. 

\begin{lemma}\label{Lem_Conver_Func}
Let $(V_n)_{n \geq 1}$ be a sequence of non decreasing functions on $\bb R$.
Assume that for every continuous compactly supported function $\vphi$ on $\bb R$, 
the sequence $\int_{\bb R} V_n(t) \vphi (t) dt$ admits a finite limit. 
Then there exists a unique right continuous and non decreasing function $V$ on $\bb R$ such that 
for any continuous compactly supported function $\vphi$, we have
\begin{align*}
\lim_{n \to \infty} \int_{\bb R} V_n(t) \vphi (t) dt = \int_{\bb R} V(t) \vphi (t) dt. 
\end{align*}
\end{lemma}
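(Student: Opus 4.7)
The strategy is to combine a local uniform boundedness estimate for the sequence $(V_n)$ with Helly's selection theorem, and then to pin down uniqueness via a standard distributional argument.

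The first step is to show that $(V_n)$ is uniformly bounded on every compact subset of $\bb R$, a property which does \emph{not} follow directly from the hypothesis but which can be extracted using the monotonicity of $V_n$. Fix $a < b$ and choose a nonnegative $\vphi \in C_c(\bb R)$ supported in $[b, b+1]$ with $\int \vphi(t) dt > 0$. Since $V_n$ is non-decreasing,
\begin{align*}
V_n(b) \int_{\bb R} \vphi(t) dt \leq \int_{\bb R} V_n(t) \vphi(t) dt,
\end{align*}
and the right hand side is bounded in $n$ by hypothesis. A symmetric argument using a test function supported in $[a-1, a]$ bounds $V_n(a)$ from below. Monotonicity then yields $\sup_n \sup_{t \in [a,b]} |V_n(t)| < \infty$ for every compact interval $[a,b]$.

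The second step is to apply Helly's selection theorem on each $[-N, N]$ together with a diagonal extraction to obtain a subsequence $(V_{n_k})$ which converges pointwise on $\bb R$ (outside a countable set) to a non-decreasing function; I let $V$ denote its right-continuous modification. The local uniform bound from the first step, combined with the dominated convergence theorem, then gives
\begin{align*}
\int_{\bb R} V(t) \vphi(t) dt = \lim_{k \to \infty} \int_{\bb R} V_{n_k}(t) \vphi(t) dt = L(\vphi)
\end{align*}
for every $\vphi \in C_c(\bb R)$, where $L(\vphi) := \lim_n \int V_n \vphi \, dt$ is the limit guaranteed by the hypothesis. For uniqueness, if $V_1, V_2$ are right-continuous and non-decreasing with $\int (V_1 - V_2) \vphi \, dt = 0$ for all $\vphi \in C_c(\bb R)$, then $V_1 = V_2$ almost everywhere with respect to Lebesgue measure, and right continuity upgrades this to pointwise equality. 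In particular, the limit $V$ does not depend on the extracted subsequence, so the original sequence itself satisfies $\int V_n \vphi \, dt \to \int V \vphi \, dt$ for every $\vphi \in C_c(\bb R)$.

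The main obstacle is conceptual rather than technical: the hypothesis only controls integrals against test functions, not pointwise values of $V_n$. Monotonicity is exactly what allows one to sandwich $V_n(t)$ between integrals of $V_n$ against well-chosen nonnegative test functions, converting integral control into pointwise control, after which Helly's theorem and dominated convergence finish the job.
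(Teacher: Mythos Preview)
Your proof is correct. The paper does not actually prove this lemma; it states it as ``an elementary fact from the theory of distributions'' and moves on, so there is no argument in the paper to compare against. Your route via local uniform bounds (extracted from monotonicity and test functions), Helly's selection theorem, and dominated convergence is the natural way to fill in this gap, and the uniqueness argument via a.e.\ equality plus right continuity is the standard one.
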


Now we construct the function $\check V^g(z,t)$ for any $g \in \scr B$. 

\begin{lemma}\label{Lemma exist of harm func(copy)}
Assume that $g\in \scr B$ is not a coboundary w.r.t.\ to $T$ and $\nu (g)=0$.
Then, for any $z \in \bb X^+$, 
there exists a unique non decreasing  and  right continuous function $\check V^{g}(z, \cdot)$ on $\bb R$ such that

\noindent
1. For any continuous compactly supported function  $\vphi$ on $\bb R$,  
uniformly in $z \in \bb X^+$,
\begin{align} 
\lim_{n \to \infty} \int_{\bb R} \vphi(t) \int_{\bb X^-_z} 
   \check S_{n} g (y \cdot z) \mathds 1_{ \left\{ \check \tau_t^g(y \cdot z) >n \right\} } \nu^-_z(dy)  dt  
  = \int_{\bb R} \vphi(t) \check V^{g}(z, t) dt.    \label{limit V002}
\end{align}

\noindent
2. For any continuous compactly supported function  $\vphi$ on $\bb R$, 
 the mapping $z \mapsto \int_{\bb R} \vphi(t) \check V^{g}(z,  t) dt$ is continuous on $\bb X^+$. 



\noindent
3. There exists a constant $c >0$ such that for any $z \in \bb X^+$ and $t \in \bb R_+$, 
\begin{align}\label{Bound_Vg_01}
t-c \leq \check V^{g}(z, \cdot) \leq t + c. 
\end{align}
In addition, for any $z \in \bb X^+$ and $t \leq -c$, we have $\check V^{g}(z, t) = 0$. 

\end{lemma}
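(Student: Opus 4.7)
The plan is to combine the distributional convergence from Lemma \ref{Lem_harm_func_001} with the general fact about non-decreasing sequences recorded in Lemma \ref{Lem_Conver_Func}. Set
\begin{align*}
\tilde V_n(z, t) := \int_{\bb X^-_z} \bigl(t + \check S_n g(y\cdot z)\bigr) \mathds 1_{\{\check\tau_t^g(y\cdot z) > n\}}\, \nu^-_z(dy),
\end{align*}
which is a non-decreasing function of $t$ by the monotonicity argument at the end of the proof of Lemma \ref{Lem_def_V}. The difference between $\tilde V_n(z,t)$ and the integrand appearing in (1) is $t\,\nu^-_z(\check\tau_t^g > n)$; since $\vphi$ is compactly supported and $\nu^-_z(\check\tau_t^g > n) \to 0$ uniformly in $z$ by Lemma \ref{Lem_Limit_Tau}, this difference integrated against $\vphi$ tends to $0$ uniformly in $z$. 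Combined with Lemma \ref{Lem_harm_func_001}, this shows that $\int_{\bb R} \vphi(t) \tilde V_n(z, t)\,dt$ converges to a finite limit uniformly in $z$. Applying Lemma \ref{Lem_Conver_Func} pointwise in $z$ to the non-decreasing sequence $\tilde V_n(z, \cdot)$ yields the existence of a unique right-continuous non-decreasing function $\check V^g(z, \cdot)$ realising the convergence stated in (1).

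For property (2), I would first show that for each fixed $n$ the map $z \mapsto \int_{\bb R} \vphi(t) \tilde V_n(z, t)\,dt$ is continuous on $\bb X^+$; continuity of the limit then follows from the uniform convergence established above. By Fubini,
\begin{align*}
\int_{\bb R} \vphi(t) \tilde V_n(z, t)\,dt
= \int_{\bb X^-_z} \int_{-\min_{1 \leq k \leq n} \check S_k g(y\cdot z)}^\infty \vphi(t) \bigl(t + \check S_n g(y\cdot z)\bigr)\,dt\, \nu^-_z(dy),
\end{align*}
and the inner integral is jointly continuous in $(y, z)$ by Corollary \ref{Cor_Holder_minimum} together with the H\"older continuity of $g$. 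The $z$-dependence of the measure $\nu^-_z$ is controlled via Lemma \ref{Lem_Absolute_Contin}: for $z, z'$ sharing the same first coordinate, the Radon--Nikodym density $e^{\theta(\cdot, z, z')}$ tends uniformly to $1$ as $z' \to z$, and the desired continuity follows from dominated convergence.

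Property (3) is deduced by passing to the limit in the bounds for $\tilde V_n(z, t)$ already established in the proofs of Lemmas \ref{Lem_def_V} and \ref{Lem_harm_func_001}: specifically, the upper bound $\tilde V_n(z, t) \leq 2 \max\{t, 0\} + c_2$ from \eqref{Pf_Wnp_02} and the matching lower bound for $t \geq 0$ obtained from the martingale approximation of Lemma \ref{Lem_def_V} applied to the approximations $g_m$ of Lemma \ref{Lem_Appro_g}. The vanishing statement on $(-\infty, -c]$ for $c \geq \|g\|_\infty$ is immediate: if $t < -\|g\|_\infty$ then $\check\tau_t^g(y\cdot z) = 1$ identically, so $\tilde V_n(z, t) = 0$ for every $n \geq 1$, and therefore $\check V^g(z, t) = 0$.

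I expect property (2) to be the main obstacle. Unlike in Lemma \ref{Lem_Harmonic_Function_V}, where the dependence on $z$ is packaged through the continuous action of the transfer operator $\mathcal{L}_\psi^m$, here the whole tail of $z$ enters both the stopping time $\check\tau_t^g(y\cdot z)$ and the Birkhoff sum $\check S_n g(y\cdot z)$ in a nonlinear way. Integration against the continuous test function $\vphi$ absorbs the indicator into a $z$-dependent lower endpoint of the $t$-integral, and Corollary \ref{Cor_Holder_minimum} is precisely the tool designed to control the dependence of this endpoint on $z$; the remaining work is essentially bookkeeping.
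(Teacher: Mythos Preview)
Your proposal is correct and follows essentially the same architecture as the paper: define $\tilde V_n(z,t)$, combine Lemma~\ref{Lem_Limit_Tau} with Lemma~\ref{Lem_harm_func_001} to get distributional convergence, apply Lemma~\ref{Lem_Conver_Func} for existence, prove continuity at each finite $n$ and pass to the limit via the uniform convergence, and derive the bounds by comparison with $g_0$ (or equivalently the $g_m$). The only tactical difference is in the continuity step~(2): the paper does not rewrite the integral via Fubini but instead establishes the sandwich inequality $\check V_n^g(z,t-\ee)\leq\check V_n^g(z',t)\leq\check V_n^g(z,t+\ee)$ for $\omega(z,z')$ large (from Lemma~\ref{Lem_sum_Ine}) and then integrates against $\vphi$ using the uniform bound \eqref{Bound_Vng_02}. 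Your route---absorbing the indicator into a $z$-dependent endpoint via Corollary~\ref{Cor_Holder_minimum} and handling the measure change by Lemma~\ref{Lem_Absolute_Contin}---is equally valid and arguably makes the dependence on $\nu^-_z$ more transparent.
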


By Lemma \ref{Lem_def_V}, in the case $g \in \scr B^+$, the notation $\check V^{g}(z, \cdot)$ 
is coherent with the one in \eqref{Def_V_g_001}.

\begin{proof}[Proof of Lemma \ref{Lemma exist of harm func(copy)}]
%

Fix $z \in \bb X^+$. 
By Lemmas \ref{Lem_Limit_Tau} and \ref{Lem_harm_func_001}, the following limit exists: 
for any continuous compactly supported function $\vphi$ on $\bb R$, 
\begin{align} 
\lim_{n \to \infty} \int_{\bb R} \vphi(t) \int_{\bb X^-_z}  
    \left( t + \check S_{n} g (y \cdot z) \right)  \mathds 1_{ \left\{ \check \tau_t^g(y \cdot z) >n  \right\} } \nu^-_z(dy)  dt. 
\end{align}
For $t \in \bb R$, set 
\begin{align}\label{Def_Vn_g_001}
\check V_n^g (z, t) = \int_{\bb X^-_z}  
    \left( t + \check S_{n} g (y \cdot z) \right)  \mathds 1_{ \left\{ \check \tau_t^g(y \cdot z) >n  \right\} } \nu^-_z(dy). 
\end{align}
Then the function $\check V_n^g (z, \cdot)$ is non decreasing on $\bb R$. 
By Lemma \ref{Lem_Conver_Func}, there exists a unique non decreasing and right continuous function $\check V^g (z, \cdot)$ on $\bb R$
such that for any continuous function $\vphi$ on $\bb R$ with compact support, 
\begin{align*}
\lim_{n \to \infty} \int_{\bb R} \check V_n^g (z, t) \vphi (t) dt = \int_{\bb R} \check V^g (z, t) \vphi (t) dt. 
\end{align*}

Note that for $t < - \|g\|_{\infty}$,
we have $\check \tau^{g}_t = 1$ everywhere. Hence $\check V^{g}(z, t) = 0$ for $t \leq - c$. 

We now prove \eqref{Bound_Vg_01}. 
By Proposition \ref{lemma-martingale001}, there exists $g_0 \in \scr B^+$ 
such that $\mathcal{L}_{\psi} (g_0) = 0$ and $g$ is cohomologous to $g_0$. 
By Lemma \ref{Lem_Appro_g}, we can choose a constant $c>0$ large enough such that for any $n \geq 1$, 
it holds that $\| \check S_n g - \check S_n g_0 \|_{\infty} \leq c$. 
By Lemmas \ref{Lem_Limit_Tau} and \ref{Lem_harm_func_001}, we have, 
for any continuous non negative function $\vphi$ on $\bb R$ with compact support,  
\begin{align*}
\lim_{n \to \infty} \int_{\bb R} \vphi (t)  \int_{\bb X^-_z}
    \left( t + c + \check S_{n} g_0 (y \cdot z) \right) \mathds 1_{ \left\{ \check \tau_t^g(y \cdot z) >n \right\} } \nu^-_z(dy)  dt  
  = \int_{\bb R} \check V^g (z, t) \vphi (t) dt. 
\end{align*}
Note that from Lemma \ref{Lem_Limit_Tau}, we have $\nu^-_z( y \in \bb X^-_z: \check \tau_t^g(y \cdot z) >n) \to 0$ as $n \to \infty$. 
As we have the following inclusion: for any $t \in \bb R$, 
\begin{align*}
\left\{ \check \tau_{t-c}^{g_0} >n  \right\} 
\subset  \left\{ \check \tau_t^g >n  \right\} 
\subset  \left\{ \check \tau_{t+c}^{g_0} >n  \right\},  
\end{align*}
and as $t + c + \check S_{n} g_0 \geq  0$ on the set $\{ \check \tau_{t+c}^{g_0} >n \}$,
we get 
\begin{align*}
\int_{\bb R} \check V^{g_0} (z, t-c) \vphi (t) dt
\leq \int_{\bb R} \check V^g (z, t) \vphi (t) dt
\leq \int_{\bb R} \check V^{g_0} (z, t+c) \vphi (t) dt. 
\end{align*}
Since this holds for any continuous non-negative test function $\vphi$ on $\bb R$,
we obtain 
\begin{align}\label{Inequality_Vg_Vg0}
\check V^{g_0} (z, t-c) \leq  \check V^g (z, t)  \leq  \check V^{g_0} (z, t+c). 
\end{align}
This, together with Lemma \ref{Bound_Har_V_g0}, concludes the proof of \eqref{Bound_Vg_01}. 

We now want to prove the continuity in $z \in \bb X^+$ of the function $z \mapsto \int_{\bb R} \vphi(t) \check V^g (z,t) dt$. 
To this aim, we establish a uniform bound for the quantity $\check V_n^g (z,t)$ defined in \eqref{Def_Vn_g_001}. 
Indeed, as usual, we have $\check V_n^g (z,t) \leq \check V_n^{g_0} (z,t+c)$. 
Now the optional stopping theorem gives 
\begin{align*}
 \check V_n^{g_0} (z,t)   
&  = t \nu^-_z\left(y \in \bb X^-_z:  \check \tau_{t}^{g_0}(y \cdot z)  > n \right)   \notag\\
& \quad  - \int_{\bb X^-_z}  \check S_{\check \tau_{t}^{g_0} (y \cdot z)} g_0 (y \cdot z)  
      \mathds 1_{ \left\{  \check \tau_{t}^{g_0} (y \cdot z)  \leq n  \right\} } \nu^-_z(dy)  \nonumber\\
&  \leq  |t| + |t| + \|g_0\|_{\infty}  = 2|t| + \|g_0\|_{\infty}. 
\end{align*}
From \eqref{Inequality_Vg_Vg0} we get 
\begin{align}\label{Bound_Vng_02}
\check V_n^{g} (z,t) \leq 2 |t| + 2c + \|g_0\|_{\infty}.
\end{align}

It remains to prove that for any continuous compactly supported function  $\vphi$ on $\bb R$, 
 the mapping $z \mapsto \int_{\bb R} \vphi(t) \check V^{g}(z,  t) dt$ is continuous on $\bb X^+$. 
It suffices to prove that for any $n \geq 1$, 
the mapping $z \mapsto \int_{\bb R} \vphi(t) \check V_n^{g}(z,  t) dt$ is continuous on $\bb X^+$. 

A priori, for fixed $t \in \bb R$, the function $z \mapsto \check V_n^{g}(z,  t)$ is not continuous. 
Nevertheless, we claim that it satisfies the following weak continuity property:
for $\ee >0$, there exists $k \in \bb N$ such that for any $z, z' \in \bb X^+$ with $w(z, z') \geq k$
we have
\begin{align*}
\check V_n^{g}(z, t-\ee)  \leq  \check V_n^{g}(z', t) \leq \check V_n^{g}(z, t + \ee). 
\end{align*}
Indeed, this follows from the inequality \eqref{Second_Ine_Sn_z_z} in Lemma \ref{Lem_sum_Ine}.
This, together with the bound \eqref{Bound_Vng_02} and the uniform continuity of the function $\vphi$, implies that 
the mapping $z \mapsto \int_{\bb R} \vphi(t) \check V_n^{g}(z,  t) dt$ is continuous on $\bb X^+$. 
\end{proof}

The previous statements can be summarized as follows: 

\begin{theorem}\label{Proposition exist harm func_aaa}
Let $g$ be a H\"older continuous function on $\bb X$ such that  $\nu (g) = 0$ and $g$ is not a coboundary.
Then there exists a unique non decreasing and right continuous function $\check V^{g}: \bb R \mapsto \bb R_+$ with the following properties:

\noindent 1. For any continuous compactly supported function  $\vphi$ on $\bb R$,
\begin{align} \label{Harmonic_V_f_aaa}
\lim_{n \to \infty} \int_{\bb R} \vphi(t) \int_{\bb X}  \check S_{n} g (x) \mathds 1_{ \left \{ \check \tau_t^g(x) >n \right\} }  \nu(dx) dt
= \int_{\bb R} \vphi(t) \check V^{g}(t) dt.
\end{align}

\noindent  2. There exists a constant $c>0$ such that for any $t\in \bb R$ it holds 
\begin{align}\label{Bound_V_f_aaa}
\max \{t - c, 0\} \leq \check V^g(t) \leq  \max \{t, 0 \} +  c.
\end{align}

\end{theorem}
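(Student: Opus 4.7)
The plan is to deduce this global statement from the fiberwise version already established in Lemma \ref{Lemma exist of harm func(copy)} by integrating over $\bb X^+$ against the measure $\nu^+$. The key decomposition comes from Lemma \ref{Lem_Fubini}, which allows us to write, for any continuous compactly supported function $\vphi$ on $\bb R$,
\begin{align*}
\int_{\bb R} \vphi(t) \int_{\bb X} \check S_n g(x) \mathds 1_{\{\check \tau_t^g(x) > n\}} \nu(dx) dt
= \int_{\bb X^+} \Phi_n(z)\, \nu^+(dz),
\end{align*}
where $\Phi_n(z) := \int_{\bb R} \vphi(t) \int_{\bb X^-_z} \check S_n g(y\cdot z) \mathds 1_{\{\check \tau_t^g(y\cdot z) > n\}} \nu^-_z(dy) dt$.

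First, I would define the candidate function by setting $\check V^g(t) := \int_{\bb X^+} \check V^g(z,t)\, \nu^+(dz)$, where $\check V^g(z,\cdot)$ is the function constructed in Lemma \ref{Lemma exist of harm func(copy)}. The bound \eqref{Bound_Vg_01} together with the fact that $\check V^g(z,t)=0$ for $t\leq -c$ immediately yields the two-sided bound \eqref{Bound_V_f_aaa}, because $\nu^+$ is a probability measure. Monotonicity and right continuity of $\check V^g$ follow at once from the analogous properties of each slice $\check V^g(z,\cdot)$, using the Lebesgue dominated convergence theorem (the dominating function $t\mapsto |t|+c$ is integrable against any compactly supported test function).

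Next, to verify \eqref{Harmonic_V_f_aaa}, I would pass to the limit inside $\int_{\bb X^+}\Phi_n(z)\,\nu^+(dz)$. By Lemma \ref{Lemma exist of harm func(copy)}(1), $\Phi_n(z)$ converges \emph{uniformly in} $z \in \bb X^+$ to $\Phi(z):=\int_{\bb R}\vphi(t)\check V^g(z,t)\,dt$; this uniform convergence is precisely the point that makes the argument work, and it is the reason the authors proved the uniform version in Lemma \ref{Lem_harm_func_001}. Combining this with Fubini's theorem (justified by the uniform bound $|\check V^g(z,t)|\leq |t|+c$ on the compact support of $\vphi$) produces the desired identity
\begin{align*}
\lim_{n\to\infty}\int_{\bb X^+}\Phi_n(z)\,\nu^+(dz)=\int_{\bb X^+}\Phi(z)\,\nu^+(dz)=\int_{\bb R}\vphi(t)\check V^g(t)\,dt.
\end{align*}

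Finally, uniqueness of $\check V^g$ is a standard fact: two non-decreasing right continuous functions that coincide as distributions on $\bb R$ must be equal, since their values at any point are recovered by testing against approximations of $\mathds 1_{(-\infty, t]}$ from the right. The only delicate step of the whole argument is the interchange of limit and integration over $z$, which is why the uniform convergence in Lemma \ref{Lemma exist of harm func(copy)}(1) and the continuity in Lemma \ref{Lemma exist of harm func(copy)}(2) were specifically built into that lemma; once those are in hand, the present theorem is essentially a formal consequence.
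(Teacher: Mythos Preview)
Your proposal is correct and follows exactly the paper's approach: define $\check V^g(t) = \int_{\bb X^+} \check V^g(z,t)\,\nu^+(dz)$ and deduce points 1 and 2 from \eqref{limit V002} and \eqref{Bound_Vg_01} of Lemma \ref{Lemma exist of harm func(copy)}, respectively. Your write-up simply makes explicit the details (the Fubini decomposition via Lemma \ref{Lem_Fubini}, the role of uniform convergence in $z$, and the uniqueness argument) that the paper's two-line proof leaves to the reader.
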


\begin{proof}
Let $g \in \scr B$. For $t \in \bb R$, we set
\begin{align*}
\check V^g (t) = \int_{\bb X^+}  \check V^g (z, t) \nu^+(dz). 
\end{align*}
Then the points 1 and 2 of Theorem \ref{Proposition exist harm func_aaa} 
follow from \eqref{limit V002} and \eqref{Bound_Vg_01} in  Lemma \ref{Lemma exist of harm func(copy)}, respectively. 
\end{proof}

\begin{proof}[Proof of Theorem \ref{Proposition exist harm func}]
It is easy to see that Theorem \ref{Proposition exist harm func} is equivalent to Theorem \ref{Proposition exist harm func_aaa}
for the reversed dynamics, i.e.\ 
by replacing $f$ with $g = f \circ T^{-1} \circ \iota = f \circ \iota \circ T$, and $\nu$ with $\iota_* \nu$. 
\end{proof}

\subsection{Properties of the harmonic function}

The goal of this section is to give some additional properties of the harmonic function $\check V^{g}$
which will be necessary for the proof of Theorem \ref{Thm-exit-time001}. 
We start with a continuity result on the cohomology class of the function $g$. 

\begin{lemma}\label{Lem_Continuity_01}
Let $g \in \scr B$ with $\nu(g) = 0$. Assume that $g$ is not a coboundary. 
Let $\alpha \in (0,1)$ and $(h_n)_{n \geq 0}$ be a sequence of element of $\scr B_{\alpha}$ that converges to $0$ 
with respect to the H\"{o}lder norm $\|\cdot \|_{\alpha}$. 
For $n \geq 0$, set $g_n = g + h_n \circ T - h_n$.
Then, there exists a constant $c >0$ such that for any $n \geq 0$, $z \in \bb X^+$ and $t \in \bb R$, one has
\begin{align}\label{Bound_V_gn_1000}
\check V^{g_n} (z,t) \leq  \max\{t, 0 \} + c. 
\end{align}
Moreover, for any continuous compactly supported function  $\vphi$ on $\bb R$, we have,
uniformly in $z \in \bb X^+$, 
\begin{align}\label{Func_V_200}
\lim_{n \to \infty} \int_{\bb R} \vphi(t) \check V^{g_n} (z,t) dt = \int_{\bb R} \vphi(t) \check V^{g} (z,t) dt. 
\end{align} 
\end{lemma}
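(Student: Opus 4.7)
My approach would be to compare $\check V^{g_n}$ with $\check V^g$ through the cohomological relation $g_n - g = h_n\circ T - h_n$, establishing a sandwich estimate that tightens as $n\to\infty$. Since $g_n$ is cohomologous to $g$ and $\nu$-centered, $g_n$ is not a coboundary and hence satisfies the hypotheses of Lemma \ref{Lemma exist of harm func(copy)}. Setting $\delta_n := 2\|h_n\|_\infty$, Lemma \ref{Martingale approx} gives $\|\check S_k g_n - \check S_k g\|_\infty \leq \delta_n$ for every $k \geq 1$, and since $\|\cdot\|_{\scr B_\alpha}$ dominates $\|\cdot\|_\infty$, one has $\delta_n \to 0$ and $\delta := \sup_n \delta_n < \infty$.

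The estimate $\|\check S_k g_n - \check S_k g\|_\infty \leq \delta_n$ yields the exit-event inclusions $\{\check \tau_{t-\delta_n}^g > k\} \subset \{\check \tau_t^{g_n} > k\} \subset \{\check \tau_{t+\delta_n}^g > k\}$ on each fiber $\bb X^-_z$; combined with the pointwise bounds $(t-\delta_n) + \check S_k g \leq t + \check S_k g_n \leq (t+\delta_n) + \check S_k g$ and the nonnegativity of the integrands on the relevant events, they produce the prelimit sandwich
\[
\check V_k^g(z, t - \delta_n) \leq \check V_k^{g_n}(z, t) \leq \check V_k^g(z, t + \delta_n),
\]
with $\check V_k^{g},\check V_k^{g_n}$ as in \eqref{Def_Vn_g_001}. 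Integrating against any nonnegative $\vphi \in C_c(\bb R)$, changing variables, and letting $k\to\infty$ by \eqref{limit V002} applied to both $g$ and $g_n$ (with uniformity in $z$ furnished by Lemma \ref{Lemma exist of harm func(copy)}(1)), I would obtain the distributional sandwich
\[
\int_{\bb R} \vphi(s+\delta_n) \check V^g(z, s)\, ds \leq \int_{\bb R} \vphi(t) \check V^{g_n}(z, t)\, dt \leq \int_{\bb R} \vphi(s-\delta_n) \check V^g(z, s)\, ds,
\]
uniformly in $z \in \bb X^+$.

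For \eqref{Bound_V_gn_1000}, I would take $\vphi$ to approximate the indicator of a small right-neighborhood of a point $t$ and invoke right-continuity of $\check V^{g_n}(z, \cdot)$ and $\check V^g(z, \cdot)$ to upgrade the distributional sandwich to the pointwise inequality $\check V^{g_n}(z, t) \leq \check V^g(z, t+\delta_n)$; combined with \eqref{Bound_Vg_01}, this gives $\check V^{g_n}(z, t) \leq \max\{t,0\} + \delta + c_0$, so \eqref{Bound_V_gn_1000} holds with $c := \delta + c_0$. For \eqref{Func_V_200}, by linearity I may assume $\vphi \geq 0$, and the distributional sandwich bounds $\bigl|\int \vphi(t)(\check V^{g_n}(z, t) - \check V^g(z, t))\, dt\bigr|$ by $\int |\vphi(s\pm\delta_n) - \vphi(s)|\check V^g(z, s)\, ds$, which tends to $0$ uniformly in $z$ because $\vphi$ is uniformly continuous and compactly supported while $\check V^g(z, \cdot)$ is bounded on compact intervals uniformly in $z$ by \eqref{Bound_Vg_01}. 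The only mildly delicate step is the pointwise upgrade from the distributional sandwich, which is routine given right-continuity and monotonicity; the heart of the argument is the cohomological prelimit sandwich, which reduces the whole lemma to properties of $\check V^g$ already established in Lemma \ref{Lemma exist of harm func(copy)}.
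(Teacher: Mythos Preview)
Your proof is correct and follows essentially the same route as the paper: the paper's two-line argument refers back to the sandwich technique already used in establishing \eqref{Inequality_Vg_Vg0} in Lemma~\ref{Lemma exist of harm func(copy)} (and in Lemma~\ref{Lem_harm_func_001}), and you have spelled out precisely that argument --- the cohomological bound $\|\check S_k g_n - \check S_k g\|_\infty \leq 2\|h_n\|_\infty$, the resulting inclusion of exit events, the prelimit sandwich on the $\check V_k$'s, and passage to the limit against test functions. Your treatment of the pointwise upgrade via right-continuity and monotonicity mirrors exactly how the paper deduces \eqref{Bound_Vg_01} from the distributional inequality in the proof of Lemma~\ref{Lemma exist of harm func(copy)}.
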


\begin{proof}
The bound \eqref{Bound_V_gn_1000} follows from \eqref{Bound_Vg_01} and the relation $g_n = g + h_n \circ T - h_n$. 
The construction of the function $\check V^{g}$ in \eqref{Func_V_200} 
 can be performed in the same way as in Lemmata \ref{Lem_harm_func_001} and  \ref{Lemma exist of harm func(copy)}. 
\end{proof}

We can also describe how the function $\check V^{g}$ behaves when the function $g$ is shifted by the dynamics. 

\begin{lemma}\label{Lem_property_V_nn}
Let $g \in \scr B$ with $\nu(g) = 0$. Assume that $g$ is not a coboundary. 
Then, for any $z \in \bb X^+$ and $t \in \bb R$, we have 
\begin{align*}
\check V^{g \circ T^{-1}} (z,t)  = \mathcal{L}_{\psi} \left( V^g(\cdot, t) \right)(z). 
\end{align*}
\end{lemma}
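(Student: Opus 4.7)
The plan is to reduce the claim to the case of finite-time approximations $\check V_n^g$ defined in \eqref{Def_Vn_g_001}, and then pass to the limit in the weak sense provided by Lemma \ref{Lemma exist of harm func(copy)}.

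First I would exploit the trivial identity
\begin{align*}
\check S_k(g \circ T^{-1})(x) = \sum_{j=1}^k g(T^{-j-1} x) = \sum_{j=1}^{k} g(T^{-j}(T^{-1}x)) = \check S_k g (T^{-1} x),
\end{align*}
which immediately yields $\check \tau_t^{g \circ T^{-1}}(x) = \check \tau_t^{g}(T^{-1} x)$ for every $x \in \bb X$. Consequently,
\begin{align*}
\check V_n^{g \circ T^{-1}}(z,t)
= \int_{\bb X^-_z} \bigl(t + \check S_n g (T^{-1}(y \cdot z))\bigr) \mathds 1_{\{\check \tau_t^g(T^{-1}(y \cdot z)) > n\}} \nu^-_z(dy).
\end{align*}

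The second step is a change-of-variables argument. For $y \in \bb X^-_z$ with $y_{-1} = a$, one checks directly that $T^{-1}(y \cdot z) = \tilde y \cdot \tilde z$ where $\tilde z = a \cdot z \in \bb X^+$ (prepending $a$) and $\tilde y = (y_{-2}, y_{-3}, \ldots) \in \bb X^-_{a \cdot z}$. From the definition \eqref{Def_nu_kz} of $\nu^k_z$, the $\nu^-_z$-measure of the cylinder $\{y_{-1} = a\}$ equals $e^{-\psi(a \cdot z)}$, and the conditional distribution of $(y_{-2}, y_{-3}, \ldots)$ is exactly $\nu^-_{a \cdot z}$: this follows since
\begin{align*}
\frac{\nu^k_z(\bb C_{(a_{-k}, \ldots, a_{-1}), z})}{\nu^1_z(\bb C_{a_{-1}, z})}
= \exp\bigl(-S_{k-1}\psi(a_{-k} \cdots a_{-2} \cdot (a_{-1} \cdot z))\bigr)
= \nu^{k-1}_{a_{-1} \cdot z}(\bb C_{(a_{-k}, \ldots, a_{-2}), a_{-1} \cdot z}).
\end{align*}
Plugging this decomposition into the previous display and recognizing the inner integral as $\check V_n^g(a \cdot z, t)$ gives
\begin{align*}
\check V_n^{g \circ T^{-1}}(z,t)
= \sum_{a:\, M(a, z_0) = 1} e^{-\psi(a \cdot z)} \check V_n^g(a \cdot z, t)
= \mathcal{L}_\psi\bigl(\check V_n^g(\cdot, t)\bigr)(z),
\end{align*}
since the preimages of $z$ under $T: \bb X^+ \to \bb X^+$ are precisely the points $a \cdot z$ with $M(a, z_0) = 1$.

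The final step is to pass to the limit as $n \to \infty$, which, since $\check V^g(z,\cdot)$ is only characterized weakly, must be done against a continuous compactly supported test function $\vphi$ on $\bb R$. By Fubini and the previous identity,
\begin{align*}
\int_{\bb R} \vphi(t) \check V_n^{g \circ T^{-1}}(z,t)\, dt
= \mathcal{L}_\psi\!\left( z \mapsto \int_{\bb R} \vphi(t) \check V_n^g(\cdot,t)\, dt \right)(z).
\end{align*}
Lemma \ref{Lemma exist of harm func(copy)} gives uniform convergence in $z \in \bb X^+$ of the inner function to $\int_{\bb R} \vphi(t) \check V^g(\cdot, t) dt$, which is continuous on $\bb X^+$, and the bounded operator $\mathcal{L}_\psi$ preserves this uniform limit. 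Combining this with the analogous convergence for $g \circ T^{-1}$ on the left-hand side yields
\begin{align*}
\int_{\bb R} \vphi(t) \check V^{g \circ T^{-1}}(z,t)\, dt
= \int_{\bb R} \vphi(t) \mathcal{L}_\psi\bigl(\check V^g(\cdot, t)\bigr)(z)\, dt
\end{align*}
for every such $\vphi$, and the conclusion follows by uniqueness of right-continuous non-decreasing representatives (Lemma \ref{Lem_Conver_Func}). The main subtlety is the bookkeeping in the conditional-measure decomposition of step two; once the identification with $\nu^-_{a \cdot z}$ is made, the rest is a routine combination of earlier lemmas.
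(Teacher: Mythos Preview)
Your proof is correct and follows essentially the same approach as the paper: both arguments reduce to the shift identity $\check S_n(g\circ T^{-1}) = (\check S_n g)\circ T^{-1}$, condition on the first past coordinate $y_{-1}$ to obtain the finite-$n$ relation $\check V_n^{g\circ T^{-1}}(z,t)=\mathcal L_\psi\bigl(\check V_n^g(\cdot,t)\bigr)(z)$, and then pass to the weak limit via Lemma~\ref{Lemma exist of harm func(copy)}. Your version is simply more explicit about the conditional-measure computation and the invocation of uniform convergence, but there is no substantive difference in strategy.
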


\begin{proof}
By Lemma \ref{Lemma exist of harm func(copy)}, for any continuous compactly supported function  $\vphi$ on $\bb R$, we have
\begin{align*}
& \int_{\bb R} \vphi(t) \check V^{g \circ T^{-1}} (z,t) dt  
 = \lim_{n \to \infty} \int_{\bb R} \vphi(t) \int_{\bb X^-_z}
     \check S_{n} g (T^{-1}(y \cdot z)) \mathds 1_{ \{ \check \tau_t^{g \circ T^{-1}}(y \cdot z) >n \} } \nu^-_z(dy) dt.   
\end{align*}
By conditioning on the coordinate $y_{-1}$, we get
\begin{align*}
& \int_{\bb X^-_z}
     \check S_{n} g (T^{-1}(y \cdot z)) \mathds 1_{ \big\{ \check \tau_t^{g \circ T^{-1}}(y \cdot z) >n \big\} } \nu^-_z(dy)  \nonumber\\
& = \int_{A^1_z}  \int_{\bb X^-_z}
     \check S_{n} g (w \cdot (y_{-1} \cdot z) )  \mathds 1_{ \left\{ \check \tau_t^{g}( w \cdot (y_{-1} \cdot z ) ) >n \right\} } \nu^-_z(dy)
     \nu^1_z (dy_{-1}). 
\end{align*}
Again by Lemma \ref{Lemma exist of harm func(copy)}, we obtain
\begin{align*}
\check V^{g \circ T^{-1}} (z,t) = \int_{A^1_z} \check V^{g} (y_{-1} \cdot z, t)  \nu^1_z(dy_{-1})
 = \mathcal{L}_{\psi} \left( V^g(\cdot, t) \right)(z),  
\end{align*}
as desired. 
\end{proof}

\subsection{The harmonic measure and the proof of Theorem \ref{Thm_Radon_Measure}}  \label{Sect_harmonic_measure}


In the case when $g$ depends only on the future ($g \in \scr B^+$), 
the function $\check V^g$ satisfies the harmonicity equation \eqref{Harmonicity_Property_V}.
In general when $g$ depends also on the past, this property may not hold. 
It turns out that equation \eqref{Harmonicity_Property_V} 
can be reinterpreted as a kind of invariance property of a certain Radon measure, which we will introduce at the end of this section.
Indeed, we have: 

\begin{lemma}\label{explanation-invariance001}
Let $g$ be in $\scr B^+$ and let $V$ be a locally integrable non-negative function on $\bb X^+ \times \bb R$. 
Then the following are equivalent: 
\begin{enumerate}
\item
For $\nu^+ \otimes dt$ almost every $(z, t)$ in  $\bb X^+ \times \bb R$, 
we have 
\begin{align*}
V(z,t) =  \sum_{z' \in \bb X^+: T(z') = z} e^{- \psi(z')} \mathds 1_{\{ t + g(z') \geq 0 \}}  V (z', t + g(z')). 
\end{align*}

\item
For any continuous compactly supported function $\varphi$ on $\bb X^+ \times \bb R$, 
we have 
\begin{align}\label{Harmonic_rho_cc}
\int_{\bb X^+ \times \bb R} \vphi(z, t)  V(z, t) \nu^+(dz) dt 
= \int_{\bb X^+} \int_{0}^{\infty}  \vphi(Tz, t - g(z))  V(z, t)  \nu^+(dz) dt.  
\end{align}

\end{enumerate}
\end{lemma}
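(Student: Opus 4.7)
The plan is to prove the equivalence by a direct duality computation using the transfer operator $\mathcal{L}_\psi$. The key identity is that since $\psi$ is normalized ($\mathcal{L}_\psi \mathds 1 = \mathds 1$), the measure $\nu^+$ satisfies $\int \mathcal{L}_\psi h \, d\nu^+ = \int h \, d\nu^+$, which together with the pointwise relation $\mathcal{L}_\psi (h \cdot (k \circ T)) = k \cdot \mathcal{L}_\psi h$ yields the duality
\begin{equation*}
\int_{\bb X^+} h(z')\, k(Tz') \, \nu^+(dz') \;=\; \int_{\bb X^+} k(z)\, \mathcal{L}_\psi h(z)\, \nu^+(dz)
\end{equation*}
for bounded measurable $h, k$.

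For the implication (1) $\Rightarrow$ (2), I would integrate the pointwise identity against $\vphi(z,t)\, \nu^+(dz)\, dt$. For each fixed $t$, the sum on the right-hand side of (1) is exactly $\mathcal{L}_\psi H_t(z)$, where $H_t(z') := \mathds 1_{\{t + g(z') \geq 0\}} V(z', t + g(z'))$. Applying the duality above with $h = H_t$ and $k = \vphi(\cdot, t)$ transforms the integral into
\begin{equation*}
\int_{\bb R} \int_{\bb X^+} \vphi(Tz', t)\, \mathds 1_{\{t + g(z') \geq 0\}}\, V(z', t + g(z'))\, \nu^+(dz')\, dt.
\end{equation*}
The change of variables $u = t + g(z')$ (for each fixed $z'$) then yields the right-hand side of (2).

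For (2) $\Rightarrow$ (1), I would run the same chain of manipulations in reverse. Starting from the right-hand side of (2), substitute $u = t - g(z)$ (so that $t = u + g(z)$ and the condition $t \geq 0$ becomes $u + g(z) \geq 0$) to rewrite it as $\int_{\bb R} \int_{\bb X^+} \vphi(Tz, u)\, \mathds 1_{\{u + g(z) \geq 0\}}\, V(z, u + g(z))\, \nu^+(dz)\, du$. Applying the duality formula in the opposite direction brings this to $\int_{\bb R} \int_{\bb X^+} \vphi(z, u)\, \mathcal{L}_\psi H_u(z)\, \nu^+(dz)\, du$. Since (2) holds for every continuous compactly supported $\vphi$ on $\bb X^+ \times \bb R$, a standard density argument implies that the locally integrable functions $V(z,t)$ and $\mathcal{L}_\psi H_t(z)$ must coincide for $\nu^+ \otimes dt$-almost every $(z,t)$, which is exactly (1).

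The only technical issue is ensuring that all integrals and the sum defining $\mathcal{L}_\psi H_t$ are well-defined for a merely locally integrable $V$. Since $g$ and $\psi$ are continuous on the compact space $\bb X^+$, hence bounded, the sum $\mathcal{L}_\psi H_t$ shifts the $t$-variable by a uniformly bounded amount and multiplies by a bounded factor, so it preserves local integrability; Fubini's theorem and the change-of-variables identity then apply without issue on compact supports. This is the only routine step and is not really an obstacle.
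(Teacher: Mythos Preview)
Your proof is correct and takes essentially the same approach as the paper: both rely on the duality identity $\int_{\bb X^+} h(z')\,k(Tz')\,\nu^+(dz') = \int_{\bb X^+} k(z)\,\mathcal{L}_\psi h(z)\,\nu^+(dz)$ (a consequence of the $\mathcal{L}_\psi$-invariance of $\nu^+$) together with the change of variable $u = t + g(z')$. The paper simply compresses the two implications into one computation by showing that the right-hand side of (2) always equals $\int \vphi(z,t)\,[\text{right-hand side of (1)}]\,\nu^+(dz)\,dt$, from which the equivalence is immediate.
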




\begin{proof}
The proof is a direct computation. Indeed, for any continuous compactly supported function $\varphi$ on $\bb X^+ \times \bb R$, 
 by a change of variable, the right hand side of \eqref{Harmonic_rho_cc} 
can be written as 
\begin{align*}
\int_{\bb R} \int_{\bb X^+}  \vphi(Tz, t)  \mathds 1_{\{ t + g(z) \geq 0 \}} V(z, t + g(z)) \nu^+(dz) dt. 
\end{align*}
As $\nu^+$ is $\mathcal L_{\psi}$ invariant, 
by using \eqref{transl-invar001}, 
we get for $t \in \bb R$, 
\begin{align*}
& \int_{\bb X^+}  \vphi(Tz, t)  \mathds 1_{\{ t + g(z) \geq 0 \}} V(z, t + g(z)) \nu^+(dz)  \notag\\
& =  \int_{\bb X^+} \varphi(z,t)  \left( \sum_{Tz' = z}  e^{- \psi(z')}  \mathds 1_{\{ t + g(z') \geq 0 \}} V(z', t + g(z'))  \right)   \nu^+(dz).  
\end{align*}
This proves the lemma. 
\end{proof}

We will now show that the functions $\check V^{g}$ and $V^{g}$ can be seen as the densities with respect to the Lebesgue measure on $\bb R$
of the projections on $\bb R$ 
of certain natural Radon measures $\check\mu^g$ and $\mu^{g}$ on $\bb X \times \bb R$,
which satisfy an invariance property similar to \eqref{Harmonic_rho_cc}.  
Those measures will play a key role in the statement of the conditioned local limit theorem.
The purpose of this subsection is to build them. This construction will follow the same lines as the one of the harmonic functions.
We will first use Markov chain arguments to define these objects when $g \in \scr B^+$ and then use approximation arguments 
to extend the definition to the general case. 

We first assume that $g$ is in $\scr B^+$. In that case, for $(z, t) \in \bb X^+ \times \bb R$ with $\check V^{g} (z,t) >0$, 
let us introduce a Borel probability measure $\check\mu^{g, -}_{z, t}$ on $\bb X^-_z$. 
To do this, for $n \geq 1$, let $A^n_z$ be as in the definition \eqref{Def_nu_kz}. 
For $a \in A^n_z$, let us write $a \cdot z$ for the element $\bb X^+$
whose $n$ first coordinates are $a_{-n}, \ldots, a_{-1}$ and whose $k$-th coordinate is $z_{k-n}$ for $k \geq n$.


\begin{lemma}\label{Def_rho_zt}
Let $g$ be in $\scr B^+$ such that $\nu^+(g) = 0$ and $g$ is not cohomologous to $0$. 
Let  $(z, t)$ be in $\bb X^+ \times \bb R$ with $\check V^{g} (z,t) >0$. 
Then, there exists a unique Borel probability measure $\check\mu^{g, -}_{z, t}$ on $\bb X^-_z$ 
such that for any $n \geq 0$ and any $a \in A^n_z$ 
we have 
\begin{align}\label{Measure_rho}
& \check\mu^{g, -}_{z, t} (\{ y \in \bb X^-_z:  y_{-n} = a_{-n}, \ldots,  y_{-1} = a_{-1} \})  \notag\\
& = \frac{1}{\check V^{g} (z,t)} 
\exp( - S_n \psi (a \cdot z)  )
\check V^{g} (a \cdot z, t + S_n g (a \cdot z) ),  
\end{align}
as soon as $t +  S_k g (T^k(a \cdot z)) \geq 0$ for all $1 \leq k \leq n$. 
\end{lemma}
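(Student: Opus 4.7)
The plan is to apply Kolmogorov's extension theorem to the values prescribed in \eqref{Measure_rho}, which specify the candidate measure $\check\mu^{g,-}_{z,t}$ on every cylinder of finite length in $\bb X^-_z$ and therefore determine it uniquely. Two properties must be checked: (i) the total mass equals $1$, and (ii) summing the prescribed values over any one-step refinement of a cylinder recovers the value on the coarser cylinder. Point (i) is the $n=0$ case, where $A^0_z$ is the empty word, the indexed cylinder is all of $\bb X^-_z$, both $S_0\psi$ and $S_0 g$ vanish, the positivity hypothesis is vacuous, and the right-hand side of \eqref{Measure_rho} equals $\check V^g(z,t)/\check V^g(z,t) = 1$, well defined thanks to the standing assumption $\check V^g(z,t) > 0$.

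For point (ii), I would fix $n \geq 0$ and $a \in A^n_z$ satisfying the positivity condition, and let $b \in A$ with $M(b, a_{-n}) = 1$ run through the labels of the one-step extensions $b\cdot a \in A^{n+1}_z$. Using $(b\cdot a)\cdot z = b\cdot(a\cdot z)$ and $T(b\cdot(a\cdot z)) = a\cdot z$ one obtains the cocycle decompositions $S_{n+1}\psi((b\cdot a)\cdot z) = \psi(b\cdot(a\cdot z)) + S_n\psi(a\cdot z)$ and $S_{n+1}g((b\cdot a)\cdot z) = g(b\cdot(a\cdot z)) + S_n g(a\cdot z)$. Setting $t' = t + S_n g(a\cdot z)$, the sum of the prescribed masses over the valid $b$ factors as
\begin{align*}
\frac{e^{-S_n\psi(a\cdot z)}}{\check V^g(z,t)} \sum_{b:\, M(b,a_{-n}) = 1} e^{-\psi(b\cdot(a\cdot z))}\, \mathds 1_{\{t' + g(b\cdot(a\cdot z)) \geq 0\}}\, \check V^g\bigl(b\cdot(a\cdot z),\, t' + g(b\cdot(a\cdot z))\bigr),
\end{align*}
where the indicator may be inserted because any extension that fails one-step positivity contributes zero mass. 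The harmonicity identity \eqref{Harmonicity_Property_V} of Lemma \ref{Lem_Harmonic_equation}, applied at the base point $a\cdot z \in \bb X^+$ with height $t'$, evaluates the inner sum to $\check V^g(a\cdot z, t')$, which is precisely the mass prescribed for $\bb C_{a,z}$ by \eqref{Measure_rho}. Consistency thus holds, and Kolmogorov's extension theorem delivers the desired unique Borel probability measure.

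No serious obstacle is expected: once the harmonicity identity of Lemma \ref{Lem_Harmonic_equation} is granted, the construction is driven entirely by the cocycle decompositions of the Birkhoff sums and by Kolmogorov's extension theorem. The only delicate point is the bookkeeping of the positivity indicators so that the one-step indicator produced by the harmonicity equation fits together with the full positivity condition carried along in \eqref{Measure_rho}; this is handled by the convention that cylinders whose index $a$ already violates the positivity condition receive zero mass.
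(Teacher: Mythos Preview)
Your proposal is correct and follows essentially the same approach as the paper: define the candidate measure on cylinders via \eqref{Measure_rho} (with the zero convention when positivity fails), verify the one-step consistency relation using the harmonicity identity of Lemma~\ref{Lem_Harmonic_equation}, and conclude by Kolmogorov's extension theorem. Your write-up is in fact slightly more explicit than the paper's, spelling out the $n=0$ normalization and the cocycle decompositions of $S_{n+1}\psi$ and $S_{n+1}g$.
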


\begin{proof}
The proof is a translation of the general construction of the Markov measures 
on the set of trajectories of a Markov chain. 

Recall that, for $a \in A^n_z$, we denoted by $\bb C_{a, z}$
(see \eqref{Def-CYLIND-001}) 
the associated cylinder of length $n$ in $\bb X^-_z$. 
For $n \geq 0$, define $\check\mu^{g, n}_{z, t}$ as a function on cylinders of length $n$ in $\bb X^-_z$ 
by the formula 
\begin{align*}
\check\mu^{g, n}_{z, t} (\bb C_{a, z}) 
=  \frac{1}{\check V^{g} (z,t)}  \exp( - S_n \psi (a \cdot z)  )
\check V^{g} (a \cdot z, t + S_n g (a \cdot z) ), 
\end{align*}
if $t +  S_k g (T^k(a \cdot z)) \geq 0$ for all $1 \leq k \leq n$; 
if not, we set $\check\mu^{g, n}_{z, t} (\bb C_{a, z})  = 0$, 
(compare with \eqref{Def_nu_kz}). 
We claim that for any $a \in A^n_z$, we have 
\begin{align}\label{Extension_measure_rho}
\check\mu^{g, n}_{z, t} (\bb C_{a, z}) 
= \sum_{\substack{b \in A \\ M(b, a_{-n}) = 1}}  \check\mu^{g, n+1}_{z, t} (\bb C_{b \cdot a, z}),  
\end{align}
(compare with \eqref{Extension_measure_nu-initial}).
Indeed, this follows from the harmonicity property of the function $\check V^g$ established in Lemma \ref{Lem_Harmonic_equation}. 
By Kolmogorov's extension theorem, equation \eqref{Extension_measure_rho} implies that 
there exists a unique Borel probability measure $\check\mu^{g, -}_{z, t}$ on $\bb X^-_z$ 
such that for any $n \geq 0$, 
$\check\mu^{g, n}_{z, t}$ is the restriction of $\check\mu^{g, -}_{z, t}$ to cylinders of length $n$. 
The lemma follows. 
\end{proof}

In the same way as for the function $\check V^{g}$, we can 
give an alternative definition of the measures $\check\mu^{g, -}_{z, t}$, which relies on 
a convergence property.

\begin{lemma}\label{Lem_def_rho}
Let $g \in \scr B^+$ with $\nu(g) = 0$ and assume that $g$ is not a coboundary.
Let $(z, t)$ be in $\bb X^+ \times \bb R$
and $\varphi$ be a continuous function on $\bb X^-_z$.
Then,  we have 
\begin{align} \label{limit_rho}
\check\mu^{g, -}_{z, t} (\varphi) \check V^{g}(z, t) 
& = \lim_{n \to \infty} \int_{\bb X^-_z}  \check S_{n} g (y \cdot z)  \varphi(y) 
    \mathds 1_{ \left\{ \check \tau_t^{g} (y \cdot z) >n  \right\} }  \nu^-_z(dy).  
\end{align}
%
\end{lemma}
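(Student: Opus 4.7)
The plan is to bootstrap this lemma from its $\varphi \equiv 1$ special case, which is Lemma \ref{Lem_def_V}, via two reductions: first replace $\check S_n g$ by the martingale-like $\check S_n g_0$, and then reduce a continuous $\varphi$ to a cylinder function, at which point a conditioning step on the first $m$ past coordinates converts the integral into a linear combination of integrals of the form already handled by Lemma \ref{Lem_def_V}.

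First, I will invoke Proposition \ref{lemma-martingale001} to pick $g_0 \in \scr B^+$ with $\mathcal{L}_\psi g_0 = 0$ cohomologous to $g$. By Lemma \ref{Martingale approx} the difference $\check S_n g - \check S_n g_0$ is uniformly bounded, and by Lemma \ref{Lem_Limit_Tau} the quantity $\nu^-_z\{\check\tau_t^g(y\cdot z) > n\}$ tends to $0$. Thus the integrals
\[
\int_{\bb X^-_z} \check S_{n} g(y \cdot z)\, \varphi(y)\, \mathds 1_{\{\check\tau_t^g > n\}}\, \nu^-_z(dy) \quad \text{and} \quad \int_{\bb X^-_z} \check S_{n} g_0(y \cdot z)\, \varphi(y)\, \mathds 1_{\{\check\tau_t^g > n\}}\, \nu^-_z(dy)
\]
have the same asymptotic behaviour. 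Using Lemma \ref{Lem_def_V} one also gets a uniform-in-$n$ bound $\int |\check S_n g_0|\, \mathds 1_{\{\check\tau_t^g > n\}}\, \nu^-_z(dy) \leq C(z,t)$, which will be essential below.

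Next, since $\bb X^-_z$ is compact in the product topology, for every $\varepsilon > 0$ I can find $m \geq 1$ and a continuous $\varphi_m$ depending only on $y_{-1}, \ldots, y_{-m}$ with $\|\varphi - \varphi_m\|_\infty \leq \varepsilon$. The uniform bound from the previous paragraph reduces the claim to establishing the limit when $\varphi$ is such a cylinder function. For such $\varphi_m$ I partition $\bb X^-_z = \bigsqcup_{a \in A^m_z} \bb C_{a,z}$ and note that on each cylinder $\varphi_m$ is a constant $\varphi_m(a)$ while the initial $m$ values $\check S_k g(y \cdot z)$, $1 \leq k \leq m$, depend only on $a$ (and $z$); hence the event $\{\check\tau_t^g(y \cdot z) > n\}$ intersected with $\bb C_{a,z}$ factors as $E_a \cap \{\check\tau_s^g(y' \cdot (a \cdot z)) > n - m\}$, where $y'$ denotes the residual coordinates (with the identification $T^{-m}(y \cdot z) = y' \cdot (a \cdot z)$ for $y \in \bb C_{a,z}$), $s = t + \check S_m g(a \cdot z)$, and $E_a$ is the event that the first $m$ walk values stay nonnegative.

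Using the identity $\nu^-_z|_{\bb C_{a,z}} = e^{-S_m \psi(a \cdot z)} \nu^-_{a \cdot z}$ (from the cylinder definition \eqref{Def_nu_kz}) and the decomposition $\check S_n g_0(y \cdot z) = S_m g_0(a \cdot z) + \check S_{n-m} g_0(y' \cdot (a \cdot z))$, each piece of the sum becomes
\[
\mathds 1_{E_a}\, \varphi_m(a)\, e^{-S_m \psi(a \cdot z)} \!\int_{\bb X^-_{a \cdot z}} \!\!\bigl( S_m g_0(a \cdot z) + \check S_{n-m} g_0(y' \cdot (a \cdot z)) \bigr) \mathds 1_{\{\check\tau_s^g > n - m\}}\, \nu^-_{a \cdot z}(dy').
\]
The bounded deterministic part drops out by Lemma \ref{Lem_Limit_Tau}, and the remaining integral converges to $\check V^g(a \cdot z, s)$ by Lemma \ref{Lem_def_V}. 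Summing over $a \in A^m_z$ and comparing with formula \eqref{Measure_rho} of Lemma \ref{Def_rho_zt}, the limit equals $\check V^g(z, t)\, \check\mu^{g,-}_{z, t}(\varphi_m)$. Letting $m \to \infty$ and using that $\check\mu^{g,-}_{z,t}$ is a probability measure and $\varphi_m \to \varphi$ uniformly, I obtain $\check V^g(z,t)\, \check\mu^{g,-}_{z,t}(\varphi)$ as desired.

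The main obstacle I expect is the bookkeeping of the decomposition step: verifying carefully that the Birkhoff sum, the stopping-time event, and the measure $\nu^-_z$ all factor through the same $m$-step conditioning on $\bb C_{a,z}$, so that one really lands back in the situation of Lemma \ref{Lem_def_V} on the shifted base $a \cdot z$. Once this factorization is established, the remaining uniform approximation arguments are routine given the uniform bound coming from the proof of Lemma \ref{Lem_def_V}.
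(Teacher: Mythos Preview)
Your proof is correct and follows essentially the same route as the paper: reduce to cylinder functions via uniform approximation and a uniform-in-$n$ bound, then condition on the first $m$ past coordinates to land back in Lemma \ref{Lem_def_V}, and finally identify the resulting sum with $\check V^g(z,t)\,\check\mu^{g,-}_{z,t}(\varphi_m)$ via \eqref{Measure_rho}. The paper proceeds a bit more directly by working with $t+\check S_n g$ rather than passing through $g_0$: since $t+\check S_n g\geq 0$ on the event $\{\check\tau_t^g>n\}$, the uniform bound for $0\leq\varphi\leq 1$ follows immediately from the $\varphi=1$ case, making the detour through the martingale version $g_0$ unnecessary (though harmless). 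One small notational slip: for $y\in\bb C_{a,z}$ the first $m$ increments give $S_m g(a\cdot z)$ (forward sum on $\bb X^+$), not $\check S_m g(a\cdot z)$, but the computation is otherwise exactly what the paper does.
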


\begin{proof}
By Lemma \ref{Lem_Limit_Tau}, the limit in equation \eqref{limit_rho} is the same as the limit of 
\begin{align*}
\int_{\bb X^-_z}  \left( t + \check S_{n} g (y \cdot z)  \right) \varphi(y) 
    \mathds 1_{ \left\{ \check \tau_t^{g} (y \cdot z) >n  \right\} }  \nu^-_z(dy). 
\end{align*}
The latter quantity is non-negative whenever $\varphi$ is non-negative. 
Besides, if $\varphi = 1$, the convergence follows from Lemma \ref{Lem_def_V}. 
Therefore, it suffices to check the convergence when $\varphi$ is the indicator function of a cylinder set.
Thus, let $m \geq 0$ be an integer. Pick $a \in A^m_z$ and let $\bb C_{a,z}$ be the associated cylinder in $\bb X^-_z$.
If $S_k g (T^k(a \cdot z)) < 0$ for some $1 \leq k \leq m$, we have for $n \geq m$, 
\begin{align*}
\int_{\bb X^-_z}  \left( t + \check S_{n} g (y \cdot z)  \right)  \mathds 1_{\bb C_{a,z} }(y) 
    \mathds 1_{ \left\{ \check \tau_t^{g} (y \cdot z) >n  \right\} }  \nu^-_z(dy)  
 = 0.  
\end{align*}
If not, we have for $n \geq m$,  
\begin{align*}
& \int_{\bb X^-_z}  \left( t + \check S_{n} g (y \cdot z)  \right)  \mathds 1_{\bb C_{a,z} }(y) 
    \mathds 1_{ \left\{ \check \tau_t^{g} (y \cdot z) >n  \right\} }  \nu^-_z(dy)  
     = \exp(- S_m \psi (a \cdot z))    \notag\\
& \quad  \times  \int_{\bb X^-_{a \cdot z}}  \left( t + \check S_{n - m} g (y \cdot a \cdot z) 
   + S_m g(a \cdot z) \right)  
    \mathds 1_{ \left\{ \check \tau_{t + S_m g(a \cdot z)}^{g} (y \cdot a \cdot z) > n - m  \right\} }  \nu^-_{a \cdot z}(dy). 
\end{align*}
By Lemma \ref{Lem_def_V}, as $n \to \infty$, this converges to 
\begin{align*}
\exp(- S_m \psi (a \cdot z))  \check V^g(a \cdot z, t + S_m g(a \cdot z)), 
\end{align*}
which, by  the definition of $\check\mu^{g, -}_{z, t}$ in Lemma \ref{Def_rho_zt}, 
is equal to $\check\mu^{g, -}_{z, t} (\bb C_{a,z}) \check V^g (z,t)$.  
\end{proof}

Using Lemma \ref{Lem_def_rho}, we can now give a definition of $\check\mu^{g, -}_{z, t}$ 
for a function $g$ only depending on finitely many negative coordinates. 

\begin{lemma}\label{Lem_Harmonic_Function_rho}
Let $g \in \scr B$ such that $\nu(g) = 0$ and $g$ is not a coboundary. 
Assume that $g$ only depends on $m$ negative coordinates for some $m \geq 0$.
In other words, the function $h = g \circ T^{m} \in \scr B^+$.
Let $(z, t)$ be in $\bb X^+ \times \bb R$ and $\varphi$ be a continuous function on $\bb X^-_z$. 
For $a \in A^m_z$, set $\varphi_a$ to be the function $y \mapsto \varphi(y \cdot a)$ on $\bb X^-_{a \cdot z}$.
Then,  we have 
\begin{align*}
& \lim_{n \to \infty} \int_{\bb X^-_z} \check{S}_n g (y \cdot z)  \varphi(y) 
 \mathds 1_{ \left\{ \check{\tau}_t^g (y \cdot z) >n \right\} } \nu^-_z(dy)  \notag\\
& = \sum_{a \in A^m_z} \exp( - S_m \psi(a \cdot z))  \check{V}^h (a \cdot z, t) \check\mu^{h, -}_{a \cdot z, t} (\varphi_a).  
\end{align*} 
\end{lemma}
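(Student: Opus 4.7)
The plan is to reduce the statement to Lemma~\ref{Lem_def_rho} applied to the one-sided observable $h\in\scr B^+$ at the points $(a\cdot z,t)$ for $a\in A^m_z$, following the same pattern as in the proof of Lemma~\ref{Lem_Harmonic_Function_V} (which established the case $\varphi=1$).

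First, I would decompose the integral over $\bb X^-_z$ according to the first $m$ negative coordinates of $y$. Every $y\in\bb X^-_z$ can be written uniquely as $y=y'\cdot a$ with $a\in A^m_z$ and $y'\in\bb X^-_{a\cdot z}$, and the consistency relation \eqref{Extension_measure_nu-initial} defining $\nu^-_z$ gives, for any bounded Borel function $F$ on $\bb X^-_z$,
\begin{align*}
\int_{\bb X^-_z} F(y)\,\nu^-_z(dy)
=\sum_{a\in A^m_z} e^{-S_m\psi(a\cdot z)}\int_{\bb X^-_{a\cdot z}} F(y'\cdot a)\,\nu^-_{a\cdot z}(dy').
\end{align*}

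Second, I would use the shift identity $(y'\cdot a)\cdot z=T^m\bigl(y'\cdot(a\cdot z)\bigr)$ together with $g=h\circ T^{-m}$, exactly as in the proof of Lemma~\ref{Lem_Harmonic_Function_V}. A direct computation then yields
\begin{align*}
\check S_n g\bigl((y'\cdot a)\cdot z\bigr)=\check S_n h\bigl(y'\cdot(a\cdot z)\bigr),\qquad
\check\tau_t^{g}\bigl((y'\cdot a)\cdot z\bigr)=\check\tau_t^{h}\bigl(y'\cdot(a\cdot z)\bigr),
\end{align*}
and by the definition of $\varphi_a$ one has $\varphi(y'\cdot a)=\varphi_a(y')$, with $\varphi_a$ continuous on $\bb X^-_{a\cdot z}$. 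Substituting these identities into the $a$-th summand turns the integral into one over $\bb X^-_{a\cdot z}$ involving only the one-sided observable $h$.

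Third, since $h\in\scr B^+$ with $\nu(h)=\nu(g)=0$, and since $h$ is cohomologous to $g$ on $\bb X$ via $h-g=S_m g\circ T-S_m g$ so that $h$ is not a coboundary either, Lemma~\ref{Lem_def_rho} applies at $(a\cdot z,t)$ with the continuous test function $\varphi_a$ and gives
\begin{align*}
\lim_{n\to\infty}\int_{\bb X^-_{a\cdot z}}\check S_n h\bigl(y'\cdot(a\cdot z)\bigr)\,\varphi_a(y')\,\mathds 1_{\{\check\tau_t^{h}(y'\cdot(a\cdot z))>n\}}\,\nu^-_{a\cdot z}(dy')
=\check V^h(a\cdot z,t)\,\check\mu^{h,-}_{a\cdot z,t}(\varphi_a).
\end{align*}
Since $A^m_z$ is a finite set, the limit commutes with the sum over $a$, and the claimed identity follows.

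The only real obstacle is the bookkeeping between the two concatenation operations $y=y'\cdot a$ (splitting the past at depth $m$) and the shift identity $(y'\cdot a)\cdot z=T^m(y'\cdot(a\cdot z))$; once these are set up correctly, the statement reduces summand by summand to the one-sided convergence result of Lemma~\ref{Lem_def_rho}, which plays here the same role that Lemma~\ref{Lem_def_V} played in the proof of Lemma~\ref{Lem_Harmonic_Function_V}.
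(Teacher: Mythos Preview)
Your proposal is correct and follows essentially the same approach as the paper: condition on the first $m$ past coordinates, use the identity $(y'\cdot a)\cdot z=T^m(y'\cdot(a\cdot z))$ to rewrite each summand in terms of $h$ at the base point $a\cdot z$, and then apply Lemma~\ref{Lem_def_rho} termwise. Your explicit check that $h$ is not a coboundary (via $h-g=S_mg\circ T-S_mg$) is a detail the paper leaves implicit but is indeed needed to invoke Lemma~\ref{Lem_def_rho}.
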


Before proving this lemma, we recall some useful facts. 
 Let $g$ and $h$ be as in Lemma \ref{Lem_Harmonic_Function_rho}. 
For $z \in \bb X^+$ and $t \in \bb R$,  
\begin{align*}
\check{V}^g (z,t) = \mathcal{L}_{\psi}^m \left( \check{V}^h (\cdot, t) \right)(z)
 = \sum_{a \in A^m_z} \exp( - S_m \psi(a \cdot z))  \check{V}^h (a \cdot z, t). 
\end{align*}
If $\check{V}^g (z,t) >0$ and $\varphi$ is a continuous function on $\bb X^-_z$, we set
\begin{align}\label{Def_rho_gzt}
\check\mu^{g, -}_{z, t} (\varphi) =  \frac{1}{ \check{V}^g (z,t) }
\sum_{a \in A^m_z} \exp( - S_m \psi(a \cdot z))  \check{V}^h (a \cdot z, t) \check\mu^{h, -}_{a \cdot z, t} (\varphi_a). 
\end{align}
Lemma \ref{Lem_Harmonic_Function_rho} implies that 
the notation \eqref{Def_rho_gzt} 
is coherent with that introduced in Lemma \ref{Def_rho_zt}. 

\begin{proof}[Proof of Lemma \ref{Lem_Harmonic_Function_rho}]
As in the proof of  Lemma \ref{Lem_Harmonic_Function_V}, 
by conditioning over the $m$ first coordinates of $y$, we get for $n \geq 0$,  
\begin{align*}
& \int_{\bb X^-_z} \check{S}_n g (y \cdot z)  \varphi(y)
 \mathds 1_{ \left\{ \check{\tau}_t^g (y \cdot z) >n \right\} } \nu^-_z(dy) \notag\\
 & = \sum_{a \in A_z^m} \exp(- S_m \psi(a \cdot z))   
  \int_{\bb X^-_{a \cdot z}}   \check{S}_n  g (  (y \cdot a)  \cdot z)  \varphi(y \cdot a) 
 \mathds 1_{ \left\{ \check{\tau}_t^{g} ( (y \cdot a)  \cdot z) > n \right\} }  \nu^-_{a \cdot z}(dy)   \notag\\
 & = \sum_{a \in A_z^m} \exp(- S_m \psi(a \cdot z))   
  \int_{\bb X^-_{a \cdot z}}   \check{S}_n  h (T^{-m}  (y \cdot a)  \cdot z)  \varphi_a(y)
 \mathds 1_{ \left\{ \check{\tau}_t^{h \circ T^{-m}} ( (y \cdot a)  \cdot z) > n \right\} }  \nu^-_{a \cdot z}(dy)   \notag\\
& = \sum_{a \in A_z^m} \exp(- S_m \psi(a \cdot z))   
  \int_{\bb X^-_{a \cdot z}}   \check{S}_{n} h ( y \cdot (a \cdot z))   \varphi_a(y) 
   \mathds 1_{ \left\{ \check{\tau}_t^h (y \cdot (a \cdot z)) >n \right\} }   \nu^-_{a \cdot z}(dy), 
\end{align*} 
where we have used the relations $(y \cdot a) \cdot z = T^m (y \cdot (a \cdot z))$
and $\check{\tau}_t^{h \circ T^{-m}} = \check{\tau}_t^{h} \circ T^{-m}$.  
The conclusion now follows from Lemma \ref{Lem_def_rho}.
\end{proof}

Now we prove that the convergence in Lemma \ref{Lem_Harmonic_Function_rho} 
holds in a weak sense for every function $g \in \scr B$.

\begin{lemma}\label{Lem_harm_func_001_rho}
Assume that $g\in \scr B$ is not a coboundary w.r.t.\ to $T$ and $\nu (g)=0$. 
Then, for any $z \in \bb X^+$, 
for any continuous compactly supported function  $\vphi$ on $\bb X^-_z \times \bb R$,  
 the following limit exists and is finite: 
\begin{align*} 
\lim_{n \to \infty} \int_{\bb R}  \int_{\bb X^-_z} 
     \check S_{n} g (y \cdot z) \varphi(y, t)  \mathds 1_{ \left\{ \check \tau_t^g(y \cdot z) >n \right\} } \nu^-_z(dy)  dt. 
\end{align*}
\end{lemma}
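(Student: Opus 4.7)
The plan is to imitate the proof of Lemma \ref{Lem_harm_func_001}, which is the analogous statement with $\varphi$ depending only on $t$. The only real novelty is the additional dependence of $\varphi$ on $y \in \bb X^-_z$; this is absorbed transparently by replacing pointwise-in-$t$ inequalities by pointwise-in-$(y,t)$ inequalities in the standard sandwich argument.

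First, I would reduce the statement to the quantity
$$\tilde U_n(z, \varphi) := \int_{\bb R} \int_{\bb X^-_z} \bigl( t + \check S_n g(y \cdot z) \bigr)\, \varphi(y, t)\, \mathds 1_{\{\check \tau_t^g(y \cdot z) > n\}}\, \nu^-_z(dy)\, dt.$$
The difference from the integral in the statement is $\int_{\bb R} t \int_{\bb X^-_z} \varphi(y,t) \mathds 1_{\{\check \tau_t^g > n\}} \nu^-_z(dy)\,dt$, which vanishes as $n \to \infty$ by Lemma \ref{Lem_Limit_Tau} and dominated convergence, using that $\varphi$ is bounded and compactly supported in $t$. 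Writing $\varphi = \varphi^+ - \varphi^-$, I may assume $\varphi \geq 0$, so that the integrand of $\tilde U_n(z,\varphi)$ is non-negative.

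Second, I would apply Lemma \ref{Lem_Appro_g} to obtain finite-memory approximants $g_m$ with $\sup_n \|\check S_n g_m - \check S_n g\|_\infty \leq c_1 \alpha^m$, and consider the corresponding quantity $\tilde U_{n,m}(z,\varphi)$. Fix $m \geq 0$. Since $g_m$ depends only on coordinates $\{x_k\}_{k \geq -m}$, Lemma \ref{Lem_Harmonic_Function_rho}, applied pointwise in $t$ to the continuous function $y \mapsto \varphi(y,t)$, gives convergence of the inner integral as $n \to \infty$ to an explicit limit expressed through $\check V^{h_m}$ and the Markov measures $\check\mu^{h_m,-}_{\cdot, t}$, where $h_m = g_m \circ T^m \in \scr B^+$. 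To exchange this limit with the $dt$-integration, I would establish the bound
$$\int_{\bb X^-_z} \bigl( t + \check S_n g_m(y \cdot z) \bigr)\, \mathds 1_{\{\check \tau_t^{g_m}(y\cdot z) > n\}}\, \nu^-_z(dy) \leq 2\abs{t} + K,$$
uniform in $n$ and $m$. This follows from the identity $g_m = g_0 + h_m \circ T - h_m$ obtained in the proof of Lemma \ref{Lem_Appro_g}, which yields $\|\check S_n g_m - \check S_n g_0\|_\infty \leq 2\|h_m\|_\infty \leq 2(\|h\|_\infty + c_1)$ uniformly in $m, n$, combined with the martingale bound \eqref{Uniform_bound_V} applied to $g_0$. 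Dominated convergence then gives $\tilde U_{n,m}(z,\varphi) \to U_m(z,\varphi)$ as $n \to \infty$.

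Third, I would sandwich $\tilde U_n$ between the shifted approximants. From the inclusions $\{\check \tau_{t-c_1\alpha^m}^{g_m} > n\} \subseteq \{\check \tau_t^g > n\} \subseteq \{\check \tau_{t+c_1\alpha^m}^{g_m} > n\}$, the pointwise bound $|(t + \check S_n g) - (t + \check S_n g_m)| \leq c_1\alpha^m$ (valid on the event), and the change of variables $t \mapsto t \pm c_1\alpha^m$, I obtain
$$\tilde U_{n,m}\bigl(z, \varphi(\cdot,\cdot + c_1\alpha^m)\bigr) \leq \tilde U_n(z,\varphi) \leq \tilde U_{n,m}\bigl(z, \varphi(\cdot,\cdot - c_1\alpha^m)\bigr).$$
Letting $n \to \infty$ for fixed $m$ and then $m \to \infty$, the uniform continuity of $\varphi$ in $t$ together with the $m$-uniform bound $U_m(z,|\varphi|) \leq \|\varphi\|_\infty \int (2\abs{t}+K)\,dt$ on the compact $t$-support of $\varphi$ forces the two sides to share a common limit, which is the desired value of $\lim_n \tilde U_n(z,\varphi)$. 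The main obstacle is the uniform-in-$(n,m)$ control of the integrands, which is what licenses the interchange of the two limits; once this uniformity is extracted from the cohomological relation between $g_m$ and $g_0$, the remainder of the proof is a direct extension of Lemma \ref{Lem_harm_func_001}, with the $y$-dependence of $\varphi$ carried along by the sandwich.
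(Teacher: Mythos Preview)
Your proposal is correct and uses the same core mechanism as the paper: the sandwich via the finite-memory approximants $g_m$ of Lemma \ref{Lem_Appro_g}, together with Lemma \ref{Lem_Harmonic_Function_rho} for the convergence at fixed $m$ and the uniform-in-$(n,m)$ bound inherited from the cohomological relation $g_m = g_0 + h_m\circ T - h_m$.

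The only organisational difference from the paper is that the paper first treats product test functions $\varphi(y,t)=\varphi_1(y)\varphi_2(t)$ and then passes to general $\varphi$ by an $\ee$-approximation with finite sums of products, controlling the error via Lemma \ref{Lem_harm_func_001}. You instead carry a general $\varphi$ through the sandwich in one pass, bounding the gap $U_m(z,\varphi(\cdot,\cdot-c_1\alpha^m))-U_m(z,\varphi(\cdot,\cdot+c_1\alpha^m))$ directly by $\omega_\varphi(2c_1\alpha^m)\int_{K'}(2|t|+K)\,dt$. Your route is a little shorter; the paper's route has the incidental benefit that in the product case the limit $U_m$ is the explicit quantity $\int\varphi_2(t)\,\check\mu^{g_m,-}_{z,t}(\varphi_1)\,\check V^{g_m}(z,t)\,dt$, so the gap estimate follows immediately from the known bound $\check V^{g_m}(z,t)\leq c_2+\max\{t,0\}$ without re-deriving a uniform bound on $\tilde U_{n,m}$. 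Either way the argument goes through.
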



\begin{proof}
First let us assume that $\varphi$ is of the form $(y, t) \mapsto \varphi_1(y) \varphi_2(t)$,
where $\varphi_1$ and $\varphi_2$ are non-negative continuous functions on $\bb X^-_z$ and $\bb R$,
and $\varphi_2$ is compactly supported. 
In that case, let $(g_m)_{m \geq 0}$, $c_1>0$ and $\alpha \in (0,1)$ be as in Lemma \ref{Lem_Appro_g}. 
Set 
\begin{align*}
W_n (z, t) 
= \int_{\bb X^-_z} \left( t +  \check S_{n} g (y \cdot z) \right)  \varphi_1(y)   
   \mathds 1_{ \left\{ \check \tau_t^g(y \cdot z) >n \right\} } \nu^-_z(dy) 
\end{align*}
and 
\begin{align*}
W_{n,m} (z, t) 
= \int_{\bb X^-_z} \left( t +  \check S_{n} g_m (y \cdot z) \right)  \varphi_1(y)  
     \mathds 1_{ \left\{ \check \tau_t^{g_m} (y \cdot z) >n \right\} } \nu^-_z(dy). 
\end{align*}
By \eqref{Basic_Inequality}, we have the inclusions
\begin{align*}
\left\{  \check \tau_{t-2c_1  \alpha^{m}}^{g_{m}} > n  \right\}
\subseteq    \left\{  \check \tau_t^{g} > n \right\}   
\subseteq  \left\{  \check \tau_{t+2c_1  \alpha^{m}}^{g_{m}} > n \right\}, 
\end{align*}
which imply that
\begin{align}\label{Pf_W_n_Wnp_rho}
W_{n,m} (z, t - 2 c_1 \alpha^m) \leq  W_{n} (z, t)  \leq  W_{n,m} (z, t + 2 c_1 \alpha^m). 
\end{align}

By lemma \ref{Lem_Harmonic_Function_rho}, for fixed $m \geq 0$, as $n \to \infty$, 
the function $W_{n,m} (z, t)$ converges to $\check\mu^{g_m, -}_{z, t} (\varphi_1) \check{V}^{g_m} (z, t)$.
From \eqref{Pf_W_n_Wnp_rho} we get
\begin{align*}
\check\mu^{g_m, -}_{z, t - 2 c_1 \alpha^m} (\varphi_1)
\check{V}^{g_m} (z, t - 2 c_1 \alpha^m) 
& \leq \liminf_{n \to \infty}  W_{n} (z, t) 
\leq \limsup_{n \to \infty}  W_{n} (z, t)   \notag\\
& \leq \check\mu^{g_m, -}_{z, t + 2 c_1 \alpha^m} (\varphi_1) \check{V}^{g_m} (z, t + 2 c_1 \alpha^m). 
\end{align*}
Now we have 
\begin{align}\label{Equation_V_rho}
& \int_{\bb R} \vphi_2(t) 
\left[ \check\mu^{g_m, -}_{z, t + 2 c_1 \alpha^m} (\varphi_1) \check{V}^{g_m} (z, t + 2 c_1 \alpha^m) 
      -  \check\mu^{g_m, -}_{z, t - 2 c_1 \alpha^m} (\varphi_1)  \check{V}^{g_m} (z, t - 2 c_1 \alpha^m) \right] dt  
       \nonumber\\
& =  \int_{\bb R} \left[ \vphi_2(t - 2 c_1 \alpha^m) - \vphi_2(t + 2 c_1 \alpha^m) \right] 
       \check\mu^{g_m, -}_{z, t \alpha^m} (\varphi_1)  \check{V}^{g_m} (z, t) dt. 
\end{align}
Using \eqref{Pf_Wnp_02} and Lemma \ref{Lem_Harmonic_Function_V}, we have that $\check{V}^{g_m} (z, t) \leq c_2 + \max \{t, 0\}$. 
As $\vphi_2$ is continuous on $\bb R$ with compact support, 
by the Lebesgue dominated convergence theorem, we get 
that the left hand side of \eqref{Equation_V_rho} converges to $0$ as $m \to \infty$. 
This tells us that $\int_{\bb R} \vphi_2(t) W_n(z,t) dt$ has a limit as $n \to \infty$. 
In other words, the lemma holds for the function $\varphi(y,t) = \varphi_1(y) \varphi_2(t)$.
This is also true when $\varphi_1$ and $\varphi_2$ are not necessarily non-negative. 

The general case follows from a standard but tedious approximation argument.  
Indeed, we can find a continuous compactly supported function $\theta$ on $\bb R$ with support $K$ 
such that for any $\ee >0$, there exist an integer $p \geq 0$ and 
continuous functions $\varphi_{i, 1}$ on $\bb X^-_z$
and  continuous compactly supported functions $\varphi_{i, 2}$ on $\bb R$ with support included in $K$, 
$1 \leq i \leq p$,  with 
\begin{align}\label{Approximation_varphi_yt}
\sup_{y \in \bb X^-_z} 
\left| \varphi(y, t) - \varphi_\ee (y,t)   \right|  
\leq \ee  \theta(t),  \quad  t \in \bb R, 
\end{align}
where $\varphi_{\ee} (y,t) = \sum_{i = 1}^p \varphi_{i, 1}(y)  \varphi_{i, 2}(t)$. 
We set $t_0 = \sup_{t \in K} |t|$. By Lemma \ref{Lem_Limit_Tau}, we need to show that 
\begin{align*}
U_n = \int_{\bb R}  \int_{\bb X^-_z} 
   \left( t_0 +  \check S_{n} g (y \cdot z)  \right)   \varphi(y, t) 
   \mathds 1_{ \left\{ \check \tau_t^g(y \cdot z) >n \right\} } \nu^-_z(dy)  dt
\end{align*}
has a limit as $n \to \infty$. 
By the first case, we know that 
\begin{align*}
U_{n,\ee} = \int_{\bb R}  \int_{\bb X^-_z} 
   \left( t_0 +  \check S_{n} g (y \cdot z)  \right)   \varphi_{\ee} (y, t) 
   \mathds 1_{ \left\{ \check \tau_t^g(y \cdot z) >n \right\} } \nu^-_z(dy)  dt
\end{align*}
has a limit $U_{\ee}$ as $n \to \infty$. 
Besides, by Lemma \ref{Lem_harm_func_001}, we get that
\begin{align*}
\int_{\bb R}  \int_{\bb X^-_z} 
   \left( t_0 +  \check S_{n} g (y \cdot z)  \right)   \theta (t) 
   \mathds 1_{ \left\{ \check \tau_t^g(y \cdot z) >n \right\} } \nu^-_z(dy)  dt
\end{align*}
converges to $\int_{\bb R} \check V^g (z,t) \theta(t) dt$. 
By \eqref{Approximation_varphi_yt}, we have 
\begin{align*}
U_{\ee} - \ee \int_{\bb R} \check V^g (z,t) \theta(t) dt 
\leq  \liminf_{n \to \infty} U_n \leq  \limsup_{n \to \infty} U_n
\leq  U_{\ee}  + \ee \int_{\bb R} \check V^g (z,t) \theta(t) dt, 
\end{align*}
which gives 
\begin{align*}
\limsup_{n \to \infty} U_n -  \liminf_{n \to \infty} U_n \leq 2 \ee \int_{\bb R} \check V^g (z,t) \theta(t) dt. 
\end{align*}
Hence the proof of Lemma \ref{Lem_harm_func_001_rho} is complete. 
\end{proof}

Now we use the previous lemma to build a Radon measure $\check \mu_{z}^{g, -}$ 
on $\bb X_z^- \times \bb R$ for any $g \in \scr B$. 

\begin{lemma}\label{Lemma exist of harm func(copy)Radon}
Assume that $g\in \scr B$ is not a coboundary w.r.t.\ to $T$ and $\nu (g)=0$.
Then, for any $z \in \bb X^+$, 
there exists a unique Radon measure $\check \mu_{z}^{g, -}$ on $\bb X_z^- \times \bb R$ such that
for any continuous compactly supported function  $\vphi$ on $\bb X_z^- \times \bb R$,  
\begin{align} 
\lim_{n \to \infty} \int_{\bb R}  \int_{\bb X^-_z}   \vphi(y, t)  
   \check S_{n} g (y \cdot z) \mathds 1_{ \left\{ \check \tau_t^g(y \cdot z) >n \right\} } \nu^-_z(dy)  dt  
  = \int_{\bb R}  \int_{\bb X^-_z}  \vphi(y, t) \check \mu_{z}^{g, -}(dy, dt).    \label{limit V002_RadonMeasure}
\end{align}
Besides, the marginal measure of $\check \mu_{z}^{g, -}$ on $\bb R$ under the natural projection map 
is the absolutely continuous measure $\check V^g(z,t) dt$. 
\end{lemma}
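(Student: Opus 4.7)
The plan is to apply the Riesz--Markov representation theorem on the locally compact Hausdorff space $\bb X_z^- \times \bb R$, using Lemma \ref{Lem_harm_func_001_rho} to define a positive linear functional on the space of continuous compactly supported functions. Note that $\bb X_z^-$ is compact as a closed subset of the compact product space $A^{-\bb N^*}$, so $\bb X_z^- \times \bb R$ is genuinely locally compact Hausdorff and the Riesz--Markov theorem applies.

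First, for any continuous compactly supported function $\vphi$ on $\bb X_z^- \times \bb R$, I would define
\begin{align*}
\Lambda_z(\vphi) := \lim_{n \to \infty} \int_\bb R \int_{\bb X_z^-} \vphi(y,t) \check S_n g(y \cdot z) \mathds 1_{\{\check \tau_t^g(y \cdot z) > n\}} \nu_z^-(dy)\, dt.
\end{align*}
By Lemma \ref{Lem_harm_func_001_rho} this limit exists and is finite, and by linearity of the integral $\Lambda_z$ is a linear functional on the space of continuous compactly supported functions.

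Next, I would verify that $\Lambda_z$ is positive. If $\vphi \geq 0$ has support contained in $\bb X_z^- \times [-M, M]$ for some $M > 0$, I decompose $\check S_n g(y \cdot z) = (t + \check S_n g(y \cdot z)) - t$. On the event $\{\check \tau_t^g > n\}$ we have $t + \check S_n g(y \cdot z) \geq 0$, so the contribution of the first summand to the approximating integral is non-negative. For the contribution of $-t$, I would use the uniform-in-$z$ pointwise-in-$t$ convergence from Lemma \ref{Lem_Limit_Tau} combined with bounded convergence on the compact interval $[-M, M]$ to obtain
\begin{align*}
\left| \int_{-M}^M \int_{\bb X_z^-} t\, \vphi(y,t) \mathds 1_{\{\check \tau_t^g > n\}} \nu_z^-(dy)\, dt \right| \leq M \|\vphi\|_\infty \int_{-M}^M \nu_z^-(\check \tau_t^g > n)\, dt,
\end{align*}
and the right-hand side tends to $0$ as $n \to \infty$. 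This yields $\Lambda_z(\vphi) \geq 0$. The Riesz--Markov theorem then produces a unique Radon measure $\check \mu_z^{g,-}$ on $\bb X_z^- \times \bb R$ representing $\Lambda_z$, which is exactly the content of \eqref{limit V002_RadonMeasure}; uniqueness follows since continuous compactly supported functions separate Radon measures.

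Finally, for the marginal on $\bb R$: since $\bb X_z^-$ is compact, for any continuous compactly supported $\phi$ on $\bb R$ the function $\vphi(y,t) := \phi(t)$ lies in the class of continuous compactly supported functions on $\bb X_z^- \times \bb R$. Applying \eqref{limit V002_RadonMeasure} to this $\vphi$ and comparing with \eqref{limit V002} in Lemma \ref{Lemma exist of harm func(copy)} shows that the $\bb R$-marginal of $\check \mu_z^{g,-}$ integrates $\phi$ against $\check V^g(z,t)\, dt$, which gives the stated absolute continuity with density $\check V^g(z,\cdot)$. The main conceptual work has already been carried out in Lemma \ref{Lem_harm_func_001_rho}, so the only delicate step here is the positivity of $\Lambda_z$: the shift trick replacing $\check S_n g$ by the non-negative quantity $t + \check S_n g$ works precisely because the error term vanishes on the compact $t$-support of $\vphi$, which is exactly what Lemma \ref{Lem_Limit_Tau} delivers.
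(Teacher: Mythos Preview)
Your proposal is correct and follows essentially the same route as the paper: existence of the limit from Lemma~\ref{Lem_harm_func_001_rho}, positivity via the shift trick (the paper replaces $\check S_n g$ by $t_0 + \check S_n g$ for a fixed $t_0$ larger than the $t$-support, while you use the variable shift $t + \check S_n g$; both work and both rely on Lemma~\ref{Lem_Limit_Tau} to kill the correction term), then Riesz--Markov, and finally Lemma~\ref{Lemma exist of harm func(copy)} for the marginal.
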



\begin{proof} 
By Lemma \ref{Lem_harm_func_001_rho}, the limit on the left hand side of \eqref{limit V002_RadonMeasure} exists.
By Lemma \ref{Lem_Limit_Tau}, the limit is the same as the one of 
\begin{align*}
\lim_{n \to \infty} \int_{\bb R}  \int_{\bb X^-_z}   \vphi(y, t)  
 \left(  t_0 + \check S_{n} g (y \cdot z) \right) 
  \mathds 1_{ \left\{ \check \tau_t^g(y \cdot z) >n \right\} } \nu^-_z(dy)  dt,
\end{align*}
where $t_0 >0$ is arbitrarily large.
In particular, this limit is non-negative. 
By Riesz representation theorem, it may be written as $\check \mu_{z}^{g, -}(\varphi)$, 
where $\check \mu_{z}^{g, -}$ is a Radon measure on $\bb X_z^- \times \bb R$. 
By Lemma \ref{Lemma exist of harm func(copy)}, the marginal measure of $\check \mu_{z}^{g, -}$ on $\bb R$ 
under the natural projection map 
is the absolutely continuous measure $\check V^g(z,t) dt$. 
\end{proof}

We define the Radon measure $\check \mu^g$ on $\bb X \times \bb R$ by setting,
 for any continuous compactly supported function $\varphi$ on $\bb X \times \bb R$, 
 \begin{align*}
\check \mu^g (\varphi) 
= \int_{\bb X^+} \int_{\bb R} \int_{\bb X^-_z} \varphi(y \cdot z, t) \check \mu_{z}^{g,-} (dy, dt) \nu^+(dz). 
\end{align*}
The main result of this section is stated as follows.

\begin{theorem}\label{Proposition exist harm func_rho}
Let $g$ be a H\"older continuous function on $\bb X$ such that  $\nu (g) = 0$ and $g$ is not a coboundary.
Then, for any continuous compactly supported function $\varphi$ on $\bb X \times \bb R$, we have 
\begin{align} \label{Harmonic_rho_f}
\lim_{n \to \infty}  \int_{\bb X} \int_{\bb R}   \vphi(x, t)  
 \check S_{n} g (x) \mathds 1_{ \left \{ \check \tau_t^g(x) >n \right\} }  \nu(dx) dt
= \int_{\bb X \times \bb R} \vphi(x, t)  \check \mu^g(dx, dt).
\end{align}
Moreover, the following harmonicity property holds:
\begin{align}\label{Harmonic_rho_aa}
\int_{\bb X \times \bb R} \vphi(x, t)  \check \mu^g(dx, dt)
= \int_{\bb X} \int_{0}^{\infty}  \vphi(Tx, t - g(x))  \check \mu^g(dx, dt).  
\end{align}
\end{theorem}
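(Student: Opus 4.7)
The plan is to establish in succession the vague convergence \eqref{Harmonic_rho_f} and then the quasi-invariance \eqref{Harmonic_rho_aa}, using the Radon measures $\check\mu_z^{g,-}$ already constructed in Lemma \ref{Lemma exist of harm func(copy)Radon}. For \eqref{Harmonic_rho_f}, I would first disintegrate $\nu$ via Lemma \ref{Lem_Fubini}, which rewrites the left-hand side as
\[
\int_{\bb X^+}\!\Bigl(\int_{\bb R}\!\int_{\bb X^-_z}\varphi(y\cdot z,t)\,\check S_n g(y\cdot z)\,\mathds 1_{\{\check\tau_t^g(y\cdot z)>n\}}\,\nu^-_z(dy)\,dt\Bigr)\,\nu^+(dz).
\]
For each fixed $z$ the inner integral converges to $\int\varphi(y\cdot z,t)\,\check\mu_z^{g,-}(dy,dt)$ by Lemma \ref{Lemma exist of harm func(copy)Radon}. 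The interchange with $\int\nu^+(dz)$ is then justified by the dominated convergence theorem, using the uniform bound $\check V_n^g(z,t)\leq 2|t|+C$ established in the proof of Lemma \ref{Lemma exist of harm func(copy)} together with the compact support of $\varphi$ in the $t$ variable. This yields \eqref{Harmonic_rho_f} with $\check\mu^g$ as defined just before the theorem.

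For \eqref{Harmonic_rho_aa}, I would begin from \eqref{Harmonic_rho_f}, apply the $T$-invariance of $\nu$ (changing the variable of integration from $x$ to $Tx$), and use the recursion $\check S_n g(Tx)=g(x)+\check S_{n-1} g(x)$ together with the equivalence
\[
\{\check\tau_t^g(Tx)>n\}=\{t+g(x)\geq 0\}\cap\{\check\tau_{t+g(x)}^g(x)>n-1\}.
\]
Setting $u=t+g(x)$ and splitting the factor $g(x)+\check S_{n-1} g(x)$ decomposes the left-hand side of \eqref{Harmonic_rho_aa} into a main term
\[
\lim_n\int\varphi(Tx,u-g(x))\,\mathds 1_{\{u\geq 0\}}\,\check S_{n-1}g(x)\,\mathds 1_{\{\check\tau_u^g(x)>n-1\}}\,\nu(dx)\,du
\]
plus a remainder in which $\check S_{n-1}g(x)$ is replaced by $g(x)$. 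Since $g$ is bounded, $\varphi$ has compact $u$-support, and $\nu(\check\tau_u^g(x)>n-1)\to 0$ for each $u$ by Lemmas \ref{Lem_Fubini} and \ref{Lem_Limit_Tau}, dominated convergence forces the remainder to vanish.

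The remaining step---identifying the main term with the right-hand side of \eqref{Harmonic_rho_aa}---is the delicate one, because the factor $\mathds 1_{\{u\geq 0\}}$ is discontinuous whereas \eqref{Harmonic_rho_f} is proved only for continuous test functions. I would regularize $\mathds 1_{\{u\geq 0\}}$ by a continuous cut-off $\chi_\epsilon$, equal to $1$ on $[0,\infty)$ and to $0$ on $(-\infty,-\epsilon]$, so that $(x,u)\mapsto\varphi(Tx,u-g(x))\,\chi_\epsilon(u)$ is continuous and compactly supported on $\bb X\times\bb R$. Applying \eqref{Harmonic_rho_f} to this test function and then passing to the limit $\epsilon\to 0$ would yield the identity, provided the limits $n\to\infty$ and $\epsilon\to 0$ can be exchanged. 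This is the main obstacle, which I would handle through the uniform estimate
\[
\Bigl|\int_{-\epsilon}^0\!\int_{\bb X}\varphi(Tx,u-g(x))\bigl(\mathds 1_{\{u\geq 0\}}-\chi_\epsilon(u)\bigr)\check S_n g(x)\,\mathds 1_{\{\check\tau_u^g(x)>n\}}\,\nu(dx)\,du\Bigr|\leq C\|\varphi\|_\infty\,\epsilon,
\]
valid uniformly in $n$: on $\{\check\tau_u^g(x)>n\}$ with $u\in[-\epsilon,0]$ one has $\check S_n g(x)\geq -u\geq 0$, so the integrand is non-negative and its total mass is controlled by the uniform bound on $\check V_n^g$. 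Since the marginal of $\check\mu^g$ on $\bb R$ is absolutely continuous by Lemma \ref{Lemma exist of harm func(copy)Radon}, the boundary $\{u=0\}$ carries no mass and the $\epsilon\to 0$ limit of $\int\varphi(Tx,u-g(x))\chi_\epsilon(u)\,\check\mu^g(dx,du)$ is precisely $\int\varphi(Tx,u-g(x))\mathds 1_{\{u\geq 0\}}\,\check\mu^g(dx,du)$, which is the right-hand side of \eqref{Harmonic_rho_aa}.
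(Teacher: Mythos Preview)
Your proposal is correct and follows essentially the same route as the paper: disintegrate $\nu$ via Lemma~\ref{Lem_Fubini}, invoke Lemma~\ref{Lemma exist of harm func(copy)Radon} fiberwise, and pass the $\nu^+$-integral through by the uniform bound \eqref{Bound_Vng_02}; then for \eqref{Harmonic_rho_aa} use $T$-invariance, the recursion for $\check S_n g$, kill the bounded remainder by Lemma~\ref{Lem_Limit_Tau}, and regularize $\mathds 1_{\{u\geq 0\}}$. The only cosmetic difference is that the paper sandwiches the indicator between two cutoffs $\chi_\varepsilon^-\leq \mathds 1_{[0,\infty)}\leq \chi_\varepsilon^+$ and lets both sides converge by dominated convergence, whereas you use a single upper cutoff together with your uniform $O(\varepsilon)$ estimate on the discrepancy---both arguments are equivalent here.
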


\begin{proof}
We can assume that $\varphi$ is non-negative. 
By Lemma \ref{Lemma exist of harm func(copy)Radon}, for every $z \in \bb X^+$ and $t \in \bb R$, 
we have 
\begin{align*}
& \lim_{n \to \infty}  \int_{\bb X^-_z} \int_{\bb R}   \vphi(y \cdot z, t)  \left( t + \check S_{n} g (y \cdot z) \right) 
   \mathds 1_{ \left \{ \check \tau_t^g(y \cdot z) >n \right\} }  \nu^-_z(dy) dt  \notag\\
& =  \int_{\bb X^-_z} \int_{\bb R}  \vphi(y \cdot z, t) \check \mu_{z}^{g, -}(dy, dt).  
\end{align*}
Thanks to the dominated convergence theorem, this will imply \eqref{Harmonic_rho_f}. 
Indeed, for $t \in \bb R$, set $\theta(t) = \sup_{x \in \bb X}  \vphi(x, t)$, so that 
$\theta$ is a continuous compactly supported function on $\bb R$. 
Note that 
\begin{align*}
& \int_{\bb X^-_z} \int_{\bb R}   \vphi(y \cdot z, t)  \left( t + \check S_{n} g (y \cdot z) \right) 
   \mathds 1_{ \left \{ \check \tau_t^g(y \cdot z) >n \right\} }  \nu^-_z(dy) dt  \notag\\
& \leq   \int_{\bb X^-_z} 
    \int_{\bb R}   \theta(t)   \left( t + \check S_{n} g (y \cdot z) \right) 
   \mathds 1_{ \left \{ \check \tau_t^g(y \cdot z) >n \right\} }  \nu^-_z(dy) dt. 
\end{align*}
By Lemma \ref{Lemma exist of harm func(copy)}, we have, uniformly in $z \in \bb X^+$, 
\begin{align*}
\lim_{n \to \infty} \int_{\bb R}   \theta(t)  \left( t + \check S_{n} g (y \cdot z) \right) 
   \mathds 1_{ \left \{ \check \tau_t^g(y \cdot z) >n \right\} }  \nu^-_z(dy) dt
   =  \int_{\bb R}   \theta(t)  \check V^g(z, t) dt.  
\end{align*}
By the dominated convergence theorem, we get \eqref{Harmonic_rho_f}.  

Now we prove \eqref{Harmonic_rho_aa}. 
By \eqref{Harmonic_rho_f}, 
\begin{align*}
\int_{\bb X \times \bb R} \vphi(x, t)  \check \mu^g(dx, dt)
=  \lim_{n \to \infty}  \int_{\bb X} \int_{\bb R}   \vphi(x, t)  \check S_{n} g (x) 
    \mathds 1_{ \left \{ \check \tau_t^g(x) >n \right\} }  \nu(dx) dt. 
\end{align*}
As $\nu$ is $T$-invariant, we have 
\begin{align*}
& \int_{\bb X} \int_{\bb R}   \vphi(x, t)  \check S_{n} g (x) 
    \mathds 1_{ \left \{ \check \tau_t^g(x) >n \right\} }  \nu(dx) dt  \notag\\
& =  \int_{\bb X} \int_{\bb R}   \vphi(Tx, t)  \left( \check S_{n-1} g (x)  + g(x) \right)
    \mathds 1_{ \{ \check \tau_{t + g(x)}^g(x) > n -1 \} }  \mathds 1_{ \{ t + g(x) \geq 0 \} }   \nu(dx) dt  \notag\\
 & =  \int_{\bb X} \int_{\bb R}   \vphi(Tx, t - g(x))  \left( \check S_{n-1} g (x)  + g(x) \right)
    \mathds 1_{  \{ \check \tau_{t}^g(x) > n -1 \} }  \mathds 1_{ \{ t \geq 0 \} }   \nu(dx) dt. 
\end{align*}
By Lemma \ref{Lem_Limit_Tau}, the latter has the same limit, as $n \to \infty$, as 
\begin{align*}
\int_{\bb X} \int_{\bb R}   \vphi(Tx, t - g(x))   \check S_{n-1} g (x)  
    \mathds 1_{  \{ \check \tau_{t}^g(x) > n -1 \} }  \mathds 1_{ \{ t \geq 0 \} }   \nu(dx) dt. 
\end{align*}
We will prove below that we can apply \eqref{Harmonic_rho_f} 
to the function $(x, t) \mapsto \vphi(Tx, t - g(x)) \mathds 1_{ \{ t \geq 0 \} }$
to get 
\begin{align*}
& \lim_{n \to \infty} \int_{\bb X} \int_{\bb R}   \vphi(Tx, t - g(x))   \check S_{n-1} g (x)  
    \mathds 1_{  \{ \check \tau_{t}^g(x) > n -1 \} }  \mathds 1_{ \{ t \geq 0 \} }   \nu(dx) dt  \notag\\
&    =  \int_{\bb X} \int_{0}^{\infty}  \vphi(Tx, t - g(x))  \check \mu^g(dx, dt), 
\end{align*}
which proves \eqref{Harmonic_rho_aa}. 

To finish the proof, we need to show that \eqref{Harmonic_rho_f}  implies 
that for any continuous compact supported function $\varphi$ on $\bb X \times \bb R$, 
as $n \to \infty$, the quantity 
\begin{align*}
I_n = \int_{\bb X} \int_{0}^{\infty}    
 \vphi(x, t)  \check S_{n} g (x) \mathds 1_{ \left \{ \check \tau_t^g(x) >n \right\} }  \nu(dx) dt
\end{align*}
converges to 
\begin{align}\label{Limit_Approxima_rho}
\int_{\bb X} \int_{0}^{\infty}  \vphi(x, t)  \check \mu^g(dx, dt).
\end{align}
This is a standard argument by an  approximation. 
Indeed, for $\ee >0$ and $t \in \bb R$, set $\chi_{\ee}^-(t) = 0$ if $t<0$; $\chi_{\ee}^-(t) = \frac{t}{\ee}$ if $0\leq t \leq \ee$
and $\chi_{\ee}^-(t) = 1$ if $t > \ee$. 
Define also $\chi_{\ee}^+(t) = \chi_{\ee}^-(t + \ee)$. 
Then, for any $n \geq 0$, we have 
\begin{align*}
& \int_{\bb X \times \bb R}  \chi_{\ee}^-(t) 
 \vphi(x, t)  \left( t + \check S_{n} g (x) \right) \mathds 1_{ \left \{ \check \tau_t^g(x) >n \right\} }  \nu(dx) dt  \notag\\
& \leq I_n  \leq   \int_{\bb X \times \bb R}  \chi_{\ee}^+(t) 
 \vphi(x, t)  \left( t + \check S_{n} g (x) \right) \mathds 1_{ \left \{ \check \tau_t^g(x) >n \right\} }  \nu(dx) dt. 
\end{align*}
By \eqref{Harmonic_rho_aa} and Lemma \ref{Lem_Limit_Tau}, we get 
\begin{align*}
& \int_{\bb X \times \bb R}  \chi_{\ee}^-(t) \vphi(x, t)  \check \mu^g(dx, dt)   \notag\\
&  \leq  \liminf_{n \to \infty} I_n 
  \leq \limsup_{n \to \infty} I_n   
  \leq \int_{\bb X \times \bb R}  \chi_{\ee}^+(t)  \vphi(x, t)  \check \mu^g(dx, dt). 
\end{align*}
We claim that the left and right hand sides of the latter inequality converge to the integral in \eqref{Limit_Approxima_rho}.
Indeed, for $(x, t) \in \bb X \times \bb R$, we have that $|\chi_{\ee}^+(t) \vphi(x, t)|$ and $|\chi_{\ee}^-(t) \vphi(x, t)|$ 
are dominated by $|\vphi(x, t)|$. 
The conclusion now follows from the dominated convergence theorem. 
%
\end{proof}

As for the function $\check V^g$, the measure $\check \mu^g$ enjoys the following 
continuity property on cohomology classes. 

\begin{lemma}\label{Lem_Continuity_rho_g}
Let $g \in \scr B$ with $\nu(g) = 0$. Assume that $g$ is not a coboundary. 
Let $\alpha \in (0,1)$ and $(h_m)_{m \geq 0}$ be a sequence of element of $\scr B_{\alpha}$ that converges to $0$ 
with respect to the H\"{o}lder norm $\|\cdot \|_{\alpha}$. 
For $m \geq 0$, set $g_m = g + h_m \circ T - h_m$.
Then, for any continuous compactly supported function  $\vphi$ on $\bb X \times\bb R$, we have,
\begin{align}\label{Func_V_300}
\lim_{m \to \infty} \int_{\bb X \times \bb R} \vphi(x, t) \check \mu^{g_m} (dx, dt) 
= \int_{\bb X \times \bb R} \vphi(x, t) \check \mu^{g} (dx, dt). 
\end{align} 
\end{lemma}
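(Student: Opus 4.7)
The plan is to sandwich $\check\mu^{g_m}(\varphi)$ between two expressions involving $\check\mu^g$ and small translations in the $t$-variable, and then let $m\to\infty$. By decomposing $\varphi$ into its positive and negative parts, it suffices to treat the case where $\varphi$ is non-negative. Set $\varepsilon_m = 2\|h_m\|_\infty$, which tends to $0$ as $m\to\infty$ by assumption.

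The first step is the pointwise comparison. Since $\check S_n g_m = \check S_n g + h_m - h_m\circ T^{-n}$, we have uniformly in $x\in\bb X$ that $|\check S_n g_m(x) - \check S_n g(x)| \leq \varepsilon_m$. Consequently the exit events satisfy the inclusions
\begin{align*}
\bigl\{\check\tau_{t-\varepsilon_m}^{g}>n\bigr\} \subseteq \bigl\{\check\tau_t^{g_m}>n\bigr\} \subseteq \bigl\{\check\tau_{t+\varepsilon_m}^{g}>n\bigr\}.
\end{align*}
On the event $\{\check\tau_t^{g_m}>n\}$, the sum $t+\check S_n g_m(x)$ is non-negative; combining this with the bound $t + \check S_n g_m(x) \leq (t+\varepsilon_m) + \check S_n g(x)$ and the analogous lower bound, we obtain, for $\varphi\geq 0$,
\begin{align*}
J_{n,m}^{-}(\varphi) \leq J_{n,m}(\varphi) \leq J_{n,m}^{+}(\varphi),
\end{align*}
where $J_{n,m}(\varphi) = \int \varphi(x,t)\,(t+\check S_n g_m(x))\,\mathds 1_{\{\check\tau_t^{g_m}>n\}}\nu(dx)\,dt$ and, after the change of variable $s = t\mp\varepsilon_m$,
\begin{align*}
J_{n,m}^{\pm}(\varphi) = \int \varphi(x, s\mp\varepsilon_m)\,(s + \check S_n g(x))\,\mathds 1_{\{\check\tau_s^{g}>n\}}\nu(dx)\,ds.
\end{align*}

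Next I would pass to the limit $n\to\infty$ with $m$ fixed. Theorem \ref{Proposition exist harm func_rho} applied to the continuous compactly supported function $(x,s)\mapsto\varphi(x,s\mp\varepsilon_m)$ yields that the $\check S_n g$-part of $J_{n,m}^{\pm}$ converges to $\check\mu^g(\varphi(\cdot,\cdot\mp\varepsilon_m))$. The residual linear piece $\int \varphi(x,s\mp\varepsilon_m)\,s\,\mathds 1_{\{\check\tau_s^{g}>n\}}\nu(dx)\,ds$ tends to $0$ by dominated convergence: for each $s$ the measure $\nu(\check\tau_s^g>n)\to 0$ by Lemma \ref{Lem_Limit_Tau} together with Lemma \ref{Lem_Fubini}, while the integrand is dominated by $|s|\|\varphi\|_\infty$ on a fixed compact support in $s$. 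The same argument shows that $J_{n,m}(\varphi)$ has limit $\check\mu^{g_m}(\varphi)$. We thus obtain the two-sided squeeze
\begin{align*}
\check\mu^g\bigl(\varphi(\cdot,\cdot+\varepsilon_m)\bigr) \leq \check\mu^{g_m}(\varphi) \leq \check\mu^g\bigl(\varphi(\cdot,\cdot-\varepsilon_m)\bigr).
\end{align*}

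Finally, letting $m\to\infty$, the supports of the translates $\varphi(\cdot,\cdot\mp\varepsilon_m)$ all lie in a common compact set, on which $\varphi(\cdot,\cdot\mp\varepsilon_m)\to\varphi$ uniformly by uniform continuity of $\varphi$. Since $\check\mu^g$ is a Radon measure and therefore finite on compact sets, the dominated convergence theorem gives $\check\mu^g(\varphi(\cdot,\cdot\mp\varepsilon_m))\to\check\mu^g(\varphi)$, and \eqref{Func_V_300} follows. The main delicate point is the bookkeeping for the linear $t$-term when converting between $\check S_n g$ and $t+\check S_n g$, ensuring that Theorem \ref{Proposition exist harm func_rho} can be applied to the shifted test function while the residual contribution vanishes in the limit $n\to\infty$.
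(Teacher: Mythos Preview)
Your proposal is correct and follows essentially the same route as the paper: both proofs use the uniform bound $|\check S_n g_m-\check S_n g|\leq 2\|h_m\|_\infty$ to obtain the inclusions of exit events, sandwich the pre-limit integrals by the corresponding integrals for $g$ with the test function translated by $\pm 2\|h_m\|_\infty$ in $t$, apply Theorem \ref{Proposition exist harm func_rho} to pass to $n\to\infty$, and then let $m\to\infty$ using that $\check\mu^g$ is Radon. Your bookkeeping is in fact slightly cleaner than the paper's, since you replace $t+\check S_n g_m$ by $(t\pm\varepsilon_m)+\check S_n g$ before enlarging or shrinking the event, which makes the pointwise inequality $J_{n,m}^-\leq J_{n,m}\leq J_{n,m}^+$ hold on the nose rather than up to a term that vanishes in the limit.
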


\begin{proof}
We can assume that $\varphi$ is non-negative. 
By Theorem \ref{Proposition exist harm func_rho}, for $m \geq 0$, we have 
\begin{align*}
\int_{\bb X \times \bb R} \vphi(x, t)  \mu^{g_m} (dx, dt) 
= \lim_{n \to \infty}   \int_{\bb X \times \bb R} \vphi(x, t)  \left( t + \check S_n g_m(x) \right)  
   \mathds 1_{\left\{ \check \tau_t^{g_m}(x) > n \right\}}  \nu(dx) dt. 
\end{align*}
For any $n \geq 0$, we have 
$S_n g_m \leq S_n g + 2 \|h_m\|_{\infty}$. Hence, for $t \in \bb R$, 
we have $\check \tau_t^{g_m} \leq \check \tau_{t + 2 \|h_m\|_{\infty}}^{g}$.
We get 
\begin{align*}
& \int_{\bb X \times \bb R} \vphi(x, t)  \left( t + \check S_n g_m(x) \right)  
   \mathds 1_{\left\{ \check \tau_t^{g_m}(x) > n \right\}}  \nu(dx) dt   \notag\\
&   \leq  \int_{\bb X \times \bb R} \vphi(x, t)  \left( t + \check S_n g_m(x) \right)  
   \mathds 1_{\left\{ \check \tau_{t + 2 \|h_m\|_{\infty}}^{g}  > n \right\}}  \nu(dx) dt.  
\end{align*}
 Again by Theorem \ref{Proposition exist harm func_rho}, as $n \to \infty$, the latter quantity converges to 
 \begin{align*}
\int_{\bb X \times \bb R} \vphi(x, t - 2 \|h_m\|_{\infty})  \check \mu^{g} (dx, dt).  
\end{align*}
Thus we have 
\begin{align*}
\int_{\bb X \times \bb R} \vphi(x, t)  \check \mu^{g_m} (dx, dt) 
\leq   \int_{\bb X \times \bb R} \vphi(x, t - 2 \|h_m\|_{\infty})  \check \mu^{g} (dx, dt).  
\end{align*}
In the same way, one also has
\begin{align*}
\int_{\bb X \times \bb R} \vphi(x, t)  \check  \mu^{g_m} (dx, dt) 
\geq   \int_{\bb X \times \bb R} \vphi(x, t + 2 \|h_m\|_{\infty})  \check \mu^{g} (dx, dt).  
\end{align*}
As $\varphi$ is continuous, the conclusion follows from the dominated convergence theorem. 
\end{proof}

\begin{proof}[Proof of Theorem \ref{Thm_Radon_Measure}]
So far we have proved Theorem \ref{Proposition exist harm func_rho}
which is an analogue of Theorem \ref{Thm_Radon_Measure}
for the reversed dynamical system $(\bb X, T^{-1}, \nu)$.
By Lemma \ref{Lem-iotanu-01}, this dynamical system is isomorphic to a subshift of finite type equipped with a Gibbs measure.  
Therefore, Theorem \ref{Thm_Radon_Measure} is actually equivalent to Theorem \ref{Proposition exist harm func_rho}. 
Formally, the former can be obtained from the latter 
by replacing $f$ with $g = f \circ T^{-1} \circ \iota = f \circ \iota \circ T$, and $\nu$ with $\iota_* \nu$. 
\end{proof}

The reader may notice that \eqref{Harmonic_rho_cc} is a particular case of \eqref{Harmonic_property_rho}, 
which is the reason to call the Radon measure $\mu^f$ harmonic. 


\section{Conditioned limit theorems}\label{Sect_CCLT}
In this section we prove Theorems \ref{Thm-exit-time001} and \ref{Thm-cnodi-lim-theor001}. 


\subsection{Proof of Theorem \ref{Thm-exit-time001}}

As in the construction of the harmonic function $\check{V}^g$ and the harmonic measure $\check \mu^g$, 
we will prove Theorem \ref{Thm-exit-time001} in several steps.
The first step is to deal with the case of functions $g$ 
depending only on the future. The following result follows from the general result 
for Markov chains established in \cite[Theorem 2.3]{GLL18}. 
The assumptions of this statement can be checked to hold thanks to the spectral gap properties  of the Ruelle
operator formulated in  Section \ref{Spectral gap theory}.

\begin{lemma}\label{Lem_Markov_chain}
Let $g \in \scr B^+$ with $\nu(g) = 0$ and assume that $g$ is not a coboundary. 
Then, for $t \in \bb R$, we have, uniformly in $z \in \bb X^+$, 
\begin{align*}
\lim_{n \to \infty} \sigma_g \sqrt{2 \pi n} \,  \nu^-_z\left(y \in \bb X^-_z: \check \tau_t^g(y \cdot z) >n \right) 
= 2 \check V^{g}(z, t).
\end{align*} 
\end{lemma}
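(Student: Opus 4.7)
The plan is to reduce the statement to the Markov chain conditioned limit theorem \cite[Theorem 2.3]{GLL18}, by exploiting the fact that when $g \in \scr B^+$ depends only on the future, the process $(\check S_n g(y \cdot z))_{n\ge 0}$ is an additive functional of a genuine Markov chain on $\bb X^+$ under $\nu^-_z$. Concretely, for $y \in \bb X^-_z$ set $X_0 = z$ and $X_k = y_{-k} y_{-k+1} \cdots y_{-1} \cdot z \in \bb X^+$ for $k \geq 1$. The definition \eqref{Def_nu_kz} of $\nu^-_z$, together with the identity $\mathcal L_\psi \mathds 1 = \mathds 1$, shows that $(X_k)_{k \geq 0}$ is a Markov chain on $\bb X^+$ with initial distribution $\delta_z$ and with one-step transition operator equal to the Ruelle operator $\mathcal L_\psi$; indeed, for any continuous $\varphi$ on $\bb X^+$ and $x \in \bb X^+$, the conditional expectation $\mathbb E[\varphi(X_{k+1}) \mid X_k = x]$ equals $\sum_{y': Ty' = x} e^{-\psi(y')} \varphi(y') = \mathcal L_\psi \varphi(x)$. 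Since $g$ depends only on future coordinates, $g(T^{-k}(y \cdot z)) = g(X_k)$, so $\check S_n g(y \cdot z) = \sum_{k=1}^n g(X_k)$, and $\check \tau_t^g(y \cdot z)$ is precisely the first time this Markov walk sum, shifted by $t$, becomes negative.

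Next I would verify the hypotheses of \cite[Theorem 2.3]{GLL18}: the spectral gap of $\mathcal L_\psi$ on $\scr B^+_\alpha$ (Theorem 2.2 of \cite{PP90}, recalled in the excerpt), the centering $\nu^+(g) = 0$ which is the stationarity condition since $\nu^+$ is $\mathcal L_\psi$-invariant, and the non-degeneracy $\sigma_g > 0$ which is equivalent to $g$ not being a coboundary. These three ingredients are exactly what is required to invoke the Markov conditioned integral limit theorem, which yields the asymptotic
\begin{align*}
\lim_{n \to \infty} \sigma_g \sqrt{2 \pi n} \, \nu^-_z\bigl( y \in \bb X^-_z : \check \tau_t^g(y\cdot z) > n \bigr) = 2 W(z,t),
\end{align*}
for some harmonic function $W$ on $\bb X^+ \times \bb R$, uniformly in the initial state $z \in \bb X^+$, the uniformity coming from the uniform spectral gap on the H\"older space.

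The key step is then to identify $W(z,t)$ with $\check V^g(z,t)$. The most natural route is to use the characterization of the harmonic function provided in \cite{GLL18} as the martingale limit
\begin{align*}
W(z,t) = \lim_{n \to \infty} \mathbb E_z\bigl[ t + \check S_n g_0(y \cdot z) ; \check \tau_t^g > n \bigr],
\end{align*}
where $g_0 \in \scr B^+$ is the martingale cohomologous element produced by Proposition \ref{lemma-martingale001}, so that $\mathcal L_\psi g_0 = 0$ and $\check S_n g_0(y \cdot z)$ is a martingale under $\nu^-_z$ (Proposition \ref{lemma-martingale002}). But this is exactly the defining formula \eqref{limit V} of $\check V^g(z,t)$ established in Lemma \ref{Lem_def_V}, so $W = \check V^g$.

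The main obstacle will be bookkeeping rather than a genuine analytic difficulty: one must carefully translate the framework of \cite{GLL18}, which is written for an abstract Markov chain on a general state space with a strong operator satisfying a spectral gap in an appropriate Banach algebra of observables, into the present subshift language, making sure that the initial condition $\delta_z$ indeed falls in the class to which the theorem applies, that the perturbed transfer operators $t \mapsto \mathcal L_{\psi - itg}$ used in the proof satisfy the regularity hypotheses on the H\"older scale $\scr B^+_\alpha$ for $\alpha$ close enough to $1$, and that the uniform convergence in $z$ produced by \cite{GLL18} translates to uniform convergence over the compact space $\bb X^+$. Once this dictionary is fixed, the statement of the lemma is an immediate consequence of \cite[Theorem 2.3]{GLL18} together with the identification of the harmonic function through Lemma \ref{Lem_def_V}.
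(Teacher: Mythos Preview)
Your proposal is correct and matches the paper's approach exactly: the paper simply states that the lemma follows from \cite[Theorem 2.3]{GLL18} after checking the spectral gap hypotheses of Section \ref{Spectral gap theory}, and you have filled in precisely the dictionary (the Markov chain $(X_k)$ with transition operator $\mathcal L_\psi$, the verification of centering and non-degeneracy, and the identification of the harmonic function via Lemma \ref{Lem_def_V}) that this one-line reference implicitly requires. There is nothing to add or correct.
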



We have to strengthen Lemma \ref{Lem_Markov_chain} by proving the following integral form:

\begin{lemma}\label{Lem_Markov_chain_Target}
Let $g \in \scr B^+$ with $\nu(g) = 0$ and assume that $g$ is not a coboundary. 
Then, for any $(z, t) \in \bb X^+ \times \bb R$ and 
for any continuous compactly supported function $\varphi$ on $\bb X^-_z$, we have 
\begin{align*}
\lim_{n \to \infty} \sigma_g \sqrt{2 \pi n}  \int_{\bb X^-_z} \varphi(y) 
\mathds 1_{ \left\{ \check \tau_t^g(y \cdot z) >n \right\} } \nu^-_z(dy) 
= 2 \check V^{g}(z, t) \int_{\bb X^-_z} \varphi(y) \check \mu^{g,-}_{z,t} (dy).
\end{align*} 
\end{lemma}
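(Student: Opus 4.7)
The plan is to first establish the identity when $\varphi$ is the indicator function of a cylinder in $\bb X^-_z$, where it reduces to Lemma \ref{Lem_Markov_chain} via the same Markov-type conditioning used in the proof of Lemma \ref{Lem_def_rho}, and then to extend to arbitrary continuous test functions by a Stone--Weierstrass approximation.

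First, I would fix $m \geq 0$ and $a \in A^m_z$ and take $\varphi = \mathds 1_{\bb C_{a,z}}$. Since $g \in \scr B^+$, the Birkhoff sums $\check{S}_k g(y \cdot z)$ for $k \leq m$ depend only on the coordinates $y_{-1}, \ldots, y_{-k}$ and on $z$, which on the cylinder $\bb C_{a,z}$ equal $a_{-1}, \ldots, a_{-k}$. Conditioning on the first $m$ negative coordinates, exactly as in the proof of Lemma \ref{Lem_def_rho}, yields for $n \geq m$:
\begin{align*}
\int_{\bb X^-_z} \mathds 1_{\bb C_{a,z}}(y)\, \mathds 1_{\{\check{\tau}^g_t(y \cdot z) > n\}}\, \nu^-_z(dy)
&= e^{-S_m \psi(a \cdot z)}\, \mathds 1_{E_{a,z,t}} \\
&\quad \times \nu^-_{a \cdot z}\bigl\{ y' : \check{\tau}^g_{t + S_m g(a \cdot z)}(y' \cdot a \cdot z) > n - m \bigr\},
\end{align*}
where $E_{a,z,t}$ stands for the event that $t + S_k g(T^k(a \cdot z)) \geq 0$ for all $1 \leq k \leq m$. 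Multiplying by $\sigma_g\sqrt{2\pi n}$ and using that $\sqrt{n}/\sqrt{n-m} \to 1$, an application of Lemma \ref{Lem_Markov_chain} at the base point $a \cdot z$ with starting point $t + S_m g(a \cdot z)$ produces the limit
\begin{align*}
2\, e^{-S_m \psi(a \cdot z)}\, \check{V}^g(a \cdot z, t + S_m g(a \cdot z))\, \mathds 1_{E_{a,z,t}},
\end{align*}
which by formula \eqref{Measure_rho} is precisely $2\, \check{V}^g(z,t)\, \check{\mu}^{g,-}_{z,t}(\bb C_{a,z})$. By linearity, the claim then holds for every finite linear combination of cylinder indicators.

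To conclude, I would extend to a general continuous compactly supported $\varphi$ on $\bb X^-_z$ by approximation. The linear span of indicators of cylinders is a point-separating subalgebra of $\mathcal{C}(\bb X^-_z)$, so by Stone--Weierstrass, for any $\ee > 0$ there is such a combination $\varphi_\ee$ with $\|\varphi - \varphi_\ee\|_\infty \leq \ee$. The difference of the $n$-th integrals is bounded in absolute value by $\ee\, \sigma_g \sqrt{2\pi n}\, \nu^-_z\{\check{\tau}^g_t(y\cdot z) > n\}$, which by Lemma \ref{Lem_Markov_chain} converges to $2\ee\, \check{V}^g(z,t)$; a symmetric estimate controls the difference on the limit side by $\ee\, \check{V}^g(z,t)\, \|\check{\mu}^{g,-}_{z,t}\|$ (recall that $\check{\mu}^{g,-}_{z,t}$ is a probability measure). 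Sending $\ee \to 0$ then completes the proof.

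The main technical point is the Markov-type factorization at the cylinder level: one must verify that formula \eqref{Measure_rho} defining $\check{\mu}^{g,-}_{z,t}$ is tailored precisely so that, when combined with Lemma \ref{Lem_Markov_chain} applied at the shifted base point, the limiting constant matches exactly. Once this matching is done, the rest is a routine density argument, and no uniformity in $(z,t)$ is required, since only finitely many applications of Lemma \ref{Lem_Markov_chain} are invoked at fixed parameters.
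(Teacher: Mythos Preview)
Your proof is correct and follows essentially the same approach as the paper: reduce to cylinder indicators, use the Markov-type factorization to rewrite the integral over a cylinder as a shifted exit-time probability at the base point $a\cdot z$, apply Lemma~\ref{Lem_Markov_chain} there, and recognise the resulting constant via formula~\eqref{Measure_rho}; then pass to general continuous $\varphi$ by approximation. The paper states the approximation step more tersely (``a standard approximation argument''), while you spell out the Stone--Weierstrass density and the $\ee$-sandwich explicitly, but the underlying argument is identical.
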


\begin{proof}
It suffices to prove this result when $\varphi$ is the indicator function of a cylinder set in $\bb X^-_z$,
since the general case follows by a standard approximation argument. 
Thus, let $m \geq 0$ and $a \in A_z^m$ and, as before, denote by $\bb C_{a,z}$  
the associated cylinder in $\bb X^-_z$ (see \eqref{Def-CYLIND-001}). 

If $t + S_k g(T^{m-k} (a \cdot z)) \geq 0$ for every $1 \leq k \leq m$, we have 
\begin{align*}
& \sigma_g \sqrt{2 \pi n} \int_{\bb X^-_z}  
\mathds 1_{\bb C_{a,z}} (y)  \mathds 1_{ \left\{ \check \tau_t^g(y \cdot z) >n \right\} } \nu^-_z(dy)   \notag\\
&  =  \sigma_g \sqrt{2 \pi n} \exp(-S_m \psi(a \cdot z))  \int_{\bb X^-_{a \cdot z} }   
   \mathds 1_{ \left\{ \check \tau_{t + S_m g( a \cdot z) }^g(y \cdot (a \cdot z)) >n - m \right\} } 
    \nu^-_{a \cdot z}(dy). 
\end{align*}
By Lemma \ref{Lem_Markov_chain}, as $n \to \infty$, the latter quantity converges to 
\begin{align*}
2 \check V^{g}(a \cdot z, t + S_m g( a \cdot z ) )  \exp(-S_m \psi(a \cdot z)), 
\end{align*}
which, by definition, is equal to $2 \check V^{g}(z, t)  \check \mu^{g,-}_{z,t} (\bb C_{a,z}).$ 

If there exists $1 \leq k \leq m$ with $t + S_k g(T^{m-k} (a \cdot z)) <0$, 
we have $\check \mu^{g,-}_{z,t} (\bb C_{a,z}) = 0$
and
\begin{align*}
\int_{\bb X^-_z}  
\mathds 1_{\bb C_{a,z}} (y)  \mathds 1_{ \left\{ \check \tau_t^g(y \cdot z) >n \right\} } \nu^-_z(dy) = 0, 
\end{align*}
for $n >k$. 
The conclusion follows. 
\end{proof}

From Lemmas \ref{Lem_Markov_chain} and \ref{Lem_Markov_chain_Target},  
we deduce the analogous result for functions which depend only on finitely many negative coordinates. 

\begin{lemma}\label{Corollary_Markov_chain}
Let $g \in \scr B$ be such that $\nu(g) = 0$ and there exists $m \geq 0$ with $g \circ T^m \in \scr B^+$. 
Assume that $g$ is not a coboundary. 
Then, for $t \in \bb R$, we have, uniformly in $z \in \bb X^+$, 
\begin{align*}
\lim_{n \to \infty} \sigma_g \sqrt{2 \pi n} \,  \nu^-_z\left(y \in \bb X^-_z: \check \tau_t^g(y \cdot z) >n \right) 
= 2 \check V^{g}(z, t).
\end{align*} 
Moreover, for any $(z, t) \in \bb X^+ \times \bb R$ and 
for any continuous compactly supported function $\varphi$ on $\bb X^-_z$, we have 
\begin{align*}
\lim_{n \to \infty} \sigma_g \sqrt{2 \pi n}  \int_{\bb X^-_z} \varphi(y) 
\mathds 1_{ \left\{ \check \tau_t^g(y \cdot z) >n \right\} } \nu^-_z(dy) 
= 2 \check V^{g}(z, t) \int_{\bb X^-_z} \varphi(y) \check \mu^{g,-}_{z,t} (dy).
\end{align*} 
\end{lemma}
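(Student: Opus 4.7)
The plan is to reduce the statement to the case $g \in \scr B^+$, which is precisely Lemmas \ref{Lem_Markov_chain} and \ref{Lem_Markov_chain_Target}, by conditioning on the first $m$ negative coordinates exactly as in the proofs of Lemmas \ref{Lem_Harmonic_Function_V} and \ref{Lem_Harmonic_Function_rho}.

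Set $h = g \circ T^m \in \scr B^+$. Since $h - g = S_m g \circ T - S_m g$ is a coboundary, $h$ inherits from $g$ the properties $\nu^+(h) = 0$, non-cohomology with $0$, and the equality of variances $\sigma_h = \sigma_g$. Moreover, $g = h \circ T^{-m}$ yields $\check S_n g = \check S_n h \circ T^{-m}$ and hence $\check \tau_t^g(x) = \check \tau_t^h(T^{-m} x)$. Next, I decompose $y \in \bb X^-_z$ according to the first $m$ coordinates $a = (y_{-m}, \ldots, y_{-1}) \in A^m_z$; the remaining tail $\tilde y$ lies in $\bb X^-_{a \cdot z}$ and satisfies $T^{-m}(y \cdot z) = \tilde y \cdot (a \cdot z)$. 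The conditional measure of $\nu^-_z$ on the cylinder $\bb C_{a, z}$, expressed on $\bb X^-_{a \cdot z}$ via this identification, is $\exp(-S_m \psi(a \cdot z)) \nu^-_{a \cdot z}$, as a consequence of the cocycle identity $S_{m+k}\psi(b \cdot a \cdot z) = S_m\psi(a \cdot z) + S_k\psi(b \cdot (a \cdot z))$. This yields the identity
\begin{align*}
&\int_{\bb X^-_z} \varphi(y) \mathds 1_{\{ \check \tau_t^g(y \cdot z) > n \}} \nu^-_z(dy) \\
&\qquad = \sum_{a \in A^m_z} \exp(-S_m \psi(a \cdot z)) \int_{\bb X^-_{a \cdot z}} \varphi_a(\tilde y) \mathds 1_{\{ \check \tau_t^h(\tilde y \cdot (a \cdot z)) > n \}} \nu^-_{a \cdot z}(d\tilde y),
\end{align*}
where $\varphi_a$ is the function on $\bb X^-_{a \cdot z}$ obtained from $\varphi$ under the identification.

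To conclude the second assertion, multiply both sides by $\sigma_g \sqrt{2 \pi n} = \sigma_h \sqrt{2 \pi n}$ and let $n \to \infty$. Since the sum is finite, Lemma \ref{Lem_Markov_chain_Target} applied to $h \in \scr B^+$ at starting point $a \cdot z$ gives, for each summand, the limit $\exp(-S_m \psi(a \cdot z)) \cdot 2 \check V^h(a \cdot z, t) \, \check \mu^{h,-}_{a \cdot z, t}(\varphi_a)$. Summing and comparing with the formula $\check V^g(z, t) = \mathcal L_\psi^m(\check V^h(\cdot, t))(z)$ from Lemma \ref{Lem_Harmonic_Function_V} and the definition \eqref{Def_rho_gzt} of $\check \mu^{g,-}_{z,t}$, the total limit equals $2 \check V^g(z, t) \, \check \mu^{g,-}_{z,t}(\varphi)$.

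The first assertion is the specialization $\varphi \equiv 1$, and the uniformity in $z$ is preserved because Lemma \ref{Lem_Markov_chain} provides uniform convergence of each of the at most $k^m$ summands (the set $A^m_z$ depends only on $z_0$ and has bounded cardinality, so a finite sum of uniform limits is a uniform limit). I do not anticipate a substantive obstacle: the argument is an almost mechanical transcription of the conditioning already performed in the proof of Lemma \ref{Lem_Harmonic_Function_V}, with Lemma \ref{Lem_Markov_chain_Target} replacing Lemma \ref{Lem_def_V} as the base case. The only point demanding genuine care is to verify the factorization $\nu^-_z|_{\bb C_{a,z}} = \exp(-S_m \psi(a \cdot z)) \, (\Phi_a)_* \nu^-_{a \cdot z}$, but this is built directly into the Kolmogorov-extension construction of the measures $\nu^-_z$ via \eqref{Def_nu_kz}.
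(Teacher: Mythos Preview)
Your proposal is correct and follows essentially the same approach as the paper: condition on the first $m$ negative coordinates to reduce to $h = g \circ T^m \in \scr B^+$, then apply Lemmas \ref{Lem_Markov_chain} and \ref{Lem_Markov_chain_Target} to each of the finitely many summands and identify the limit via Lemma \ref{Lem_Harmonic_Function_V} and \eqref{Def_rho_gzt}. The paper's proof is just a terser version of exactly what you wrote.
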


\begin{proof}
As in Lemma \ref{Lem_Harmonic_Function_rho}, 
for $a \in A_z^m$, let $\varphi_a$ be the continuous function $y \mapsto \varphi(a \cdot y)$ on $\bb X^-_{a \cdot z}$. 
We have, by setting $h = g \circ T^m$, 
\begin{align*}
& \int_{\bb X^-_z} \varphi(y)  \mathds 1_{ \left\{ \check \tau_t^g(y \cdot z) >n \right\} } \nu^-_z(dy)  \notag\\
&  = \sum_{a \in A^m_z} \exp(- S_m \psi(a \cdot z) )  \int_{\bb X^-_{a \cdot z}} \varphi_a(y) 
\mathds 1_{ \left\{ \check \tau_t^h (y \cdot (a \cdot z)) >n \right\} } \nu^-_{a \cdot z}(dy).   
\end{align*}
The conclusion now follows from Lemmas \ref{Lem_Markov_chain} and \ref{Lem_Markov_chain_Target} and \eqref{Def_rho_gzt}. 
\end{proof}

Now we use the same approximation argument as before to deduce from Lemma \ref{Corollary_Markov_chain}
a slightly weaker statement that works for every function $g$ in $\scr B$.
This is the main result of this section. 

\begin{theorem}\label{Lemma_Harmonic_V_10}
Let $g \in \scr B$ be such that $\nu(g) = 0$. 
Assume that $g$ is not a coboundary. 
Then, for any continuous compactly supported function $\vphi$ on $\bb R$, 
we have, uniformly in $z \in \bb X^+$, 
\begin{align*}
\lim_{n \to \infty} \sigma_g \sqrt{2 \pi n} 
\int_{\bb R} \vphi(t)  \nu^-_z\left(y \in \bb X^-_z: \check \tau_t^g(y \cdot z) >n \right)  dt
= 2  \int_{\bb R} \vphi(t) \check V^{g}(z, t) dt.
\end{align*} 
Moreover, 
for any continuous compactly supported function $\varphi$ on $\bb X \times \bb R$, we have 
\begin{align*}
\lim_{n \to \infty} \sigma_g \sqrt{2 \pi n} 
\int_{\bb X \times \bb R} \vphi(x, t)  
\mathds 1_{ \left\{ \check \tau_t^g(x) >n \right\} } \nu(dx) dt
= 2  \int_{\bb X \times \bb R} \vphi(x, t) \check \mu^g(dx, dt).
\end{align*} 
\end{theorem}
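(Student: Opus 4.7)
The plan is to reduce the general case $g \in \scr B$ to the case of a function $g_m$ depending only on finitely many negative coordinates, where Corollary \ref{Corollary_Markov_chain} and its target-function version already give the asymptotic, and then to let $m \to \infty$ using the cohomology-class continuity results of Lemmas \ref{Lem_Continuity_01} and \ref{Lem_Continuity_rho_g}. Throughout the argument I would work with the decomposition $\nu(dx) = \nu^-_z(dy) \nu^+(dz)$ from Lemma \ref{Lem_Fubini}, so that the $z$-uniform statements on $\bb X^-_z$ can be integrated against $\nu^+(dz)$ at the very end.

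First I would fix the approximating sequence $(g_m)_{m \geq 0}$ of Lemma \ref{Lem_Appro_g}, giving $\|\check S_n g_m - \check S_n g\|_{\infty} \leq c_1 \alpha^m$ for all $n \geq 1$; since $g_m - g$ is a coboundary, one has $\sigma_{g_m} = \sigma_g$. Setting $\epsilon_m = 2 c_1 \alpha^m$, the two-sided sandwich
\begin{align*}
\mathds 1_{\{\check \tau^{g_m}_{t - \epsilon_m}(x) > n\}} \leq \mathds 1_{\{\check \tau^g_t(x) > n\}} \leq \mathds 1_{\{\check \tau^{g_m}_{t + \epsilon_m}(x) > n\}}
\end{align*}
holds. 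For the first assertion, integrating against $\vphi(t)$ and changing variables in $t$, it is enough to show, for each fixed $m$, that
\begin{align*}
\sigma_g \sqrt{2 \pi n} \int_{\bb R} \vphi(t)\, \nu^-_z\bigl(y: \check \tau^{g_m}_t(y \cdot z) > n\bigr)\, dt \longrightarrow 2 \int_{\bb R} \vphi(t)\, \check V^{g_m}(z, t)\, dt
\end{align*}
uniformly in $z \in \bb X^+$, and then to let $m \to \infty$ using Lemma \ref{Lem_Continuity_01}. The pointwise-in-$t$ and uniform-in-$z$ convergence of $\sigma_g \sqrt{2 \pi n}\, \nu^-_z(\check \tau^{g_m}_t > n) \to 2 \check V^{g_m}(z, t)$ is precisely Corollary \ref{Corollary_Markov_chain}. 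To pass this pointwise convergence through the integral in $t$, I would use dominated convergence with the uniform envelope $\sigma_g \sqrt{2 \pi n}\, \nu^-_z(\check \tau^{g_m}_t > n) \leq C (1 + \max\{t, 0\})$, valid for all $n \geq 1$, $z$, $t$, and $m$; this comes from the optional-stopping computation already exploited in the proof of \eqref{Bound_Vng_02}, combined with $\sup_m \|g_m\|_\infty < \infty$.

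The second assertion is obtained in parallel by lifting the argument from measures on $\bb R$ to Radon measures on $\bb X^-_z \times \bb R$. I would first handle $\varphi$ of product form $\varphi(x, t) = \varphi_1(x) \vphi_2(t)$ and reduce $\varphi_1$ to an indicator of a cylinder in $\bb X^-_z$ (uniformly in $z$, since only finitely many coordinates of $y$ enter); for these the Markov chain statement of Lemma \ref{Lem_Markov_chain_Target} applied to $g_m$ (and then Corollary \ref{Corollary_Markov_chain}) gives
\begin{align*}
\sigma_g \sqrt{2 \pi n} \int_{\bb X^-_z \times \bb R} \varphi(y \cdot z, t) \mathds 1_{\{\check \tau^{g_m}_t(y \cdot z) > n\}} \nu^-_z(dy)\, dt \longrightarrow 2 \int_{\bb X^-_z \times \bb R} \varphi(y \cdot z, t)\, \check \mu^{g_m, -}_z(dy, dt).
\end{align*}
The sandwich combined with Lemma \ref{Lem_Continuity_rho_g} lets me send $m \to \infty$, and a standard approximation by linear combinations of product functions (using the envelope $C(1 + \max\{t,0\})$ together with continuity and compact support of $\varphi$) extends the result to general continuous compactly supported $\varphi$ on $\bb X \times \bb R$. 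Finally, integrating against $\nu^+(dz)$ and applying Lemma \ref{Lem_Fubini} converts the $z$-uniform statement into one for the measure $\nu$ on $\bb X$.

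The main obstacle is controlling the interchange of the limits $\lim_n$ and $\lim_m$: this requires the envelope bound above to be uniform not only in $n$ and $z$ but also in $m$, and it requires that the $z$-convergence given by Corollary \ref{Corollary_Markov_chain} remain uniform after integration in $t$ against $\vphi$ and in $y$ against $\varphi_1$. Everything else — the variance invariance $\sigma_{g_m} = \sigma_g$, the sandwich, and the cohomology-continuity of $\check V^g$ and $\check \mu^g$ — is already packaged in the lemmas cited above.
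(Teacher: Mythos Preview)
Your approach is essentially the same as the paper's: sandwich the exit-time indicator for $g$ between those for the approximants $g_m$, apply Corollary \ref{Corollary_Markov_chain} at level $m$, and then let $m\to\infty$ via Lemmas \ref{Lem_Continuity_01} and \ref{Lem_Continuity_rho_g}. The paper's write-up is more compressed but structurally identical, including the reduction to non-negative $\varphi$ for the second assertion.

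One point to correct: the envelope $\sigma_g\sqrt{2\pi n}\,\nu^-_z(\check\tau^{g_m}_t>n)\leq C(1+\max\{t,0\})$ does \emph{not} come from \eqref{Bound_Vng_02}; that inequality bounds $\int_{\bb X^-_z}(t+\check S_n g)\mathds 1_{\{\check\tau_t^g>n\}}\,d\nu^-_z$, which is a different object. Fortunately you do not need a bound uniform in $m$. For each fixed $m$, Corollary \ref{Corollary_Markov_chain} gives pointwise-in-$t$, uniform-in-$z$ convergence of $\check V_n^{g_m}(z,t)$ to the finite limit $\check V^{g_m}(z,t)$; since $t\mapsto\check V_n^{g_m}(z,t)$ is non-decreasing and $\varphi$ has compact support $[a,b]$, one has $\check V_n^{g_m}(z,t)\leq\sup_n\check V_n^{g_m}(z,b)<\infty$ on $[a,b]$, and dominated convergence gives the integrated limit $\int\varphi(t)\check V_n^{g_m}(z,t\pm\epsilon_m)\,dt\to\int\varphi(t)\check V^{g_m}(z,t\pm\epsilon_m)\,dt$. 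Only then do you send $m\to\infty$ with Lemma \ref{Lem_Continuity_01}. The same remark applies to the second assertion.
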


\begin{proof}
For $(z,t) \in \bb X^+ \times \bb R$, denote
\begin{align*}
\check{V}_n^g (z,t) = \frac{1}{2} \sigma_g \sqrt{2 \pi n} \,  \nu^-_z\left(y \in \bb X^-_z: \check \tau_t^g(y \cdot z) >n \right).
\end{align*} 
Let $(g_m)_{m \geq 0}$ be the sequence of H\"{o}lder continuous functions as in Lemma \ref{Lem_Appro_g}. 
For $z \in \bb X^+$ and $t \in \bb R$, we have 
\begin{align*}
\check{V}_n^{g_m} (z,t - 2 c_1 \alpha^m) \leq \check{V}_n^{g} (z,t)  \leq \check{V}_n^{g_m} (z,t + 2 c_1 \alpha^m). 
\end{align*} 
By taking the limit as $n \to \infty$, we get by Lemma \ref{Corollary_Markov_chain}, 
\begin{align*} 
\check{V}^{g_m} (z,t - 2 c_1 \alpha^m) 
\leq \liminf_{n \to \infty} \check{V}_n^{g} (z,t) 
\leq \limsup_{n \to \infty} \check{V}_n^{g} (z,t)  
\leq \check{V}^{g_m} (z,t + 2 c_1 \alpha^m). 
\end{align*}
The first part of the lemma now follows from Lemma \ref{Lem_Continuity_01}. 

Let now $\varphi$ be a non-negative continuous compactly supported function on $\bb X \times \bb R$.
For $m, n \geq 0$, we have
\begin{align*}
 \int_{\bb X \times \bb R} \vphi(x, t)  \mathds 1_{ \left\{ \check \tau_{t - 2c_1 \alpha^m}^{g_m}(x) > n \right\} } \nu(dx) dt
& \leq  \int_{\bb X \times \bb R} \vphi(x, t)  \mathds 1_{ \left\{ \check \tau_t^g(x) >n \right\} } \nu(dx) dt   \notag\\
& \leq  \int_{\bb X \times \bb R} \vphi(x, t)  \mathds 1_{ \left\{ \check \tau_{t + 2c_1 \alpha^m}^{g_m}(x) > n \right\} } \nu(dx) dt.  
\end{align*}
The conclusion follows from Lemmas \ref{Lem_Continuity_rho_g} and \ref{Corollary_Markov_chain}.  
\end{proof}

%
%

Now we prove Theorem \ref{Thm-exit-time001}. 

\begin{proof}[Proof of Theorem \ref{Thm-exit-time001}]
The first two assertions of Theorem \ref{Thm-exit-time001} follow from Theorem \ref{Lemma_Harmonic_V_10} by
replacing $f$ with $g = f \circ T^{-1} \circ \iota = f \circ \iota \circ T$, and $\nu$ with $\iota_* \nu$.
The third assertion 
is equivalent to the second one by Lemma \ref{Corollary_Duality}. 
\end{proof}

From Theorem \ref{Thm-exit-time001}, 
we get the following coarse domination which will be used in the proof of the 
conditioned local limit theorem (Theorem \ref{Thm-conLLT}). 

\begin{corollary}\label{Cor_CoarseBound}
Let $g$ be in $\scr B^+$ with $\nu(g) = 0$.
Assume that $g$ is not cohomologous to $0$. 
Let $G$ be a continuous compactly supported function on $\bb X^+ \times \bb R$.
Then there exists a constant $c>0$ such that for any $n \geq 1$, 
\begin{align*}
\int_{\bb R} \sup_{z \in \bb X^+} \int_{\bb X_z^-} G( T^{-n} (y \cdot z)_+,  t + \check S_n g( y \cdot z ) ) 
   \mathds 1_{ \left\{ \check \tau_t^g (y \cdot z) > n \right\} } \nu_z^-(dy)  dt
   \leq \frac{c}{\sqrt{n}}. 
\end{align*}
\end{corollary}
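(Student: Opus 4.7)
The plan is to reduce the supremum over $z$ to an integral against the Gibbs measure and then invoke the duality of Lemma \ref{Corollary_Duality} together with Theorem \ref{Thm-exit-time001}. First, since $G$ is continuous and compactly supported, there are constants $C_G, M > 0$ with $|G(x,s)| \leq C_G \mathds 1_{[-M,M]}(s)$; moreover, on the event $\{\check \tau_t^g(y\cdot z) > n\}$ we have $t + \check S_n g(y\cdot z) \geq 0$, so the integrand is dominated by
$$
C_G\, \mathds 1_{[0,M]}\bigl(t + \check S_n g(y\cdot z)\bigr)\, \mathds 1_{\{\check \tau_t^g(y\cdot z) > n\}}.
$$

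The main step is to replace the pointwise supremum over $z \in \bb X^+$ by an integral with respect to $\nu^+$, at the cost of enlarging the indicator's support and a multiplicative constant. Fix for each $a \in A$ a reference point $z^{(a)} \in \bb X^+$ with $z^{(a)}_0 = a$ and set $p_a := \nu^+(\{z_0 = a\}) > 0$. For any $z \in \bb X^+$ with $z_0 = a$ and any $y \in \bb X_z^- = \bb X_{z^{(a)}}^-$, Lemma \ref{Lem_sum_Ine} gives $|\check S_n g(y\cdot z) - \check S_n g(y\cdot z^{(a)})| \leq c_0 \alpha$, which yields the inclusions
$\{\check \tau_t^g(y\cdot z) > n\} \subseteq \{\check \tau_{t + c_0\alpha}^g(y\cdot z^{(a)}) > n\}$
and a corresponding enlargement $[-c_0\alpha, M + c_0\alpha]$ of the indicator; while Lemma \ref{Lem_Absolute_Contin} bounds $\nu_z^-$ uniformly by a constant multiple of $\nu_{z^{(a)}}^-$. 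Iterating this procedure a second time, but now with a generic $z' \in \bb X^+$ satisfying $z_0' = a$, and averaging $z'$ against $p_a^{-1} \nu^+|_{\{z_0=a\}}$, then summing over $a \in A$ and using Lemma \ref{Lem_Fubini}, we obtain constants $C', M' > 0$ such that
$$
\int_{\bb R} \sup_{z \in \bb X^+} \int_{\bb X_z^-} [\cdots]\, \nu_z^-(dy)\, dt \leq C' \int_{\bb R} \int_{\bb X} \mathds 1_{[-M', M']}\bigl(t + \check S_n g(x)\bigr)\, \mathds 1_{\{\check \tau_t^g(x) > n\}}\, \nu(dx)\, dt,
$$
where a harmless translation in the variable $t$ has absorbed the constant shifts accumulated in the approximation.

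Now apply Lemma \ref{Corollary_Duality} with $F(x,t,x',t') = \mathds 1_{[-M', M']}(t')$, after using $\mathds 1_{\{\check \tau_t^g(x) > n\}} \leq \mathds 1_{\{\check \tau_t^g(x) > n - 1\}}$. The integral on the right-hand side is bounded by
$$
\int_{\bb R} \int_{\bb X} \mathds 1_{[-M',M']}(u)\, \mathds 1_{\{\tau_u^{-g}(x) > n - 1\}}\, \nu(dx)\, du = \int_{-M'}^{M'} \nu\bigl(\tau_u^{-g} > n - 1\bigr)\, du,
$$
and since $-g$ is a H\"older continuous function with $\nu(-g) = 0$ which is not a coboundary, Corollary \ref{Cor-exit-time001} applied to $-g$ on the bounded interval $[-M', M']$ shows that this quantity is asymptotic to $\tfrac{2}{\sigma_g \sqrt{2\pi n}} \int_{-M'}^{M'} V^{-g}(u)\, du$, hence of order $n^{-1/2}$. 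Combining the three steps yields the stated bound.

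The main obstacle to guard against is the second step: one must verify that the successive applications of Lemmas \ref{Lem_sum_Ine} and \ref{Lem_Absolute_Contin} truly dominate the sup over $z$ by a multiple of a $\nu$-integral, with constants independent of $n$, $t$ and $z$; once this averaging is in place, the duality converts the double exit-time problem into a single-sided one in the variable $u$ confined to a compact set, where the $O(n^{-1/2})$ bound is an immediate consequence of the integral exit-time asymptotics already established.
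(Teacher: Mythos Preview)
Your proposal is correct and follows essentially the same route as the paper. The paper likewise reduces $G$ to a function of $t$ alone, uses Lemmas \ref{Lem_sum_Ine} and \ref{Lem_Absolute_Contin} to dominate the supremum over $z$ by a $\nu$-integral (in one comparison step rather than your two-step $z \to z^{(a)} \to z'$ detour, which is harmless but redundant), and then concludes by citing Theorem \ref{Thm-exit-time001} directly; your explicit invocation of duality followed by Corollary \ref{Cor-exit-time001} simply unpacks what that citation does internally.
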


\begin{proof}
By replacing $G$ with the function $\sup_{z \in \bb X^+} |G(z, t)|$,
we can assume that $G$ does not depend on the first coordinate. 
Let $c_0$ be as in Lemma \ref{Lem_sum_Ine}. 
For $t \in \bb R$, set $G_1(t) = \sup_{|t' - t| \leq c_0}  |G(t')|$.
Then for any $t \in \bb R$ and $z, z' \in \bb X^+$ with $z_0 = z_0'$,
we have 
\begin{align*}
& \int_{\bb X_z^-} G(t + \check S_n g(  y \cdot z) ) 
   \mathds 1_{ \left\{ \check \tau_t^g (y \cdot z) > n \right\} } \nu_z^-(dy)  \notag\\
& \leq  \int_{\bb X_z^-} G_1 (t + \check S_n g(  y \cdot z') ) 
   \mathds 1_{ \left\{ \check \tau_{t+c_0}^g (y \cdot z') > n \right\} } \nu_z^-(dy)  \notag\\
&  \leq  c \int_{\bb X_{z'}^-} G_1 (t + \check S_n g( y \cdot z') ) 
   \mathds 1_{ \left\{ \check \tau_{t+c_0}^g (y \cdot z') > n \right\} } \nu_{z'}^-(dy), 
\end{align*}
for some constant $c>0$ coming from Lemma \ref{Lem_Absolute_Contin}. 
By integrating over $z' \in \bb X^+$, we get
\begin{align*}
& \sup_{z \in \bb X^+} \int_{\bb X_z^-} G(t + \check S_n g( y \cdot z ) ) 
   \mathds 1_{ \left\{ \check \tau_t^g (y \cdot z) > n \right\} } \nu_z^-(dy)   \notag\\
&  \leq  \frac{c}{ c_1 }  
  \int_{\bb X}   G_1(t - c_0 + \check S_n g(x) ) 
   \mathds 1_{ \left\{ \check \tau_t^g (x) > n \right\} } \nu(dx),  
\end{align*}
where $c_1 = \inf_{a \in A} \nu^+ \{z' \in \bb X^+: z'_0 = a \}$. 
Integrating over $t \in \bb R,$ 
we get the result by Theorem \ref{Thm-exit-time001}.  
\end{proof}

\subsection{Proof of Theorem \ref{Thm-cnodi-lim-theor001}}

Again we start with the case of Markov chains.
As in the previous section, using the argument of \cite[Theorem 2.5]{GLL18}, 
we get the following result. 

\begin{lemma}\label{Lem_CondiCLT}
Let $g \in \scr B^+$ with $\nu(g) = 0$ and assume that $g$ is not a coboundary. 
Then, 
for any $t \in  \bb R$ and continuous compactly supported function $F$ on $\bb X^+ \times \bb R$, 
we have, uniformly in $z \in \bb X^+$, 
\begin{align*} 
\lim_{n \to \infty}  \sigma_g  \sqrt{2 \pi n}   \int_{\bb X^-_z} 
& F \left( \left( T^{-n}(y \cdot z) \right)_+,  \frac{\check S_n g (y \cdot z)}{\sigma_g \sqrt{n}}  \right)
    \mathds 1_{ \left\{ \check \tau_t^g(y \cdot z) >n \right\} }   \nu^-_z(dy) \nonumber \\
&= 2 \check V^{g}(z, t)   \int_{\bb X^+ \times \bb R} F(z', u) \phi^+(u) du \nu^+(dz').
\end{align*}
\end{lemma}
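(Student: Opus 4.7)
The plan is to interpret Lemma \ref{Lem_CondiCLT} as a conditioned central limit theorem for the Markov chain living on past trajectories and to invoke the general result of \cite[Theorem 2.5]{GLL18}. Fix $z \in \bb X^+$. On the probability space $(\bb X_z^-, \nu_z^-)$, the sequence of maps $y \mapsto (T^{-n}(y \cdot z))_+$ together with the additive functional $\check S_n g$ is precisely the kind of object treated in \cite{GLL18}, with transition operator governed by the Ruelle operator $\mathcal L_\psi$ and conditioning given by $\{\check \tau_t^g > n\}$.

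First I would replace $g$ by its cohomologous representative $g_0 \in \scr B^+$ given by Proposition \ref{lemma-martingale001}, which satisfies $\mathcal L_\psi g_0 = 0$ and, by Proposition \ref{lemma-martingale002}, makes $\check S_n g_0$ a martingale under $\nu_z^-$ with respect to the filtration $(\scr F_n^z)$. By Lemma \ref{Martingale approx} the difference $|\check S_n g - \check S_n g_0|$ is uniformly bounded, so up to bounded perturbation of $t$ in the conditioning and a $O(1/\sqrt n)$ correction in the normalization the problem is reduced to the martingale case. The hypotheses of \cite[Theorem 2.5]{GLL18} can then be verified using the spectral gap of $\mathcal L_\psi$ on $\scr B_\alpha^+$ (\cite[Theorem 2.2]{PP90}), the assumption $\sigma_g > 0$ equivalent to $g$ not being a coboundary, and the harmonic function $\check V^g(z,\cdot)$ built in the previous section, which plays the role of the harmonic function of the chain killed upon exit from $\bb R_+$.

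Applying the cited theorem to product test functions $F(z', u) = F_1(z') F_2(u)$ yields the claimed asymptotic with right hand side factoring as $2\check V^g(z,t)\, \nu^+(F_1) \int_{\bb R} F_2(u) \phi^+(u)\,du$: the prefactor $2\check V^g(z,t)/(\sigma_g\sqrt{2\pi n})$ comes from the exit-time asymptotic of Lemma \ref{Lem_Markov_chain}, the $\nu^+(F_1)$ factor from the exponential mixing of $\mathcal L_\psi$, and the Rayleigh weight $\phi^+$ arises as the endpoint density of the Brownian meander approximating the conditioned walk. Extension to general continuous compactly supported $F$ follows by a standard Stone--Weierstrass approximation, with error controlled by applying the $F \equiv 1$ result (Lemma \ref{Lem_Markov_chain}) to suitable envelope functions. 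Uniformity in $z \in \bb X^+$ is obtained by combining the uniformity already present in Lemma \ref{Lem_Markov_chain} with the H\"older-type bound of Lemma \ref{Lem_sum_Ine}, which shows that $\check S_n g(y\cdot z)$ depends on $z$ in an equicontinuous way.

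The main obstacle is establishing the asymptotic independence between the endpoint $(T^{-n}(y\cdot z))_+$ and the normalized sum $\check S_n g(y\cdot z)/(\sigma_g\sqrt n)$ after conditioning on $\{\check \tau_t^g > n\}$. Unconditionally this decoupling is a direct consequence of the spectral gap of $\mathcal L_\psi$ combined with the martingale CLT, but the conditioning biases the measure toward positive trajectories and must be handled through the Doob $h$-transform associated with the harmonic function $\check V^g(z,\cdot)$ together with the Brownian meander invariance principle, carried out uniformly in the base point $z$ and in the space variable; this is precisely the technical content imported from \cite[Theorem 2.5]{GLL18}.
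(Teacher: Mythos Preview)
Your proposal is correct and matches the paper's own approach: the paper simply states that Lemma \ref{Lem_CondiCLT} follows from the argument of \cite[Theorem 2.5]{GLL18}, with the spectral gap properties of the Ruelle operator (Section \ref{Spectral gap theory}) used to verify the hypotheses. Your outline of how to check those hypotheses---cohomological reduction to the martingale observable $g_0$, the harmonic function $\check V^g$, and the uniformity via Lemma \ref{Lem_sum_Ine}---is in fact more detailed than what the paper itself provides.
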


We shall extend the previous lemma to allow functions $F$ depending on the past coordinates in $\bb X$. 

\begin{lemma}\label{Lem_CondiCLT_target1}
Let $g \in \scr B^+$ with $\nu(g) = 0$ and assume that $g$ is not a coboundary. 
Then, 
for any $t \in  \bb R$ and continuous compactly supported function $F$ on $\bb X \times \bb R$, 
we have, uniformly in $z \in \bb X^+$, 
\begin{align*} 
\lim_{n \to \infty}  \sigma_g  \sqrt{2 \pi n}   \int_{\bb X^-_z} 
& F \left( T^{-n}(y \cdot z),  \frac{\check S_n g (y \cdot z)}{\sigma_g \sqrt{n}}  \right)
    \mathds 1_{ \left\{ \check \tau_t^g(y \cdot z) >n \right\} }   \nu^-_z(dy) \nonumber \\
&=  2 \check V^{g}(z, t)   \int_{\bb X \times \bb R} F(x, u) \phi^+(u) du \nu(dx). 
\end{align*}
\end{lemma}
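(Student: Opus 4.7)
The plan is to reduce the statement to Lemma \ref{Lem_CondiCLT} by averaging out the dependence of $F$ on the ``past'' coordinates. Define an auxiliary function on $\bb X^+\times\bb R$ by
\[
\bar F(z',u):=\int_{\bb X^-_{z'}}F(y'\cdot z',u)\,\nu^-_{z'}(dy').
\]
First I would verify that $\bar F$ is continuous and compactly supported on $\bb X^+\times\bb R$: compact support in $u$ is inherited from $F$, while continuity in $z'$ follows from the uniform continuity of $F$ on its compact support together with the quantitative equivalence of $\nu^-_{z'}$ and $\nu^-_{z''}$ provided by Lemma \ref{Lem_Absolute_Contin} (noting that $\omega(y'\cdot z',y'\cdot z'')\geq\omega(z',z'')$ and the Radon-Nikodym density between these measures is $1+O(\alpha^{\omega(z',z'')})$ uniformly in $y'$).

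The crucial combinatorial observation is that, since $g\in\scr B^+$ depends only on future coordinates, for every $y\in\bb X^-_z$ and $1\le k\le n$ the quantity $\check S_k g(y\cdot z)$ depends on $y$ only through $(y_{-k},\ldots,y_{-1})$ (and on $z$). Consequently, for $a\in A^n_z$, both $\check S_n g(y\cdot z)$ and the indicator $\mathds 1_{\{\check\tau_t^g(y\cdot z)>n\}}$ are constant on the cylinder $\bb C_{a,z}\subset\bb X^-_z$. Writing $y=y'\cdot a$ with $y'\in\bb X^-_{a\cdot z}$, the Markov structure built into \eqref{Def_nu_kz} yields
\[
\int_{\bb X^-_z}\varphi(y)\,\nu^-_z(dy)=\sum_{a\in A^n_z}e^{-S_n\psi(a\cdot z)}\int_{\bb X^-_{a\cdot z}}\varphi(y'\cdot a)\,\nu^-_{a\cdot z}(dy').
\]
Combining this decomposition with the identities $T^{-n}((y'\cdot a)\cdot z)=y'\cdot(a\cdot z)$ and $(T^{-n}(y\cdot z))_+=a\cdot z$ for $y\in\bb C_{a,z}$, I would show that the integral on the left-hand side of the lemma equals
\[
\int_{\bb X^-_z}\bar F\bigl((T^{-n}(y\cdot z))_+,\check S_n g(y\cdot z)/(\sigma_g\sqrt n)\bigr)\mathds 1_{\{\check\tau_t^g(y\cdot z)>n\}}\,\nu^-_z(dy).
\]
This matching of the two integrals, via the factorisation of the inner integral into $\bar F$, is the heart of the proof.

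Applying Lemma \ref{Lem_CondiCLT} to $\bar F$ (which is a valid test function thanks to the first step) then yields that, uniformly in $z\in\bb X^+$, the displayed quantity above multiplied by $\sigma_g\sqrt{2\pi n}$ converges to $2\check V^g(z,t)\int_{\bb X^+\times\bb R}\bar F(z',u)\phi^+(u)\,du\,\nu^+(dz')$. Finally, Fubini's theorem combined with Lemma \ref{Lem_Fubini} rewrites this integral as $\int_{\bb X\times\bb R}F(x,u)\phi^+(u)\,du\,\nu(dx)$, completing the proof.

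The main technical point will be the coincidence between the two integrals involving $F$ and $\bar F$; once it is established, everything else follows directly from the Markov chain result of Lemma \ref{Lem_CondiCLT} and the Fubini-type representation of $\nu$ in terms of $\nu^+$ and the conditional measures $\nu^-_{z'}$.
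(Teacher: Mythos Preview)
Your proposal is correct and follows the same underlying idea as the paper: reduce to Lemma~\ref{Lem_CondiCLT} by integrating out the past coordinates, then use Lemma~\ref{Lem_Fubini} to recognise the limit as an integral against $\nu$. The paper implements this slightly differently: instead of defining $\bar F$ for a general $F$ and checking its continuity via Lemma~\ref{Lem_Absolute_Contin}, it first reduces by approximation to functions of the special form $F(x,t)=\mathds 1_{\{x_{-m}=a_{-m},\ldots,x_{-1}=a_{-1}\}}F_1(x_+,t)$, for which the past-averaged function is explicitly $F_2(z',t)=e^{-S_m\psi(a\cdot z')}F_1(z',t)$ (this is precisely your $\bar F$ in that case). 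Your direct route avoids the approximation step at the modest cost of verifying continuity of $\bar F$, while the paper's route trades that verification for a density argument; both land on the same application of Lemma~\ref{Lem_CondiCLT}.
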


\begin{proof}

By a standard approximation argument, 
it suffices to prove the result for the set of functions $F$  
of the form 
\begin{align*}
(x,t) \mapsto \mathds 1_{\{ x_{-m} = a_{-m}, x_{-m+1} = a_{-m+1}, \ldots, x_{-1} = a_{-1} \}} F_1(x_+, t), 
\end{align*}
where $F_1$ is a continuous compactly supported function on $\bb X^+ \times \bb R$,
and $a_{-m}, \ldots, a_{-1} \in A$ with $M(a_{i-1}, a_{i}) = 1$ for $-m+1 \leq i \leq -1$.  
We want to determine the limit as $n \to \infty$, 
\begin{align*}
I_n: = \int_{\bb X^-_z} 
F \left( T^{-n}(y \cdot z),  \frac{\check S_n g (y \cdot z)}{\sigma_g \sqrt{n}}  \right)
    \mathds 1_{ \left\{ \check \tau_t^g(y \cdot z) >n \right\} }   \nu^-_z(dy). 
\end{align*}
Note that in this integral, all the terms only depend on the coordinates $y_{-n}, y_{-n+1}, \ldots, y_{-1}$
except $T^{-n}(y \cdot z)$. 
By integrating first  over the deep past coordinates $\ldots, y_{-n-2}, y_{-n-1}$, we get by using Lemma \ref{Lem_Fubini}, 
\begin{align*}
& I_n =  \int_{\bb X^-_z} 
F_2 \left( \left( T^{-n}(y \cdot z) \right)_+,  \frac{\check S_n g (y \cdot z)}{\sigma_g \sqrt{n}}  \right)
    \mathds 1_{ \left\{ \check \tau_t^g(y \cdot z) >n \right\} }   \nu^-_z(dy), 
\end{align*}
where, for $(z',t) \in \bb X^+ \times \bb R$,
\begin{align*}
F_2(z',t) =  \exp(- S_m \psi(a_{-m} \ldots a_{-1} \cdot z')) F_1(z', t). 
\end{align*}
Lemma \ref{Lem_CondiCLT} gives 
\begin{align*}
\lim_{n \to \infty}  \sigma_g  \sqrt{2 \pi n}   \int_{\bb X^-_z} 
&F_2 \left( \left( T^{-n}(y \cdot z) \right)_+,  \frac{\check S_n g (y \cdot z)}{\sigma_g \sqrt{n}}  \right)
    \mathds 1_{ \left\{ \check \tau_t^g(y \cdot z) >n \right\} }   \nu^-_z(dy) \nonumber \\
&= 2 \check V^{g}(z, t)   \int_{\bb X^+ \times \bb R} F_2(z', u) \phi^+(u) du \nu^+(dz'). 
\end{align*}
By construction of the measure $\nu$ in Lemma \ref{Lem_Fubini}, we have 
\begin{align*}
\int_{\bb X^+ \times \bb R} F_2(z', u) \phi^+(u) du \nu^+(dz')
= \int_{\bb X \times \bb R} F(x, u) \phi^+(u) du \nu(dx),  
\end{align*}
which ends the proof of the lemma. 
\end{proof}

As for Theorem \ref{Thm-exit-time001}, we get the following version of Lemma \ref{Lem_CondiCLT_target1},
where we add a source target function. 

\begin{lemma}\label{Lem_CondiCLT_target2}
Let $g \in \scr B^+$ with $\nu(g) = 0$ and assume that $g$ is not a coboundary. 
Then, 
for any $(z, t) \in \bb X^+ \times \bb R$ and continuous compactly supported function $F$ 
on $\bb X_z^- \times \bb X \times \bb R$, 
we have 
\begin{align*} 
\lim_{n \to \infty}  \sigma_g  \sqrt{2 \pi n}   \int_{\bb X^-_z} 
& F \left(y,  T^{-n}(y \cdot z),  \frac{\check S_n g (y \cdot z)}{\sigma_g \sqrt{n}}  \right)
    \mathds 1_{ \left\{ \check \tau_t^g(y \cdot z) >n \right\} }   \nu^-_z(dy) \nonumber \\
&= 2 \check V^{g}(z, t)   
 \int_{\bb X_z^- \times \bb X \times \bb R} F(y', x, u) \phi^+(u) \check \mu^{g,-}_{z,t} (dy') \nu(dx) du.
\end{align*}
\end{lemma}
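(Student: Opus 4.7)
The plan is to mirror the proof of Lemma~\ref{Lem_CondiCLT_target1}, using the Markov-type structure of $\nu_z^-$ encoded in~\eqref{Def_nu_kz} to isolate the dependence on the initial coordinates of $y$ before invoking Lemma~\ref{Lem_CondiCLT_target1} as a black box. By a standard density/approximation argument (linear combinations of cylinder indicators times continuous compactly supported functions of $(x,u)$), it suffices to treat test functions of the product form $F(y,x,u) = \mathds 1_{\bb C_{a,z}}(y) F_1(x,u)$ for some $m \geq 0$, $a \in A_z^m$, and a continuous compactly supported $F_1$ on $\bb X \times \bb R$.

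For such $F$ and $n \geq m$, I would write any $y \in \bb C_{a,z}$ as $y = y'\cdot a$ with $y' \in \bb X_{a\cdot z}^-$ and use the identity $T^{-n}(y\cdot z) = T^{-(n-m)}(y'\cdot(a\cdot z))$ together with the splitting $\check S_n g(y\cdot z) = S_m g(a\cdot z) + \check S_{n-m} g(y'\cdot(a\cdot z))$, valid because $g \in \scr B^+$. Setting $t_a := t + S_m g(a\cdot z)$, the event $\{\check\tau_t^g(y\cdot z) > n\}$ decomposes on $\bb C_{a,z}$ as a deterministic condition $R(a,t,z)$ on the first $m$ reversed partial sums together with $\{\check\tau_{t_a}^g(y'\cdot(a\cdot z)) > n-m\}$. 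The Markov property encoded in~\eqref{Def_nu_kz} then expresses the integral appearing in the lemma as $\mathds 1_{R(a,t,z)} e^{-S_m\psi(a\cdot z)}$ times an integral over $\bb X_{a\cdot z}^-$ against $\nu_{a\cdot z}^-$.

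After this reduction I would apply Lemma~\ref{Lem_CondiCLT_target1} with initial state $a\cdot z$ and initial position $t_a$. A necessary preparatory step is to replace the second argument of $F_1$, namely $(S_m g(a\cdot z) + \check S_{n-m} g(y'\cdot(a\cdot z)))/(\sigma_g\sqrt n)$, by the quantity $\check S_{n-m} g(y'\cdot(a\cdot z))/(\sigma_g\sqrt{n-m})$ expected by Lemma~\ref{Lem_CondiCLT_target1}; this is justified by the uniform continuity of $F_1$, since both the additive shift $S_m g(a\cdot z)/\sqrt n$ and the relative change of normalization $1 - \sqrt{(n-m)/n}$ vanish as $n \to \infty$. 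The resulting asymptotic for $\sigma_g\sqrt{2\pi n}$ times the integral equals
\begin{align*}
2\,\mathds 1_{R(a,t,z)}\, e^{-S_m\psi(a\cdot z)}\, \check V^g(a\cdot z, t_a) \int_{\bb X\times\bb R} F_1(x,u)\,\phi^+(u)\,du\,\nu(dx),
\end{align*}
and the prefactor is identified with $2\check V^g(z,t)\,\check\mu^{g,-}_{z,t}(\bb C_{a,z})$ via the defining formula~\eqref{Measure_rho} for the Markov measures.

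The main obstacle is the perturbation step in the application of Lemma~\ref{Lem_CondiCLT_target1}: one must absorb both the additive shift $S_m g(a\cdot z)$ and the denominator change from $\sqrt n$ to $\sqrt{n-m}$ in the scaled Birkhoff sum, which requires a quantitative use of uniform continuity combined with the uniform-in-$z$ finiteness already established in Lemma~\ref{Lem_CondiCLT_target1} (so that an $o(1)$ perturbation of the integrand produces an $o(1)$ perturbation of the scaled integral). A secondary verification is the compatibility of the deterministic positivity condition $R(a,t,z)$ with the one encoded in~\eqref{Measure_rho}; this compatibility is built into the harmonicity of $\check V^g$ (Lemma~\ref{Lem_Harmonic_equation}) that underlies the consistency relation~\eqref{Extension_measure_rho}.
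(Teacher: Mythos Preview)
Your approach is correct and is exactly what the paper intends: it states that the proof of Lemma~\ref{Lem_CondiCLT_target2} ``can be carried out in the same way as that of Lemma~\ref{Lem_Markov_chain_Target},'' namely by reducing to cylinder indicators, conditioning on the first $m$ coordinates via~\eqref{Def_nu_kz}, and applying the base result (here Lemma~\ref{Lem_CondiCLT_target1}) at the shifted state $(a\cdot z, t_a)$ before identifying the prefactor with $\check V^g(z,t)\,\check\mu^{g,-}_{z,t}(\bb C_{a,z})$ through~\eqref{Measure_rho}. The perturbation issue you flag (the shift $S_m g(a\cdot z)/\sqrt n$ and the renormalization $\sqrt{n}\to\sqrt{n-m}$) is genuine but harmless, since $F_1$ is uniformly continuous with compact support and the total mass $\nu^-_z(\check\tau_t^g>n)$ is $O(n^{-1/2})$ by Lemma~\ref{Lem_Markov_chain}.
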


The proof of Lemma \ref{Lem_CondiCLT_target2} can be carried out
in the same way as that of Lemma \ref{Lem_Markov_chain_Target} and therefore is left to the reader. 
By using again conditioning and Lemma \ref{Lem_Harmonic_Function_rho}, 
we extend the previous lemma to functions $g$ depending on finitely many coordinated of the past.

\begin{lemma}
Let $g \in \scr B$ be such that $\nu(g) = 0$ and there exists $m \geq 0$ with $g \circ T^m \in \scr B^+$. 
Assume that $g$ is not a coboundary. 
Then, 
for any $t \in  \bb R$ and continuous compactly supported function $F$ on $\bb X \times \bb R$, 
we have, uniformly in $z \in \bb X^+$, 
\begin{align*} 
\lim_{n \to \infty}  \sigma_g  \sqrt{2 \pi n}   \int_{\bb X^-_z} 
& F \left( T^{-n}(y \cdot z),  \frac{\check S_n g (y \cdot z)}{\sigma_g \sqrt{n}}  \right)
    \mathds 1_{ \left\{ \check \tau_t^g(y \cdot z) >n \right\} }   \nu^-_z(dy) \nonumber \\
&= 2 \check V^{g}(z, t)   \int_{\bb X \times \bb R} F(x, u) \phi^+(u)  \nu(dx) du.
\end{align*}
Moreover, for any $(z, t) \in \bb X^+ \times \bb R$ and 
for any continuous compactly supported function $F$ on $\bb X^-_z \times \bb X \times \bb R$, we have 
\begin{align*}
\lim_{n \to \infty}  \sigma_g  \sqrt{2 \pi n}   \int_{\bb X^-_z} 
& F \left(y,  T^{-n}(y \cdot z),  \frac{\check S_n g (y \cdot z)}{\sigma_g \sqrt{n}}  \right)
    \mathds 1_{ \left\{ \check \tau_t^g(y \cdot z) >n \right\} }   \nu^-_z(dy) \nonumber \\
&= 2 \check V^{g}(z, t)   
 \int_{\bb X_z^- \times \bb X \times \bb R} F(y', x, u) \phi^+(u) \check \mu^{g,-}_{z,t} (dy')  \nu(dx) du.
\end{align*} 
\end{lemma}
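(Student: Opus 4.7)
The plan is to reduce both assertions to Lemmas \ref{Lem_CondiCLT_target1} and \ref{Lem_CondiCLT_target2} (which cover $g \in \scr B^+$) by conditioning on the first $m$ past coordinates $y_{-m}, \ldots, y_{-1}$ of $y \in \bb X_z^-$, following the template of Lemmas \ref{Lem_Harmonic_Function_V}, \ref{Lem_Harmonic_Function_rho} and \ref{Corollary_Markov_chain}. Writing $y = y' \cdot a$ with $a \in A_z^m$ and $y' \in \bb X^-_{a \cdot z}$, Lemma \ref{Lem_Fubini} applied to cylinders of length $m$ gives
\begin{align*}
\int_{\bb X^-_z} \Phi(y) \nu^-_z(dy) = \sum_{a \in A^m_z} e^{-S_m \psi(a \cdot z)} \int_{\bb X^-_{a \cdot z}} \Phi(y' \cdot a)\, \nu^-_{a \cdot z}(dy').
\end{align*}
Setting $h = g \circ T^m \in \scr B^+$ and using the key identity $(y' \cdot a) \cdot z = T^m(y' \cdot (a \cdot z))$, one obtains $\check S_n g((y' \cdot a) \cdot z) = \check S_n h(y' \cdot (a \cdot z))$, $\check \tau_t^g((y' \cdot a) \cdot z) = \check \tau_t^h(y' \cdot (a \cdot z))$, and $T^{-n}((y' \cdot a) \cdot z) = T^m\bigl(T^{-n}(y' \cdot (a \cdot z))\bigr)$. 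Since $h - g = (S_m g) \circ T - S_m g$ is a coboundary, $h$ and $g$ are cohomologous, so $\sigma_h = \sigma_g$ and $h$ is not a coboundary, so the hypotheses of Lemmas \ref{Lem_CondiCLT_target1}--\ref{Lem_CondiCLT_target2} are satisfied for $h$.

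For the first assertion I would introduce $\tilde F(w, u) = F(T^m w, u)$, which remains continuous compactly supported on $\bb X \times \bb R$ (the space $\bb X$ is itself compact, so compactness of $\supp F$ concerns only the $\bb R$-coordinate). Applying Lemma \ref{Lem_CondiCLT_target1} to $h$ at source $a \cdot z$ with target $\tilde F$ (uniformly in $z \in \bb X^+$) and using the $T$-invariance of $\nu$ to rewrite $\int \tilde F(w, u)\, \nu(dw) = \int F(x, u)\, \nu(dx)$, each $a$-term of the decomposition tends to $2 \check V^h(a \cdot z, t) \int F \phi^+ d\nu\, du$. Summing over $a$ and invoking the identity $\check V^g(z, t) = \sum_{a \in A^m_z} e^{-S_m \psi(a \cdot z)} \check V^h(a \cdot z, t)$ from Lemma \ref{Lem_Harmonic_Function_V} produces the desired limit, uniformly in $z$.

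For the second assertion, fix $(z, t) \in \bb X^+ \times \bb R$ and for each $a \in A^m_z$ define $\tilde F_a(y', w, u) = F(y' \cdot a, T^m w, u)$ on $\bb X^-_{a \cdot z} \times \bb X \times \bb R$, which is again continuous compactly supported. Applying Lemma \ref{Lem_CondiCLT_target2} to $h$ at source $a \cdot z$ with target $\tilde F_a$, and changing variables $x = T^m w$ via the $T$-invariance of $\nu$, each $a$-term converges to
\begin{align*}
2 \check V^h(a \cdot z, t) \int_{\bb X^-_{a\cdot z} \times \bb X \times \bb R} F(y' \cdot a, x, u) \phi^+(u)\, \check \mu^{h,-}_{a \cdot z, t}(dy')\, \nu(dx)\, du.
\end{align*}
Writing $\varphi(y) = \int_{\bb X \times \bb R} F(y, x, u) \phi^+(u)\, \nu(dx)\, du$, which is continuous on $\bb X^-_z$, and $\varphi_a(y') = \varphi(y' \cdot a)$, the defining formula \eqref{Def_rho_gzt} of $\check \mu^{g,-}_{z,t}$ reassembles the $a$-sum into $2 \check V^g(z, t) \check \mu^{g,-}_{z,t}(\varphi)$, which is precisely the stated limit.

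There is no genuine obstacle beyond carefully tracking the $T^m$-shift: the mismatch between $T^{-n}$ in the statement and $T^{m-n} = T^m \circ T^{-n}$ that arises from the decomposition is absorbed into the target functions $\tilde F$ and $\tilde F_a$, and is finally eliminated through the $T$-invariance of $\nu$ when passing to the limit. The rest is the standard conditioning computation already used several times in this section.
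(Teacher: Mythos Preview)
Your proposal is correct and follows essentially the same approach as the paper: condition on the first $m$ past coordinates, set $h = g \circ T^m$, define the shifted target functions $F_a(y', x, u) = F(y' \cdot a, T^m x, u)$, apply Lemmas \ref{Lem_CondiCLT_target1}--\ref{Lem_CondiCLT_target2} to $h$, and reassemble via Lemma \ref{Lem_Harmonic_Function_V} and \eqref{Def_rho_gzt}. The paper's proof is terser (it only treats the second assertion, noting the first is a special case), while you make explicit the use of $T$-invariance of $\nu$ to absorb the $T^m$-shift and the verification that $h$ inherits the needed hypotheses from $g$.
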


\begin{proof}
We prove only the second assertion, since the first one is a particular case of the second.
As in Lemma \ref{Lem_Harmonic_Function_rho}, 
for $a \in A_z^m$, set $F_a$ to be the function on $\bb X^-_{a \cdot z} \times \bb X \times \bb R$
defined by $F_a (y, x, t) = F (y \cdot a, T^m x, t)$. 
We have, by setting $h = g \circ T^m$,  
\begin{align*}
& \int_{\bb X^-_z}  F \left(y,  T^{-n}(y \cdot z),  \frac{\check S_n g (y \cdot z)}{\sigma_g \sqrt{n}}  \right)
  \mathds 1_{ \left\{ \check \tau_t^g(y \cdot z) >n \right\} } \nu^-_z(dy)  \notag\\
&  = \sum_{a \in A^m_z} \exp(- S_m \psi(a \cdot z) )  
  \int_{\bb X^-_{a \cdot z}} 
  F_a \left(y,  T^{-n}( y \cdot (a \cdot z) ),  \frac{\check S_n h ( y \cdot (a \cdot z) )}{\sigma_g \sqrt{n}}  \right)    \notag\\
& \qquad \times \mathds 1_{ \left\{ \check \tau_t^h (y \cdot (a \cdot z)) >n \right\} } \nu^-_{a \cdot z}(dy).   
\end{align*}
The conclusion now follows from Lemma \ref{Lem_CondiCLT_target2} and \eqref{Def_rho_gzt}. 
\end{proof}

The same technique as in Lemma \ref{Lemma_Harmonic_V_10} gives 

\begin{lemma}\label{Lem_Condi_CLT}
Let $g \in \scr B$ with $\nu(g) = 0$ and assume that $g$ is not a coboundary. 
Then, 
for any continuous compactly supported function $F$ on $\bb X \times \bb X \times \bb R \times \bb R$, 
we have 
\begin{align*} 
\lim_{n \to \infty}  \sigma_g  \sqrt{2 \pi n}  \int_{\bb X \times \bb R}   
& F \left(x, T^{-n} x, t,  \frac{ \check S_n g (y \cdot z)}{\sigma_g \sqrt{n}}  \right)
    \mathds 1_{ \left\{ \check \tau_t^g(x) >n \right\} }   \nu(dx) dt \nonumber \\
&= 2  \int_{\bb X  \times  \bb R \times  \bb X  \times \bb R} 
   F(x, x', t, t') \phi^+(t') \nu(dx') dt' \check \mu^g(dx, dt).
\end{align*}
\end{lemma}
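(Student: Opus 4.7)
The plan is to reduce to the previous lemma (which handles functions $g$ whose past dependence is finite) via a Fubini decomposition, and then remove the finite-past-dependence hypothesis by approximation on the cohomology class, exactly following the template already established for Theorems \ref{Proposition exist harm func_rho} and \ref{Lemma_Harmonic_V_10}.

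First I would apply Lemma \ref{Lem_Fubini} with $x = y \cdot z$ to rewrite the integral under study as
\begin{align*}
I_n \;=\; \int_{\bb X^+} \int_{\bb R} J_n(z,t)\, dt \, \nu^+(dz),
\end{align*}
where
\begin{align*}
J_n(z,t) \;=\; \sigma_g\sqrt{2\pi n}\int_{\bb X^-_z} F\!\left(y\cdot z,\, T^{-n}(y\cdot z),\, t,\, \tfrac{\check S_n g(y\cdot z)}{\sigma_g\sqrt n}\right) \mathds 1_{\{\check\tau_t^g(y\cdot z)>n\}} \nu^-_z(dy).
\end{align*}
For fixed $(z,t)$, the map $(y',x',u)\mapsto F(y'\cdot z,x',t,u)$ is continuous and compactly supported on $\bb X^-_z\times\bb X\times\bb R$, so when $g\circ T^m\in\scr B^+$ for some $m\geq 0$ (finite past dependence), the second assertion of the previous lemma applies and gives pointwise in $(z,t)$ the limit
\begin{align*}
J_n(z,t) \;\longrightarrow\; 2\check V^g(z,t)\!\int_{\bb X^-_z\times\bb X\times\bb R}\! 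F(y'\cdot z,x,t,u)\phi^+(u)\,\check\mu^{g,-}_{z,t}(dy')\nu(dx)\,du.
\end{align*}

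Next I would integrate this pointwise convergence in $(z,t)$ by dominated convergence. The required majorant is obtained from $\|F\|_\infty$ and the coarse domination provided by Corollary \ref{Cor_CoarseBound} (applied with the natural projection of $F$ onto the forward variable and the $t'$ variable): the latter gives an $L^1_{dt\,\nu^+(dz)}$ bound of order $1/\sqrt n$ for $\sqrt n\,\nu^-_z(\check\tau_t^g>n)$, hence the desired domination for $J_n(z,t)$ after multiplication by $\sqrt n$. Once the convergence is passed inside, the disintegration $\check\mu^{g,-}_z(dy,dt)=\check\mu^{g,-}_{z,t}(dy)\,\check V^g(z,t)\,dt$ from Lemma \ref{Lemma exist of harm func(copy)Radon}, together with the definition
\begin{align*}
\check\mu^g(\varphi)=\int_{\bb X^+}\!\int_{\bb R}\!\int_{\bb X^-_z}\varphi(y\cdot z,t)\,\check\mu^{g,-}_z(dy,dt)\,\nu^+(dz),
\end{align*}
collapses the limit to $2\int F(x,x',t,t')\phi^+(t')\nu(dx')\,dt'\,\check\mu^g(dx,dt)$, as required.

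To remove the finite-past-dependence hypothesis, I would use the sequence $(g_m)_{m\geq 0}$ from Lemma \ref{Lem_Appro_g}, so that $\sup_{n\geq 1}\|\check S_n g - \check S_n g_m\|_\infty\leq c_1\alpha^m$, and the inclusions
\begin{align*}
\{\check\tau_{t-2c_1\alpha^m}^{g_m}>n\} \;\subset\; \{\check\tau_t^g>n\} \;\subset\; \{\check\tau_{t+2c_1\alpha^m}^{g_m}>n\}
\end{align*}
sandwich the integral $I_n(g,F)$ between $I_n(g_m,F_m^{\pm})$ where $F_m^{\pm}(x,x',t,u)=F(x,x',t\mp 2c_1\alpha^m,u\pm c_1\alpha^m/\sigma_g)$; uniform continuity of $F$ makes the cost of this replacement an $o_m(1)$ term. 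Passing $n\to\infty$ via the first part of the proof and then $m\to\infty$ via Lemmas \ref{Lem_Continuity_01} and \ref{Lem_Continuity_rho_g} (continuity of $\check V^g$ and $\check\mu^g$ along the cohomology class) yields the general statement.

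The main obstacle is the step that moves the pointwise-in-$z$ convergence of $J_n(z,t)$ inside the integrals against $\nu^+(dz)\,dt$: the previous lemma only provides uniform-in-$z$ convergence in its first (simpler) assertion, not in its second (where the source dependence on $y$ is retained). Justifying dominated convergence therefore hinges on producing an $L^1$ majorant that is independent of $n$, which is exactly what Corollary \ref{Cor_CoarseBound} was tailored for; without it, one would have to reshape the argument and either use a separable approximation of $F$ by sums $\sum_i F_{1,i}(x)F_{2,i}(T^{-n}x,t,u)$ with each $F_{1,i}$ cylindrical, or prove a strengthened uniform version of the preceding lemma, both of which are significantly more cumbersome.
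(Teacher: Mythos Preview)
Your approach is essentially the paper's (which just writes ``the same technique as in Theorem~\ref{Lemma_Harmonic_V_10} gives''): disintegrate over $z$ via Lemma~\ref{Lem_Fubini}, converge fiber-wise for each finite-past $g_m$ using the preceding lemma, then sandwich and invoke Lemmas~\ref{Lem_Continuity_01}, \ref{Lem_Continuity_rho_g} to reach general $g$.

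There is, however, one genuine gap in your justification. Corollary~\ref{Cor_CoarseBound} does not deliver the majorant you need: it is stated only for $g\in\scr B^+$, and the target function $G$ there localises $t+\check S_n g$, whereas in $J_n$ the quantity localised by $F$ is $\check S_n g/(\sigma_g\sqrt n)$, so the corollary is not directly applicable. More importantly, even if it were, it would only yield a uniform $L^1$ bound on the sequence $\int J_n(z,t)\,dt\,\nu^+(dz)$, not a pointwise integrable majorant, and $L^1$-boundedness alone does not license dominated convergence.

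The correct majorant is simpler and is implicitly what the paper uses. Since $F$ has compact support in its third variable, say $t\in[t_-,t_0]$, and the survival probability is nondecreasing in $t$, one bounds
\[
|J_n(z,t)|\;\leq\;\sigma_g\sqrt{2\pi n}\,\|F\|_\infty\,\mathds 1_{[t_-,t_0]}(t)\,\nu^-_z\bigl(\check\tau_{t_0}^{g_m}(y\cdot z)>n\bigr).
\]
By the first assertion of Corollary~\ref{Corollary_Markov_chain}, $\sigma_g\sqrt{2\pi n}\,\nu^-_z(\check\tau_{t_0}^{g_m}>n)\to 2\check V^{g_m}(z,t_0)$ uniformly in $z$, hence is bounded by some $M_0$ uniformly in $(n,z)$; thus $|J_n(z,t)|\leq \|F\|_\infty M_0\,\mathds 1_{[t_-,t_0]}(t)$, which lies in $L^1(\nu^+\otimes dt)$. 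With this repair the rest of your argument stands.
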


Theorem \ref{Thm-cnodi-lim-theor001} easily follows from Lemma \ref{Lem_Condi_CLT}.

\section{Effective local limit theorems}

So far we have adapted some results from the theory of Markov chains to the case of hyperbolic dynamical systems
 by constructing the analogs of the harmonic functions $V^g$ and $\check V^g$ 
 and building the harmonic measures $\mu^g$ and $\check \mu^g$.  
In the remaining part of the paper we shall use these objects to establish conditioned limit theorems,
by adapting the strategy from the case of sums of independent random variables \cite{GX21}. 
We start with formulating an effective version of the ordinary local limit theorem which is adapted to our needs. 

\subsection{Spectral gap theory} \label{Spectral gap theory}
Fix $\alpha \in (0,1)$ such that $\psi \in \mathscr B^+_\alpha$, where $\psi$ is the potential function used for the construction of the Gibbs measure $\nu$ (see Section \ref{SubSec_Shift}).
Denote by $\mathscr L(\scr B^+_\alpha, \scr B^+_\alpha)$ 
the set of all bounded linear operators from $\scr B^+_\alpha$ to $\scr B^+_\alpha$
equipped with the standard operator norm $\left\| \cdot \right\|_{\mathscr B^+_\alpha \to \mathscr B^+_\alpha}$. 
From the general construction of the Ruelle operator, 
every $ f \in \scr B^+_\alpha$ gives rise to a family of perturbed operators 
$(\mathcal{L}_{\psi + \bf{i} t f})$ 
defined as follows: 
for any $\varphi \in \scr B^+_\alpha$, 
\begin{align} \label{operator Rsz}
\mathcal{L}_{\psi + \bf{i} t f} \varphi(z) 
=\sum_{z': \, Tz' = z} e^{-\psi(z') - \bf{i} t f(z')} \vphi(z'),  \quad z \in \bb X^+,\ t \in \bb R.
\end{align}
By iteration, it follows that for any $\psi, f \in \scr B_\alpha$ and $t \in \bb R$,
\begin{align*} 
\mathcal{L}^n_{\psi + \bf{i} t f} \varphi(z) 
=\sum_{z': \, T^nz' = z} e^{-S_n(\psi + \bf{i} t f) (z') } \vphi(z'),  \quad z \in \bb X^+.
\end{align*}
The following result (see \cite{PP90}) provides the spectral gap properties for the perturbed operator $\mathcal{L}_{\psi + \bf{i} t f}$. 
For similar statements in the case of Markov chains we refer to \cite{HH01}. 

\begin{lemma} \label{Lem_Perturbation}
Assume that $f\in \scr B^+_\alpha$ is not a coboundary and that $\nu (f)=0$.
Then, there exists a constant $\delta > 0$ such that for any $t \in (-\delta, \delta)$,
\begin{align}
\mathcal{L}^n_{\psi + \bf{i} t f} = \lambda^{n}_{t} \Pi_{t} + N^{n}_{t},  
\quad  n \geq 1, \label{perturb001}
\end{align}
where the mappings $t \mapsto \Pi_{ t}: (-\delta, \delta) \to \mathscr L(\scr B^+_\alpha, \scr B^+_\alpha)$
and $z \mapsto N_{ t}: (-\delta, \delta) \to \mathscr L(\scr B^+_\alpha, \scr B^+_\alpha)$ are analytic
in the operator norm topology, 
$\Pi_{ t}$ is a rank-one projection with 
$\Pi_{0}(\varphi)(z) = \nu^+ (\varphi)$ for any $\varphi \in \mathscr{B}^+_\alpha$ and $z \in \bb X^+$,
$\Pi_{ t} N_{ t} = N_{ t} \Pi_{ t} = 0$. 
Moreover, there exist $n_0\geq 1$ and $q \in (0,1)$ such that for any $t\in (-\delta,\delta)$
 the $\|N_{t}^{n_0}\|_{\mathscr B^+_\alpha \to \mathscr B^+_\alpha} \leq q$. 
\end{lemma}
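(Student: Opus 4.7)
The plan is to derive the claim from the Ruelle--Perron--Frobenius theorem for the unperturbed operator $\mathcal{L}_\psi$ combined with standard analytic perturbation theory. The essential analytic input (spectral gap for $\mathcal{L}_\psi$) is already contained in Theorem 2.2 of \cite{PP90}, so the argument is largely a perturbative unpacking of that fact.

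First, I would recall the spectral picture of $\mathcal{L}_\psi$ on $\scr B^+_\alpha$. Because $\psi$ is normalized, $\mathcal{L}_\psi \mathds 1 = \mathds 1$, and the dual operator fixes $\nu^+$. By Ruelle--Perron--Frobenius, $1$ is a simple isolated eigenvalue of $\mathcal{L}_\psi$ and the remainder of the spectrum lies in a closed disc of radius $r_0 < 1$. This yields a spectral decomposition $\mathcal{L}_\psi = \Pi_0 + N_0$ where $\Pi_0$ is the rank-one projector $\varphi \mapsto \nu^+(\varphi) \mathds 1$, $\Pi_0 N_0 = N_0 \Pi_0 = 0$, and the spectral radius of $N_0$ equals $r_0$.

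Next, I would verify that the family $t \mapsto \mathcal{L}_{\psi + \mathbf{i} t f}$ is analytic from $\mathbb{C}$ to $\mathscr L(\scr B^+_\alpha, \scr B^+_\alpha)$. For this one writes
\begin{align*}
\mathcal{L}_{\psi + \mathbf{i} t f} \varphi = \mathcal{L}_\psi\!\left( e^{-\mathbf{i} t f} \varphi \right),
\end{align*}
and uses that $\scr B^+_\alpha$ is a Banach algebra, so the exponential series $e^{-\mathbf{i} t f} = \sum_{k \geq 0} \frac{(-\mathbf{i} t)^k}{k!} f^k$ converges in $\scr B^+_\alpha$ for all $t \in \mathbb{C}$ and depends analytically on $t$. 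Composition with the bounded operator $\mathcal{L}_\psi$ then yields an entire $\scr L(\scr B^+_\alpha, \scr B^+_\alpha)$-valued function.

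Having an analytic perturbation of $\mathcal{L}_\psi$ at $t=0$, where $1$ is a simple isolated eigenvalue of the unperturbed operator, I would invoke Kato's analytic perturbation theorem. Fix a small circle $\Gamma \subset \mathbb{C}$ around $1$ whose interior meets the spectrum of $\mathcal{L}_\psi$ only at $1$. By continuity of the resolvent, $\Gamma$ continues to separate the perturbed spectrum for $t$ in a sufficiently small interval $(-\delta, \delta)$, and the Riesz projections
\begin{align*}
\Pi_t = \frac{1}{2\pi \mathbf{i}} \oint_{\Gamma} (z - \mathcal{L}_{\psi + \mathbf{i} t f})^{-1} \, dz
\end{align*}
are rank-one, analytic in $t$, and commute with $\mathcal{L}_{\psi + \mathbf{i} t f}$. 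Setting $\lambda_t$ to be the unique eigenvalue inside $\Gamma$ (also analytic in $t$ with $\lambda_0 = 1$) and $N_t = \mathcal{L}_{\psi + \mathbf{i} t f}(I - \Pi_t)$, one obtains the decomposition \eqref{perturb001} with $\Pi_t N_t = N_t \Pi_t = 0$.

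Finally, for the uniform contraction of $N_t$: since the spectral radius of $N_0$ is at most $r_0 < 1$, Gelfand's formula gives an $n_0 \geq 1$ with $\| N_0^{n_0} \|_{\scr B^+_\alpha \to \scr B^+_\alpha} \leq q_0$ for some $q_0 \in (r_0, 1)$. Analyticity of $t \mapsto N_t$ in the operator norm then lets me shrink $\delta > 0$ so that $\| N_t^{n_0} \|_{\scr B^+_\alpha \to \scr B^+_\alpha} \leq q$ for every $t \in (-\delta, \delta)$, with any chosen $q \in (q_0, 1)$, completing the proof. The only non-routine input is the spectral gap of $\mathcal{L}_\psi$ itself, which is cited from \cite{PP90}; the remaining steps are a textbook application of analytic perturbation theory.
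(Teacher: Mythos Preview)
Your argument is correct and follows the standard route (Ruelle--Perron--Frobenius spectral gap plus Kato's analytic perturbation via Riesz projections). Note that the paper does not actually prove this lemma: it is stated as a known result with a reference to \cite{PP90}, and your sketch is precisely the perturbation-theoretic argument underlying that citation.
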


The eigenvalue $\lambda_{t}$ has the asymptotic expansion: as $t \to 0$,   
\begin{align} \label{decomp-lambda001}
\lambda_{ t} = 1 - \frac{\sigma_f^2}{2} t^2 + O(|t|^3). 
\end{align}
Note that since $f$ is not a coboundary w.r.t.\ to $T$,
 the asymptotic variance $\sigma_f^2$ appearing in \eqref{decomp-lambda001} is strictly positive.

%
%
%

%


\begin{lemma}\label{Lem_StrongNonLattice}
Let $f\in \scr B^+_\alpha$ and $t \neq 0$. 
Assume that for any $p \neq 0$ and $q \in \bb R$, 
the function $p f + q$ is not cohomologous to a function with values in $\bb Z$.  
Then,
for any $t \neq 0$, the operator $\mathcal L_{\psi +{\bf i} t f }$ has spectral radius strictly less than $1$
in $\mathscr B^+_\alpha$. 
More precisely,  for any compact set $K \subset \bb R \setminus \{0\}$, 
there exist constants $c_K, c_K' >0$ such that for any $\varphi\in \mathscr{B}^+_{\alpha}$ and $n \geq 1$, 
\begin{align}\label{Spectral_Radius_Uniform}
\sup_{t \in K}   \|\mathcal L_{\psi +{\bf i}t f }^n \varphi \|_{\scr B^+_{\alpha}}
\leq  c_K' e^{- c_K n}  \|\varphi\|_{\scr B^+_{\alpha}}.
\end{align}
\end{lemma}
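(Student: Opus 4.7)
The plan is to combine the quasi-compactness of $\mathcal{L}_{\psi+\mathbf{i}tf}$ with a rigidity argument on the peripheral spectrum, and then use a compactness argument to upgrade the pointwise statement into the uniform bound. First I would verify the standard Doeblin-Fortet/Lasota-Yorke inequality in $\mathscr{B}^+_\alpha$: for every $t\in\mathbb{R}$ there exist $\rho\in(0,1)$ and $C_t>0$ such that $\|\mathcal{L}^n_{\psi+\mathbf{i}tf}\varphi\|_{\mathscr{B}^+_\alpha}\leq C_t\rho^n\|\varphi\|_{\mathscr{B}^+_\alpha}+C_t\|\varphi\|_\infty$. This comes from the very same contraction estimate used for $\mathcal{L}_\psi$ in Lemma \ref{Lem_Perturbation}, using that the perturbation $e^{-\mathbf{i}tf}$ is of modulus one and is Hölder. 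Combined with the trivial bound $|\mathcal{L}_{\psi+\mathbf{i}tf}\varphi|\leq\mathcal{L}_\psi|\varphi|$ and the identity $\mathcal{L}_\psi\mathbf{1}=\mathbf{1}$, the Ionescu-Tulcea--Marinescu theorem yields quasi-compactness of $\mathcal{L}_{\psi+\mathbf{i}tf}$ with spectral radius $\leq 1$ and essential spectral radius strictly less than $1$.

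The core of the proof is then to show that for each $t\neq 0$ no eigenvalue of $\mathcal{L}_{\psi+\mathbf{i}tf}$ lies on the unit circle. Suppose $\mathcal{L}_{\psi+\mathbf{i}tf}h=\lambda h$ for some $h\in\mathscr{B}^+_\alpha\setminus\{0\}$ with $|\lambda|=1$. The triangle inequality gives $|h|\leq\mathcal{L}_\psi|h|$, so the Perron-Frobenius theorem applied to $\mathcal{L}_\psi$ forces $|h|$ to be constant; normalize $|h|\equiv 1$. Equality in the triangle inequality then yields, for every $z\in\bb X^+$ and every pair $z',z''$ of preimages of $z$, the identity $e^{-\mathbf{i}tf(z')}h(z')=e^{-\mathbf{i}tf(z'')}h(z'')=\lambda h(z)$. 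Since $\bb X^+$ is totally disconnected, the continuous $S^1$-valued function $h$ admits a Hölder continuous lift $u:\bb X^+\to\mathbb{R}$ with $h=e^{\mathbf{i}u}$; writing $\lambda=e^{\mathbf{i}\theta}$, the above identity rearranges to $tf+u\circ T-u-\theta\in 2\pi\mathbb{Z}$ everywhere. Dividing by $2\pi$ exhibits $pf+q$, with $p=t/(2\pi)\neq 0$ and $q=-\theta/(2\pi)$, as cohomologous through $u/(2\pi)$ to an integer-valued function, contradicting the hypothesis. Hence the spectral radius of $\mathcal{L}_{\psi+\mathbf{i}tf}$ in $\mathscr{B}^+_\alpha$ is strictly smaller than $1$ for every $t\neq 0$.

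To obtain the uniform estimate \eqref{Spectral_Radius_Uniform} I would invoke an upper semicontinuity argument. The map $t\mapsto\mathcal{L}_{\psi+\mathbf{i}tf}\in\mathscr{L}(\mathscr{B}^+_\alpha,\mathscr{B}^+_\alpha)$ is norm-continuous (in fact real-analytic), and, by quasi-compactness with a uniformly controlled essential spectral radius, the modulus of the dominant eigenvalue depends upper semicontinuously on $t$. Consequently, for each $t_0\in K$ there exist a neighborhood $U_{t_0}$ and $\eta_{t_0}>0$ with $r(\mathcal{L}_{\psi+\mathbf{i}tf})\leq 1-\eta_{t_0}$ for $t\in U_{t_0}$; extracting a finite subcover of the compact set $K$ yields a uniform $\eta_K>0$. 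Combining this with the Lasota-Yorke inequality (which provides a uniformly bounded power of the operator on $\mathscr{B}^+_\alpha$, since $t$ ranges in the compact $K$) gives the geometric decay $\sup_{t\in K}\|\mathcal{L}^n_{\psi+\mathbf{i}tf}\varphi\|_{\mathscr{B}^+_\alpha}\leq c_K'e^{-c_Kn}\|\varphi\|_{\mathscr{B}^+_\alpha}$.

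I expect the principal difficulty to lie in the rigidity step: converting the operator equation $\mathcal{L}_{\psi+\mathbf{i}tf}h=\lambda h$ into a genuine Hölder coboundary identity modulo $2\pi\mathbb{Z}$ requires a careful global lift of the $S^1$-valued eigenfunction $h$ to a real-valued Hölder function. This is facilitated by the total disconnectedness of $\bb X^+$ (on each cylinder of sufficiently small diameter the lift is unique up to a $2\pi\mathbb{Z}$-constant, and the local lifts patch into a globally defined Hölder function), but the bookkeeping must be done with care to maintain the same Hölder exponent as $h$ and to ensure that the resulting identity holds pointwise rather than merely $\nu^+$-almost everywhere, which is what ultimately allows the contradiction with the non-arithmeticity hypothesis.
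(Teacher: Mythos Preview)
Your proof is correct, but it diverges from the paper's in both halves. For the first assertion (spectral radius strictly less than $1$ at each $t\neq 0$), the paper simply cites \cite[Theorem 4.5]{PP90}, whereas you reprove this result from scratch via the standard rigidity argument (Lasota--Yorke $\Rightarrow$ quasi-compactness, then exclusion of peripheral eigenvalues by lifting the $S^1$-valued eigenfunction and deriving a forbidden cohomological identity). Your argument is self-contained and illuminates exactly how the non-arithmeticity hypothesis enters, at the cost of the lifting technicality you flagged.

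For the uniform bound \eqref{Spectral_Radius_Uniform}, the paper's compactness argument is more elementary than yours: rather than appealing to upper semicontinuity of the spectral radius, it observes directly that for each $t\in K$ there is an integer $n_0(t)$ with $\|\mathcal L_{\psi+\mathbf{i}tf}^{n_0(t)}\|_{\mathscr B^+_\alpha\to\mathscr B^+_\alpha}<1$, that this inequality persists on an open neighborhood of $t$ by norm-continuity of $t\mapsto \mathcal L_{\psi+\mathbf{i}tf}$, and that a finite subcover of $K$ then yields uniform constants. This sidesteps any discussion of essential spectral radius or spectral perturbation theory and passes straight from ``some power has norm $<1$'' to the geometric decay. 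Your route through upper semicontinuity of the spectral radius is valid but requires the extra ingredient you mention (uniform control of the Lasota--Yorke constants over $K$) to convert a bound on $r(\mathcal L_{\psi+\mathbf{i}tf})$ into an actual norm estimate; the paper's argument avoids this step entirely.
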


\begin{proof}
The proof of the first assertion can be found in \cite[Theorem 4.5]{PP90}. 
Now we prove \eqref{Spectral_Radius_Uniform}. 
For every $t \in K$, there exist $n_0(t) \geq 1$ and $\alpha(t) \in (0,1)$ such that 
$\|\mathcal L_{\psi +{\bf i}t f }^{n_0(t)} \|_{\mathscr{B}^+_{\alpha} \to  \mathscr{B}^+_{\alpha}} < \alpha(t)$. 
As the operator $\mathcal L_{\psi +{\bf i}t f }$ depends continuously on $t$ for the operator norm topology,
there exists $\delta = \delta(t)$ such that for any $s \in (t - \delta(t), t + \delta(t))$, we still have 
$\|\mathcal L_{\psi +{\bf i}s f }^{n_0(t)} \|_{\mathscr{B}^+_{\alpha} \to  \mathscr{B}^+_{\alpha}} < 1$. 
In particular, for every $n \geq 0$ we have 
$\|\mathcal L_{\psi +{\bf i}s f }^{n} \|_{\mathscr{B}^+_{\alpha} \to  \mathscr{B}^+_{\alpha}} 
\leq  c(t) \alpha(t)^{n/n_0(t)}$, for some $c(t) > 0$. 
By compactness, we can find $t_1, \ldots, t_r \in K$ such that $K \subset \bigcup_{i =1}^r (t_i - \delta(t_i), t_i + \delta(t_i))$.
In particular, by setting $c = \max_{1 \leq i \leq r} c(t_i)$, $\alpha = \max_{1 \leq i \leq r} \alpha(t_i)$
and $n_0 = \max_{1 \leq i \leq r} n_0(t_i)$, we get for any $s \in K$ and $n \geq 0$, 
$\|\mathcal L_{\psi +{\bf i}s f }^{n} \|_{\mathscr{B}^+_{\alpha} \to  \mathscr{B}^+_{\alpha}}  \leq  c \alpha^{n/n_0}$. 
\end{proof}

\subsection{Local limit theorem for smooth target functions}

In the following we establish a local limit theorem for Markov chains with a precise estimation of the remainder term.
Let $F$ be a measurable non-negative bounded target function on $\bb X \times \mathbb{R}$.
The probability we are interested in can be written as follows: for any $z \in \bb X^+$, 
\begin{align*} 
 \int_{\bb X^-_z} F \left( (T^{-n} y \cdot z)_+, \check S_n g (y\cdot z) \right) \nu^-_z(dy).
\end{align*}
The main difficulty is to give a local limit theorem with the explicit dependence of the remainder terms on $F$. 

We first describe the kind of target functions that we will use. 


\begin{lemma}\label{Lem_Measurability}
Let $X$ be a compact metric space. 
Let $F$ be a real-valued function on $X \times \bb R$ such that
\begin{enumerate}[label=\arabic*., leftmargin=*]
\item For any $t \in \bb R$, the function $z \mapsto F(z, t)$ is $\alpha$-H\"older continuous on $X$.  
\item For any $z \in X$, the function $t \mapsto F(z, t)$ is measurable on $\bb R$.
 \end{enumerate}
Then, the function $(z, t) \mapsto F(z,t)$ is measurable on $X \times \bb R$ and 
the function $t \mapsto \| F (\cdot, t) \|_{\alpha}$ is measurable on $\bb R$,
where the norm $\|\cdot \|_{\alpha}$ is the usual norm on the space of $\alpha$-H\"older continuous functions on $X$. 
Moreover, if the integral $\int_{\bb R} \| F (\cdot, t) \|_{\alpha} dt$ is finite, 
we define the partial Fourier transform $\widehat F$ of $F$ by setting for any $z \in X$ and $u \in \bb R$,
$$\widehat F(z, u) = \int_{\bb R} e^{-itu} F (z, t) dt.$$ 
This is a continuous function on $X \times \bb R$. 
In addition, for every $u \in \bb R$, 
the function $z \mapsto \widehat F(z, u)$ is $\alpha$-H\"older continuous
and $\| \widehat F (\cdot, u) \|_{\alpha} \leq \int_{\bb R} \| F (\cdot, t) \|_{\alpha} dt$. 
\end{lemma}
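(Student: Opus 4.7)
My plan is to separate the statement into its three ingredients: joint measurability of $F$ together with measurability of $t \mapsto \|F(\cdot,t)\|_\alpha$, the H\"older estimate on $\widehat F(\cdot,u)$, and the joint continuity of $\widehat F$ on $X \times \bb R$.

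First I would use the fact that a compact metric space is separable and fix a countable dense subset $D = \{z_n : n \geq 1\} \subset X$. Since $z \mapsto F(z,t)$ is continuous for every fixed $t$, $F$ is a Carath\'eodory function, so it is jointly Borel measurable on $X \times \bb R$: explicitly, one can write $F(z,t) = \lim_{n\to\infty} F(z_{k(n,z)}, t)$ where $k(n,z)$ is a Borel-measurable choice of nearest index in $\{1,\ldots,n\}$, and each approximation is measurable as a countable combination of the measurable slices $t \mapsto F(z_j, t)$. For the norm, separability and the continuity of $z \mapsto F(z,t)$ give
\begin{align*}
\|F(\cdot, t)\|_\alpha = \sup_{z \in D} |F(z,t)| + \sup_{\substack{z, z' \in D \\ z \neq z'}} \frac{|F(z,t) - F(z',t)|}{d(z,z')^\alpha},
\end{align*}
so $t \mapsto \|F(\cdot,t)\|_\alpha$ is a countable supremum of measurable functions, hence measurable.

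Next, assuming $\int_{\bb R} \|F(\cdot,t)\|_\alpha\, dt < \infty$, the H\"older bound on $\widehat F(\cdot, u)$ follows from a direct estimate: for any $z, z' \in X$ and $u \in \bb R$,
\begin{align*}
|\widehat F(z,u) - \widehat F(z',u)| \leq \int_{\bb R} |F(z,t) - F(z',t)|\, dt \leq d(z,z')^\alpha \int_{\bb R} \|F(\cdot, t)\|_\alpha\, dt,
\end{align*}
and likewise $|\widehat F(z,u)| \leq \int_{\bb R} \|F(\cdot,t)\|_\alpha\, dt$ because $|F(z,t)| \leq \|F(\cdot,t)\|_\alpha$. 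Summing these two contributions gives the desired inequality $\|\widehat F(\cdot,u)\|_\alpha \leq \int_{\bb R} \|F(\cdot,t)\|_\alpha\, dt$ valid for every $u$.

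Finally, for joint continuity on $X \times \bb R$, the estimate above already provides continuity in $z$ uniformly in $u$. For continuity in $u$, I would invoke the dominated convergence theorem: the integrand $e^{-itu} F(z,t)$ is dominated by the $z$-uniform and integrable function $t \mapsto \|F(\cdot,t)\|_\alpha$, while for each $t$ the map $u \mapsto e^{-itu}$ is continuous, so $\widehat F(z,u') \to \widehat F(z,u)$ as $u' \to u$, uniformly in $z \in X$. Combining uniform continuity in each variable yields joint continuity of $\widehat F$. No serious obstacle is expected here; the only mildly delicate point is measurable choice of nearest-neighbor indices in the first step, which is a standard selection argument for separable metric spaces.
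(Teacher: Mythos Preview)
Your proposal is correct and follows essentially the same approach as the paper: separability of $X$ to reduce the suprema defining $\|F(\cdot,t)\|_\alpha$ to countable ones, the dominated convergence theorem for continuity of $\widehat F$, and the direct pointwise estimate for the H\"older bound (which the paper simply calls ``obvious''). If anything you are more thorough, since you spell out the Carath\'eodory-type argument for joint measurability of $F$, a point the paper's proof leaves implicit.
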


\begin{proof}
Since the space $X$ is separable and the function $z \mapsto F (z, t)$ is continuous on $X$ for any $t \in \bb R$,  
the supremum $\sup_{z\in X} | F(z,t) |$ 
can be taken over a countable dense subset, so that 
$t \mapsto \sup_{z\in X} | F(z,t) |$ is measurable. 
In the same way, 
since the function $z \mapsto F (z, t)$ is $\alpha$-H\"older continuous on  $X$ for any $t \in \bb R$, 
one can also verify that $\sup_{z, z' \in X}  \frac{| F \left(z, t \right)  - F \left(z', t \right)  |}{\alpha^{\omega(z, z')}}$
is a measurable function in $t$. 

In case the integral $\int_{\bb R} \| F (\cdot, t) \|_{\alpha} dt$ is finite, 
the partial Fourier transform $\widehat F$ is well defined and continuous by the dominated convergence theorem. 
The norm domination is obvious. 
\end{proof}

We denote by $\mathscr H_{\alpha}^+$ the set of real-valued functions on $\bb X^+ \times \bb R$ 
such that conditions (1) and (2) of Lemma \ref{Lem_Measurability} hold and 
the integral $\int_{\bb R} \| F (\cdot, t) \|_{\mathscr B^+_\alpha} dt$ is finite. 
%
%
For any compact set $K\subset \bb R $, 
denote by $\mathscr H_{\alpha, K}^+$ the set of functions $F \in \mathscr H_{\alpha}^+$
such that the Fourier transform $\widehat F(z,\cdot)$ has a support contained in $K$  for any $z \in \bb X^+$.

\begin{theorem} \label{Theor-LLT-bound001}
Let $\alpha \in (0,1)$. 
Assume that $g \in \scr B^+_{\alpha}$  such that 
$\nu^+ (g)=0$
and  for any $p \neq 0$ and $q \in \bb R$, 
the function $p g + q$ is not cohomologous to a function with values in $\bb Z$.  
Let $K \subset \bb R$ be a compact set. 
Then there exists a constant $c_K >0$ 
such that for any $F\in \mathscr H_{\alpha, K}^+$, 
$n\geq 1$ and $z \in \bb X^+$,
\begin{align}\label{LLT_First}
& \bigg| \sqrt{n} \int_{\bb X^-_z}  F \left( (T^{-n} y \cdot z)_+,  \check S_n g  (y\cdot z) \right) \nu^-_z(dy)   \notag\\
& \quad  -  \int_{\bb X^+ \times \bb R}  \frac{1}{\sigma_g}  \phi \left( \frac{u}{\sigma_g \sqrt{n}} \right)
    F \left(z', u\right)  du  \nu^+(dz') \bigg|  
 \leq  \frac{c_K}{\sqrt{n} }   \int_{\bb R} \| F (\cdot, t) \|_{\mathscr B^+_\alpha} dt.  
\end{align} 
\end{theorem}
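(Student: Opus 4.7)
The approach I propose is the Nagaev-Guivarc'h Fourier method, adapted to the subshift setting by identifying the conditional integral with an iterate of a twisted Ruelle operator. Write $I_n(z)$ for the integral on the left-hand side of \eqref{LLT_First} and $R_n$ for the Gaussian integral on the right, so that the estimate to prove is $|\sqrt n\,I_n(z) - R_n| \leq c_K n^{-1/2}\int\|F(\cdot,t)\|_{\scr B^+_\alpha}\,dt$.

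The first step is Fourier inversion. Because $F\in\mathscr H_{\alpha,K}^+$ has partial Fourier transform $\widehat F(z',\cdot)$ supported in $K$,
\begin{align*}
F\bigl((T^{-n}(y\cdot z))_+,\, \check S_n g(y\cdot z)\bigr)
= \frac{1}{2\pi}\int_K e^{\bf{i}t\,\check S_n g(y\cdot z)}\, \widehat F\bigl((T^{-n}(y\cdot z))_+,\, t\bigr)\, dt.
\end{align*}
Since $g\in\scr B^+_\alpha$ depends only on future coordinates, a direct check gives $\check S_n g(y\cdot z) = S_n g\bigl((T^{-n}(y\cdot z))_+\bigr)$ and the entire integrand depends on $y$ only through $(y_{-n},\dots,y_{-1})$. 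Using the definition \eqref{Def_nu_kz} of $\nu^-_z$ on cylinder sets and interchanging the order of integration produces the key identity
\begin{align*}
I_n(z) = \frac{1}{2\pi}\int_K \mathcal L_{\psi - \bf{i}tg}^{\,n}\bigl(\widehat F(\cdot, t)\bigr)(z)\, dt.
\end{align*}
In parallel, Plancherel's formula together with the evenness of the Gaussian transforms $R_n$ into
\begin{align*}
R_n = \frac{\sqrt n}{2\pi} \int_{\bb R} e^{-n\sigma_g^2 t^2/2}\,\nu^+\!\bigl(\widehat F(\cdot, t)\bigr)\, dt,
\end{align*}
so the whole problem reduces to estimating a single difference of Fourier integrals.

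Next I would split the $t$-integration at a threshold $\delta>0$ chosen small enough for the perturbation theory of Lemma \ref{Lem_Perturbation} to apply. On $K\cap\{|t|\geq \delta\}$, Lemma \ref{Lem_StrongNonLattice} yields $\|\mathcal L_{\psi - \bf{i}tg}^{\,n}\|_{\scr B^+_\alpha\to\scr B^+_\alpha}\leq c_K' e^{-c_K n}$, while the tail of the Gaussian in $R_n$ is exponentially small in $n$; both contributions are absorbed into $c_K n^{-1/2}$. On $|t|<\delta$, invoke the decomposition $\mathcal L_{\psi - \bf{i}tg}^{\,n} = \lambda_{-t}^n \Pi_{-t} + N_{-t}^n$ from \eqref{perturb001}. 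The $N_{-t}^n$ piece decays geometrically in the $\scr B^+_\alpha$-norm and contributes $O(e^{-cn})$ after integration in $t$. Writing $\Pi_{-t} = \Pi_0 + tR(t)$ with $\Pi_0\varphi = \nu^+(\varphi)$ and $R$ analytic in operator norm (Lemma \ref{Lem_Perturbation}), and using $\lambda_{-t}^n = e^{-n\sigma_g^2 t^2/2}\bigl(1 + O(n|t|^3)\bigr)$ from \eqref{decomp-lambda001}, the leading contribution is compared to $e^{-n\sigma_g^2 t^2/2}\nu^+\!\bigl(\widehat F(\cdot, t)\bigr)$ with a pointwise-in-$z$ error of order $e^{-cnt^2}(n|t|^3 + |t|)\,\|\widehat F(\cdot, t)\|_{\scr B^+_\alpha}$. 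The standard Gaussian moment bound
\begin{align*}
\sqrt n\int_{\bb R} e^{-cnt^2}\bigl(n|t|^3 + |t|\bigr)\,dt = O(n^{-1/2})
\end{align*}
combined with the uniform domination $\|\widehat F(\cdot,t)\|_{\scr B^+_\alpha}\leq \int_{\bb R}\|F(\cdot,s)\|_{\scr B^+_\alpha}\,ds$ from Lemma \ref{Lem_Measurability} then delivers the required estimate, uniformly in $z\in\bb X^+$.

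The main obstacle is the calibration of the three frequency scales so that the final error is genuinely linear in $\int\|F(\cdot,t)\|_{\scr B^+_\alpha}\,dt$ and uniform in $z$: the near-Gaussian scale $|t|\lesssim n^{-1/2}$, where the cubic correction in $\lambda_{-t}^n$ must be balanced against the Gaussian weight; the transitional band $n^{-1/2}\ll |t|\leq \delta$, where both $|\lambda_{-t}^n|$ and $e^{-n\sigma_g^2 t^2/2}$ are subexponentially small but the spectral expansion continues to apply; and the far band $|t|\geq \delta$, which relies entirely on the non-arithmeticity hypothesis. The compactness of $K$ enters only through Lemma \ref{Lem_StrongNonLattice} (and the trivial bound on $|K|$) and is absorbed into the single constant $c_K$.
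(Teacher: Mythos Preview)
Your proposal is correct and follows essentially the same route as the paper: Fourier inversion to rewrite the conditional integral as $\int_K \mathcal L_{\psi\pm \mathbf{i}tg}^{\,n}\widehat F(\cdot,t)(z)\,dt$, a split at a fixed threshold $\delta$, Lemma~\ref{Lem_StrongNonLattice} on the far band, and the spectral expansion \eqref{perturb001} with the eigenvalue asymptotic \eqref{decomp-lambda001} on the near band. The paper performs the change of variable $t\mapsto t/\sqrt n$ before splitting (so the threshold becomes $\delta\sqrt n$) and organizes the small-frequency error into three pieces $J_1,J_2,J_3$ corresponding to $(\lambda_t^n-e^{-t^2/2})\Pi_t$, $e^{-t^2/2}(\Pi_t-\Pi_0)$, and $N_t^n$, but this is only a cosmetic reordering of the same estimates you describe.
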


\begin{proof}
Without loss of generality, we assume that $\sigma_g = 1$. 
By the Fourier inversion formula, the Fubini theorem and a change of variable $t$ to $\frac{t}{\sqrt{n}}$, 
we get
\begin{align*}
& \sqrt{n}  \int_{\bb X^-_z} F \left( (T^{-n} y \cdot z)_+, \check S_n g (y\cdot z) \right) \nu^-_z(dy)   \nonumber\\
& = \frac{\sqrt{n}}{2 \pi}  
    \int_{\bb X^-_z \times \bb R}  e^{- \bf{i} t \check S_n g (y\cdot z)} \widehat{F} ((T^{-n} y \cdot z)_+,  t)  \nu^-_z(dy)  dt  \nonumber\\
& = \frac{1}{2\pi } 
  \int_{\bb X^-_z \times \bb R}  e^{ -\frac{\bf{i} t}{\sqrt{n}} \check S_n g (y\cdot z)} 
  \widehat{F} \left((T^{-n} y \cdot z)_+,  \frac{t}{\sqrt{n}} \right)  \nu^-_z(dy) dt  =: I. 
\end{align*}
Note that the Fubini theorem can be applied since the integral on $\bb X^-_z$ is in fact a finite sum. 
Denote 
\begin{align*}
J(t) & = \int_{\bb X^-_z}  e^{ -\frac{\bf{i} t}{\sqrt{n}} \check S_n g (y\cdot z)} 
          \widehat{F} \left((T^{-n} y \cdot z)_+, \frac{t}{\sqrt{n}} \right)  \nu^-_z(dy)   \notag\\
& \qquad\qquad   - e^{- \frac{t^2}{2}} \int_{\bb X^+} \widehat{F} \left(z', \frac{t}{\sqrt{n}} \right) \nu^+(dz'). 
\end{align*}
We decompose the integral $I$ into three parts: $I=I_{1}+I_{2} + I_3,$ where
\begin{align*}
I_{1}  
     &   = \frac{1}{2\pi }\int_{|t| \leq \delta \sqrt{n}}  J(t)  dt,   \\
I_{2} &  = \frac{1}{2\pi }\int_{ \delta  \sqrt{n}  < |t| }   \left[  \int_{\bb X^-_z}  e^{ -\frac{\bf{i} t}{\sqrt{n}} \check S_n g (y\cdot z)} 
          \widehat{F} \left((T^{-n} y \cdot z)_+, \frac{t}{\sqrt{n}} \right)  \nu^-_z(dy)  \right]  dt,  \\ 
I_3 & =  \frac{1}{2\pi }\int_{ |t| \leq \delta  \sqrt{n}  }   
    \left[  e^{- \frac{t^2}{2}} \int_{\bb X^+} \widehat{F} \left(z', \frac{t}{\sqrt{n}} \right) \nu^+(dz') \right]  dt. 
\end{align*}

\textit{Estimate of $I_1.$}
Since $\int_{\bb R} \|F(\cdot, u)\|_{\scr B^+_{\alpha}} du < \infty$, 
the function $z \mapsto \widehat{F} \left(z, t \right)$ is H\"older continuous on $\bb X^+$
with H\"older norm at most $\int_{\bb R} \|F(\cdot, u)\|_{\scr B^+_{\alpha}} du$, for any fixed $t \in \bb R$. 
Applying \eqref{perturb001}, we get
\begin{align*}
J(t)  & = \mathcal{L}^n_{\psi + \frac{\bf{i} t}{\sqrt{n}} g}  \widehat{F} \left(\cdot, \frac{t}{\sqrt{n}} \right)(z) 
   - e^{- \frac{t^2}{2}}  \int_{\bb X^+} \widehat{F} \left(z', \frac{t}{\sqrt{n}} \right) \nu^+(dz')   \nonumber\\
&  =  \left( \lambda^{n}_{\frac{t}{\sqrt{n}} } -  e^{- \frac{t^2}{2}}  \right) 
  \Pi_{\frac{t}{\sqrt{n}} }  \widehat{F} \left(\cdot, \frac{t}{\sqrt{n}} \right)(z)   \nonumber\\
& \quad  +  e^{- \frac{t^2}{2}}  \left( \Pi_{\frac{t}{\sqrt{n}} } - \Pi_{0} \right) 
      \widehat{F} \left(\cdot, \frac{t}{\sqrt{n}} \right)(z)       
   + N^n_{\frac{t}{\sqrt{n}} }  \widehat{F} \left(\cdot, \frac{t}{\sqrt{n}} \right)(z)   \nonumber\\
& =:  J_1(t) + J_2(t) + J_3(t). 
\end{align*}
For the first term, by \eqref{decomp-lambda001} and simple calculations, we get
\begin{align*}
\left| J_1(t)  \right|
 \leq c \left| \lambda^{n}_{\frac{t}{\sqrt{n}} } -  e^{- \frac{t^2}{2}} \right|
      \sup_{|t'| \leq \delta} \left\| \widehat{F} \left(\cdot, t' \right) \right\|_{\scr B^+_{\alpha}}
\leq \frac{C}{\sqrt{n}} e^{- \frac{t^2}{4}} \int_{\bb R} \|F(\cdot, u)\|_{\scr B^+_{\alpha}} du. 
\end{align*}
For the second and third terms, using again Lemma \ref{Lem_Perturbation}, we obtain
\begin{align*}
\left| J_2(t)  \right| + \left| J_3(t)  \right|
  \leq  C  \left( \frac{|t|}{\sqrt{n}} e^{- \frac{t^2}{2}} + e^{-c n}  \right) 
     \int_{\bb R} \|F(\cdot, u)\|_{\scr B^+_{\alpha}} du. 
\end{align*}
Therefore, we obtain the following upper bound for $I_1$:
\begin{align}
|I_{1}|  \leq  \left( \frac{C}{\sqrt{n}} + C e^{-cn} \right)  \int_{\bb R} \|F(\cdot, u)\|_{\scr B^+_{\alpha}} du
    \leq  \frac{C}{\sqrt{n}}  \int_{\bb R} \|F(\cdot, u)\|_{ \scr B^+_{\alpha} } du.  \label{I_1 final}
\end{align}

\textit{Estimate of $I_2.$}
Since the function $\widehat{F}(z,\cdot)$ is compactly supported on $K \subset [-C_1, C_1 ]$, where $C_1 >0$ 
is a constant not depending on $z \in \bb X^+$, 
we have 
\begin{align*}
I_2 
 & =  \frac{1}{2\pi } \int_{\bb X^-_z}    
     \left[ \int_{ \delta  \sqrt{n}  < |t| \leq C_1 \sqrt{n} }   e^{ -\frac{\bf{i} t}{\sqrt{n}} \check S_n g (y\cdot z)} 
          \widehat{F} \left((T^{-n} y \cdot z)_+, \frac{t}{\sqrt{n}} \right)   dt  \right]   \nu^-_z(dy)  \\
& = \frac{\sqrt{n}}{2\pi }\int_{ \delta   < |t| \leq C_1 }   \left[  \mathcal{L}^n_{\psi + \bf{i} t f}  \widehat{F} \left(\cdot, t \right)(z) \right]  dt. 
\end{align*}
Note that, for any $t$ satisfying $\delta   < |t| \leq C_1 $,
\begin{align*}
\sup_{z \in \bb X^+}
\left|  \mathcal{L}^n_{\psi + \bf{i} tf}  \widehat{F} \left(\cdot, t \right)(z) \right| 
\leq
\left\|  \mathcal{L}^n_{\psi + \bf{i} tf}  \widehat{F} \left(\cdot, t \right) \right\|_{\mathscr B^+_{\alpha}} \leq 
\left\|  \mathcal{L}^n_{\psi + \bf{i} tf} \right\| _{ \mathcal{L(\mathscr{B}^+_{\alpha}, \mathscr{B}^+_{\alpha})} } 
    \|\widehat{F} (\cdot, t ) \|_{\mathscr B^+_{\alpha}}. 
\end{align*}
Then, by Lemma \ref{Lem_StrongNonLattice}, it follows that
\begin{align}\label{Bound_I2_aa}
|I_{2}|  
&  =    \frac{1}{2\pi }\int_{ \delta  < |t| \leq C_1}   
\sqrt{n}\left\|  \mathcal{L}^n_{\psi + \bf{i} tf} \right\| _{ \mathcal{L(\mathscr{B}^+_{\alpha}, \mathscr{B}^+_{\alpha})} } dt
  \sup_{|t'| \in [\delta,  C_1 ] }  \|\widehat{F} (\cdot, t' ) \|_{\mathscr B^+_{\alpha}}     \notag\\
& \leq  c_K' \sqrt{n} e^{-c_K n}    
   \sup_{|t'| \in [\ee,  C_1 ] }  \|\widehat{F} (\cdot, t' ) \|_{\mathscr B^+_{\alpha}}  
 \leq  c_K' e^{-c_K n}  \int_{\bb R}  \| F (\cdot, t) \|_{\mathscr B^+_{\alpha}}  dt. 
\end{align}
\textit{Estimate of $I_3.$}
Notice that 
\begin{align*}
I_3 & =  \frac{1}{2\pi }\int_{ \bb R  }   \left[  e^{- \frac{t^2}{2}} \int_{\bb X^+} \widehat{F} \left(z, \frac{t}{\sqrt{n}} \right) \nu^+(dz) \right]  dt   \\
&  \quad -  \frac{1}{2\pi }\int_{ |t| > \delta \sqrt{n}  }   \left[  e^{- \frac{t^2}{2}} \int_{\bb X^+} \widehat{F} \left(z, \frac{t}{\sqrt{n}} \right) \nu^+(dz) \right]  dt. 
\end{align*}
For the first term, by the Fourier inversion formula, 
\begin{align}
\frac{1}{2\pi } \int_{\bb R}  e^{- \frac{ t^2}{2}}
   \int_{\bb X^+} \widehat{F} \left(z, \frac{t}{\sqrt{n}} \right) \nu^+(dz) dt 
= \frac{1}{\sqrt{2 \pi n}} \int_{\bb X^+} \int_{\mathbb{R}} e^{- \frac{ t^2}{2 n}}  
      F \left(z, t\right)  dt  \nu^+(dz).  \label{locI-002}
\end{align}
For the second term, using the fact that 
$\widehat{F} \left(z, \frac{t}{\sqrt{n}} \right) \leq \int_{\bb R}  | F (z, u) |  du$, 
we have 
\begin{align}\label{locI-002bis}
& \frac{1}{2\pi } \int_{ |t| > \delta \sqrt{n} }  
\left[  e^{- \frac{t^2}{2}} \int_{\bb X^+} \widehat{F} \left(z, \frac{t}{\sqrt{n}} \right) \nu^+(dz) \right]  dt  \notag\\
& \leq  \frac{1}{2\pi } \int_{ |t| > \delta \sqrt{n} }  e^{- \frac{t^2}{2}}  dt  
    \int_{\bb X^+ \times \bb R}    | F (z, u) |  du  \nu^+(dz)  
 \leq  c e^{- \frac{\delta^2}{4} n}  \int_{\bb R}  \| F (\cdot, u) \|_{\mathscr B^+_{\alpha}}  du.  
\end{align}
Combining \eqref{I_1 final}, \eqref{Bound_I2_aa}, \eqref{locI-002} and \eqref{locI-002bis},
and taking into account that $\delta$ is a fixed constant, we conclude the proof of \eqref{LLT_First}. 
\end{proof}

\subsection{Local limit theorem for $\ee$-dominated target functions}

Let $\ee >0$. Let $f, g$ be functions on $\bb R$.  
We say that the function $g$ $\ee$-dominates the function $f$
(or $f$ $\ee$-minorates $g$) if for any $t \in \bb R$, it holds that
\begin{align*}
f(t) \leq g(t +v),  \quad  \forall  \  |v| \leq \ee.
\end{align*}
In this case we write $f \leq_{\ee} g$ or $g \geq_{\ee} f$.
For any functions $F$ and $G$  on $\bb X^+ \times \bb R$, we say that $F \leq_{\ee} G$ if $F(z, \cdot) \leq_{\ee} G(z, \cdot)$ 
for any $z \in \bb X^+$.

In the proofs we make use of the following assertion. 
Denote by $\rho$ the non-negative density function on $\bb R$, 
which is the Fourier transform of the function $(1 - |t|) \mathds 1_{|t| \leq 1}$ for $t \in \bb R$. 
Set $\rho_{\ee}(u) = \frac{1}{\ee} \rho ( \frac{u}{\ee} )$ for $u \in \bb R$ and $\ee >0$.  
\begin{lemma}\label{smoothing-lemma-001}
Let $\ee \in (0,1/4)$. Let $f: \bb R \to \bb R_+$ and $g: \bb R \to \bb R_+$
 be integrable functions and that $f\leq_{\ee} g$. 
Then for any $u \in \bb R$,
\begin{align*}
f(u) \leq  \frac{1}{1-2\ee} g*\rho_{\ee^2} (u),  \quad  
g(u)  \geq f * \rho_{\ee^2} (u) - \int_{|v| > \ee} f \left( u- v \right) \rho_{\ee^2} (v) d v.
\end{align*}
\end{lemma}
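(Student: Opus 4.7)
The plan is to exploit the concentration of the mollifier $\rho_{\ee^2}$ near $0$ together with the defining inequality $f(t) \leq g(t+v)$ for $|v| \leq \ee$. First I would verify the key quantitative estimate
\begin{align*}
\int_{|v| > \ee} \rho_{\ee^2}(v)\, dv \leq 2\ee.
\end{align*}
This uses the explicit form of $\rho$: since $\rho$ is the Fourier transform of $(1-|t|)\mathds 1_{|t|\leq 1}$, a direct computation gives $\rho(u) = \frac{1}{2\pi}\bigl(\frac{\sin(u/2)}{u/2}\bigr)^2 \leq \frac{2}{\pi u^2}$. A change of variable $v = \ee^2 w$ yields $\int_{|v|>\ee}\rho_{\ee^2}(v)\, dv = \int_{|w|>1/\ee}\rho(w)\,dw \leq \frac{4\ee}{\pi} \leq 2\ee$, and in particular $\int_{|v|\leq \ee}\rho_{\ee^2}(v)\,dv \geq 1 - 2\ee$.

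For the first (upper) inequality, fix $u \in \bb R$. For any $|v| \leq \ee$, the hypothesis $f \leq_\ee g$ applied at the point $u$ with shift $-v$ gives $f(u) \leq g(u-v)$. Multiplying by the non-negative weight $\rho_{\ee^2}(v)$ and integrating over $\{|v| \leq \ee\}$ yields
\begin{align*}
f(u) \int_{|v|\leq \ee} \rho_{\ee^2}(v)\, dv \leq \int_{|v|\leq \ee} g(u-v)\rho_{\ee^2}(v)\, dv \leq g*\rho_{\ee^2}(u),
\end{align*}
where the last bound uses $g \geq 0$. Dividing by $\int_{|v|\leq \ee}\rho_{\ee^2}(v)\,dv \geq 1 - 2\ee$ gives the first claim.

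For the second (lower) inequality, split the convolution as
\begin{align*}
f * \rho_{\ee^2}(u) = \int_{|v|\leq \ee} f(u-v)\rho_{\ee^2}(v)\,dv + \int_{|v|>\ee} f(u-v)\rho_{\ee^2}(v)\,dv,
\end{align*}
so that the right-hand side of the target inequality equals $\int_{|v|\leq \ee} f(u-v)\rho_{\ee^2}(v)\,dv$. For $|v|\leq \ee$, applying $f\leq_\ee g$ at the point $u-v$ with shift $+v$ gives $f(u-v) \leq g(u)$; integrating against $\rho_{\ee^2}(v)\,\mathds{1}_{|v|\leq \ee}\,dv$ and using $\int \rho_{\ee^2} = 1$ gives $\int_{|v|\leq \ee}f(u-v)\rho_{\ee^2}(v)\,dv \leq g(u)$, which is the desired bound. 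The only nontrivial ingredient is the tail estimate on $\rho_{\ee^2}$; everything else is a direct application of the defining inequality of $\leq_\ee$ combined with the non-negativity of $f$, $g$, and $\rho$.
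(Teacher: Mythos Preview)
Your proof is correct. The paper actually states this lemma without proof (presumably because it is elementary or imported from the reference \cite{GX21}), so there is no argument to compare against; your direct verification --- the tail estimate $\int_{|v|>\ee}\rho_{\ee^2}(v)\,dv\leq 2\ee$ together with the defining inequality $f(t)\leq g(t+v)$ for $|v|\leq\ee$ applied pointwise and integrated --- is exactly the intended argument, and the tail bound you derive is in fact used verbatim later in the paper (e.g.\ in the estimate of $J_2$ in the proof of Theorem~\ref{Lem_LLT_Nonasm}).
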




\begin{remark}\label{f is bounded if g is integr -001}
The domination property $\leq_{\ee} $ implies, in particular, that if $f \leq_{\ee} g$ and the function $g$ is integrable, then $f$ is bounded and 
$\lim_{u\to\infty} f(u)=0$, $\lim_{u\to-\infty} f(u)=0$. 
Indeed, since $f \leq_{\ee} g$ and $g$ is an integrable function, 
by Lemma \ref{smoothing-lemma-001} we have $f \leq \frac{1}{1 - 2 \ee} g * \rho_{\ee^2}$.
Since the Fourier transform of $g * \rho_{\ee^2}$ is compactly supported on $[- \frac{1}{\ee^2}, \frac{1}{\ee^2}]$, 
by the Fourier inversion formula, 
\begin{align*}
| g * \rho_{\ee^2} (x) | = \left| \frac{1}{2 \pi} \int_{\bb R} e^{itx}  \widehat{g}(t) \widehat{\rho}_{\ee^2}  (t) dt  \right| \leq c. 
\end{align*}
Therefore, $g * \rho_{\ee^2}$ is bounded on $\bb R$, so that $f$ is bounded on $\bb R$. 
\end{remark}

Below, for any function $F \in \mathscr H_{\alpha}^+$,  we use the notation
\begin{align*}
F * \rho_{\ee^2} (z,t) = \int_{\bb R} F (z, t -v) \rho_{\ee^2}(v) dv,  
\quad  z \in \bb X^+,  \  t \in \bb R, 
\end{align*}
and
\begin{align*} 
\| F \|_{\mathscr H^+_\alpha} =  \int_{\bb R} \| F(\cdot, u) \|_{\mathscr B^+_\alpha}   du,
\qquad
\| F \|_{\nu ^+\otimes \Leb} =  \int_{\bb X^+} \int_{\bb R} | F(z, u) |  du  \nu^+(dz). 
\end{align*}

The following properties will be useful in the proofs:
\begin{lemma}\label{Lem_PropertiesBanachSpace}
Let $F\in \scr H^+_\alpha $ and $\rho\in L^1(\bb R)$. Then $F*\rho \in \scr H^+_\alpha $ and 
$\| F *\rho \|_{\mathscr H^+_\alpha} \leq \| F \|_{\mathscr H^+_\alpha} \| \rho \|_{L^1(\bb R)}$. 
\end{lemma}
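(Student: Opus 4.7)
The proof is essentially Young's convolution inequality applied inside the Banach space $\mathscr B^+_\alpha$. My plan is as follows.

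First I would check that, for $\nu^+ \otimes \Leb$-almost every $(z,t) \in \bb X^+ \times \bb R$, the convolution $F*\rho(z,t) = \int_{\bb R} F(z, t-v) \rho(v)\, dv$ is well defined. Since $|F(z,u)| \leq \|F(\cdot, u)\|_{\mathscr B^+_\alpha}$ and $\int \|F(\cdot, u)\|_{\mathscr B^+_\alpha} du < \infty$ by assumption, Fubini's theorem together with $\rho \in L^1(\bb R)$ gives $\int_{\bb R} \int_{\bb R} |F(z, t-v)| |\rho(v)|\, dv\, dt < \infty$ for every $z$, so the integral defining $F*\rho(z,t)$ converges absolutely for a.e.\ $t$.

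Next I would verify the two qualitative conditions of Lemma~\ref{Lem_Measurability} for $F*\rho$. For each fixed $t$ where the integral converges, the subadditivity of the Hölder seminorm in $z$ gives, for any $z, z' \in \bb X^+$,
\begin{align*}
|F*\rho(z,t) - F*\rho(z',t)|
&\leq \int_{\bb R} |F(z, t-v) - F(z', t-v)|\,|\rho(v)|\, dv \\
&\leq \alpha^{\omega(z,z')} \int_{\bb R} \|F(\cdot, t-v)\|_{\mathscr B^+_\alpha}\, |\rho(v)|\, dv,
\end{align*}
and the integral on the right is finite (in fact, finite for a.e.\ $t$ by Fubini). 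Thus $z \mapsto F*\rho(z,t)$ is $\alpha$-Hölder continuous for almost every $t$, and one can redefine $F*\rho$ on a Lebesgue-null set in $t$ to obtain this for every $t$. Measurability of $t \mapsto F*\rho(z,t)$ follows from Fubini.

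The key quantitative step is the norm bound. Taking the $\mathscr B^+_\alpha$-norm in $z$ and applying Minkowski's integral inequality (equivalently, using that $\|\cdot\|_{\mathscr B^+_\alpha}$ is a norm and the triangle inequality passes through the integral), I obtain for a.e.\ $u \in \bb R$
\begin{align*}
\|(F*\rho)(\cdot, u)\|_{\mathscr B^+_\alpha}
\leq \int_{\bb R} \|F(\cdot, u-v)\|_{\mathscr B^+_\alpha}\, |\rho(v)|\, dv.
\end{align*}
Integrating over $u$ and applying Fubini to swap the order of integration yields
\begin{align*}
\|F*\rho\|_{\mathscr H^+_\alpha}
\leq \int_{\bb R}\!\int_{\bb R} \|F(\cdot, u-v)\|_{\mathscr B^+_\alpha}\, |\rho(v)|\, dv\, du
= \|F\|_{\mathscr H^+_\alpha}\,\|\rho\|_{L^1(\bb R)},
\end{align*}
which gives both $F*\rho \in \mathscr H^+_\alpha$ and the asserted inequality. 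No step is truly delicate; the only mild care needed is in verifying the Hölder continuity in $z$ for every $t$ (as opposed to almost every $t$), which is handled by the standard modification on a null set.
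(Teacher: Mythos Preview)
Your argument is correct and is exactly the standard Young/Minkowski inequality proof one would expect; the paper itself omits the proof of this lemma entirely, treating it as routine, so there is nothing to compare against beyond noting that your approach is the natural one. The only minor point is the ``every $t$ versus almost every $t$'' issue you flag for condition~(1) of Lemma~\ref{Lem_Measurability}: your null-set modification is the right fix, and in the paper's actual applications the convolution kernel $\rho_{\ee^2}$ is bounded (it is a Fej\'er-type kernel), so the integral $\int_{\bb R} |F(z,t-v)||\rho(v)|\,dv$ is in fact finite for \emph{every} $t$ and no modification is needed there.
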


\begin{theorem}\label{Lem_LLT_Nonasm}
Let $\alpha \in (0, 1)$ and $g\in \scr B^+_{\alpha}$ be such that $\nu^+ (g)=0$.
Assume that for any $p \neq 0$ and $q \in \bb R$, 
the function $p g + q$ is not cohomologous to a function with values in $\bb Z$.  
Then for any $\ee \in (0, \frac{1}{8})$, 
there exist constants $c, c_{\ee} >0$ such that for any non-negative function $F$ and any function $G \in \mathscr H_{\alpha}^+$
satisfying $F \leq_{\ee} G$, $n\geq 1$ and $z \in \bb X^+$, 
\begin{align}
&   \int_{\bb X^-_z}  
  F \left( (T^{-n} y \cdot z)_+,  \check S_n g (y\cdot z) \right) \nu^-_z(dy)  \notag\\
&    \leq   \frac{1}{\sqrt{n}} \int_{\bb X^+}  \int_{\bb R}  
    \frac{1}{\sigma_g} \phi \left(\frac{u}{\sigma_g \sqrt{n}} \right)  G \left(z', u\right)  du  \nu^+(dz')  
      +  \frac{c\ee}{\sqrt{n}} \| G \|_{\nu ^+\otimes \Leb}  
     +  \frac{c_{\ee}}{ n }  \| G \|_{\mathscr H^+_{\alpha}},    \label{LLT_Upper_aa}
\end{align}
and for any non-negative  function $F$ and non-negative  functions $G, H \in \mathscr H_{\alpha}^+$
satisfying $H \leq_{\ee} F \leq_{\ee} G$, $n\geq 1$ and $z \in \bb X^+$, 
\begin{align}
&  \int_{\bb X^-_z}  
  F \left( (T^{-n} y \cdot z)_+,  \check S_n g (y\cdot z) \right) \nu^-_z(dy)   \notag\\
& \geq  \frac{1}{\sqrt{n}} \int_{\bb X^+} \int_{\mathbb{R}}  
  \frac{1}{\sigma_g} \phi \left(\frac{u}{\sigma_g \sqrt{n}} \right)   H \left(z', u\right)  du  \nu^+(dz')   \notag\\
& \quad      -  \frac{ c \ee}{\sqrt{n}} \| G \|_{\nu ^+\otimes \Leb}  
   -  \frac{c_{\ee}}{ n } 
      \left(  \| G \|_{\mathscr H^+_{\alpha}}  
       +  \| H \|_{\mathscr H^+_{\alpha}} \right).   
       \label{LLT_Lower_aa}
\end{align}
\end{theorem}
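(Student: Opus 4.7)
The strategy is to exploit the smoothing inequalities of Lemma~\ref{smoothing-lemma-001} in order to reduce matters to functions with compactly supported partial Fourier transform, where the effective local limit theorem of Theorem~\ref{Theor-LLT-bound001} is available. Concretely, since the Fourier transform of $\rho$ is supported in $[-1,1]$, for any $F \in \mathscr H_{\alpha}^+$ the convolution $F * \rho_{\varepsilon^2}$ lies in $\mathscr H_{\alpha,K}^+$ with $K = [-\varepsilon^{-2}, \varepsilon^{-2}]$, so Theorem~\ref{Theor-LLT-bound001} applies to it; moreover $\|F * \rho_{\varepsilon^2}\|_{\mathscr H_\alpha^+} \leq \|F\|_{\mathscr H_\alpha^+}$ by Lemma~\ref{Lem_PropertiesBanachSpace}.

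For the upper bound \eqref{LLT_Upper_aa}, I would combine $F \leq_{\varepsilon} G$ with Lemma~\ref{smoothing-lemma-001} to write $F \leq (1-2\varepsilon)^{-1} G * \rho_{\varepsilon^2}$, then apply Theorem~\ref{Theor-LLT-bound001} to $G * \rho_{\varepsilon^2}$. The error term there contributes $O(c_\varepsilon/n)\,\|G\|_{\mathscr H_\alpha^+}$. It then remains to compare the main term $\int \frac{1}{\sigma_g}\phi(\frac{u}{\sigma_g \sqrt n})(G * \rho_{\varepsilon^2})(z',u)\,du\,\nu^+(dz')$ with $\int \frac{1}{\sigma_g}\phi(\frac{u}{\sigma_g \sqrt n})G(z',u)\,du\,\nu^+(dz')$. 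Writing the difference as $\int \rho_{\varepsilon^2}(v)\int [\tfrac{1}{\sigma_g}\phi(\frac{u+v}{\sigma_g \sqrt n}) - \tfrac{1}{\sigma_g}\phi(\frac{u}{\sigma_g \sqrt n})] G(z',u)\,du\,dv$ and splitting the $v$-integral at $|v| = \varepsilon$, the Lipschitz bound of $\phi$ controls the inner piece, while for $|v| > \varepsilon$ one uses the boundedness of $\phi$ together with the decay $\int_{|v|>\varepsilon} \rho_{\varepsilon^2}(v)\,dv = O(\varepsilon)$, coming from the $1/v^2$-tail of the Fej\'er-type kernel $\rho$. This produces an error of size $O(\varepsilon\,\|G\|_{\nu^+\otimes \mathrm{Leb}})$. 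Finally, expanding $(1-2\varepsilon)^{-1} = 1 + O(\varepsilon)$ and absorbing the extra factor into the same $\varepsilon$-error yields \eqref{LLT_Upper_aa}.

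For the lower bound \eqref{LLT_Lower_aa}, Lemma~\ref{smoothing-lemma-001} applied to $H \leq_{\varepsilon} F$ gives
\[
F \geq H * \rho_{\varepsilon^2} - \widetilde H, \qquad \widetilde H(z',u) = \int_{|v|>\varepsilon} H(z',u-v)\,\rho_{\varepsilon^2}(v)\,dv.
\]
The contribution of $H * \rho_{\varepsilon^2}$ is handled exactly as in the upper bound, via Theorem~\ref{Theor-LLT-bound001} and the same Lipschitz--tail comparison for $\phi$, producing the main term $\frac{1}{\sqrt n}\int \frac{1}{\sigma_g}\phi \cdot H$ up to errors of the required size. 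The remainder $\int \widetilde H((T^{-n}y\cdot z)_+, \check S_n g(y\cdot z))\,\nu^-_z(dy)$ is the main obstacle. My plan is to control it by applying the already-established upper bound \eqref{LLT_Upper_aa} with $F := \widetilde H$ and $G$ an $\varepsilon$-envelope of $\widetilde H$, for instance $G(z',u) = \sup_{|w|\leq\varepsilon}\widetilde H(z',u+w)$. The key point is that, because $\rho_{\varepsilon^2}(v) \leq c\varepsilon^2/v^2$ for $|v|\geq \varepsilon$, one has $\int_{|v|>\varepsilon}\rho_{\varepsilon^2}(v)\,dv = O(\varepsilon)$, whence by Fubini both $\|\widetilde H\|_{\nu^+\otimes\mathrm{Leb}}$ and $\|\widetilde H\|_{\mathscr H_\alpha^+}$ are $O(\varepsilon)$ times the corresponding norms of $H$, and the same bounds propagate to $G$.

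The main technical obstacle is precisely this last step: justifying that the envelope $G$ (or some convenient substitute) lies in $\mathscr H_\alpha^+$ and enjoys the claimed $O(\varepsilon)$ norm bounds, so that applying the upper bound yields an error of the desired order $\frac{c\varepsilon}{\sqrt n}\|H\|_{\nu^+\otimes\mathrm{Leb}} + \frac{c_\varepsilon}{n}\|H\|_{\mathscr H_\alpha^+}$. If the sup-envelope is inconvenient, an alternative is to apply Theorem~\ref{Theor-LLT-bound001} directly to $H_v * \rho_{\varepsilon^2}$ for the translates $H_v(z',u) = H(z',u-v)$ with $|v|>\varepsilon$ and then integrate in $v$ against $\rho_{\varepsilon^2}$, exploiting again its $O(\varepsilon)$-mass on $\{|v|>\varepsilon\}$; once this step is cleanly executed, combining it with the main term from $H * \rho_{\varepsilon^2}$ and the $1/(1-2\varepsilon)$ factor gives \eqref{LLT_Lower_aa}.
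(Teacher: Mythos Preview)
Your proposal is correct and follows essentially the same route as the paper: smooth via Lemma~\ref{smoothing-lemma-001}, apply Theorem~\ref{Theor-LLT-bound001} to the convolution with $\rho_{\ee^2}$, and compare the resulting main term with the target by the Lipschitz/tail splitting you describe. For the tail piece $\widetilde H$ in the lower bound, the paper sidesteps the envelope issue you flag simply by applying the already--proved upper bound \eqref{LLT_Upper_aa} to each translate $H(\cdot,\cdot - v)$, using that $H \leq_{\ee} F \leq_{\ee} G$ gives $H(\cdot,\cdot - v) \leq_{2\ee} G(\cdot,\cdot - v) \in \mathscr H_{\alpha}^+$; this yields a bound uniform in $v$ of order $\tfrac{c}{\sqrt n}\|G\|_{\nu^+\otimes\Leb} + \tfrac{c_\ee}{n}\|G\|_{\mathscr H_\alpha^+}$, and integrating against $\rho_{\ee^2}$ over $|v|>\ee$ supplies the missing factor $O(\ee)$. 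No sup--envelope construction is needed.
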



\begin{proof}
Without loss of generality, we assume that $\sigma_g =1$. 
We first prove the upper bound \eqref{LLT_Upper_aa}. 
By Lemma \ref{smoothing-lemma-001},  we have $F \leq  (1 + 4 \ee) G * \rho_{\ee^2}$, 
and hence
\begin{align}  \label{LLT_Inequ_Smooth}
&  \int_{\bb X^-_z}  
  F \left( (T^{-n} y \cdot z)_+,  \check S_n g (y\cdot z) \right) \nu^-_z(dy)  \nonumber\\
& \leq  (1 + 4 \ee) 
 \int_{\bb X^-_z}  G * \rho_{\ee^2} \left( (T^{-n} y \cdot z)_+,  \check S_n g (y\cdot z) \right) \nu^-_z(dy).   
\end{align}
By Lemma \ref{Lem_PropertiesBanachSpace},
$\widehat{G * \rho_{\ee^2}}\in \scr H^+_\alpha$, and the support of the function
$\widehat{G * \rho_{\ee^2}} (z, \cdot) = \widehat{G} (z, \cdot) \widehat{\rho}_{\ee^2}(\cdot)$. 
is included in $[- \frac{1}{\ee^2}, \frac{1}{\ee^2}]$,
for all $z \in \bb X^+$. 
Using Theorem \ref{Theor-LLT-bound001}, 
for any $\ee \in (0, \frac{1}{4})$, there exists  $c_{\ee} >0$ 
such that for all $n\geq 1$ and $z \in \bb X^+$, 
\begin{align}
&  \int_{\bb X^-_z}  G * \rho_{\ee^2}  \left( (T^{-n} y \cdot z)_+,  \check S_n g (y\cdot z) \right) \nu^-_z(dy) \notag \\
& \leq  \frac{1}{\sqrt{n}} \int_{\bb X^+} \int_{\mathbb{R}}  
   \phi \left(\frac{u}{\sqrt{n}} \right)  G * \rho_{\ee^2} \left(z, u\right)  du  \nu^+(dz)  
  +   \frac{c_{\ee}}{ n }   \|  G\|_{\scr H^+_{\alpha}}.   
   \label{LLT_Inequa_aa}
\end{align}
By a change of variable and Fubini's theorem, we have for any $z \in \bb X^+$, 
\begin{align} \label{LLT_B_Fubini_aa}
&  \int_{\mathbb{R}}  \phi \left(\frac{u}{\sqrt{n}} \right)  G * \rho_{\ee^2} \left(z, u\right)  du   
    = \sqrt{n}  \int_{\bb R } \phi_{\sqrt{n}} * \rho_{\ee^2} \left( t \right) G(z, t) dt,
\end{align}
where $\phi_{\sqrt{n}}(t) = \frac{1}{\sqrt{2 \pi n}} e^{- \frac{t^2}{2n}}$, $t \in \bb R$. 
For brevity, denote $\psi(t)=\sup_{|v|\leq\ee} \phi_{\sqrt{n}} (t +v)$, $t \in \bb R$. 
Using the second inequality in Lemma \ref{smoothing-lemma-001}, 
we have 
\begin{align*} 
&  \int_{\bb R } \phi_{\sqrt{n}} * \rho_{\ee^2} \left( t \right) G(z, t) dt  \notag\\
& \leq    \int_{\mathbb{R}} \psi \left( t \right)  G(z, t)  dt    
   +   \int_{\mathbb{R}} \int_{\abs{v} \geq \ee} 
        \phi_{\sqrt{n}} \left( t - v \right) \rho_{\ee^2} (v) d v  G(z, t)  dt   
=: J_1+J_2.  
\end{align*}
For $J_1$, by Taylor's expansion and the fact that the function $\phi'$ is bounded on $\bb R$,
we derive that
\begin{align} \label{LLT_B_J1} 
J_1
& =  \frac{1}{\sqrt{n}} \left[ \int_{-\infty}^{-\ee}  \phi \left( \frac{t + \ee}{\sqrt{n}} \right)   G(z, t) dt  
   +    \int_{-\ee}^{\ee} \frac{1}{\sqrt{2\pi}}  G(z, t) dt    
   +   \int_{\ee}^{\infty}  \phi \left( \frac{t - \ee}{\sqrt{n}} \right)  G(z, t) dt  \right]  \notag\\
 & \leq   \frac{1}{\sqrt{n}}  \int_{\bb R}  \phi \left( \frac{t}{\sqrt{n}} \right)   G(z, t) dt  
     +  \frac{c \ee}{\sqrt{n}}  \int_{\bb R}   G (z, t)  dt.  
\end{align}
For $J_2$, since $\phi_{\sqrt{n}} \leq \frac{1}{\sqrt{n}}$ and $\int_{|v| \geq \ee} \rho_{\ee^2} (v) d v \leq 2 \ee$, we get
\begin{align}\label{LLT_B_J2}
J_2 \leq   \frac{1}{\sqrt{n}} \int_{\mathbb{R}} \left( \int_{|v| \geq \ee} \rho_{\ee^2} (v) d v \right)  G(z, t)  dt  
      \leq  \frac{ 2 \ee}{\sqrt{n}}   \int_{\bb R}   G (z, t)  dt. 
\end{align}
From \eqref{LLT_B_J1} and \eqref{LLT_B_J2}, 
together with \eqref{LLT_Inequ_Smooth} and \eqref{LLT_Inequa_aa},  we get \eqref{LLT_Upper_aa}. 

Now we prove the lower bound \eqref{LLT_Lower_aa}. 
Since $F \geq_{\ee} H$, 
using the second inequality in Lemma \ref{smoothing-lemma-001}, 
we get
\begin{align}  \label{LLT_Inequ_Smooth-002}
&  \int_{\bb X^-_z}  
  F \left( (T^{-n} y \cdot z)_+,  \check S_n g (y\cdot z) \right) \nu^-_z(dy)  \nonumber\\
& \geq   \int_{\bb X^-_z}  H * \rho_{\ee^2} \left( (T^{-n} y \cdot z)_+,  \check S_n g (y\cdot z) \right) \nu^-_z(dy)  \notag \\
&  \quad   -  \int_{\bb X^-_z}   \int_{|v| \geq \ee}
  H \left( (T^{-n} y \cdot z)_+,  \check S_n g (y\cdot z) - v \right) \rho_{\ee^2} (v) dv \nu^-_z(dy).  
\end{align}
For the first term,  by Theorem \ref{Theor-LLT-bound001},  
for any $\ee >0$, there exists  $c >0$ 
such that for all $n\geq 1$ and $z \in \bb X^+$, 
\begin{align}
&   \int_{\bb X^-_z}  H * \rho_{\ee^2}   \left( (T^{-n} y \cdot z)_+,  \check S_n g (y\cdot z) \right) \nu^-_z(dy) \notag \\
& \geq  \frac{1}{\sqrt{n}} \int_{\bb X^+} \int_{\mathbb{R}}  
\phi \left(\frac{u}{ \sqrt{n}} \right)   H * \rho_{\ee^2}  \left(z, u\right)  du  \nu^+(dz)
  -   \frac{c_{\ee}}{ n }  \int_{\bb R} \|  H (\cdot, u)\|_{\scr B^+} du.    \label{LLT_Inequa_aa-002}
\end{align}
In the same way as in \eqref{LLT_B_Fubini_aa}, we have
\begin{align} \label{LLT_B_Fubini_bb}
\int_{\mathbb{R}}  \phi \left(\frac{u}{\sqrt{n}} \right)  H * \rho_{\ee^2} \left(z, u\right)  du 
&=   \sqrt{n} \int_{\bb R } \phi_{\sqrt{n}} * \rho_{\ee^2} \left( t \right) H(z, t) dt.
\end{align}
Using the first inequality in Lemma \ref{smoothing-lemma-001}, 
we have $\phi_{\sqrt{n}} *\rho_{\ee^2} (t) \geq (1 - 2 \ee) \psi(t)$, for $t\in \bb R$, 
where $\psi(t) = \inf_{|v| \leq \ee} \phi_{\sqrt{n}} (t + v)$. 
Proceeding in the same way as in \eqref{LLT_B_J1} and \eqref{LLT_B_J2}, 
we obtain that 
\begin{align}\label{LLT_B_Lower_Term1}
& \int_{\bb X^+} \int_{\mathbb{R}}  \phi \left(\frac{u}{\sqrt{n}} \right)   H * \rho_{\ee^2}  \left(z, u\right)  du  \nu^+(dz)  \notag\\
& \geq  \int_{\bb X^+} \int_{\mathbb{R}}  \phi \left(\frac{u}{\sqrt{n}} \right)   H  \left(z, u\right)  du  \nu^+(dz)
  - c \ee  \int_{\bb X^+}  \int_{\bb R}   H (z, u)  du \nu^+(dz).
\end{align}
For the second term on the right hand side of \eqref{LLT_Inequ_Smooth-002}, 
using \eqref{LLT_Upper_aa} and the fact that $H \leq_{\ee} F$ and $\phi \leq 1$, 
we get that there exist constants $c, c_{\ee} >0$ such that for any $v \in \bb R$ and $n \geq 1$, 
\begin{align*}
&  \int_{\bb X^-_z}  
  H \left( (T^{-n} y \cdot z)_+,  \check S_n g (y\cdot z) - v \right)  \nu^-_z(dy)  \notag \\
& \leq  \frac{c}{ \sqrt{n} } \int_{\bb X^+}  \int_{\bb R}  \phi \left(\frac{u}{\sqrt{n}} \right)
  F * \kappa_{\ee^2} \left(z, u - v\right)  du  \nu^+(dz)   
     +  \frac{c_{\ee}}{ n }  \int_{\bb R} \|  F (\cdot, u) \|_{\mathscr B^+_{\alpha}} du  \notag \\
& \leq  \frac{c}{\sqrt{n}}  \int_{\bb X^+}  \int_{\bb R}  F  \left(z, u \right)  du  \nu^+(dz)  
    +  \frac{c_{\ee}}{ n }  \int_{\bb R} \|  F (\cdot, u) \|_{\mathscr B^+_{\alpha}} du. 
\end{align*}
This, together with the fact that $\int_{|v| \geq \ee} \rho_{\ee^2} (v) d v \leq 2 \ee$, implies
\begin{align} \label{LLTlowbnd-004}
&  \int_{\bb X^-_z}   \int_{|v| \geq \ee}
  H \left( (T^{-n} y \cdot z)_+,  \check S_n g (y\cdot z) - v \right) \rho_{\ee^2} (v) dv \nu^-_z(dy) \notag \\
 & \leq  \frac{c \ee}{\sqrt{n}} \int_{\bb X^+}  \int_{\bb R}  F  \left(z, u \right)  du  \nu^+(dz)  
    +  \frac{c_{\ee}}{ n }  \int_{\bb R} \|  F (\cdot, u) \|_{\mathscr B^+_{\alpha}} du.
\end{align}
From \eqref{LLT_Inequ_Smooth-002}, \eqref{LLT_Inequa_aa-002}, \eqref{LLT_B_Lower_Term1}  
and \eqref{LLTlowbnd-004},
and the fact that $H \leq F$, 
we obtain the lower bound \eqref{LLT_Lower_aa}. 
\end{proof}

\section{Effective conditioned local limit theorems} 

\subsection{Formulation of the result}

We will prove the following conditioned local limit theorem for Markov chains which provides a rate of order $n^{-1}$. 
This result will serve as an intermediate step between the conditioned central limit Theorem \ref{Thm-cnodi-lim-theor001}
and the conditioned local limit Theorem \ref{Thm-conLLT}. 
The interest of this result lies in the fact that it is uniform in the function $F$.
In particular, the theorem is effective when the support of the function $F$ moves to infinity with the rate $\sqrt{n}$.  
This strategy is inspired by \cite{Den Wachtel 2011} for random walks in cones of $\bb R^d$, 
see also \cite{GLL20}  for finite Markov chains
and \cite{GX21} for random walks on $\bb R$. 
For a different approach based on the Wiener-Hopf factorisation we refer to \cite{Carav05, Don12, VatWacht09}. 

%


\begin{theorem} \label{t-A 001}
Let $\alpha \in (0,1)$ and $g \in \scr B^+_{\alpha}$ be such that  $\nu^+ (g) = 0$.
Assume that for any $p \neq 0$ and $q \in \bb R$, 
the function $p g + q$ is not cohomologous to a function with values in $\bb Z$.  
Let $t_0 \in \bb R_+$. 
Then, there exist a constant $c>0$ and a sequence $(r_n)$ of positive numbers satisfying $\lim_{n \to \infty} r_n = 0$
with the following properties. 
For any $\ee \in (0,\frac{1}{8})$, 
there exists a constant $c_\ee > 0$ such that for any $n \geq 1$, $z \in \bb X^+$ and $t \leq t_0$, 
\begin{enumerate}[label=\arabic*., leftmargin=*]
\item  
 For any  non-negative function $F$ 
and any function $G \in \mathscr H_{\alpha}^+$ satisfying $F \leq_{\ee} G$,  
\begin{align}\label{eqt-A 001}
&  n \int_{\bb X^-_z}  F \left( (T^{-n} y \cdot z)_+,  t + \check S_n g(y \cdot z)  \right) 
   \mathds 1_{ \left\{ \check \tau_t^g (y \cdot z) >n \right\}} \nu^-_z(dy)    \nonumber\\
& \leq  \frac{2 \check V^{g}(z, t)}{ \sigma_g^2 \sqrt{2\pi} } 
  \int_{\bb X^+}  \int_{\mathbb R}  G \left(z', u' \right)
\phi^+ \left( \frac{u'}{\sigma_g \sqrt{n}} \right) du' \nu^+(dz')   \nonumber\\
& \quad + c \left( \ee^{1/4}  +   \frac{r_n}{\ee^{1/4}} \right) 
  \| G \|_{\nu^+ \otimes \Leb }  
    +   \frac{c_{\ee}}{ \sqrt{n}}  \|  G \|_{\mathscr H^+_{\alpha}}.  
\end{align}
\item 
For any non-negative  function $F$ and non-negative  functions $G, H \in \mathscr H_{\alpha}^+$
satisfying $H \leq_{\ee} F \leq_{\ee} G$, 
\begin{align} \label{eqt-A 002}
&  n \int_{\bb X^-_z}  F \left( (T^{-n} y \cdot z)_+,  t + \check S_n g(y \cdot z)  \right) 
   \mathds 1_{ \left\{ \check \tau_t^g (y \cdot z) >n \right\}} \nu^-_z(dy)    \nonumber\\
&  \geq  \frac{2 \check V^{g}(z, t)}{ \sigma_g^2 \sqrt{2\pi} } 
\int_{\bb X^+}  \int_{\bb R}  H(z', u') \phi^+\left(\frac{u'}{ \sigma_g \sqrt{n} } \right) du' \nu^+(d z')  \notag \\
&\qquad  - c \left( \ee^{1/12}  +   \frac{r_n}{\ee^{1/4}} \right)    \| G \|_{\nu^+ \otimes \Leb } 
  -  \frac{c_{\ee}}{\sqrt{n}} 
  \left( \left\Vert  G  \right\Vert _{\mathscr H^+_{\alpha}} +  \left\Vert  H  \right\Vert _{\mathscr H^+_{\alpha}}  \right). 
\end{align}
\end{enumerate}
\end{theorem}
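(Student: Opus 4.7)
The plan is to adapt the Denisov--Wachtel--Grama--Xiao strategy for random walks to the dynamical setting, exploiting the fact that $g \in \scr B^+_\alpha$ depends only on future coordinates, so that under $\nu^-_z$ the Birkhoff sums $(\check S_k g(y \cdot z))_{k \geq 0}$ have a Markov-type structure via the state $T^{-k}(y \cdot z) \in \bb X^+$. This structure will allow a clean decomposition at an intermediate time $n_1 = \lfloor (1-\theta) n \rfloor$, where $\theta \in (0,1)$ is a small parameter I would later optimize in terms of $\ee$. Crucially, both $n_1$ and $n - n_1$ are of order $n$, so each of the two halves lives in the Gaussian (CLT/LLT) regime.

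The first step is to condition on the first $n_1$ past coordinates, writing $a = (y_{-n_1}, \ldots, y_{-1}) \in A^{n_1}_z$ and decomposing
\begin{align*}
&\int_{\bb X^-_z} F\bigl((T^{-n}y \cdot z)_+,\, t + \check S_n g(y\cdot z)\bigr) \mathds 1_{\{\check \tau_t^g > n\}} \nu^-_z(dy) \\
&= \sum_a e^{-S_{n_1}\psi(a\cdot z)} \mathds 1_{\{\check\tau > n_1 \text{ on } a\}} \int_{\bb X^-_{a\cdot z}} F\bigl((T^{-(n-n_1)}(\bar y \cdot a\cdot z))_+,\, u + \check S_{n-n_1} g(\bar y \cdot (a\cdot z))\bigr) \mathds 1_{\{\text{positivity}\}} \nu^-_{a\cdot z}(d\bar y),
\end{align*}
where $u = t + \check S_{n_1} g(a \cdot z)$. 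For the upper bound I would drop the positivity requirement on the second factor entirely (relaxing the event $\{\check\tau > n\}$ to $\{\check\tau > n_1\}$), so that the inner integral becomes a pure unconditioned expectation. To this inner integral I would apply the effective LLT (Theorem \ref{Lem_LLT_Nonasm}) on $n - n_1$ steps starting from the shifted state $a\cdot z$ and starting position $u$; this produces a Gaussian density $\frac{1}{\sigma_g\sqrt{2\pi(n-n_1)}} \exp\bigl(-(u'-u)^2/(2\sigma_g^2(n-n_1))\bigr)$ convolved with $G$ in the $u'$ variable, paid for with errors $\ee \|G\|_{\nu^+\otimes\Leb}/\sqrt{n-n_1}$ and $c_\ee \|G\|_{\mathscr H^+_\alpha}/(n-n_1)$.

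To the resulting outer sum over $a$ I would then apply the conditioned CLT (Lemma \ref{Lem_CondiCLT_target2}), whose target function is now $u \mapsto \frac{1}{\sigma_g\sqrt{2\pi(n-n_1)}} \int \exp\bigl(-(u'-u)^2/(2\sigma_g^2(n-n_1))\bigr) G(z',u')\, du'\,\nu^+(dz')$ seen as a function of the rescaled variable $u/(\sigma_g\sqrt{n_1})$; this yields the factor $\frac{2\check V^g(z,t)}{\sigma_g\sqrt{2\pi n_1}}$ times the Rayleigh average of that target. The final analytic step is the explicit Gaussian convolution identity
\begin{align*}
\int_0^{\infty} \phi^+\!\left(\frac{v}{\sigma_g\sqrt{n_1}}\right) \frac{1}{\sigma_g\sqrt{2\pi(n-n_1)}} e^{-(u'-v)^2/(2\sigma_g^2(n-n_1))} \, dv \;\longrightarrow\; \frac{\sqrt{n}}{\sqrt{n_1}} \cdot \frac{1}{\sigma_g\sqrt{n}} \phi^+\!\left(\frac{u'}{\sigma_g\sqrt{n}}\right)
\end{align*}
as $\theta \to 0$, which converts the Rayleigh $\times$ Gaussian convolution on scale $\sqrt{n_1}$ into the desired Rayleigh on scale $\sqrt{n}$; multiplying by the factor $n$ from the LHS reproduces exactly the leading constant $2\check V^g(z,t)/(\sigma_g^2\sqrt{2\pi})$ in \eqref{eqt-A 001}.

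The main obstacle will be the uniform error control. The Rayleigh--Gaussian convolution is only approximate and generates a $\theta$-dependent error; the conditioned CLT rate (Lemma \ref{Lem_CondiCLT_target2}) is qualitative, entering as a sequence $r_n \to 0$ that depends on the smoothness of the target function, which itself degrades like $1/\ee^{1/4}$ when we smooth $G$ at scale $\ee$; the LLT introduces the $c_\ee/(n-n_1)$ term. Balancing these forces the split in the upper bound $\ee^{1/4} + r_n/\ee^{1/4}$. For the lower bound \eqref{eqt-A 002}, the additional difficulty is that one cannot simply drop positivity on $(n_1,n]$: instead one must show that the event where the trajectory dips negative in this window is negligible. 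This is achieved using the coarse domination in Corollary \ref{Cor_CoarseBound} applied on shifted origins, which costs an extra smoothing layer and explains the weaker exponent $\ee^{1/12}$ in place of $\ee^{1/4}$.
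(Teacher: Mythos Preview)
Your overall strategy matches the paper's proof closely: the split $n = k + m$ with $m = [\sqrt{\ee}\,n]$ and $k = n-m$ (your $n_1$ and $n-n_1$), dropping positivity on the short segment for the upper bound, applying the effective LLT (Theorem~\ref{Lem_LLT_Nonasm}) on the short segment and the conditioned CLT (Lemma~\ref{Lem_CondiCLT}) on the long one, and then the Rayleigh--Gaussian convolution identity (Lemma~\ref{t-Aux lemma}) to produce the leading term---all of this is exactly what the paper does, and your accounting of where the factors $\ee^{1/4}$ and $r_n/\ee^{1/4}$ come from is correct.

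There is, however, a real gap in your lower-bound sketch. The quantity you need to kill is
\[
K_n(z,t)=\int_{\bb X^-_z}\overline A_m\bigl((T^{-k}y\cdot z)_+,\,t+\check S_k g(y\cdot z)\bigr)\,\mathds 1_{\{\check\tau_t^g>k\}}\,\nu^-_z(dy),
\]
where $\overline A_m$ carries the event that the walk dips negative during the last $m$ steps. Corollary~\ref{Cor_CoarseBound} alone does not do this: it gives a $c/\sqrt{m}$ bound only after integrating over the starting level $t$, whereas here $t$ is fixed and the starting level for the short segment is the random $t+\check S_k g$. The paper splits $K_n=K_1+K_2$ according to whether $t+\check S_k g\leq \ee^{1/6}\sqrt{n}$ or not. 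For $K_1$ one reuses the conditioned CLT to bound the probability of being at low level (this is the source of the exponent $\ee^{1/12}$). For $K_2$ the key extra ingredients are the \emph{duality identity} (Lemma~\ref{Corollary_Duality}), which turns the event ``start high, dip to zero in $m$ steps'' into a statement about $\max_{1\le j\le m}|\check S_j g|\ge \ee^{1/6}\sqrt{n}$, and then \emph{Fuk's martingale inequality} (Lemma~\ref{Lem_Fuk}) applied to the martingale approximation of $\check S_j g$, yielding an $\exp(-c\,\ee^{-1/6})$ bound. Without these two steps your argument for the lower bound does not close; the smoothing layer you mention (Lemma~\ref{Lem_Inequality_Aoverline}) is used, but only to put $\overline A_{m,\ee}$ into $\mathscr H^+_\alpha$ so that the already-proved upper bound~\eqref{eqt-A 001} can be reapplied to $K_2$.
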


\subsection{Auxiliary statements}
The normal density of variance $v > 0$ is denoted by 
\begin{align*}
\phi_{v} (x) = \frac{1}{\sqrt{2 \pi v} } e^{- \frac{x^2}{2 v}},  \quad x \in \bb R,  
\end{align*}
and the Rayleigh density with scale parameter $\sqrt{v}$ is denoted by 
\begin{align*}
\phi^+_{v}(x)=\frac{x}{v} e^{-\frac{x^2}{2 v}} \mathds 1_{\mathbb R_+} (x), \quad  x\in \mathbb R. 
\end{align*}
The standard normal density is denoted by $\phi(x) = \phi_1(x)$, $x\in \mathbb R$.
The following lemma from \cite{GX21} shows that when $v$ is small the convolution $\phi_{v} * \phi^+_{1-v}$ 
behaves like the Rayleigh density.
\begin{lemma} \label{t-Aux lemma}
For any $v \in (0,1/2]$ and $x\geq 0$, it holds that
\begin{align*} 
 \sqrt{1-v} \phi^+(x) 
\leq  \phi_{v} * \phi^+_{1- v}(x) 
\leq  \sqrt{1-v} \phi^+(x) +  \sqrt{v}  e^{ -\frac{x^2}{2v} }.  
\end{align*}
\end{lemma}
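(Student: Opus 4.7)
The plan is to compute the convolution $\phi_v * \phi^+_{1-v}(x)$ exactly and then read off both inequalities from the resulting closed form. Writing
\begin{align*}
\phi_v * \phi^+_{1-v}(x) = \int_0^\infty \frac{1}{\sqrt{2\pi v}}\, e^{-(x-y)^2/(2v)} \cdot \frac{y}{1-v}\, e^{-y^2/(2(1-v))}\, dy,
\end{align*}
I would combine the two exponents over the common denominator $2v(1-v)$, complete the square in $y$ around $\mu := (1-v)x$, and check that the cross-term gives exactly $e^{-x^2/2}$ as an overall factor. After the change of variables $u = (y-\mu)/\sigma$ with $\sigma := \sqrt{v(1-v)}$, the remaining integral splits as $\sigma^2 \int_{-\mu/\sigma}^\infty u e^{-u^2/2} du + \sigma\mu \int_{-\mu/\sigma}^\infty e^{-u^2/2} du$. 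The first piece is elementary and contributes a pure Gaussian, and the second is $\sigma\mu\sqrt{2\pi}\,\Phi(\mu/\sigma)$. Collecting factors and using $\mu/\sigma = x\sqrt{(1-v)/v}$ together with $\mu^2/(2\sigma^2) + x^2/2 = x^2/(2v)$, the final identity reads
\begin{align*}
\phi_v * \phi^+_{1-v}(x) = \sqrt{\tfrac{v}{2\pi}}\, e^{-x^2/(2v)} + \sqrt{1-v}\, \phi^+(x)\, \Phi\!\left(x\sqrt{\tfrac{1-v}{v}}\right),
\end{align*}
where $\Phi$ is the standard normal c.d.f.

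The upper bound is then immediate from $\Phi \leq 1$ and $1/\sqrt{2\pi} \leq 1$, the latter ensuring $\sqrt{v/(2\pi)} \leq \sqrt{v}$. For the lower bound, set $a := x\sqrt{(1-v)/v}$ and rewrite the claim as
\begin{align*}
\sqrt{\tfrac{v}{2\pi}}\, e^{-x^2/(2v)} \geq \sqrt{1-v}\, \phi^+(x)\, (1 - \Phi(a)).
\end{align*}
The case $x = 0$ is trivial (the right-hand side vanishes since $\phi^+(0) = 0$). For $x > 0$, I would apply the classical Mills ratio inequality $1 - \Phi(a) \leq \phi(a)/a$ valid for $a > 0$. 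Substituting $\phi(a) = (2\pi)^{-1/2} e^{-a^2/2}$ and using $x^2/2 + a^2/2 = x^2/(2v)$ in the exponent, along with the algebraic simplification $\sqrt{1-v}/\sqrt{(1-v)/v} = \sqrt{v}$, the right-hand side collapses to exactly $\sqrt{v/(2\pi)}\, e^{-x^2/(2v)}$, matching the left-hand side.

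There is no real obstacle here beyond careful bookkeeping: the argument is a direct computation, and the slight subtlety is that the tails of the Gaussian integral (when $x$ is small relative to $\sqrt{v}$) force the $\Phi$-term to be noticeably less than $1$, but the Mills-ratio bound is precisely tight enough to compensate. The value of the lemma is in its sharp form: the lower bound recovers $\sqrt{1-v}\,\phi^+(x)$ without any error term, while the excess in the upper bound is controlled by $\sqrt{v}\, e^{-x^2/(2v)}$, which is integrable uniformly in $v$.
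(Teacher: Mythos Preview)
Your proof is correct. The exact identity
\[
\phi_v * \phi^+_{1-v}(x) = \sqrt{\tfrac{v}{2\pi}}\, e^{-x^2/(2v)} + \sqrt{1-v}\, \phi^+(x)\, \Phi\!\left(x\sqrt{\tfrac{1-v}{v}}\right)
\]
follows from the quadratic bookkeeping you outline, the upper bound is immediate from $\Phi\leq 1$, and the lower bound follows from the Mills ratio inequality $1-\Phi(a)\leq \phi(a)/a$ exactly as you describe; all the algebraic identities you invoke (in particular $x^2/2 + a^2/2 = x^2/(2v)$ and the cancellation $\sqrt{1-v}/a = \sqrt{v}/x$) check out.

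The paper does not give its own proof here: it simply quotes the lemma from \cite{GX21}. Your argument is therefore a self-contained substitute, and indeed it is the natural way to establish the result.
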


We need the following inequality of Haeusler \cite[Lemma 1]{Hae84}, 
which is a generalisation of Fuk's inequality for martingales. 

\begin{lemma}\label{Lem_Fuk}
Let $\xi_1, \ldots, \xi_n$ be a martingale difference sequence with respect to the non-decreasing 
$\sigma$-fields $\mathscr F_0, \mathscr F_1, \ldots, \mathscr F_n$.
Then, for all $u, v, w > 0$,
\begin{align*}
\bb P \left(  \max_{1 \leq k \leq n} \left| \sum_{i=1}^k \xi_i \right| \geq u \right)
& \leq  \sum_{i=1}^n \bb P \left( |\xi_i|  > v \right)
  + 2 \bb P \left(  \sum_{i=1}^n \bb E \left( \xi_i^2 |  \mathscr F_{i-1} \right) > w \right)   \\
& \quad  + 2 \exp \left\{ \frac{u}{v} \left( 1 - \log \frac{uv}{w} \right) \right\}. 
\end{align*}
\end{lemma}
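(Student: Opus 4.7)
The plan follows the classical truncation-plus-Bennett approach for martingales: the first summand on the right accounts for discarding any jumps of size $>v$, the second for stopping when the predictable quadratic variation first exceeds $w$, and the third comes from Doob's maximal inequality applied to the exponential of the resulting doubly-truncated martingale.

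Concretely, I would set $\xi_i^{(v)} = \xi_i \mathds 1_{\{|\xi_i|\leq v\}}$ and $\tilde\xi_i = \xi_i^{(v)} - \bb E(\xi_i^{(v)}|\mathscr F_{i-1})$. Since $\bb E(\xi_i|\mathscr F_{i-1})=0$, the sequence $(\tilde\xi_i)$ is again a martingale difference sequence satisfying $|\tilde\xi_i|\leq 2v$ and $\bb E(\tilde\xi_i^2|\mathscr F_{i-1})\leq \bb E(\xi_i^2|\mathscr F_{i-1})$. On the event $B = \bigcap_{i=1}^n\{|\xi_i|\leq v\}$ one has $\sum_{i=1}^k \xi_i = \tilde M_k + C_k$, where $\tilde M_k = \sum_{i=1}^k \tilde\xi_i$ and $C_k = -\sum_{i=1}^k \bb E(\xi_i \mathds 1_{\{|\xi_i|>v\}}|\mathscr F_{i-1})$; Markov's inequality yields $|C_k| \leq v^{-1}\sum_{i=1}^k \bb E(\xi_i^2|\mathscr F_{i-1})$. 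The bound $\bb P(B^c)\leq \sum_i \bb P(|\xi_i|>v)$ produces the first summand. I then introduce the stopping time $\sigma = \min\{k:\sum_{i=1}^{k}\bb E(\xi_i^2|\mathscr F_{i-1})>w\} \wedge n$. The event $\{\sigma<n\}$ is contained in $\{\sum_{i=1}^n\bb E(\xi_i^2|\mathscr F_{i-1})>w\}$, giving (a half of) the second summand, while on $\{\sigma=n\}$ the predictable quadratic variation of $\tilde M$ is at most $w$ and $|C_n|\leq w/v$. In the non-trivial regime $w<uv$ the latter is $<u$, so one is reduced to estimating $\bb P\bigl(\max_{k\leq \sigma}|\tilde M_k|\geq u/2\bigr)$.

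This last probability is handled by applying Doob's maximal inequality to each of the non-negative submartingales $e^{\pm \lambda \tilde M_{k\wedge \sigma}}$ for $\lambda>0$, which produces the factor $2$ in front of both the stopping-time probability and the exponential term. Combined with the Prokhorov-type conditional MGF estimate
\begin{equation*}
\bb E\bigl(e^{\lambda\tilde\xi_i}\,\big|\,\mathscr F_{i-1}\bigr) \leq \exp\Bigl(\tfrac{\bb E(\tilde\xi_i^2|\mathscr F_{i-1})}{(2v)^2}\bigl(e^{2\lambda v}-1-2\lambda v\bigr)\Bigr)
\end{equation*}
valid whenever $|\tilde\xi_i|\leq 2v$ with zero conditional mean, iteration gives $\bb E\,e^{\lambda \tilde M_{n\wedge \sigma}}\leq \exp\bigl(w(2v)^{-2}(e^{2\lambda v}-1-2\lambda v)\bigr)$, and optimizing in $\lambda$ (essentially $\lambda\sim v^{-1}\log(uv/w)$) produces the Bennett-type bound of the stated form $2\exp\{(u/v)(1-\log(uv/w))\}$. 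The main obstacle is cosmetic rather than conceptual: matching the precise constant $u/v$ in front of $(1-\log(uv/w))$ — rather than $u/(2v)$, which is what a naive combination of the above ingredients gives — requires exploiting the sharp Prokhorov bound above (optimized rather than relaxed to a purely quadratic estimate) and careful bookkeeping of how the centering step doubles the size of the truncation threshold; the outline of the proof, however, is entirely standard.
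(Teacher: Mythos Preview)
The paper does not prove this lemma at all: it is quoted verbatim from Haeusler \cite[Lemma~1]{Hae84} and used as a black box, so there is no ``paper's own proof'' to compare against. Your outline is the standard truncation-plus-exponential-martingale argument that underlies Fuk--Nagaev/Haeusler-type bounds, and it is conceptually sound; the only subtlety you have already identified yourself is the bookkeeping needed to land on the exact constant $u/v$ rather than $u/(2v)$, which in Haeusler's original proof is handled by working directly with the one-sided truncation and the sharp Prokhorov MGF bound rather than first recentering.
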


In order to control certain natural quantities appearing in the proof,
we shall need the following definitions. 
For $\ee  >0 $,  
\begin{align}\label{Def_chiee}
\chi_{\ee} (u) = 0  \  \mbox{for} \ u \leq -\ee,  
\   \chi_{\ee} (u) = \frac{u+\ee}{\ee}   \  \mbox{for} \ u \in (-\ee,0),
\   \chi_{\ee} (u) = 1  \  \mbox{for} \  u \geq 0.  
\end{align}
Denote $\overline\chi_{\ee}(u) = 1 - \chi_{\ee}(u)$ and note that
\begin{align} \label{bounds-reversedindicators-001} 
\chi_{\ee}  \left( t-\ee \right) \leq  \mathds 1_{(0,\infty)} \left( t \right) \leq \chi_{\ee}  \left( t\right), 
\quad
\overline\chi_{\ee}  \left( t \right) \leq \mathds 1_{(-\infty,0]} \left( t \right) \leq \overline\chi_{\ee}  \left( t-\ee \right).
\end{align}

\begin{lemma}\label{Lem_Inequality_Aoverline}
Let $\alpha \in (0,1)$ and $g \in \scr B^+_{\alpha}$ be such that  $\nu^+ (g) = 0$.
Assume that $g$ is not a coboundary.   
Let $\kappa$ be a smooth compactly supported function on $\bb R$ and $\ee >0$. 
Then there exists a constant $c>0$ such that for any $G \in \scr H^+_{\alpha}$
and any $m \geq 1$,  
the function $A_m$ defined on $\bb X^+ \times \bb R$ by 
\begin{align*}
A_{m} (z,t)    
 := &  \int_{\bb X^-_{z}}   
     G *  \kappa \left( (T^{-m} y \cdot z)_+, t + \check S_m g(y \cdot z) \right)   \notag\\
 &  \times
  \overline\chi_{\ee}  \left( t-\ee + \min_{1 \leq  j \leq m}  \check S_j g(y \cdot z) \right)  \nu^-_{z}(dy), 
\end{align*}
belongs to $\scr H^+_{\alpha}$ and satisfies 
\begin{align*}
\|  A_{m} \|_{\nu^+ \otimes \Leb }  \leq   \int_{\bb R} |\kappa(t)| dt  \| G \|_{\nu^+ \otimes \Leb },  
\qquad 
\| A_{m} \|_{ \scr H^+_{\alpha} }  \leq \frac{c}{\ee}  \| G\|_{ \scr H^+_{\alpha} }.  
\end{align*}
\end{lemma}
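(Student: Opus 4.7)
The plan is to verify the three claims in turn, namely (a) that $A_m\in\scr H^+_\alpha$, (b) the $\nu^+\otimes\Leb$-norm bound, and (c) the $\scr H^+_\alpha$-norm bound. Since $A_m$ is linear in $G$, the idea is to dominate each variation in $z$ of the integrand by a factor $\alpha^{\omega(z,z')}$ times something integrable in $t$, by combining the Hölder regularity of $G\ast\kappa$, the smoothness of $\kappa$, and the translation invariance of the Lebesgue measure.

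For (a), I would apply Lemma \ref{Lem_Measurability}: joint measurability of the integrand makes $t\mapsto A_m(z,t)$ measurable by Fubini, and the $\alpha$-Hölder regularity of $z\mapsto A_m(z,t)$ at fixed $t$ follows from four ingredients applied to pairs $z,z'$ with $z_0=z'_0$ (so $\bb X^-_z=\bb X^-_{z'}$): the Hölder regularity of $G\ast\kappa$ in its first argument (inherited from $G$ via Lemma \ref{Lem_PropertiesBanachSpace}); the bound $|\check S_m g(y\cdot z)-\check S_m g(y\cdot z')|\leq c_0\alpha^{\omega(z,z')}$ from Lemma \ref{Lem_sum_Ine} together with smoothness of $\kappa$; the Lipschitz modulus $1/\ee$ of $\overline\chi_\ee$ combined with Corollary \ref{Cor_Holder_minimum} for the minimum; and the measure comparison in Lemma \ref{Lem_Absolute_Contin}. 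Finiteness of $\|A_m\|_{\scr H^+_\alpha}$ is part of (c). For (b), I would drop the factor $\overline\chi_\ee$ (which is bounded by $1$), apply Fubini in $(t,y)$, and use translation invariance of $dt$ to see that $\int_\bb R|G\ast\kappa(w,t+s)|\,dt=\int_\bb R|G\ast\kappa(w,u)|\,du$ does not depend on the shift $s=\check S_m g(y\cdot z)$. Lemma \ref{Lem_Fubini} together with $T$-invariance of $\nu$ then contracts the integration against $\nu^-_z(dy)\nu^+(dz)$ of any function of $(T^{-m}(y\cdot z))_+$ to a single integral against $\nu^+$, and Young's convolution inequality (Lemma \ref{Lem_PropertiesBanachSpace}) finishes the estimate by $\|\kappa\|_{L^1}\|G\|_{\nu^+\otimes\Leb}$.

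For (c), I would first reduce the $L^\infty$-in-$z$ part of $\|A_m(\cdot,t)\|_{\scr B^+_\alpha}$ to the $\nu^+\otimes\Leb$-norm plus the Hölder seminorm: integrating the elementary inequality $\|F\|_\infty\leq\int F\,d\nu^++[F]_\alpha$ over $t$ and using (b), it suffices to show $\int_\bb R[A_m(\cdot,t)]_\alpha\,dt\leq\tfrac{c}{\ee}\|G\|_{\scr H^+_\alpha}$. For the Hölder seminorm, on pairs $z\neq z'$ with $z_0=z'_0$ I would split $A_m(z,t)-A_m(z',t)$ into four contributions: (i) the change in the first argument of $G\ast\kappa$, which picks up $\alpha^{m+\omega(z,z')}$ because $(T^{-m}(y\cdot z))_+$ and $(T^{-m}(y\cdot z'))_+$ agree on their first $m$ coordinates; (ii) the change in the second argument of $G\ast\kappa$ via $\check S_m g$, handled by smoothness of $\kappa$ and the $L^1$-translation estimate $\int_\bb R|\kappa(\tau)-\kappa(\tau+h)|d\tau\leq|h|\|\kappa'\|_{L^1}$; (iii) the change in $\overline\chi_\ee$, contributing $\tfrac{c_0}{\ee}\alpha^{\omega(z,z')}$ via Corollary \ref{Cor_Holder_minimum}; and (iv) the change of measure $\nu_z^-\to\nu_{z'}^-$ via Lemma \ref{Lem_Absolute_Contin}. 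After dividing each term by $\alpha^{\omega(z,z')}$, integrating in $t$, and dominating $|G(w,v)|$ by $\|G(\cdot,v)\|_{\scr B^+_\alpha}$ (respectively $|G\ast\kappa(w,u)|$ by $\|G\ast\kappa(\cdot,u)\|_{\scr B^+_\alpha}$), each term is bounded by a constant times $\|G\|_{\scr H^+_\alpha}$, with only term (iii) contributing the $1/\ee$.

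The main obstacle is establishing the uniformity in $(z,z')$ of these bounds: for the seminorm one must control the supremum over $(z,z')$ of the pointwise-in-$t$ estimates before integrating in $t$. The resolution is that, thanks to translation invariance of Lebesgue measure and the probability structure of $\nu^-_z$, after integrating in $t$ each term reduces to an $L^1$-type quantity in a dual variable (namely $\int\|G(\cdot,v)\|_{\scr B^+_\alpha}\,dv=\|G\|_{\scr H^+_\alpha}$ or $\int\|G\ast\kappa(\cdot,u)\|_{\scr B^+_\alpha}\,du\leq\|\kappa\|_{L^1}\|G\|_{\scr H^+_\alpha}$), which is manifestly independent of $z,z'$. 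The $L^1$-translation estimate for $\kappa$ is the crucial technical input: it converts term (ii) from a naive pointwise bound involving $\|\kappa'\|_\infty$ (which would not survive integration in $t$) into an integrated bound involving the finite constant $\|\kappa'\|_{L^1}$, closing the estimate.
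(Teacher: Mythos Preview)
Your architecture matches the paper's closely: part (b) is done the same way, and your four-term decomposition (i)--(iv) for the H\"older seminorm is exactly the paper's splitting into $I_1$--$I_4$. Your reduction of the sup-norm part of $\|A_m(\cdot,t)\|_{\scr B^+_\alpha}$ via the elementary inequality $\|F\|_\infty\leq\int|F|\,d\nu^++[F]_\alpha$ is a clean alternative to the paper's direct averaging for that piece.

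There is, however, a genuine gap in your resolution of the seminorm uniformity. After dividing each term by $\alpha^{\omega(z,z')}$ you bound it by an expression of the form $E(z,z',t)=\int_{\bb X^-_z}\Phi(t+\text{shift}(y,z,z'))\,\nu^-_z(dy)$ and then argue that ``after integrating in $t$ each term reduces to an $L^1$-type quantity independent of $z,z'$''. But that gives $\sup_{z,z'}\int_{\bb R}E(z,z',t)\,dt=\|\Phi\|_{L^1}$, whereas the quantity you must bound is $\int_{\bb R}\sup_{z,z'}E(z,z',t)\,dt$. Translation invariance alone does not bridge this order-of-operations issue, which you yourself flag as the main obstacle.

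The paper closes the gap by first obtaining, at each fixed $t$, a bound on $E(z,z',t)$ that is already independent of $z,z'$. Concretely, one uses Lemma~\ref{Lem_sum_Ine} to replace the shift $\check S_m g(y\cdot z')$ by $\check S_m g(y\cdot z'')$ for an arbitrary $z''$ with $z''_0=z_0$ (absorbing the error by passing from $\kappa$ to $\kappa_1(t)=\sup_{|s|\leq c_0}|\kappa(t+s)|$, resp.\ from $\kappa'$ to $\kappa_2$), and then Lemma~\ref{Lem_Absolute_Contin} to replace $\nu^-_z$ by $c\,\nu^-_{z''}$. Averaging over $z''$ against $\nu^+$ yields a pointwise-in-$t$ bound of the form $c\int_{\bb X}\Phi(t+\check S_m g(x))\,\nu(dx)$, with no residual dependence on $z,z'$; only then does one integrate in $t$ and invoke translation invariance. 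You already cite both lemmas for other purposes, so the fix is organizational rather than conceptual: the averaging must occur before the $t$-integration.
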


\begin{proof}
For the first inequality, we write 
\begin{align*}
|A_{m} (z,t) | \leq  \int_{\bb X^-_{z}}   
    | G *  \kappa |  \left( (T^{-m} y \cdot z)_+, t + \check S_m g(y \cdot z) \right)  \nu^-_{z}(dy),
\end{align*}
which gives 
\begin{align*}
\|  A_{m} \|_{\nu^+ \otimes \Leb }
& \leq  \int_{\bb X \times \bb R}    | G *  \kappa |  \left( (T^{-m} x)_+, t + \check S_m g(x) \right)  \nu(dx) dt  \notag\\
& =   \int_{\bb X \times \bb R}    | G *  \kappa |  \left(  x_+, t \right)  \nu(dx) dt  
 \leq  \int_{\bb R} |\kappa(t)| dt   \| G \|_{\nu^+ \otimes \Leb }.  
\end{align*}
This finishes the proof of the first inequality.  

For the second inequality, 
recall that 
\begin{align*} 
\|  A_{m} \|_{\mathscr H^+_{\alpha}}
 =  \int_{\bb R}   \sup_{z \in \bb X^+}  | A_{m} \left(z, t \right)|  dt
 +  \int_{\bb R} \sup_{z, z' \in \bb X^+}  
    \frac{| A_{m} \left(z, t \right)  - A_{m} \left(z', t \right)  |}{\alpha^{\omega(z, z')}} dt. 
\end{align*}
We pick $c_0 >0$ as in Lemma \ref{Lem_sum_Ine}
and for $t \in \bb R$ we set $\kappa_1(t) = \sup_{|s| \leq c_0} |\kappa(t+s)|$
and $H(t) = \sup_{z \in \bb X^+} |G(z, t)|$. 
We get for $z, z' \in \bb X^+$ with $z_0 = z_0'$ and  $t \in \bb R$, 
\begin{align*}
|A_{m} (z,t) | 
& \leq  \int_{\bb X^-_{z}} H*\kappa_1 \left(   t + \check S_m g(y \cdot z')  \right) \nu^-_{z}(dy).  
\end{align*}
By Lemma \ref{Lem_Absolute_Contin},  we get
\begin{align*}
|A_{m} (z,t) | 
 \leq  c  \int_{\bb X^-_{z'}} H*\kappa_1 \left(   t + \check S_m g(y \cdot z')  \right) \nu^-_{z'}(dy), 
\end{align*}
for some constant $c$.   
By integrating over $z'$, we get 
\begin{align*}
|A_{m} (z,t) | \leq c'   \int_{\bb X} H*\kappa_1 \left(   t + \check S_m g(x)  \right) \nu(dx). 
\end{align*}
By integrating over $t$, it follows that 
\begin{align*}
\int_{\bb R} \sup_{z \in \bb X^+} |A_{m} (z,t) |  dt 
\leq  c' \int_{\bb R}  H*\kappa_1 \left(   t   \right) dt   =  c'  \int_{\bb R} |\kappa_1 (t)| dt  \int_{\bb R}  H(t) dt.  
\end{align*}
Now we dominate the second term in the norm  $\|  A_{m} \|_{\mathscr H^+_{\alpha}}$. 
For $t \in \bb R$, set $\kappa_2(t) = \sup_{|s| \leq c_0} |\kappa'(t+s)|$.
We get for $|t-t'| \leq c_0$ and $z \in \bb X^+$,
\begin{align*}
| G * \kappa(z,t) - G * \kappa(z,t') | \leq  |t-t'|  H * \kappa_2(t).
\end{align*}
Hence for $z, z', z'' \in \bb X^+$ with $z_0 = z_0' = z_0''$ and  $t \in \bb R$,  
\begin{align*}
 I_1(z,z',t) & =: \bigg| A_{m} (z,t) -  \int_{\bb X^-_{z}}   
     G *  \kappa \left( (T^{-m} y \cdot z)_+, t + \check S_m g(y \cdot z') \right)   \notag\\
 &  \qquad\qquad\quad \times
  \overline\chi_{\ee}  \left( t-\ee + \min_{1 \leq  j \leq m}  \check S_j g(y \cdot z) \right)  \nu^-_{z}(dy)  \bigg|  \notag\\
 & \leq  c \alpha^{\omega(z, z')}  \int_{\bb X^-_{z}}  H * \kappa_2 \left( t + \check S_m g(y \cdot z'')  \right)  \nu^-_{z}(dy)  \notag\\
 & \leq  c_1  \alpha^{\omega(z, z')}  \int_{\bb X^-_{z''}}  H * \kappa_2 \left( t + \check S_m g(y \cdot z'')  \right)  \nu^-_{z''}(dy),
\end{align*}
where we have used Lemma \ref{Lem_Absolute_Contin}.  
Again by integrating over $z''$, we get that 
\begin{align}\label{Bound_I1zzt}
I_1(z,z',t)   \leq   c_2 \alpha^{\omega(z, z')}  \int_{\bb X}  H * \kappa_2 \left( t + \check S_m g(x)  \right)  \nu(dx). 
\end{align}
Besides, as $G$ is in $\scr H_{\alpha}^+$, the function $L(t) = \sup_{z, z' \in \bb X^+} \alpha^{-\omega (z.z')} |G(z,t) - G(z',t)|$
is integrable on $\bb R$ and for $z,z' \in \bb X^+$ with $z_0 = z_0'$ and $t \in \bb R$, we have 
\begin{align}\label{Bound_I2zzt}
 I_2(z,z',t) : & = \bigg|   \int_{\bb X^-_{z}}   
   \bigg[  G *  \kappa \left( (T^{-m} y \cdot z)_+, t + \check S_m g(y \cdot z') \right)    \notag\\
 &  \qquad\qquad  -  G *  \kappa \left( (T^{-m} y \cdot z')_+, t + \check S_m g(y \cdot z') \right)  \bigg] \notag\\
 &  \qquad\qquad\quad \times
  \overline\chi_{\ee}  \left( t-\ee + \min_{1 \leq  j \leq m}  \check S_j g(y \cdot z) \right)  \nu^-_{z}(dy)  \bigg|  \notag\\ 
 & \leq  \alpha^{\omega (z.z')} \int_{\bb X^-_{z}}   
   L * \kappa \left(  t + \check S_m g(y \cdot z')  \right)  \nu^-_{z}(dy)   \notag\\
   & \leq c \alpha^{\omega (z.z')} \int_{\bb X}  L * \kappa_1 \left(  t + \check S_m g(x)  \right)  \nu(dx),
\end{align}
where we have again used Lemmas \ref{Lem_Absolute_Contin} and \ref{Lem_sum_Ine}. 

As $\overline\chi_{\ee}$ is $1/\ee$-Lipschitz continuous on $\bb R$, 
by reasoning in the same way and using Corollary \ref{Cor_Holder_minimum}, we get
\begin{align}\label{Bound_I3zzt}
& I_3(z,z',t)   =: \bigg|   \int_{\bb X^-_{z}}   
     G *  \kappa \left( (T^{-m} y \cdot z')_+, t + \check S_m g(y \cdot z') \right)   \notag\\
 &  \times
 \bigg[ \overline\chi_{\ee}  \left( t-\ee + \min_{1 \leq  j \leq m}  \check S_j g(y \cdot z) \right) 
  -  \overline\chi_{\ee}  \left( t-\ee + \min_{1 \leq  j \leq m}  \check S_j g(y \cdot z') \right)  \bigg] \nu^-_{z}(dy)  \bigg| \notag\\
& \leq \frac{c}{\ee}  \alpha^{\omega (z.z')} \int_{\bb X}   H *  \kappa_1 \left(  t + \check S_m g(x) \right) \nu(dx). 
\end{align}

By Lemma \ref{Lem_Absolute_Contin}, we have
\begin{align}\label{Bound_I4zzt}
 I_4(z,z',t)   & =: \bigg|   \int_{\bb X^-_{z}}   
     G *  \kappa \left( (T^{-m} y \cdot z')_+, t + \check S_m g(y \cdot z') \right)   \notag\\
 & \qquad\qquad \times
   \overline\chi_{\ee}  \left( t-\ee + \min_{1 \leq  j \leq m}  \check S_j g(y \cdot z') \right)  \nu^-_{z}(dy) 
   - A_m(z',t) \bigg| \notag\\
& \leq  c \alpha^{\omega (z.z')} \int_{\bb X}   H *  \kappa_1 \left(  t + \check S_m g(x) \right) \nu(dx).  
\end{align}

Putting \eqref{Bound_I1zzt}, \eqref{Bound_I2zzt}, \eqref{Bound_I3zzt} and \eqref{Bound_I4zzt} together,
and integrating over $t\in \bb R$, yields the required domination.  
\end{proof}

\subsection{Proof of the upper bound} 
We prove the inequality \eqref{eqt-A 001} in Theorem \ref{t-A 001}. 
It is enough to prove \eqref{eqt-A 001} only for sufficiently large $n>n_0(\ee)$, 
where $n_0(\ee)$ depends on $\ee$,
otherwise the bound becomes trivial.

Without loss of generality, we assume that $\sigma_g = 1$. 
Let $\ee\in (0,\frac{1}{8})$. With $\delta = \sqrt{\ee}$, set
$m=\left[ \delta n \right]$ and $k = n-m.$ 
Note that $\frac{1}{2}\delta \leq \frac{m}{k} \leq \frac{\delta}{1-\delta}$ for $n \geq \frac{2}{\sqrt{\ee}}$. 
Denote, for $z \in \bb X^+$ and $t \in \bb R$,  
\begin{align*}
\Psi_n (z,t) = \int_{\bb X^-_z}  F \left( (T^{-n} y \cdot z)_+, t + \check S_n g(y \cdot z) \right) 
    \mathds 1_{ \left\{ \check \tau_t^g (y \cdot z) >n \right\}} \nu^-_z(dy). 
\end{align*}
By the Markov property, we have that for any $z \in \bb X^+$ and $t \in \bb R$, 
\begin{align}\label{JJJ-markov property}
\Psi_n (z,t) = \int_{\bb X^-_z} \Psi_m \left( (T^{-k} y \cdot z)_+, t + \check S_k g(y \cdot z) \right) 
  \mathds 1_{ \left\{ \check \tau_t^g (y \cdot z) > k \right\}} 
 \nu^-_z(dy). 
\end{align}
By bounding the indicator function by $1$ in the definition of $\Psi_m$, we get
\begin{align}\label{JJJJJ-1111-000}
\Psi_m (z,t) 
&  \leq  \int_{\bb X^-_{z}} F \left( (T^{-m} y \cdot z)_+, t + \check S_m g(y \cdot z) \right) 
  \nu^-_{z}(dy)   =: J_{m}(z,t). 
\end{align}
By the local limit theorem (cf.\ Theorem \ref{Lem_LLT_Nonasm}), 
 there exist constants $c, c_{\ee} >0$ such that 
for any $m \geq 1$, $z \in \bb X^+$ and $t \in \bb R$, 
\begin{align} \label{JJJJJ-1111-001}
 J_{m}(z, t) 
&\leq H_m(t) +  \frac{ c\ee }{\sqrt{m}} \| G \|_{\nu ^+\otimes \Leb}   
+ \frac{c_{\ee}}{m}  \| G \|_{\mathscr H^+_{\alpha}}, 
\end{align}
where for brevity we set
\begin{align}
\label{JJJ005}
H_m(t) = \int_{\bb X^+}  \int_{\bb R}    
  G \left(z, u\right)  \frac{1}{\sqrt{m}} \phi \left( \frac{u - t}{\sqrt{m}} \right)  du  \nu^+(dz).
\end{align}
Using \eqref{JJJ-markov property}, \eqref{JJJJJ-1111-000} and \eqref{JJJJJ-1111-001},  
and Lemma \ref{Lem_Markov_chain},  
we get that for any $z \in \bb X^+$ and $t \in \bb R$, 
\begin{align} \label{JJJ004}
 \Psi_n (z,t)
 & \leq \int_{\mathbb X_z^-}  H_m \left( t + \check S_k g(y \cdot z) \right)  
 \mathds 1_{\{ \check \tau_t^g (y \cdot z) > k \}}  \nu^-_z (dy) \notag \\  
& \quad +   \frac{c\ee}{\sqrt{mk}}  \| G \|_{\nu ^+\otimes \Leb}
     +  \frac{c_{\ee}}{m \sqrt{k} }  \| G \|_{\mathscr H^+_{\alpha}}. 
\end{align}
Now we deal with the first term on the right hand side of \eqref{JJJ004}. 
Denote $L_m(s) = H_m(\sqrt{k} s)$ for $s \in \mathbb R$. We have 
\begin{eqnarray} \label{JJJ006}
L_m(s) =  \int_{\bb X^+}  \int_{\bb R}  G \left(z, \sqrt{k} u\right)
 \frac{1}{\sqrt{m/k}} \phi \left( \frac{s - u}{\sqrt{m/k}} \right)  du  \nu^+(dz).
\end{eqnarray}
Since the function $s \mapsto L_m(s)$ is differentiable on $\bb R$ and vanishes as $s \to + \infty$, 
using integration by parts, we have, for any $z \in \mathbb X^+$ and $t \in \bb R$,
\begin{align} \label{ApplCondLT-001}
H_{m,k}(z, t): 
& =  \int_{\mathbb X_z^-}  H_m \left( t + \check S_k g(y \cdot z) \right)  
 \mathds 1_{\{ \check \tau_t^g (y \cdot z) > k \}}  \nu^-_z (dy) \notag \\
& =  \int_{\mathbb X_z^-}  L_m \left( \frac{t + \check S_k g(y \cdot z)}{\sqrt{k}} \right)  
\mathds 1_{\{ \check \tau_t^g (y \cdot z) > k \}}  \nu^-_z (dy)  \notag \\
& =  \int_{\mathbb{R}_{+}}  
  L'_m(s)  \nu^-_z \left( \frac{t + \check S_k g(y \cdot z)}{\sqrt{k}}  >  s, 
  \check \tau_t^g (y \cdot z) > k \right) ds.
 \end{align}
Applying the conditioned central limit theorem 
(see Lemma \ref{Lem_CondiCLT}), 
we have  
\begin{align} \label{ApplCondLT-002}
H_{m,k}(z, t)
 & \leq  \frac{2 \check V^{g}(z, t)}{\sqrt{2\pi k}  }  \int_{\mathbb{R}_{+}}  L'_m(s) \left( 1 - \Phi_+(s) \right) ds 
 + \frac{r_k}{k^{1/2}} \int_{\mathbb{R}_{+}}  | L'_m(s) | ds,
\end{align}
where $r_k \to 0$ as $k \to \infty$. 
By \eqref{JJJ006}, we have
\begin{align} \label{JJJ-20001}
\int_{\mathbb{R}_{+}}  | L'_m(s) | ds 
& =  \int_{\bb X^+}  \int_{\bb R}  \int_{\mathbb R}   G \left(z,  \sqrt{m} \frac{u}{\sqrt{m/k}} \right) 
   \phi' \left( \frac{s - u}{\sqrt{m/k}} \right)
  \frac{du}{\sqrt{m/k}} \frac{ds}{\sqrt{m/k}}  \nu^+(dz) \notag\\ 
& =   \int_{\bb X^+}  \int_{\bb R}  \int_{\mathbb R}
  G \left(z,  \sqrt{m} u \right)     
  \phi'\left( s - u \right)  ds du \nu^+(dz)  \notag\\ 
& \leq c  \int_{\bb X^+}  \int_{\bb R}  G \left(z,  \sqrt{m} u \right)  du \nu^+(dz) 
= \frac{c}{\sqrt{m}}  \| G \|_{\nu ^+\otimes \Leb}. 
\end{align}
By integration by parts and a change of variable, we have 
$$
\int_{\mathbb{R}_{+}}  L'_m(s) \left( 1 - \Phi^+(s)  \right)ds
=  \int_{\mathbb{R}_{+}}  H_m(s) \phi^+ \left( \frac{s}{\sqrt{k}} \right) \frac{ds}{\sqrt{k}}.
$$ 
Hence, from \eqref{ApplCondLT-001}, \eqref{ApplCondLT-002} and \eqref{JJJ-20001}, we obtain
\begin{align} \label{Integ_3}
H_{m,k}(z, t) 
& \leq \frac{2 \check V^{g}(z, t)}{\sqrt{2\pi k}  } 
\int_{\mathbb{R}_{+}}  H_m(s) \phi^+ \left( \frac{s}{\sqrt{k}} \right) \frac{ds}{\sqrt{k}} 
  +  \frac{r_k}{ \sqrt{km} }  \| G \|_{\nu ^+\otimes \Leb}.
\end{align}
Implementing this bound into \eqref{JJJ004} 
we get, for any $z \in \mathbb X^+$,
\begin{align}
\label{JJJ201aaa}
\Psi_n (z,t) 
& \leq  \frac{2 \check V^{g}(z, t)}{\sqrt{2\pi } }  I_{m,k}  
  +   \frac{c \ee + r_k}{\sqrt{k m }}  \| G \|_{\nu ^+\otimes \Leb}  
   +  \frac{c_{\ee}}{m\sqrt{k} }  \| G \|_{\mathscr H^+_{\alpha}}, 
\end{align}
where
\begin{align*}
I_{m,k} = \frac{1}{\sqrt{k}} 
 \int_{\mathbb{R}_{+}}  H_m(s) \phi^+ \left( \frac{s}{\sqrt{k}} \right) \frac{ds}{\sqrt{k}}.  
\end{align*}
By the definition of $H_m$ (cf.\ \eqref{JJJ005}) and Fubini's theorem, it follows that
\begin{align*}
I_{m,k} & = 
 \int_{\mathbb{R}_{+}} \int_{\bb X^+}  \int_{\bb R}  \phi_{\sqrt{m}} (u - s)  
  G \left(z, u\right)  du  \nu^+(dz) 
 \phi^+ \left( \frac{s}{\sqrt{k}} \right) \frac{ds}{k} \nonumber \\
& =  \int_{\bb X^+}  \int_{\mathbb R}  G \left(z, u\right)
\left[ \int_{\mathbb{R}_{+}}  \phi_{\sqrt{m}} (u - s)
       \phi^+ \left( \frac{s}{\sqrt{k}} \right)  \frac{ds}{k} \right] du  \nu^+(dz).
\end{align*}
Denote $\delta_n = \frac{m}{n} = \frac{[\delta n]}{n}$. 
By a change of variable, we have 
\begin{align*}
I_{m,k} &= \frac{1}{\sqrt{n}} \int_{\bb X^+}  \int_{\mathbb R}  G \left(z, \sqrt{n} u \right)
\left[ \int_{\mathbb{R}_{+}}
  \phi_{\delta_n} (u - s)
\phi^+_{1 - \delta_n} \left( s \right)  ds \right] du \nu^+(dz) \\
& =  \frac{1}{\sqrt{n}}  \int_{\bb X^+}  \int_{\mathbb R}  G \left(z, \sqrt{n} u \right)
\phi_{\delta_n}*\phi_{1-\delta_n}^+(u) du \nu^+(dz)  \\
& = \frac{ 1 }{n} 
  \int_{\bb X^+}  \int_{\mathbb R}  G \left(z, u \right)
\phi_{\delta_n}*\phi_{1-\delta_n}^+ \left( \frac{u}{\sqrt{n}} \right) du \nu^+(dz).  
\end{align*}
Using Lemma \ref{t-Aux lemma} and the fact that $\sqrt{1 - \delta_n} = \sqrt{\frac{k}{n}}$, it follows that
\begin{align}\label{EqualityPfImkhh}
I_{m,k} &  \leq   \frac{ \sqrt{k} }{n^{3/2}}   \int_{\bb X^+}  \int_{\mathbb R}  G \left(z, u \right)
\phi^+ \left( \frac{u}{\sqrt{n}} \right) du \nu^+(dz)   \notag\\
& \quad  +   \frac{\sqrt{m}}{ n^{3/2}} 
  \int_{\bb X^+}  \int_{\mathbb R}  G \left(z, u \right)  e^{- \frac{u^2}{2m}} du \nu^+(dz)   \notag\\
 & \leq   \frac{ \sqrt{k} }{n^{3/2}}
  \int_{\bb X^+}  \int_{\mathbb R}  G \left(z, u \right)
\phi^+ \left( \frac{u}{\sqrt{n}} \right) du \nu^+(dz)
  +  \frac{\sqrt{m}}{ n^{3/2}}  \| G \|_{\nu ^+\otimes \Leb}.  
\end{align}
Substituting this into \eqref{JJJ201aaa}, and using the fact that $\check V^{g}(z, t) \leq t + c$ gives
\begin{align*}
 \Psi_n (z,t) 
& \leq  \frac{2 \check V^{g}(z, t)}{\sqrt{2\pi} }  \frac{ \sqrt{k} }{n^{3/2}}
  \int_{\bb X^+}  \int_{\mathbb R}  G \left(z, u \right)  \phi^+ \left( \frac{u}{\sqrt{n}} \right) du \nu^+(dz)   \\
& \quad + c \left( \frac{\sqrt{m}}{ n^{3/2}}   +  \frac{c \ee + r_k}{ \sqrt{mk} } \right)  \| G \|_{\nu ^+\otimes \Leb}  
    +  \frac{c_{\ee}}{ m \sqrt{k} }  \| G \|_{\mathscr H^+_{\alpha}}. 
\end{align*}
Since $\ee^{1/2} n \geq  m\geq \frac{1}{2}\ee^{1/2} n$ and $n > k\geq \frac{1}{2}n$,  we obtain 
\begin{align*}
\Psi_n (z,t) 
& \leq  \frac{2 \check V^{g}(z, t)}{\sqrt{2\pi} n}  
  \int_{\bb X^+}  \int_{\mathbb R}  G \left(z, u \right)  \phi^+ \left( \frac{u}{\sqrt{n}} \right) du \nu^+(dz)   \\
& \quad +  \frac{c}{n}  \left( \ee^{1/4}  +   \frac{r_n}{\ee^{1/4}} \right)   \| G \|_{\nu ^+\otimes \Leb}  
    +  \frac{c_{\ee}}{ n^{3/2} }  \| G \|_{\mathscr H^+_{\alpha}}, 
\end{align*}
which finishes the proof of the upper bound \eqref{eqt-A 001}. 

\subsection{Proof of the lower bound}
We now proceed to prove the second assertion \eqref{eqt-A 002} of Theorem \ref{t-A 001}. 
We use the same notation as that in the proof of the upper bound. Recall that $\delta=\sqrt{\ee}$, $m=[\delta n]$ and $k=n-m$. 
For $z \in \bb X^+$, $t \in \bb R$ and $n \geq 1$,  denote
\begin{align*}
\Psi_n (z,t): & = \int_{\bb X^-_z}  F \left( (T^{-n} y \cdot z)_+, t + \check S_n g(y \cdot z) \right) 
   \mathds 1_{ \left\{ \check \tau_t^g (y \cdot z) >n \right\}} \nu^-_z(dy). 
\end{align*} 
By the Markov property, we have that for any $z \in \bb X^+$ and $t \in \bb R$, 
\begin{align}\label{KKK-markov property}
\Psi_n (z,t) = \int_{\bb X^-_z} \Psi_m \left( (T^{-k} y \cdot z)_+, t + \check S_k g(y \cdot z) \right) 
   \mathds 1_{ \left\{ \check \tau_t^g (y \cdot z) > k \right\}}  \nu^-_z(dy). 
\end{align}
We write $\Psi_m$ as a sum of two terms: for any $z \in \bb X^+$ and $t \in \bb R$, 
\begin{align}\label{KKK-1111-000}
\Psi_m (z,t)  =: A_{m} (z,t) - \overline A_{m} (z,t), 
\end{align}
where 
\begin{align}
A_{m} (z,t)   &  =  \int_{\bb X^-_{z}}  
            F \left( (T^{-m} y \cdot z)_+, t + \check S_m g(y \cdot z) \right)  \nu^-_{z}(dy), \label{decompos-psi001}  \\
\overline A_{m} (z,t)  & =  \int_{\bb X^-_{z}}  F \left( (T^{-m} y \cdot z)_+, t + \check S_m g(y \cdot z) \right) 
  \mathds 1_{ \left\{ \check \tau_t^g (y \cdot z) \leq m \right\}}  \nu^-_{z}(dy). \label{decompos-psi002}
\end{align}
This implies that for any $z \in \bb X^+$ and $t \in \bb R$, 
\begin{align} \label{Psi_nee_Decompo}
 \Psi_n(z,t)= J_n(z,t)-K_n(z,t), 
\end{align}
where  
\begin{align}
J_n(z,t)& := \int_{\bb X^-_z}  A_{m} \left( (T^{-k} y \cdot z)_+, t + \check S_k g(y \cdot z) \right) 
   \mathds 1_{ \left\{ \check \tau_t^g (y \cdot z) > k \right\}}  \nu^-_z(dy),  \label{Psi_nee_Decompo001} \\
K_n(z,t)&:=  \int_{\bb X^-_z}  \overline A_{m} \left( (T^{-k} y \cdot z)_+, t + \check S_k g(y \cdot z) \right) 
   \mathds 1_{ \left\{ \check \tau_t^g (y \cdot z) > k \right\}}  \nu^-_z(dy).  \label{Psi_nee_Decompo002}
\end{align}
We proceed to give a lower bound for the term $J_n(z,t)$.
 It can be handled as the case of the upper bound, but here the situation is more complicated. 
 By the local limit theorem (cf.\ Theorem \ref{Lem_LLT_Nonasm}), 
we get that there exist constants $c, c_{\ee} >0$ such that 
for any  $m \geq 1$, $z \in \bb X^+$ and $t \in \bb R$, 
\begin{align} \label{JJJJJ-1111-001B}
 A_{m}(z, t) 
& \geq  H_m(t)
     -   \frac{c \ee}{\sqrt{m}}  \| G \|_{\nu ^+\otimes \Leb}   
       -  \frac{c_{\ee}}{ m }   \left(  \| G \|_{\mathscr H^+_{\alpha}}   +  \| H \|_{\mathscr H^+_{\alpha}} \right), 
\end{align}
where for brevity we set
\begin{align}
\label{JJJ005B}
H_m(t) = \int_{\bb X^+}  \int_{\bb R} \frac{1}{\sqrt{m}} \phi \left( \frac{u - t}{\sqrt{m}} \right)  
  H \left(z, u\right)  du  \nu^+(dz).
\end{align}
Using \eqref{decompos-psi001}, \eqref{Psi_nee_Decompo001} and \eqref{JJJJJ-1111-001B},  
and Lemma \ref{Lem_Markov_chain}, 
we get that for any $z \in \bb X^+$ and $t \in \bb R$, 
\begin{align} \label{JJJ004B}
  J_n (z,t)
& \geq   \int_{\bb X^-_z}  H_{m} \left( t + \check S_k g(y \cdot z) \right) 
   \mathds 1_{ \left\{ \check \tau_t^g (y \cdot z) > k \right\}}  \nu^-_z(dy)  \notag \\  
& \quad  -  \frac{c\ee}{\sqrt{km}}   \| G \|_{\nu ^+\otimes \Leb}  
     -  \frac{c_{\ee}}{\sqrt{k} m }  \left(  \| G \|_{\mathscr H^+_{\alpha}}   +  \| H \|_{\mathscr H^+_{\alpha}} \right).  
\end{align}
For the first term on the right hand side of \eqref{JJJ004B}, 
proceeding in the same way as that in the proof of  \eqref{Integ_3}, 
using the lower bound in the conditioned central limit theorem (see Lemma \ref{Lem_CondiCLT}), 
one can verify that 
\begin{align} \label{Integ_3B}
&  \int_{\bb X^-_z}  H_{m} \left( t + \check S_k g(y \cdot z) \right) 
   \mathds 1_{ \left\{ \check \tau_t^g (y \cdot z) > k \right\}}  \nu^-_z(dy)   \notag\\
& \geq \frac{2 \check V^{g}(z, t)}{\sqrt{2\pi k}  } 
\int_{\mathbb{R}_{+}}  H_m(s) \phi^+ \left( \frac{s}{\sqrt{k}} \right) \frac{ds}{\sqrt{k}}  
   -  \frac{r_k}{ \sqrt{km} } \| H \|_{\nu ^+\otimes \Leb}.
\end{align}
Implementing this bound into \eqref{JJJ004B},
we get that for any $z \in \mathbb X^+$,
\begin{align}
\label{JJJ201aaaB}
  J_n (z,t) 
&  \geq  \frac{2 \check V^{g}(z, t)}{\sqrt{2\pi} }  I_{m,k}  
 -  \frac{r_k}{ \sqrt{km} }  \| H \|_{\nu ^+\otimes \Leb}  \notag\\
& \quad  -  \frac{c\ee}{\sqrt{km}}   \| G \|_{\nu ^+\otimes \Leb}  
     -  \frac{c_{\ee}}{\sqrt{k} m }  \left(  \| G \|_{\mathscr H^+_{\alpha}}   +  \| H \|_{\mathscr H^+_{\alpha}} \right).    
\end{align}
where, in the same way as in the proof of \eqref{EqualityPfImkhh}, 
\begin{align*}
I_{m,k} : & = \int_{\mathbb{R}_{+}}  H_m(s) \phi^+ \left( \frac{s}{\sqrt{k}} \right) \frac{ds}{k}  
 = \frac{ \sqrt{k} }{n^{3/2}}
  \int_{\bb X^+}  \int_{\mathbb R}  H \left(z, u \right)
\phi^+ \left( \frac{u}{\sqrt{n}} \right) du \nu^+(dz).  
\end{align*}
Substituting this into \eqref{JJJ201aaaB}, and using the fact that $H \leq_{\ee} G$ and $\check V^{g}(z, t) \leq t + c$,
we get 
\begin{align*}
  J_n (z,t) 
& \geq  \frac{2 \check V^{g}(z, t)}{\sqrt{2\pi} }  \frac{ \sqrt{k} }{n^{3/2}}
  \int_{\bb X^+}  \int_{\mathbb R}  H \left(z, u \right)
\phi^+ \left( \frac{u}{\sqrt{n}} \right) du \nu^+(dz)   \\
&  \quad  -  \frac{c\ee + r_k}{\sqrt{km}}   \| G \|_{\nu ^+\otimes \Leb}  
     -  \frac{c_{\ee}}{\sqrt{k} m }  \left(  \| G \|_{\mathscr H^+_{\alpha}}   +  \| H \|_{\mathscr H^+_{\alpha}} \right). 
\end{align*}
Since $\sqrt{\frac{n}{k}}\leq 1+ c \ee^{1/4}$, $m\geq \frac{1}{2} \ee^{1/2} n$ and $k\geq \frac{1}{2}n$, 
using again $H \leq_{\ee} G$
we deduce that for $n$ sufficiently large, 
\begin{align} \label{final bounf Jm(x)}
J_n (z,t)
& \geq  \frac{2 \check V^{g}(z, t)}{\sqrt{2\pi} n} 
  \int_{\bb X^+}  \int_{\mathbb R}  H \left(z, u \right)
\phi^+ \left( \frac{u}{\sqrt{n}} \right) du \nu^+(dz)   \notag\\
&  \quad  -  \frac{c}{n}  \left( \ee^{1/4} + \frac{r_n}{\ee^{1/4}}  \right)  \| G \|_{\nu ^+\otimes \Leb}  
     -  \frac{c_{\ee}}{ n^{3/2} }  \left(  \| G \|_{\mathscr H^+_{\alpha}}   +  \| H \|_{\mathscr H^+_{\alpha}} \right). 
\end{align}

The bound of the term $K_n(z,t)$ (cf.\ \eqref{Psi_nee_Decompo002}) 
is rather long and needs to employ the duality (cf.\ Lemma \ref{Corollary_Duality}).  
We start by splitting $K_n(z,t)$ into two parts 
according to whether the values of $t + \check S_k g(y \cdot z)$ 
are less or larger than $\ee \sqrt{n}$: for $z \in \bb X^+$ and $t \in \bb R$, 
\begin{align}\label{KKK-111-001}
K_n(z,t) = K_1 + K_2, 
\end{align}
where
\begin{align*} 
K_1
& = \int_{\bb X^-_z}  \overline A_{m} \left( (T^{-k} y \cdot z)_+, t + \check S_k g(y \cdot z) \right)  
    \mathds 1_{ \left\{  t + \check S_k g(y \cdot z) \leq \ee^{1/6}\sqrt{n} \right\} }
     \mathds 1_{ \left\{ \check \tau_t^g (y \cdot z) > k \right\}}  \nu^-_z(dy),   \\
K_2 
& = \int_{\bb X^-_z}  \overline A_{m} \left( (T^{-k} y \cdot z)_+, t + \check S_k g(y \cdot z) \right)  
    \mathds 1_{ \left\{  t + \check S_k g(y \cdot z) > \ee^{1/6}\sqrt{n} \right\} }
     \mathds 1_{ \left\{ \check \tau_t^g (y \cdot z) > k \right\}}  \nu^-_z(dy). 
\end{align*}
For $K_1$, since $\overline\chi_{\ee} \leq 1$, using the upper bound in the local limit theorem 
(cf.\ Theorem \ref{Lem_LLT_Nonasm}) and taking into account that $\phi \leq 1$,  we get
\begin{align*}
\overline A_{m} (z, t) 
\leq  \frac{L_m(\ee)}{\sqrt{m}}, \quad \mbox{where}  \   
 L_m(\ee) =  c \| G \|_{\nu ^+\otimes \Leb}     +  \frac{c_{\ee}}{ \sqrt{m} }  \| G \|_{\mathscr H^+_{\alpha}}. 
\end{align*}
This, together with \eqref{bounds-reversedindicators-001}, \eqref{identity_for_indicators001}
and the fact that $\sqrt{\frac{n}{k}} \leq c$, 
implies that 
\begin{align*} 
K_1  & \leq  \frac{L_m(\ee)}{\sqrt{m}} 
    \int_{\bb X^-_z}   \mathds 1_{ \left\{  t + \check S_k g(y \cdot z) \leq \ee^{1/6}\sqrt{n} \right\} }
     \mathds 1_{ \left\{ \check \tau_t^g (y \cdot z) > k \right\}}  \nu^-_z(dy)    \\
& \leq  \frac{L_m(\ee)}{\sqrt{m}} \,  \nu^-_z \left(  \frac{t + \check S_k g(y \cdot z)}{\sqrt{k}} \leq c\ee^{1/6} , \, 
  \check \tau_{t}^g (y \cdot z) > k \right). 
\end{align*}
Using Lemma \ref{Lem_CondiCLT}, we get that uniformly in $z \in \bb X^+$, 
\begin{align} \label{K1-final bound}
K_1
& \leq  \frac{L_m(\ee)}{\sqrt{m}}  \, 
\left( \frac{2 \check V^{g}(z, t)}{ \sqrt{2\pi k} } 
  \int_{0}^{ c\ee^{1/6}} \phi^+ \left(t'\right) dt' 
  +  \frac{o(1)}{k^{1/2}} \right) \notag \\
&\leq  \frac{L_m(\ee)}{\sqrt{mk}} \, 
\left(\int_{0}^{ c\ee^{1/6}} \phi^+ \left(t'\right) dt' +  o(1) \right) 
\leq  \frac{L_m(\ee)}{n} \, 
\frac{1}{\ee^{\frac{1}{4}}} \left(c\ee^{1/3}  +  o(1) \right)  \notag \\
&\leq   \frac{L_m(\ee)}{n} \,  \left(c\ee^{\frac{1}{12}}  +  \frac{o(1)}{\ee^{\frac{1}{4}}} \right).
\end{align}

We proceed to give an upper bound for $K_{2}$, see \eqref{KKK-111-001}.
Note that the function 
\begin{align*}
& \overline A_{m} (z,t)   =  \int_{\bb X^-_{z}}  F \left( (T^{-m} y \cdot z)_+, t + \check S_m g(y \cdot z) \right) 
  \mathds 1_{ \left\{ \check \tau_t^g (y \cdot z) \leq m \right\}}  \nu^-_{z}(dy) 
\end{align*}
does not in general belong to the space $\scr H^+_{\alpha}$.  
We start by smoothing the indicator functions in \eqref{Psi_nee_Decompo002} and \eqref{decompos-psi002}. 
Define for $\ee>0$, 
\begin{align*}
\overline A_{m, \ee}  (z,t) := &  \int_{\bb X^-_{z}}   
     G *  \kappa_{\ee/2} \left( (T^{-m} y \cdot z)_+, t + \check S_m g(y \cdot z) \right)   \notag\\
 &  \times
  \overline\chi_{\ee}  \left( t-\ee + \min_{1 \leq  j \leq m}  \check S_j g(y \cdot z) \right)  \nu^-_{z}(dy),
\end{align*}
where $\chi_{\ee}$ is the same as in \eqref{Def_chiee} and $\overline\chi_{\ee} = 1 - \chi_{\ee}$.  
Note that $G * \kappa_{\ee/2}$ $\ee/2$-dominates the function $F$. 
By the identity
\begin{align} \label{identity_for_indicators001}
\mathds 1_{ \left\{ \check \tau_t^g (y \cdot z) > m \right\}} 
= \mathds 1_{[0,\infty)} \left( t + \min_{1 \leq j \leq m}  \check S_j g(y \cdot z)  \right),   
\end{align}  
using the bounds \eqref{bounds-reversedindicators-001} and $F(z, \cdot) \leq  G * \kappa_{\ee/2} (z, \cdot)$,  
 we get that  
the function $\overline A_{m, \ee}$ $\ee/2$-dominates the function $\overline A_{m}$. 
Moreover, by Lemma \ref{Lem_Inequality_Aoverline}, 
there exists a constant $c_{\ee}$ such that for any $m \geq 1$, 
the function $\overline A_{m, \ee}$ belongs to $\scr H^+_{\alpha}$ and satisfies 
\begin{align*}
\| \overline A_{m, \ee} \|_{ \scr H^+_{\alpha} }  \leq c_{\ee}  \| G\|_{ \scr H^+_{\alpha} },
\qquad  
\| \overline A_{m, \ee} \|_{\nu^+ \otimes \Leb }  \leq   \| G \|_{\nu^+ \otimes \Leb }. 
\end{align*}
Denote 
\begin{align}\label{Def_m_zt}
W_{m,\ee} (z, t) = \overline A_{m, \ee} \left( z, t  \right)   \mathds 1_{ \{ t \geq \ee^{1/6}\sqrt{n} \} }. 
\end{align}
Using the upper bound \eqref{eqt-A 001} and the fact that $\phi^+ \leq 1$, we obtain 
\begin{align}\label{eqt-A 001_low_bb}
K_{2} & \leq \int_{\bb X^-_z}  W_m \left( (T^{-k} y \cdot z)_+,  t + \check S_k g(y \cdot z)  \right) 
   \mathds 1_{ \left\{ \check \tau_t^g (y \cdot z) >k \right\}} \nu^-_z(dy)    \nonumber\\
& \leq  \left(\frac{2 \check V^{g}(z, t)}{\sqrt{2\pi} k}  + \frac{c}{k} \left( \ee^{1/4}  +   \frac{r_n}{\ee^{1/4}} \right) \right)
   \| W_{m,\ee} \|_{\nu^+ \otimes \Leb }
    +   \frac{c_{\ee}}{ \sqrt{n} k}  \|  W_{m,\ee} \|_{\mathscr H^+_{\alpha}}.  
\end{align}
For the first term on the right hand side of \eqref{eqt-A 001_low_bb}, 
by the definition of $W_m$ and Fubini's theorem, we have
\begin{align*}
  \int_{\bb X^+}  \int_{\mathbb R}  W_{m,\ee} \left(z', u \right)  & du \nu^+(dz')   
\leq  \int_{\bb X}  \int_{\mathbb R}  
  \bigg[   G *  \kappa_{\ee/2} \left( T^{-m} x,  u + \check S_m g(x) \right)   \notag\\
& \qquad\qquad  \times  \mathds 1_{ \left\{   u  + \min_{1 \leq  j \leq m}  \check S_j g(x)  \leq 0 \right\} } 
\mathds 1_{ \{ u \geq \ee^{1/6}\sqrt{n} \} }    \bigg]  \nu(dx)  du   =: U. 
\end{align*}
Using the duality (Lemma \ref{Corollary_Duality}) yields that
\begin{align*}
& U  =  \int_{\bb X}  \int_{\mathbb R}  
  \bigg[   G *  \kappa_{\ee/2} \left( x',  u'  \right)    \mathds 1_{ \left\{ u' - \min_{1 \leq j \leq m }  S_j g(x')   \leq 0  \right\}  }  
\mathds 1_{ \left\{ u' - S_m g(x') \geq  \ee^{1/6}\sqrt{n} \right\}  } \bigg]  \nu(dx')  du'.  
\end{align*}
Since the measure $\nu$ is $T$-invariant, it follows that 
\begin{align}\label{Lower_F_ee_kkk}
U  & =  \int_{\bb X}  \int_{\mathbb R}  
  \bigg[   G *  \kappa_{\ee/2} \left( T^{-m} x',  u'  \right)    \mathds 1_{ \left\{ u' - \min_{1 \leq j \leq m }  S_j g(T^{-m} x')   \leq 0  \right\}  } 
     \notag\\ 
& \qquad \qquad\qquad
   \times  \mathds 1_{ \left\{ u' - S_m g(T^{-m} x') \geq  \ee^{1/6}\sqrt{n} \right\}  } \bigg]  \nu(dx')  du'  \notag\\
& =  \int_{\bb X}  \int_{\mathbb R}  
  \bigg[   G *  \kappa_{\ee/2} \left( T^{-m} x',  u'  \right)    
  \mathds 1_{ \left\{ u' - \check S_{m} g(x') + \max_{1 \leq j \leq m } ( \check S_{m-j} g(x') )  \leq 0  \right\}  }       \notag\\ 
& \qquad \qquad\qquad
   \times  \mathds 1_{ \left\{ u' - \check S_m g(x') \geq  \ee^{1/6}\sqrt{n} \right\}  } \bigg]  \nu(dx')  du'  \notag\\
& \leq    \int_{\mathbb R}    \| G *  \kappa_{\ee/2} \left( \cdot,  u'  \right)\|_{\infty} du'  
 \,  \nu \left( x' \in \bb X:     \max_{1 \leq j \leq m } (\check S_{j} g(x') )  \leq - \ee^{1/6}\sqrt{n}  \right). 
\end{align}
By Propositions \ref{lemma-martingale001} and \ref{lemma-martingale002},  
there exists a H\"older continuous function $g_0$ on $\bb X^+$ satisfying $\mathcal L_{\psi} g_0 = 0$
such that $\{ \check S_k g_0 (y' \cdot z) \}_{k \geq 0}$ is a martingale
and $\sup_{k \geq 0} | \check S_k g_0 - \check S_k g| \leq c$ for some constant $c>0$. 
Therefore, for any $t' \geq  \ee^{1/6}\sqrt{n}$, 
by  Fuk's inequality for martingales (Lemma \ref{Lem_Fuk}), 
we have that with $u = \ee^{1/6}\sqrt{ n} $ and $v =  c \ee^{1/3} \sqrt{n}$ ($c$ is a sufficiently large constant), 
uniformly in $z \in \bb X^+$, 
\begin{align} \label{Fuk-Nagaev-001}
  \nu^-_{z} \left( y \in \bb X^-_z:  \max_{1 \leq j \leq m}  | \check S_j g(y \cdot z)| \geq   u  \right) 
& \leq  \exp\left(- \frac{u}{v} \log \left(1 + \frac{uv}{c[\frac{m}{2}]}\right)\right)   \notag\\
& \leq  \exp\left( - \frac{c}{\ee^{1/6}} \right),  
\end{align} 
which implies that
\begin{align}\label{Proba_002}
  \nu \left( x' \in \bb X:   \max_{1 \leq j \leq m}  | \check S_j g(x')| \geq   u  \right)  
 \leq  \exp\left( - \frac{c}{\ee^{1/6}} \right).  
\end{align}
Combining \eqref{Proba_002} and \eqref{Lower_F_ee_kkk},  we obtain
\begin{align}\label{Bound_n_U}
U & \leq  \exp\left(- c \ee^{-1/6} \right)    
   \int_{\mathbb R}    \| G *  \kappa_{\ee/2} \left( \cdot,  u'  \right)\|_{\infty} du'   \notag\\
& \leq  \exp\left(- c \ee^{-1/6} \right)   \int_{\mathbb R}    \| G \left( \cdot,  u'  \right)\|_{\infty} du'. 
\end{align}
For the second term on the right hand side of \eqref{eqt-A 001_low_bb},
by \eqref{Def_m_zt} and  Lemma \ref{Lem_Inequality_Aoverline}, we get
\begin{align}\label{Bound_n_Wmee}
\|  W_{m,\ee} \|_{\mathscr H^+_{\alpha}}  
\leq \| \overline A_{m, \ee} \|_{ \scr H^+_{\alpha} }
\leq  c_\ee \|  G \|_{\mathscr H^+_{\alpha}}. 
\end{align}
Therefore, from \eqref{eqt-A 001_low_bb}, \eqref{Bound_n_U} and \eqref{Bound_n_Wmee},  
we derive the upper bound for $K_2$:
\begin{align}\label{Final_Bound_K2}
K_{2} 
& \leq  \left(\frac{2 \check V^{g}(z, t)}{\sqrt{2\pi} n}  + \frac{c}{n} \left( \ee^{1/4}  +   \frac{r_n}{\ee^{1/4}} \right) \right)
 \exp\left(- c \ee^{-1/6} \right)   \int_{\mathbb R}    \| G \left( \cdot,  u'  \right)\|_{\infty} du'    \nonumber\\
& \quad  +   \frac{c_{\ee}}{ n^{3/2} }  \|  G \|_{\mathscr H^+_{\alpha}}.  
\end{align}
Combining \eqref{final bounf Jm(x)}, \eqref{K1-final bound} and \eqref{Final_Bound_K2}, the lower bound \eqref{eqt-A 002} follows.

\section{Proof of Theorem \ref{Thm-conLLT}} 

As for Theorems \ref{Thm-exit-time001} and \ref{Thm-cnodi-lim-theor001},  
we first establish the result when $g$ is in $\scr B^+$.  
The general case of a function $g$ in $\scr B$ will follow using the same method as in Section \ref{Sect_CCLT}.

\begin{theorem}\label{Lem_CondiLLT_001}
Let $g \in \scr B^+$ be such that  $\nu^+ (g) = 0$.
Assume that for any $p \neq 0$ and $q \in \bb R$, 
the function $p g + q$ is not cohomologous to a function with values in $\bb Z$.  
Let $F$ be a continuous compactly supported function on $\bb X^+ \times \bb R$. 
Then, we have, uniformly in $z \in \bb X^+$ and $t$ in a compact subset of $\bb R$,  
\begin{align*}
& \lim_{n \to \infty}  n^{3/2} \int_{\bb X^-_z}  F \left( (T^{-n} y \cdot z)_+,  t + \check S_n g(y \cdot z)  \right) 
   \mathds 1_{ \left\{ \check \tau_t^g (y \cdot z) >n - 1 \right\}} \nu^-_z(dy)    \nonumber\\
& =   \frac{2 \check V^{g}(z, t)}{\sqrt{2\pi} \sigma_g^3 }  
  \int_{\bb X \times \mathbb R}  F \left(x_+', t' \right)  \mu^{(-g)}(dx', dt'). 
 \end{align*}
\end{theorem}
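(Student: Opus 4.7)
My plan is to combine the effective conditioned local limit estimate from Theorem~\ref{t-A 001} with a Markov splitting at an intermediate time and the harmonicity identity for $\check V^g$ provided by Lemma~\ref{Lem_Harmonic_equation}. Denoting the left-hand integral by $\Psi_n(z,t)$, the strategy is: decompose $\Psi_n$ by the Markov property, apply Theorem~\ref{t-A 001} to the post-split segment, use harmonicity to pull out the factor $\check V^g(z,t)$, pass to the large-$n$ limit, and finally identify the result with the $\mu^{(-g)}$-integral appearing in the statement.

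First, for each $\ee>0$ I would approximate the continuous compactly supported $F$ from above and below by non-negative compactly supported $G_\ee,H_\ee\in\scr H^+_\alpha$ satisfying $H_\ee\leq_\ee F\leq_\ee G_\ee$ and $\|G_\ee-H_\ee\|_{\nu^+\otimes\Leb}\to 0$ as $\ee\to 0$ (via convolution with a smooth approximate identity). Then I would fix a sequence $k_n\to\infty$ with $k_n=o(n)$, set $m_n=n-1-k_n$, and split by the Markov property of the backward walk on $(\bb X^-_z,\nu^-_z)$:
\[ \Psi_n(z,t)=\int_{\bb X^-_z}\Psi_{m_n}\bigl((T^{-k_n}y\cdot z)_+,\,t+\check S_{k_n}g(y\cdot z)\bigr)\mathds 1_{\{\check\tau^g_t(y\cdot z)>k_n\}}\nu^-_z(dy). \]
Applying Theorem~\ref{t-A 001} with envelope $G_\ee$ to the inner $\Psi_{m_n}$ produces a main term bearing a factor $\check V^g$ evaluated at the intermediate state; iterating the one-step harmonicity of Lemma~\ref{Lem_Harmonic_equation} yields
\[ \int_{\bb X^-_z}\check V^g\bigl((T^{-k_n}y\cdot z)_+,\,t+\check S_{k_n}g(y\cdot z)\bigr)\mathds 1_{\{\check\tau^g_t>k_n\}}\nu^-_z(dy)=\check V^g(z,t), \]
so that this factor comes out of the outer integral cleanly, producing
\[ m_n\Psi_n(z,t)\leq\frac{2\check V^g(z,t)}{\sigma_g^2\sqrt{2\pi}}\int_{\bb X^+\times\bb R}G_\ee(z',u')\phi^+\!\Bigl(\frac{u'}{\sigma_g\sqrt{m_n}}\Bigr)du'\,\nu^+(dz')+R_n(\ee), \]
where $R_n(\ee)$ gathers the error terms of Theorem~\ref{t-A 001} weighted by $\nu^-_z(\check\tau^g_t>k_n)=O(1/\sqrt{k_n})$ (Lemma~\ref{Lem_Markov_chain}); the symmetric lower bound with $H_\ee$ is obtained analogously.

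Multiplying by $n^{3/2}/m_n\to 1$, sending $n\to\infty$ with $\ee=\ee_n\to 0$ along a rate chosen so that $\sqrt{n/k_n}(\ee_n^{1/4}+r_{m_n}/\ee_n^{1/4})\to 0$ and $c_{\ee_n}/\sqrt{n\,k_n}\to 0$, and using the uniform expansion $\sqrt{n}\,\phi^+(u'/(\sigma_g\sqrt{m_n}))\to u'/\sigma_g$ on the bounded support of $G_\ee$, one would sandwich the $\limsup$ and $\liminf$ of $n^{3/2}\Psi_n(z,t)$. The last step is to identify the resulting expression with $\frac{2\check V^g(z,t)}{\sigma_g^3\sqrt{2\pi}}\int F(x_+',t')\mu^{(-g)}(dx',dt')$ by invoking the defining formula of $\mu^{(-g)}$ from Theorem~\ref{Thm_Radon_Measure} applied to $-g\in\scr B^+$, together with the martingale decomposition of Proposition~\ref{lemma-martingale001}. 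The principal obstacle of this scheme is twofold: first, the delicate balance of $k_n$ and $\ee_n$ required to drive $R_n(\ee_n)$ below the $n^{-3/2}$ threshold, which forces the $\ee^{1/4}$-term to be absorbed by the survival probability gain $1/\sqrt{k_n}$ under a modest growth bound on $c_\ee$; second and more serious, the identification of the limit with the $\mu^{(-g)}$-integral, since a naive linearization of $\phi^+$ gives $\int F\,u'\,du'\,\nu^+$ whereas the density of the $\bb X^+\times\bb R$-projection of $\mu^{(-g)}$ differs from $u'$ by the bounded correction appearing in \eqref{Bound_V_f_aaa}. Closing this gap seems to require either a refinement of the main term in Theorem~\ref{t-A 001} in the compact-support regime, or—more naturally—a dual argument based on Lemma~\ref{Corollary_Duality} that converts the final segment of the backward walk into an initial segment of a forward walk with observable $-g$, whose own conditioned CLT directly delivers the density of $\mu^{(-g)}$.
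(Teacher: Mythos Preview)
Your proposal correctly names the main ingredients---a Markov split, Theorem~\ref{t-A 001}, and duality---but applies them in an order that creates precisely the two obstacles you flag, and the second of these is fatal rather than merely technical. The paper assembles the same pieces in a different order, and that reordering is what makes the argument close.

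The paper splits at $m=[n/2]$, $k=n-m$ and applies Theorem~\ref{t-A 001} to the \emph{outer} integral over the first $k$ steps, with the target function taken to be (a smoothed $\ee$-dominant of) $\Psi_m$. Because $t$ is the original fixed starting point, the restriction $t\leq t_0$ in Theorem~\ref{t-A 001} is trivially satisfied, the factor $\check V^g(z,t)$ comes out directly from the conclusion of that theorem, and no appeal to harmonicity is needed. Your scheme instead applies Theorem~\ref{t-A 001} at the intermediate state $(z',t')$ with $t'=t+\check S_{k_n}g$ unbounded, which already lies outside the stated hypotheses; and the error control then forces the coupling of $\ee_n$ to $k_n$ and an undocumented growth bound on $c_\ee$. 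In the paper's ordering one keeps $\ee$ fixed, bounds $\|\overline\Psi_m\|_{\nu^+\otimes\Leb}\leq c/\sqrt m$ via Corollary~\ref{Cor_CoarseBound} and $\|\overline\Psi_m\|_{\scr H^+_\alpha}\leq c/(\ee\sqrt m)$ via Lemma~\ref{Lem_HolderNormPsi}, and then the $J_2,J_3$ remainders of Theorem~\ref{t-A 001} are $O(\ee^{1/4}n^{-3/2})$ and $o(n^{-3/2})$; one lets $\ee\to 0$ only after $n\to\infty$ (Lemma~\ref{Lem_Approximation_FGH}).

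The decisive point is your second obstacle: the linearisation $\sqrt{n}\,\phi^+(u'/(\sigma_g\sqrt{m_n}))\to u'/\sigma_g$ produces the limit $\int F(z',u')\,u'\,du'\,\nu^+(dz')$, which is \emph{not} the $\mu^{(-g)}$-integral (the marginal density $V^{(-g)}$ differs from $u'$ by a nontrivial bounded function, cf.~\eqref{Bound_V_f_aaa}, and the $\bb X$-marginal is not $\nu$). The paper never linearises. After Theorem~\ref{t-A 001} has isolated $\check V^g(z,t)$, the main term is
\[
\frac{2\check V^g(z,t)}{\sigma_g^2\sqrt{2\pi}\,k}\int_{\bb X^+\times\bb R_+}\overline\Psi_m(z',u')\,\phi^+\!\Big(\frac{u'}{\sigma_g\sqrt k}\Big)\,du'\,\nu^+(dz'),
\]
and here $\overline\Psi_m$ still contains a survival indicator over $m\sim n/2$ steps. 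One expands $\overline\Psi_m$, applies the duality of Lemma~\ref{Corollary_Duality} to turn the backward walk with $g$ into the forward walk with $-g$, and recognises the resulting expression as an instance of the conditioned central limit theorem (Theorem~\ref{Thm-cnodi-lim-theor001}) for the observable $-g$, with $\phi^+(\cdot/\sqrt k)$ playing the role of the target on the rescaled sum. That theorem delivers $\mu^{(-g)}$ directly; the factor $\int_0^\infty(\phi^+)^2=\sqrt\pi/4$ combines with the powers of $m,k$ to produce exactly the rate $n^{-3/2}$ and the constant $2/(\sigma_g^3\sqrt{2\pi})$. Thus the duality you mention as a possible fix is not a side-route but the core of the argument, and it is applied \emph{after} Theorem~\ref{t-A 001} has extracted $\check V^g(z,t)$, to the main term rather than to $\Psi_{m_n}$ itself.
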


In the proof of this theorem, we will make use of several technical lemmas which are stated below. 
We say that a function $G$ on $\bb X^+ \times \bb R$
is $\alpha$-regular if there is a constant $c$ such that for any $(z, t)$ and $(z', t')$ in $\bb X^+ \times \bb R$,
we have $|G(z,t) - G(z',t')| \leq c( |t-t'| + \alpha^{\omega(z,z')} )$. 
In other words, a function is $\alpha$-regular if and only if it is Lipschitz continuous on $\bb X^+ \times \bb R$
when $\bb R$ is equipped with the standard distance and $\bb X^+$ is equipped with the distance 
$(z, z') \mapsto \alpha^{\omega(z,z')}$. 
The following result is similar to Lemma \ref{Lem_Inequality_Aoverline}. 
It will allow us to smooth certain functions appearing in the proof of Theorem \ref{Lem_CondiLLT_001}
in order to be able to apply Theorem \ref{t-A 001}. 
Recall that for $\ee \in (0,1)$,  
 $\chi_{\ee} (u) = 0$ for $u \leq -\ee$, 
$\chi_{\ee} (u) = \frac{u+\ee}{\ee}$ for $u \in (-\ee,0)$, 
and $\chi_{\ee} (u) = 1$ for $u \geq 0$. 

\begin{lemma}\label{Lem_HolderNormPsi}
Let $\alpha \in (0,1)$ and $g \in \scr B^+_{\alpha}$ be such that  $\nu^+ (g) = 0$.
Assume that $g$ is not a coboundary. 
Let $G$ be an $\alpha$-regular function with compact support on $\bb X^+ \times \bb R$.
For $(z,t) \in \bb X^+ \times \bb R$, $m \geq 1$ and $\ee >0$, define 
\begin{align*}
\overline \Psi_{m,\ee}(z, t) :  = 
\int_{\bb X^-_{z}}  & G \left( (T^{-m} y \cdot z)_+, t + \check S_m g(y \cdot z) \right)     \notag\\
 & \qquad \times   \chi_{\ee}  \left( t + \ee + \min_{1 \leq  j \leq m}  \check S_j g(y \cdot z) \right)  \nu^-_{z}(dy). 
\end{align*}
Then $\overline \Psi_{m,\ee} \in \scr H_{\alpha}^+$ and  $\| \overline \Psi_{m,\ee} \|_{\scr H_{\alpha}^+} \leq \frac{c}{\ee \sqrt{m}}.$
\end{lemma}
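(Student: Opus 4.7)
The plan is to mirror the structure of the proof of Lemma \ref{Lem_Inequality_Aoverline}, replacing the coarse bound $\| G\|_{\scr H^+_\alpha}$ that appeared there by the quantitative $1/\sqrt{m}$ estimate that Corollary \ref{Cor_CoarseBound} supplies under the compact support assumption on $G$. The starting observation is that the weight $\chi_\ee(t+\ee+\min_{1\leq j\leq m}\check S_j g(y\cdot z))$ is bounded above by $\mathds 1_{\{\check \tau^g_{t+2\ee}(y\cdot z)>m\}}$, since $\chi_\ee$ vanishes as soon as its argument falls below $-\ee$. This converts the constraint $\chi_\ee\neq 0$ into exactly the positivity event that Corollary \ref{Cor_CoarseBound} is designed to handle.

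First I would treat the $L^1_t(L^\infty_z)$ piece of the norm. From the domination above and the boundedness of $G$,
\[
\sup_{z\in\bb X^+}|\overline\Psi_{m,\ee}(z,t)|
\leq \sup_{z\in\bb X^+}\int_{\bb X^-_z}|G((T^{-m}y\cdot z)_+,t+\check S_m g(y\cdot z))|\mathds 1_{\{\check \tau^g_{t+2\ee}(y\cdot z)>m\}}\nu^-_z(dy).
\]
After the change of variables $t\mapsto t-2\ee$ and an application of Corollary \ref{Cor_CoarseBound} to the compactly supported function $(x,s)\mapsto |G(x,s-2\ee)|$, the resulting expression integrates over $t\in\bb R$ to at most $c/\sqrt{m}$, with $c$ depending only on $G$.

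For the H\"older-in-$z$ seminorm it suffices to handle $z,z'\in\bb X^+$ with $z_0=z'_0$, since otherwise $\omega(z,z')=0$ and the sup bound already suffices. Using Lemma \ref{Lem_Absolute_Contin} I would rewrite the integral defining $\overline\Psi_{m,\ee}(z,t)$ as an integral against $\nu^-_{z'}$ with Radon--Nikodym density $e^{\theta(y,z,z')}=1+O(\alpha^{\omega(z,z')})$. The difference $\overline\Psi_{m,\ee}(z,t)-\overline\Psi_{m,\ee}(z',t)$ then splits into four telescoping contributions: one from $e^\theta-1$; one from $G((T^{-m}y\cdot z)_+,\cdot)-G((T^{-m}y\cdot z')_+,\cdot)$, controlled by the H\"older regularity of $G$ in its first variable together with the identity $\omega((T^{-m}y\cdot z)_+,(T^{-m}y\cdot z')_+)=m+\omega(z,z')$; one from $\check S_m g(y\cdot z)-\check S_m g(y\cdot z')$, controlled by the Lipschitz regularity of $G$ in its second variable and Lemma \ref{Lem_sum_Ine}; and one from $\min_j\check S_j g(y\cdot z)-\min_j\check S_j g(y\cdot z')$, controlled via Corollary \ref{Cor_Holder_minimum} but amplified by the Lipschitz constant $1/\ee$ of $\chi_\ee$. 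In each of the four contributions what remains under the $y$-integral is a compactly supported function evaluated at $(T^{-m}y\cdot z')_+$ and $t+\check S_m g(y\cdot z')$, dominated by an indicator of the form $\mathds 1_{\{\check \tau^g_{t+2\ee}(y\cdot z')>m\}}$, so that integration in $t$ followed by Corollary \ref{Cor_CoarseBound} gives a bound of order $c\alpha^{\omega(z,z')}/\sqrt{m}$ for the first three terms and $c\alpha^{\omega(z,z')}/(\ee\sqrt{m})$ for the fourth. Dividing by $\alpha^{\omega(z,z')}$ and taking the supremum over $z,z'$ then yields the announced bound $c/(\ee\sqrt m)$.

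The main obstacle is the fourth contribution: one must express the difference $\chi_\ee(u)-\chi_\ee(u')$ in a form that preserves simultaneously the Lipschitz size $\ee^{-1}|u-u'|$ and the support restriction needed to invoke Corollary \ref{Cor_CoarseBound}. I would handle this through a standard mean-value representation realising $\chi_\ee(u)-\chi_\ee(u')$ as an integral over a subinterval of $[-\ee,0]$, along which an indicator of the type $\mathds 1_{\{\min_j \check S_j g(y\cdot z')\geq -t-2\ee\}}$ still applies up to a harmless shift of $t$ absorbed by a further change of variable, so that the remaining integrand still falls within the scope of Corollary \ref{Cor_CoarseBound}.
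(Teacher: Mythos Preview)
Your proposal is correct and follows essentially the same route as the paper: control the $L^1_t(L^\infty_z)$ piece directly via Corollary \ref{Cor_CoarseBound}, then telescope the H\"older difference into contributions from the measure change (Lemma \ref{Lem_Absolute_Contin}), the arguments of $G$ (Lemma \ref{Lem_sum_Ine} and $\alpha$-regularity), and the arguments of $\chi_\ee$ (Corollary \ref{Cor_Holder_minimum}), each fed back into Corollary \ref{Cor_CoarseBound}. For the fourth contribution you can dispense with the mean-value representation: since $|\min_j\check S_j g(y\cdot z)-\min_j\check S_j g(y\cdot z')|\leq c_0$, the difference of the two $\chi_\ee$ terms vanishes unless $t+\min_j\check S_j g(y\cdot z)\geq -c_0-2\ee$, so one may simply insert this indicator and apply the plain Lipschitz bound $|\chi_\ee(u)-\chi_\ee(u')|\leq \ee^{-1}|u-u'|$---the positivity event required by Corollary \ref{Cor_CoarseBound} then comes for free.
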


\begin{proof}
Recall that 
\begin{align*} 
\|  \overline \Psi_{m,\ee} \|_{\mathscr H^+_{\alpha}}
 =  \int_{\bb R}   \sup_{z \in \bb X^+}  | \overline \Psi_{m,\ee} \left(z, t \right)|  dt
 +  \int_{\bb R} \sup_{z, z' \in \bb X^+}  
    \frac{| \overline \Psi_{m,\ee} \left(z, t \right)  - \overline \Psi_{m,\ee} \left(z', t \right)  |}{\alpha^{\omega(z, z')}} dt. 
\end{align*}
By Corollary \ref{Cor_CoarseBound},  the first term is dominated by $c/\sqrt{m}$ for some constant $c>0$. 

For the second term, we start by noticing that
by Lemma \ref{Lem_sum_Ine}, there exists a constant $c_0>0$ such that
 for any $z, z' \in \bb X^+$ with $z_0 = z_0'$, 
$t \in \bb R$ and $y \in \bb X^-_{z}$,
\begin{align*}
& \chi_{\ee}  \left( t + \ee + \min_{1 \leq  j \leq m}  \check S_j g(y \cdot z) \right) 
  - \chi_{\ee}  \left( t  + \ee+ \min_{1 \leq  j \leq m}  \check S_j g(y \cdot z') \right)  \notag\\
& = \left( \chi_{\ee}  \left( t + \ee + \min_{1 \leq  j \leq m}  \check S_j g(y \cdot z) \right) 
  - \chi_{\ee}  \left( t + \ee  + \min_{1 \leq  j \leq m}  \check S_j g(y \cdot z') \right)  \right)  \notag\\
& \quad \times  \mathds 1_{\{ t  + \min_{1 \leq  j \leq m}  \check S_j g(y \cdot z) \geq - c_0 \}}   \notag\\
& \leq \frac{1}{\ee} 
\left|  \min_{1 \leq  j \leq m}  \check S_j g(y \cdot z) - \min_{1 \leq  j \leq m}  \check S_j g(y \cdot z') \right| 
\mathds 1_{\{ t  + \min_{1 \leq  j \leq m}  \check S_j g(y \cdot z) \geq - c_0 \}}   \notag\\
& \leq \frac{c_1}{\ee}  \alpha^{\omega(z,z')}  
  \mathds 1_{\{ t  + \min_{1 \leq  j \leq m}  \check S_j g(y \cdot z) \geq - c_0 \}},
\end{align*}
where in the last inequality we used Corollary \ref{Cor_Holder_minimum}.  
It follows that
\begin{align*}
& \int_{\bb X^-_{z}}   G \left( (T^{-m} y \cdot z)_+, t + \check S_m g(y \cdot z) \right)     \notag\\
 & \quad \times  \left| \chi_{\ee}  \left( t + \ee + \min_{1 \leq  j \leq m}  \check S_j g(y \cdot z) \right) - 
 \chi_{\ee}  \left( t + \ee + \min_{1 \leq  j \leq m}  \check S_j g(y \cdot z') \right) \right| \nu^-_{z}(dy)   \notag\\
& \leq  \frac{c_1}{\ee}  \alpha^{\omega(z,z')}
\int_{\bb X^-_{z}}  G \left( (T^{-m} y \cdot z)_+, t + \check S_m g(y \cdot z) \right) 
\mathds 1_{\{ t  + \min_{1 \leq  j \leq m}  \check S_j g(y \cdot z) \geq - c_0 \}} \nu^-_{z}(dy).
\end{align*}
By using again Corollary \ref{Cor_CoarseBound}, we get
\begin{align}\label{HolderBound1}
& \int_{\bb R} \sup_{z, z' \in \bb X^+: z_0 = z_0'}  \alpha^{-\omega(z,z')}
\int_{\bb X^-_{z}}   G \left( (T^{-m} y \cdot z)_+, t + \check S_m g(y \cdot z) \right)     \notag\\
 & \quad \times  \left| \chi_{\ee}  \left( t + \ee + \min_{1 \leq  j \leq m}  \check S_j g(y \cdot z) \right) - 
 \chi_{\ee}  \left( t + \ee + \min_{1 \leq  j \leq m}  \check S_j g(y \cdot z') \right) \right| \nu^-_{z}(dy)   \notag\\
 & \leq \frac{c_2}{\ee \sqrt{m}}.  
\end{align}
Besides, as $G$ is $\alpha$-regular and has compact support, we have 
 for any $z, z' \in \bb X^+$ with $z_0 = z_0'$, and $t \in \bb R$, 
 by Lemma \ref{Lem_sum_Ine},  
\begin{align*}
&  \int_{\bb X^-_{z}}  
\left| G \left( (T^{-m} y \cdot z)_+, t + \check S_m g(y \cdot z) \right)  
   -  G \left( (T^{-m} y \cdot z')_+, t + \check S_m g(y \cdot z') \right)  \right|  \notag\\
 & \qquad \times   \chi_{\ee}  
   \left( t + \ee + \min_{1 \leq  j \leq m}  \check S_j g(y \cdot z') \right)  \nu^-_{z}(dy)  \notag\\
& \leq c_3 \alpha^{\omega(z,z')} H(t)  
\nu^-_{z}  \left( y \in \bb X^-_{z}: t + c' + \min_{1 \leq  j \leq m}  \check S_j g(y \cdot z) \right), 
\end{align*}
for some compactly supported continuous function $H$ on $\bb R$.  
Again by Corollary \ref{Cor_CoarseBound}, we get 
\begin{align}\label{HolderBound2}
& \int_{\bb R}  \sup_{z, z' \in \bb X^+: z_0 = z_0'}  \alpha^{-\omega(z,z')}  \notag\\
& \quad \times  \int_{\bb X^-_{z}}  
\left| G \left( (T^{-m} y \cdot z)_+, t + \check S_m g(y \cdot z) \right)  
   -  G \left( (T^{-m} y \cdot z')_+, t + \check S_m g(y \cdot z') \right)  \right|   \notag\\
   & \quad \times   \chi_{\ee}  
   \left( t + \ee + \min_{1 \leq  j \leq m}  \check S_j g(y \cdot z') \right)  \nu^-_{z}(dy)  dt 
 \leq \frac{c_4}{\sqrt{m}}.  
\end{align}
Finally, for any $z, z' \in \bb X^+$ with $z_0 = z_0'$, 
$t \in \bb R$, we have 
\begin{align*}
& \int_{\bb X^-_{z}}  G \left( (T^{-m} y \cdot z')_+, t + \check S_m g(y \cdot z') \right) 
\chi_{\ee}  \left( t + \ee + \min_{1 \leq  j \leq m}  \check S_j g(y \cdot z') \right)  \nu^-_{z}(dy)  \notag\\
& = \int_{\bb X^-_{z'}}  G \left( (T^{-m} y \cdot z')_+, t + \check S_m g(y \cdot z') \right) 
\chi_{\ee}  \left( t + \ee + \min_{1 \leq  j \leq m}  \check S_j g(y \cdot z') \right) e^{\theta(y, z', z)}  \nu^-_{z'}(dy), 
\end{align*}
where $\theta$ is as in Lemma \ref{Lem_Absolute_Contin}.  
By the H\"older continuous domination of $\theta$ in Lemma \ref{Lem_Absolute_Contin}, we derive that
\begin{align}\label{HolderBound3}
& \int_{\bb R} \sup_{z, z' \in \bb X^+: z_0 = z_0'}  \alpha^{-\omega(z,z')}   \notag\\
 & \quad  \bigg| \int_{\bb X^-_{z}}  G \left( (T^{-m} y \cdot z')_+, t + \check S_m g(y \cdot z') \right) 
\chi_{\ee}  \left( t + \ee + \min_{1 \leq  j \leq m}  \check S_j g(y \cdot z') \right)  \nu^-_{z}(dy)   \notag\\
&  -  \int_{\bb X^-_{z'}}  G \left( (T^{-m} y \cdot z')_+, t + \check S_m g(y \cdot z')  \right)
\chi_{\ee}  \left( t + \ee + \min_{1 \leq  j \leq m}  \check S_j g(y \cdot z') \right)  \nu^-_{z'}(dy)  \bigg| dt  \notag\\
& \leq c_4    \int_{\bb R} 
  \int_{\bb X^-_{z'}}  G \left( (T^{-m} y \cdot z')_+, t + \check S_m g(y \cdot z') \right) 
\chi_{\ee}  \left( t + \ee + \min_{1 \leq  j \leq m}  \check S_j g(y \cdot z') \right)  \nu^-_{z'}(dy) dt  \notag\\
& \leq  \frac{c_5}{\sqrt{m}} \alpha^{\omega(z,z')}, 
\end{align}
where the last inequality follows from Corollary \ref{Cor_CoarseBound}. 
Putting together \eqref{HolderBound1}, \eqref{HolderBound2} and \eqref{HolderBound3} gives
\begin{align*}
\int_{\bb R} \sup_{z, z' \in \bb X^+: z_0 = z_0'}  
    \frac{| \overline \Psi_{m,\ee} \left(z, t \right)  - \overline \Psi_{m,\ee} \left(z', t \right)  |}{\alpha^{\omega(z, z')}} dt
    \leq  \frac{c_6 }{\ee \sqrt{m}}. 
\end{align*}
The lemma follows. 
\end{proof}

Now we write a technical version of Theorem \ref{Lem_CondiLLT_001}. 

\begin{lemma} \label{t-BB001}
Let $\alpha \in (0,1)$ and $g \in \scr B^+_{\alpha}$ be such that  $\nu^+ (g) = 0$.
Assume that for any $p \neq 0$ and $q \in \bb R$, 
the function $p g + q$ is not cohomologous to a function with values in $\bb Z$.  
Let $t \in \bb R$. 
Then,  for any $\ee \in (0,\frac{1}{8})$ and $z \in \bb X^+$,  
and for any non-negative  function $F$ 
and non-negative $\alpha$-regular compactly supported functions $G, H$ 
satisfying $H \leq_{\ee} F \leq_{\ee} G$, 
we have 
\begin{align}\label{eqt-BB001}
& \limsup_{n \to \infty} n^{3/2} \int_{\bb X^-_z}  F \left( (T^{-n} y \cdot z)_+,  t + \check S_n g(y \cdot z)  \right) 
   \mathds 1_{ \left\{ \check \tau_t^g (y \cdot z) >n - 1 \right\}} \nu^-_z(dy)    \nonumber\\
& \leq   \frac{2 \check V^{g}(z, t)}{\sigma_g^3 \sqrt{2\pi} }  
  \int_{\bb X} \int_{\bb R}   G(x_+, t)  \mu^{(-g)}(dx, dt)
\end{align}
and
\begin{align} \label{eqt-BB002}
&  \liminf_{n \to \infty} n^{3/2} \int_{\bb X^-_z}  F \left( (T^{-n} y \cdot z)_+,  t + \check S_n g(y \cdot z)  \right) 
   \mathds 1_{ \left\{ \check \tau_t^g (y \cdot z) >n -1 \right\}} \nu^-_z(dy)    \nonumber\\
&  \geq   \frac{2 \check V^{g}(z, t)}{\sigma_g^3 \sqrt{2\pi} }  
  \int_{\bb X} \int_{\bb R}   H(x_+, t)  \mu^{(-g)}(dx, dt). 
\end{align}
\end{lemma}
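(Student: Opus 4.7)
My approach is a two-scale Denisov--Wachtel-type decomposition adapted to the subshift/duality setting. First, split $n=k+m$ with $m=\lfloor\delta_n n\rfloor$ for a slowly vanishing $\delta_n\to 0$. By the Markov property of the conditional measures $\nu^-_z$,
\[ \Psi_n(z,t):=\int_{\bb X^-_z} F\big((T^{-n}(y\cdot z))_+,\, t+\check S_n g(y\cdot z)\big)\,\mathds 1_{\{\check\tau_t^g(y\cdot z)>n-1\}}\,\nu^-_z(dy) \]
decomposes as the integral of the inner quantity $\Psi_m^F((T^{-k}(y\cdot z))_+,\,t+\check S_k g(y\cdot z))$ against $\mathds 1_{\{\check\tau_t^g>k\}}\,\nu^-_z(dy)$, where $\Psi_m^F(z',t')$ is defined by the analogous formula over the last $m$ steps starting from $(z',t')$.

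Second, I smooth $\Psi_m^F$ to fit the hypotheses of Theorem~\ref{t-A 001}. Using $F\leq_\ee G$ and replacing the hard constraint $\{\check\tau_{t'}^g>m-1\}$ by a Lipschitz cut-off of the form $\chi_{\ee'}\big(t'+\ee'+\min_{1\leq j\leq m}\check S_j g\big)$ (with a suitable offset accounting for the bounded discrepancy between $\min_{j\leq m-1}$ and $\min_{j\leq m}$), one obtains an upper majorant $\overline\Psi_{m,\ee'}^G$ of $\Psi_m^F$ of the shape analysed in Lemma~\ref{Lem_HolderNormPsi}. That lemma yields $\|\overline\Psi_{m,\ee'}^G\|_{\mathscr H_\alpha^+}\leq c/(\ee'\sqrt m)$, while Corollary~\ref{Cor_CoarseBound} applied to $G$ controls $\|\overline\Psi_{m,\ee'}^G\|_{\nu^+\otimes\Leb}=O(1/\sqrt m)$. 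A symmetric construction from $H\leq_\ee F$ produces a lower minorant for the lower bound.

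Third, apply Theorem~\ref{t-A 001} to the outer integral over $k$ steps with target $\overline\Psi_{m,\ee'}^G$: this shows that $k\cdot\Psi_n(z,t)$ is bounded above by
\[ \frac{2\check V^g(z,t)}{\sigma_g^2\sqrt{2\pi}}\iint \overline\Psi_{m,\ee'}^G(z',u)\,\phi^+\!\Big(\tfrac{u}{\sigma_g\sqrt k}\Big)\,du\,\nu^+(dz')+\text{errors}, \]
the errors being of order $\ee'^{1/4}/\sqrt m$ and $c_{\ee'}/(\sqrt k\cdot \ee'\sqrt m)$. For $u$ in the effective support of $\overline\Psi_{m,\ee'}^G$ (essentially compact), $\phi^+(u/\sigma_g\sqrt k)=u/(\sigma_g\sqrt k)+O(k^{-3/2})$, so the main term linearises to $\frac{2\check V^g(z,t)}{\sigma_g^3\sqrt{2\pi}\,k^{3/2}}\iint u\,\overline\Psi_{m,\ee'}^G\,du\,d\nu^+$.

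Finally, identify this remaining integral. Restoring $\Psi_m^G$ in place of $\overline\Psi_{m,\ee'}^G$ (at cost $O(\ee')$), Lemma~\ref{Lem_Fubini} together with the duality Lemma~\ref{Corollary_Duality} (test function $(x,t,x',t')\mapsto t\,G(x'_+,t')$) rewrites it as
\[ \int_\bb R\!\!\int_\bb X\!\big(u+S_m(-g)(x)\big)\,G(x_+,u)\,\mathds 1_{\{\tau_u^{-g}(x)>m-1\}}\,\nu(dx)\,du. \]
The $u$-contribution is $O(m^{-1/2})\to 0$ by Theorem~\ref{Thm-exit-time001}, while the $S_m(-g)$-contribution converges to $\int G(x_+,u)\,\mu^{(-g)}(dx,du)$ by Theorem~\ref{Thm_Radon_Measure}. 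Multiplying by $n^{3/2}/k\to 1$ yields \eqref{eqt-BB001}; the symmetric argument with $H$ and the lower bound of Theorem~\ref{t-A 001} yields \eqref{eqt-BB002}. The main obstacle is the joint calibration of $\delta_n\to 0$ and $\ee'=\ee'_n\to 0$: one must arrange $(\ee'_n)^{1/4}/\sqrt{\delta_n}\to 0$ to kill the $L^1$-error and $c_{\ee'_n}/(\ee'_n\sqrt{n\delta_n})\to 0$ to kill the H\"older-norm error, both being achievable provided $c_\ee$ from Theorem~\ref{t-A 001} grows only polynomially in $\ee^{-1}$.
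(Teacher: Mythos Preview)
Your overall architecture (Markov split, smooth $\Psi_m$ via Lemma~\ref{Lem_HolderNormPsi}, apply Theorem~\ref{t-A 001} to the outer block, then duality) matches the paper's, but two choices create a real gap. The paper takes $m=[n/2]$ and does \emph{not} linearise $\phi^+$: after duality it applies the conditioned central limit theorem (Theorem~\ref{Thm-cnodi-lim-theor001}) to the resulting integral, which produces the exact constant $\int_0^\infty(\phi^+(t'))^2\,dt'=\sqrt{\pi}/4$ and makes the scaling work for this fixed split. Because $\ee$ stays fixed throughout, $c_\ee$ is just a constant and the $\mathscr H_\alpha^+$-error dies as $n^{-1/2}$ with no calibration whatsoever.

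Your route instead forces $\delta_n\to 0$ so that $\phi^+(u/\sigma_g\sqrt k)\approx u/(\sigma_g\sqrt k)$, and this is where it breaks. First, a minor point: the support of $\overline\Psi_{m,\ee'}^G$ in $u$ is not ``essentially compact'' but of order $\sqrt m$, so the pointwise linearisation error is $O(u^3 k^{-3/2})$, not $O(k^{-3/2})$; the integrated error is $O(m/k)=O(\delta_n)$, which still vanishes but needs the moment bound $\int u^3\,\overline\Psi_m\,du\,d\nu^+=O(m)$ that you should justify. The serious issue is the one you flag yourself: after multiplying by $n^{3/2}$, the $L^1$-error from Theorem~\ref{t-A 001} is of order $\ee'^{1/4}/\sqrt{\delta_n}$, which diverges unless $\ee'_n\to 0$, and then the $\mathscr H_\alpha^+$-error carries $c_{\ee'_n}$. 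Your proviso that $c_\ee$ grows at most polynomially in $\ee^{-1}$ is \emph{not} established anywhere in the paper: tracing back through Theorems~\ref{Lem_LLT_Nonasm} and~\ref{Theor-LLT-bound001}, this constant ultimately depends on the spectral bounds $c_K,c_K'$ of Lemma~\ref{Lem_StrongNonLattice} on the growing compact $K=[-\ee^{-2},\ee^{-2}]$, for which no quantitative rate is available. A sufficiently slow diagonal choice of $(\delta_n,\ee'_n)$ could in principle absorb any growth of $c_\ee$, but you have not supplied it, and the further unquantified sequence $r_n$ makes this delicate. The clean fix is to abandon $\delta_n\to 0$: keep $m=[n/2]$, drop the linearisation, and identify the main term via duality and Theorem~\ref{Thm-cnodi-lim-theor001} as the paper does.
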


\begin{proof}
We first prove \eqref{eqt-BB001}.  
As in \eqref{KKK-markov property}, denote, for $z \in \bb X^+$ and $t \in \bb R$,  
\begin{align*}
\Psi_n (z,t) = \int_{\bb X^-_z}  F \left( (T^{-n} y \cdot z)_+, t + \check S_n g(y \cdot z) \right) 
    \mathds 1_{ \left\{ \check \tau_t^g (y \cdot z) >n - 1 \right\}} \nu^-_z(dy). 
\end{align*}
Set $m=\left[ n/2 \right]$ and $k = n-m.$   
By the Markov property we have that for any $z \in \bb X^+$ and $t \in \bb R$, 
\begin{align}\label{JJJ-markov property_thmB}
\Psi_n (z,t) = \int_{\bb X^-_z} \Psi_m \left( (T^{-k} y \cdot z)_+, t + \check S_k g(y \cdot z) \right) 
  \mathds 1_{ \left\{ \check \tau_t^g (y \cdot z) > k \right\}} 
 \nu^-_z(dy). 
\end{align}
For any $z' \in \bb X^+$ and $t' \in \bb R$, 
we set 
\begin{align*} 
\overline \Psi_m(z', t') :  = 
\int_{\bb X^-_{z'}}  & G \left( (T^{-m} y \cdot z')_+, t' + \check S_m g(y \cdot z') \right)     \notag\\
 & \qquad \times   \chi_{\ee}  \left( t' + \ee + \min_{1 \leq  j \leq m}  \check S_j g(y \cdot z') \right)  \nu^-_{z'}(dy). 
\end{align*}
By using $F \leq_{\ee} G$, we get that $\Psi_m \leq_{\ee}  \overline \Psi_m$.
Note that by Lemma \ref{Lem_HolderNormPsi}, 
the function $\overline \Psi_m$ belongs to the space $\mathscr H_{\alpha}^+$,
so that we are exactly in the setting of Theorem \ref{t-A 001}.  
Therefore, using the bound \eqref{eqt-A 001} of Theorem \ref{t-A 001}, we get 
\begin{align*}
\Psi_n (z,t) 
& \leq  \frac{2 \check V^{g}(z, t)}{\sigma_g^2 \sqrt{2\pi} k } 
  \int_{\bb X^+}  \int_{\mathbb R_+}  \overline \Psi_m  \left(z', u' \right)
\phi^+ \left( \frac{u'}{\sigma_g \sqrt{k}} \right) du' \nu^+(dz')   \nonumber\\
& \quad + \frac{c}{k} \left( \ee^{1/4}  +   \frac{r_k}{\ee^{1/4}} \right)  \| \overline \Psi_m \|_{\nu^+ \otimes \Leb }  
    +   \frac{c_{\ee}}{ k^{3/2} }  \|  \overline \Psi_m \|_{\mathscr H^+_{\alpha}}   \notag\\
&  =: J_1 + J_2 + J_3. 
\end{align*}

For $J_1$, applying the duality (Lemma \ref{Corollary_Duality}), we deduce that
\begin{align*}
&  \int_{\bb X^+}  \int_{\mathbb R_+}  \overline \Psi_m  \left(z, u \right)
\phi^+ \left( \frac{u}{\sigma_g \sqrt{k}} \right) du \nu^+(dz)  \\
& \leq   \int_{\bb X^+} \int_{\bb R_+}
\int_{\bb X^-_{z}}  G \left( (T^{-m} y \cdot z)_+,  u + \check S_m g(y \cdot z) \right)   \notag\\
& \qquad  \times     \mathds 1_{ \left\{ \check \tau_{u + 2\ee}^g (y \cdot z) > m - 1 \right\}} \nu^-_{z}(dy) 
     \phi^+ \left( \frac{u}{\sigma_g \sqrt{k}} \right)  du \nu^+(dz) \notag\\
 & =   \int_{\bb X} \int_{\bb R_+}
   G \left( (T^{-m} x)_+,  u + \check S_m g(x) \right) \phi^+ \left( \frac{u}{\sigma_g \sqrt{k}} \right)
    \mathds 1_{ \left\{ \check \tau_{u + 2\ee}^g (x) > m - 1 \right\}} 
       du  \nu(dx)   \notag\\
  & =  \int_{\bb X} \int_{\bb R}
   G \left( x_+,  t - \ee \right)  \phi^+ \left( \frac{t - S_m g(x) - 2\ee }{\sigma_g \sqrt{k}} \right)
      \mathds 1_{ \left\{  \tau_{t}^{-g} (x) > m - 1 \right\}}    dt  \nu(dx).  
\end{align*}
Using the conditioned central limit theorem (Theorem \ref{Thm-cnodi-lim-theor001}),
we get 
\begin{align*}
& \lim_{n \to \infty}  \sigma_g  \sqrt{2 \pi m} 
\int_{\bb X} \int_{\bb R} 
   G \left( x_+,  t - 2\ee \right)  \phi^+ \left( \frac{t - S_m g(x) - \ee }{\sigma_g \sqrt{k}} \right)
      \mathds 1_{ \left\{  \tau_{t }^{-g} (x) > m - 1 \right\}}    dt  \nu(dx)  \notag\\
 & =   2  \int_{\bb X}  \int_{\bb R}   G(x_+, t - 2\ee)  \mu^{-g}(dx, dt)
       \int_{\bb R_+}   \left(  \phi^+(t')  \right)^2 dt'   \notag\\
 & = \frac{\sqrt{\pi}}{2} \int_{\bb X} \int_{\bb R}   G(x_+, t - 2\ee)  \mu^{(-g)}(dx, dt).  
\end{align*}
Therefore, we obtain 
\begin{align*}
\lim_{n \to \infty} n^{3/2} J_1
=  \frac{2 \check V^{g}(z, t)}{\sigma_g^3 \sqrt{2\pi} }  
  \int_{\bb X} \int_{\bb R}   G(x_+, t - 2\ee)  \mu^{(-g)}(dx, dt).  
\end{align*}

For $J_2$,  by Corollary \ref{Cor_CoarseBound}, we have 
\begin{align*} 
 \| \overline \Psi_m \|_{\nu^+ \otimes \Leb }  
 \leq \frac{c}{\sqrt{m}}. 
\end{align*}
Taking into account that $m=[n/2]$ and $k=n-m, $ we get $\limsup_{n \to \infty} n^{3/2} J_2   \leq c \ee^{1/4}.$

For $J_3$, by Lemma \ref{Lem_HolderNormPsi}, we have $\lim_{n \to \infty} n^{3/2} J_3 = 0$. 
This finishes the proof of the upper bound. 
The proof of the lower bound can be carried out in the same way. 
\end{proof}

From Lemma \ref{t-BB001}, we get Theorem \ref{Lem_CondiLLT_001} by a standard approximation procedure.

\begin{lemma}\label{Lem_Approximation_FGH}
Fix $\alpha \in (0,1)$. 
Let $F$ be a non-negative continuous compactly supported function on $\bb X^+ \times \bb R$.
Then, there exist a decreasing sequence $(G_n)_{n \geq 1}$ and an increasing sequence $(H_n)_{n \geq 1}$
of compactly supported $\alpha$-regular functions, 
such that $H_n \leq_{1/n} F \leq_{1/n} G_n$ for any $n \geq 1$, and 
$G_n$ and $H_n$ converge uniformly to $F$ as $n \to \infty$. 
\end{lemma}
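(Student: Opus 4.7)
The plan is to build $G_n$ and $H_n$ in two steps: first shift-envelope $F$ in the $t$-variable to secure the domination $F\leq_{1/n}G_n$ and $H_n\leq_{1/n}F$, then Lipschitz-regularize with respect to the natural product metric $d\bigl((z,t),(z',t')\bigr)=\alpha^{\omega(z,z')}+|t-t'|$ on $\bb X^+\times\bb R$. Note that a function is $\alpha$-regular in the sense introduced before Lemma~\ref{Lem_HolderNormPsi} precisely when it is Lipschitz with respect to $d$.

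For each $n\geq 1$ I would set
\[
\hat F_n(z,t)=\sup_{|v|\leq 1/n} F(z,t+v),\qquad \check F_n(z,t)=\inf_{|v|\leq 1/n} F(z,t+v),
\]
which are continuous (by the uniform continuity of $F$), non-negative, compactly supported, and satisfy $F\leq_{1/n}\hat F_n$, $\check F_n\leq_{1/n}F$, with $\hat F_n,\check F_n\to F$ uniformly. Then, for a large parameter $N$, I would form the McShane--Whitney Lipschitz regularizations
\[
G^{N}_n(z,t)=\sup_{(z',t')}\bigl[\hat F_n(z',t')-Nd((z,t),(z',t'))\bigr],
\]
\[
H^{N}_n(z,t)=\inf_{(z',t')}\bigl[\check F_n(z',t')+Nd((z,t),(z',t'))\bigr].
\]
Standard arguments give that $G^N_n,H^N_n$ are $N$-Lipschitz for $d$ (hence $\alpha$-regular), that $G^N_n\geq \hat F_n$ and $H^N_n\leq \check F_n$, and that both converge uniformly to $\hat F_n,\check F_n$ as $N\to\infty$. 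Compact support and non-negativity come for free for $H^N_n$: at any $(z,t)$ where $\check F_n(z,t)=0$, taking $(z',t')=(z,t)$ in the infimum forces $H^N_n(z,t)\leq 0$, which combined with $H^N_n\geq\inf\check F_n=0$ gives $H^N_n(z,t)=0$. For $G^N_n$ one must truncate: passing to $\max(G^N_n,0)$ preserves $\alpha$-regularity and non-negativity, cuts off the linear decay outside the $(\|F\|_\infty/N)$-neighbourhood of $\supp(\hat F_n)$, and keeps the domination $\geq \hat F_n\geq_{1/n} F$. Picking $N_n$ large enough produces auxiliary sequences $\tilde G_n,\tilde H_n$ with all the desired properties save monotonicity.

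To enforce monotonicity I would set $G_n=\min(\tilde G_1,\ldots,\tilde G_n)$ and $H_n=\max(\tilde H_1,\ldots,\tilde H_n)$; the minimum or maximum of finitely many $\alpha$-regular, non-negative, compactly supported functions remains in that class. The required dominations follow from the elementary observation that $|v|\leq 1/n$ implies $|v|\leq 1/j$ for every $j\leq n$, so $F\leq_{1/j}\tilde G_j$ upgrades to $F\leq_{1/n}\tilde G_j$ and thus to $F\leq_{1/n}G_n$; the argument for $H_n\leq_{1/n}F$ is symmetric. Uniform convergence follows by sandwiching: $F\leq G_n\leq \tilde G_n\to F$ and $\tilde H_n\leq H_n\leq F$ with $\tilde H_n\to F$. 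The only delicate step is the compact-support issue for the upper sequence, which the $\max(\cdot,0)$ truncation resolves once $N_n$ is chosen large enough compared to $\|F\|_\infty$ divided by the distance from $\supp(F)$ to the desired enlarged support.
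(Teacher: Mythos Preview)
The paper does not actually prove this lemma; it states it as a ``standard approximation procedure'' and moves on. Your construction is correct and fills in the omitted details. The two-stage approach (shift-envelope in $t$ to secure the $\leq_{1/n}$ relation, then McShane--Whitney regularize in the product metric) is a clean way to produce $\alpha$-regular approximants, and the final $\min/\max$ trick for monotonicity together with the sandwich argument for uniform convergence are exactly right.

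One minor remark on your exposition: the compact support of $\max(G_n^N,0)$ does not require $N_n$ to be large --- for \emph{any} $N>0$ the truncated function vanishes outside the $\|\hat F_n\|_\infty/N$-neighbourhood of $\supp(\hat F_n)$, which is compact because $\bb X^+$ is compact and the $t$-support of $\hat F_n$ is bounded. The largeness of $N_n$ is needed only to force $\|G_n^{N_n}-\hat F_n\|_\infty$ (and hence $\|\tilde G_n-F\|_\infty$) to zero, which you use in the sandwich $F\leq G_n\leq \tilde G_n$.
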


\begin{proof}[Proof of Theorem \ref{Lem_CondiLLT_001}]
This follows directly from Lemmas \ref{t-BB001} and \ref{Lem_Approximation_FGH}.  
\end{proof}

From Theorem \ref{Lem_CondiLLT_001}
 we deduce a new lemma in which the target function $F$ may depend on the past coordinates.

\begin{lemma}\label{Lem_CondiLLT_thm1}
Let $g \in \scr B^+$ be such that  $\nu^+ (g) = 0$.
Assume that for any $p \neq 0$ and $q \in \bb R$, 
the function $p g + q$ is not cohomologous to a function with values in $\bb Z$.  
Let 
$F$ be a continuous compactly supported function on $\bb X \times \bb R$. 
Then, uniformly in $z \in \bb X_+$ and $t$ in a compact subset of $\bb R$,  
\begin{align*}
& \lim_{n \to \infty}  n^{3/2} \int_{\bb X^-_z}  F \left( T^{-n} (y \cdot z),  t + \check S_n g(y \cdot z)  \right) 
   \mathds 1_{ \left\{ \check \tau_t^g (y \cdot z) >n - 1 \right\}} \nu^-_z(dy)    \nonumber\\
& =   \frac{2 \check V^{g}(z, t)}{\sqrt{2\pi} \sigma_g^3}  
  \int_{\bb X \times \mathbb R}  F \left(x', t' \right)  \mu^{(-g)}(dx', dt'). 
  \end{align*}

\end{lemma}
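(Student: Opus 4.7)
The plan is to extend Theorem \ref{Lem_CondiLLT_001} from target functions on $\bb X^+ \times \bb R$ to target functions on $\bb X \times \bb R$ by the approximation-plus-conditioning strategy used to deduce Lemma \ref{Lem_CondiCLT_target1} from Lemma \ref{Lem_CondiCLT}. By linearity and a standard cylinder/uniform-continuity approximation, it suffices to treat test functions of the form
\[
F(x,t) = \mathds 1_{\{x_{-m}=a_{-m},\,\ldots,\,x_{-1}=a_{-1}\}}\,F_1(x_+,t),
\]
with $m \geq 0$, $(a_{-m},\ldots,a_{-1})$ an admissible word in $A$, and $F_1$ continuous and compactly supported on $\bb X^+ \times \bb R$.

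For such an $F$ and $n \geq m$, inside the left-hand side of the lemma all factors except the past-cylinder indicator in $F(T^{-n}(y\cdot z),\cdot)$ depend only on $(y_{-n},\ldots,y_{-1},z)$, since $g \in \scr B^+$; the indicator in turn fixes the deep-past coordinates $y_{-n-m},\ldots,y_{-n-1}$. First I would integrate over these deep-past coordinates: by the Gibbs/Markov structure of $\nu^-_z$ recalled in \eqref{Def_nu_kz}, the conditional probability of this cylinder event given $(y_{-n},\ldots,y_{-1})$ equals $e^{-S_m \psi(a \cdot z')}\mathds 1_{\{M(a_{-1},z'_0)=1\}}$ with $z' = (T^{-n}(y\cdot z))_+$. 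Consequently the integral in the lemma rewrites as
\[
\int_{\bb X^-_z} F_2\bigl((T^{-n}(y\cdot z))_+,\, t+\check S_n g(y\cdot z)\bigr)\,\mathds 1_{\{\check \tau_t^g(y\cdot z) > n-1\}}\,\nu^-_z(dy),
\]
where $F_2(z',t) := e^{-S_m\psi(a\cdot z')}\,\mathds 1_{\{M(a_{-1},z'_0)=1\}}\,F_1(z',t)$ is continuous and compactly supported on $\bb X^+ \times \bb R$. Applying Theorem \ref{Lem_CondiLLT_001} with target $F_2$ then yields the asymptotic
\[
\frac{2\check V^g(z,t)}{\sqrt{2\pi}\,\sigma_g^3}\int_{\bb X\times\bb R} F_2(x'_+,t')\,\mu^{(-g)}(dx',dt'),
\]
uniformly in $z \in \bb X^+$ and in $t$ on compact sets.

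The final step is to verify the identity $\int_{\bb X\times\bb R} F_2(x'_+,t')\,\mu^{(-g)}(dx',dt') = \int_{\bb X\times\bb R} F(x',t')\,\mu^{(-g)}(dx',dt')$. Using the limit characterization of $\mu^{(-g)}$ in Theorem \ref{Thm_Radon_Measure} and the fact that both $S_n(-g)(x')$ and $\tau_{t'}^{-g}(x')$ depend only on $x'_+$ (since $g \in \scr B^+$), the Fubini formula of Lemma \ref{Lem_Fubini} lets me integrate the past coordinates of $x'$ in each defining limit. For $\varphi = F$ the $\nu^-_{z'}$-integration of the past indicator produces exactly the factor $e^{-S_m\psi(a\cdot z')}\mathds 1_{\{M(a_{-1},z'_0)=1\}}$, which against $F_1$ gives $F_2$; for $\varphi(x',t') = F_2(x'_+,t')$ it produces the factor $1$. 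Both sides therefore reduce to the same one-sided limit
\[
\lim_{n\to\infty}\int_{\bb X^+\times\bb R} F_2(z',t')\,S_n(-g)(z')\,\mathds 1_{\{\tau_{t'}^{-g}(z')>n\}}\,\nu^+(dz')\,dt',
\]
and hence coincide.

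The main technical point, and the only step requiring some care, is this last identification: justifying the interchange of the $\nu^-_{z'}$-integration over the past with the vague limit defining $\mu^{(-g)}$ for each of the two test functions. This will be handled by the uniform estimates underlying Theorem \ref{Proposition exist harm func_rho} (in particular the uniform bounds in $z$ from Lemma \ref{Lemma exist of harm func(copy)Radon}) together with Lemma \ref{Lem_Fubini}; the remaining steps are routine bookkeeping.
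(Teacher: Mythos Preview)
Your proposal is correct and follows essentially the same approach as the paper's proof: reduce to cylinder test functions, integrate out the deep-past coordinates to replace $F$ by the one-sided function $F_2(z',t') = e^{-S_m\psi(a\cdot z')}F_1(z',t') = \int_{\bb X^-_{z'}} F(y\cdot z',t')\,\nu^-_{z'}(dy)$, apply Theorem \ref{Lem_CondiLLT_001}, and then identify the two $\mu^{(-g)}$-integrals via Theorem \ref{Thm_Radon_Measure} and Lemma \ref{Lem_Fubini}. The only minor remark is that the final identification is more direct than you suggest: since $g\in\scr B^+$, both $F$ and $(x,t)\mapsto F_2(x_+,t)$ are continuous compactly supported functions on $\bb X\times\bb R$, so Theorem \ref{Thm_Radon_Measure} applies to each directly and the equality follows from the single observation $\int_{\bb X^-_{z'}}F(y\cdot z',t')\,\nu^-_{z'}(dy)=F_2(z',t')$, without needing the uniform-in-$z$ machinery you cite.
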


\begin{proof}
As in the proof of Lemma \ref{Lem_CondiCLT_target1}, 
it suffices to prove this result when $F$ is of the form 
$(x,t) \mapsto \mathds 1_{\{x_{-m} = a_{-m}, \ldots, x_{-1} = a_{-1} \} } F_1(x_+, t)$,
where $a_{-m}, \ldots, a_{-1}$ are in $A$
such that  $M(a_{i-1}, a_{i}) = 1$ for $-m+1 \leq i \leq -1$,
 and $F_1$ is a continuous compactly supported function on 
$\bb X^+\times \bb R$. 
For such a function, we have 
\begin{align*}
& \int_{\bb X^-_z} 
F \left( T^{-n}(y \cdot z),  \check S_n g (y \cdot z) \right)
    \mathds 1_{ \left\{ \check \tau_t^g(y \cdot z) >n -1 \right\} }   \nu^-_z(dy)   \notag\\
& =  \int_{\bb X^-_z} 
F_2 \left( \left( T^{-n}(y \cdot z) \right)_+,  \check S_n g (y \cdot z)  \right)
    \mathds 1_{ \left\{ \check \tau_t^g(y \cdot z) >n -1   \right\} }   \nu^-_z(dy), 
\end{align*}
where, for $(z', t') \in \bb X^+ \times \bb R$, 
\begin{align*}
F_2(z',t') =  \exp(- S_m \psi(a_{-m} \ldots a_{-1} \cdot z')) F_1(z', t')
=  \int_{\bb X_{z'}^-} F(y\cdot z', t') \nu^-_{z'}(dy). 
\end{align*}
Since $F_2(\cdot, t')$ depends only on the future, we can apply Theorem \ref{Lem_CondiLLT_001}, 
which gives 
\begin{align*}
\lim_{n \to \infty}  n^{3/2}   \int_{\bb X^-_z} 
&F_2 \left( \left( T^{-n}(y \cdot z) \right)_+,  \check S_n g (y \cdot z) \right)
    \mathds 1_{ \left\{ \check \tau_t^g(y \cdot z) >n - 1 \right\} }   \nu^-_z(dy) \nonumber \\
&= \frac{2 \check V^{g}(z, t)}{\sqrt{2\pi} \sigma_g^3}  
  \int_{\bb X \times \mathbb R}  F_2 \left(x'_+, t' \right)  \mu^{(-g)}(dx', dt'). 
\end{align*}
To conclude, it remains to show that 
\begin{align*}
\int_{\bb X \times \mathbb R}  F_2 \left(x_+, t \right)  \mu^{(-g)}(dx, dt)
= \int_{\bb X \times \mathbb R}  F \left(x, t \right)  \mu^{(-g)}(dx, dt). 
\end{align*}
Indeed, by the definition of the measure $\mu^{(-g)}$ (see Theorem \ref{Thm_Radon_Measure}) 
and by using Lemma \ref{Lem_Fubini},  we get
\begin{align*}
& \int_{\bb X \times \mathbb R}  F \left(x, t \right)  \mu^{(-g)}(dx, dt)  \notag\\
& = \lim_{n \to \infty} \int_{\bb X \times \mathbb R}  
  F \left(x, t \right) (- S_n g(x) ) \mathds 1_{\{ \tau_t^{(-g)} (x) > n \}} \nu(dx) dt  \notag\\
& =  \lim_{n \to \infty} \int_{\bb X_+} \int_{\mathbb R} (- S_n g(z) )  \mathds 1_{\{ \tau_t^{(-g)}(z) > n \}} 
   \int_{\bb X_z^-} F(y\cdot z, t) \nu^-_z(dy) dt  \nu^+(dz)  \notag\\
& =  \lim_{n \to \infty} \int_{\bb X_+} \int_{\mathbb R} (-S_n g(z) )  \mathds 1_{\{ \tau_t^{(-g)}(z) > n \}}  
   F_2(z,t)  dt  \nu^+(dz)  \notag\\
& =  \int_{\bb X \times \mathbb R}  F_2 \left(x_+, t \right)  \mu^{(-g)}(dx, dt), 
\end{align*}
which ends the proof of the lemma. 
\end{proof}

Now we will put a target on the starting point $y \in \bb X^-_z$. 

\begin{lemma}\label{Lem_CondiCLLT003}
Let $g \in \scr B^+$ be such that  $\nu^+ (g) = 0$.
Assume that for any $p \neq 0$ and $q \in \bb R$, 
the function $p g + q$ is not cohomologous to a function with values in $\bb Z$.  
Then, 
for any $(z, t) \in \bb X^+ \times \bb R$
and any continuous compactly supported function $F$ on $\bb X_z^- \times \bb X \times \bb R$, 
we have 
\begin{align*}
& \lim_{n \to \infty}  n^{3/2} \int_{\bb X^-_z}  F \left(y,  T^{-n} y \cdot z,  t + \check S_n g(y \cdot z)  \right) 
   \mathds 1_{ \left\{ \check \tau_t^g (y \cdot z) >n - 1 \right\}} \nu^-_z(dy)    \nonumber\\
& =   \frac{2 \check V^{g}(z, t)}{\sqrt{2\pi} \sigma_g^3}  
  \int_{\bb X_z^- \times \bb X \times \bb R}  F \left(y', x', t' \right)  
   \mu^{(-g)}(dx', dt') \check \mu^{g,-}_{z,t} (dy'). 
  \end{align*}

\end{lemma}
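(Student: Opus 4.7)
The plan is to reduce Lemma~\ref{Lem_CondiCLLT003} to Lemma~\ref{Lem_CondiLLT_thm1} by absorbing the dependence on $y$ into cylinder indicators in $\bb X^-_z$. This follows exactly the same pattern used in the passage from Lemma~\ref{Lem_Harmonic_Function_V} to Lemma~\ref{Lem_Harmonic_Function_rho}, from Lemma~\ref{Lem_Markov_chain} to Lemma~\ref{Lem_Markov_chain_Target}, and from Lemma~\ref{Lem_CondiCLT_target1} to Lemma~\ref{Lem_CondiCLT_target2}.

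\textbf{Step 1 (cylinder approximation).} Since $\bb X^-_z$ is compact and $F$ is continuous with compact support, uniform continuity gives, for every $\ee>0$, an integer $m \geq 1$ such that $|F(y,x,t) - F(y',x,t)| \leq \ee$ whenever $y_{-1}=y'_{-1},\ldots,y_{-m}=y'_{-m}$. Picking for each $a \in A_z^m$ a representative $y_a \in \bb C_{a,z}$ and setting $F_m(y,x,t) = \sum_{a \in A_z^m} \mathds 1_{\bb C_{a,z}}(y) F(y_a, x, t)$, one obtains $|F - F_m|(y,x,t) \leq \ee\, \chi(x,t)$ for some continuous compactly supported $\chi$ on $\bb X \times \bb R$ dominating the support of $F$. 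Applying Theorem~\ref{Lem_CondiLLT_001} to $\chi$ gives
\begin{align*}
\limsup_{n\to\infty} n^{3/2} \int_{\bb X^-_z} \chi\bigl(T^{-n}(y\cdot z),\, t+\check S_n g(y\cdot z)\bigr) \mathds 1_{\{\check\tau_t^g(y\cdot z)>n-1\}} \nu_z^-(dy) \leq C,
\end{align*}
so the contribution of $F - F_m$ is at most $\ee C$ for $n$ large. By linearity it suffices to treat $F(y,x,t) = \mathds 1_{\bb C_{a,z}}(y) F_1(x,t)$ for fixed $a \in A_z^m$ and continuous compactly supported $F_1$ on $\bb X \times \bb R$.

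\textbf{Step 2 (unfolding the cylinder).} For $y \in \bb C_{a,z}$, write $y = y' \cdot a$ with $y' \in \bb X^-_{a\cdot z}$; then $y \cdot z = T^m(y' \cdot (a \cdot z))$. Setting $w = y' \cdot (a\cdot z)$ and using that $g \in \scr B^+$ (so $g(T^i w) = g(T^i(a\cdot z))$ for $0 \leq i \leq m-1$), one has $T^{-n}(y\cdot z) = T^{-(n-m)} w$ and the cocycle identity $\check S_n g(y\cdot z) = S_m g(a\cdot z) + \check S_{n-m} g(w)$ for $n \geq m$. Moreover, writing $t'' = t + S_m g(a \cdot z)$, the stopping-time indicator factorizes as
\begin{align*}
\mathds 1_{\{\check\tau_t^g(y\cdot z) > n-1\}} = \mathds 1_{C_{a,z,t}} \cdot \mathds 1_{\{\check\tau_{t''}^g(w) > n - m - 1\}},
\end{align*}
where $C_{a,z,t}$ is the deterministic condition $t + \check S_j g(y\cdot z) \geq 0$ for $1 \leq j \leq m$, which depends only on $(a,z,t)$. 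Combined with the cylinder Fubini identity $\int_{\bb X^-_z} \mathds 1_{\bb C_{a,z}}(y) \varphi(y') \nu_z^-(dy) = e^{-S_m\psi(a\cdot z)} \int_{\bb X^-_{a\cdot z}} \varphi(y') \nu^-_{a\cdot z}(dy')$ (a direct consequence of \eqref{Def_nu_kz}), the integral in the lemma becomes $e^{-S_m\psi(a\cdot z)} \mathds 1_{C_{a,z,t}}$ times an integral that fits exactly the framework of Lemma~\ref{Lem_CondiLLT_thm1} at the starting point $a\cdot z \in \bb X^+$ with initial value $t''$, target $F_1$, and length $n-m$.

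\textbf{Step 3 (identifying the limit).} Applying Lemma~\ref{Lem_CondiLLT_thm1} and using $(n-m)^{3/2}/n^{3/2} \to 1$, one obtains, as $n \to \infty$,
\begin{align*}
n^{3/2} \int F(\cdots) \nu_z^-(dy) \;\to\; e^{-S_m\psi(a\cdot z)} \mathds 1_{C_{a,z,t}} \cdot \frac{2 \check V^g(a\cdot z, t'')}{\sqrt{2\pi}\, \sigma_g^3} \int_{\bb X \times \bb R} F_1(x',t') \mu^{(-g)}(dx',dt').
\end{align*}
The defining formula \eqref{Measure_rho} for $\check\mu^{g,-}_{z,t}$ on cylinders gives $e^{-S_m\psi(a\cdot z)} \check V^g(a\cdot z, t'') \mathds 1_{C_{a,z,t}} = \check V^g(z,t)\, \check\mu^{g,-}_{z,t}(\bb C_{a,z})$, and the right-hand side above matches the target expression for $F(y,x,t) = \mathds 1_{\bb C_{a,z}}(y) F_1(x,t)$. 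When $C_{a,z,t}$ fails, both sides vanish identically for $n > m$, consistent with $\check\mu^{g,-}_{z,t}(\bb C_{a,z}) = 0$. The main obstacle is the uniform control of the approximation error in Step~1, which is resolved by the effective upper bound (Lemma~\ref{t-BB001}) applied to the fixed dominator $\chi$; the remainder is bookkeeping using the cocycle and Fubini identities above.
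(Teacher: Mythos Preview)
Your proof is correct and follows essentially the same approach as the paper: reduce to functions of the form $\mathds 1_{\bb C_{a,z}}(y)\,F_1(x,t)$, unfold the cylinder via the cocycle identity $\check S_n g(y\cdot z) = S_m g(a\cdot z) + \check S_{n-m} g(w)$, apply Lemma~\ref{Lem_CondiLLT_thm1} at the shifted point $(a\cdot z,\, t+S_m g(a\cdot z))$, and identify the prefactor using the defining formula~\eqref{Measure_rho} for $\check\mu^{g,-}_{z,t}$. The only difference is that you make the density/approximation step explicit (your Step~1), whereas the paper simply writes ``as usual, it suffices''; your control of the remainder via a fixed dominator $\chi$ and Lemma~\ref{Lem_CondiLLT_thm1} is exactly the right justification for that phrase.
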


\begin{proof}
As usual, it suffices to prove the lemma when $F$ is of the form 
$(y,x,t') \mapsto \mathds 1_{\bb C_{a,z}}(y) G(x,t')$,
where $a \in A_z^m$ and $G$ is a continuous compactly supported function on $\bb X \times \bb R$.

If  $t + S_k g(T^{m-k} (a \cdot z)) \geq 0$ for every $1 \leq k \leq m$, we have that for $n > m$,
\begin{align*}
&  n^{3/2}  \int_{\bb X^-_z}  
  F \left(y, T^{-n} y \cdot z,  t + \check S_n g(y \cdot z)  \right) 
 \mathds 1_{ \left\{ \check \tau_t^g(y \cdot z) >n - 1 \right\} } \nu^-_z(dy)   \notag\\
&  =  n^{3/2}  \exp(-S_m \psi(a \cdot z))    \notag\\
& \qquad \times   \int_{\bb X^-_{a \cdot z} }   
  G \left(T^{-(n-m)} y \cdot (a \cdot z),  t +  S_m g(a \cdot z) + \check S_{n-m} g (y \cdot (a \cdot z)) \right)
   \notag\\
& \qquad \times 
   \mathds 1_{ \left\{ \check \tau_{t + S_m g( a \cdot z )}^g(y \cdot (a \cdot z)) >n - m-1 \right\} } 
     \nu^-_{a \cdot z}(dy). 
\end{align*}
By Lemma \ref{Lem_CondiLLT_thm1}, as $n \to \infty$, the latter quantity converges to 
\begin{align*}
\frac{2 \check V^{g}(a \cdot z, t + S_m g( a \cdot z) )}{\sqrt{2\pi} \sigma_g^3}  
  \int_{\bb X \times \mathbb R}  G \left(x', t' \right)  \mu^{(-g)}(dx', dt') 
  \exp(-S_m \psi(a \cdot z)), 
\end{align*}
which, by the definition of measure $\check \mu^{g,-}_{z,t}$ (see \eqref{Measure_rho}), is equal to 
\begin{align*}
& \frac{2 \check V^{g}(z, t)}{\sqrt{2\pi} \sigma_g^3}  \check \mu^{g,-}_{z,t} (\bb C_{a,z}) 
  \int_{\bb X \times \bb R}  G \left(x', t' \right)  \mu^{(-g)}(dx', dt')  \notag\\
&= \frac{2 \check V^{g}(z, t)}{\sqrt{2\pi} \sigma_g^3}  
  \int_{\bb X_z^- \times \bb X \times \bb R}  F \left(y', x', t' \right)  
   \mu^{(-g)}(dx', dt') \check \mu^{g,-}_{z,t} (dy'). 
\end{align*}

If there exists $1 \leq k \leq m$ with $t + S_k g(T^{m-k} (a \cdot z)) <0$, 
we have $\check \mu^{g,-}_{z,t} (\bb C_{a,z}) = 0$
and
\begin{align*}
\int_{\bb X^-_z}  
  F \left(y, T^{-n} y \cdot z,  t + \check S_n g(y \cdot z)  \right) 
 \mathds 1_{ \left\{ \check \tau_t^g(y \cdot z) >n \right\} } \nu^-_z(dy)  = 0
\end{align*}
for $n > k$. 
The conclusion follows. 
\end{proof}

As usual, from Lemma \ref{Lem_CondiCLLT003},  
we want to deduce the analogous result for functions which depend only on finitely many negative coordinates. 
We shall use the following easy formula that relates the measures $\mu^g$ and $\mu^{g \circ T}$. 

\begin{lemma}\label{Lem_Shift_Invariance_mu}
Let $g \in \scr B$ be such that $\nu(g) = 0$ and $g$ is not a coboundary.
Then, for any continuous compactly supported function $F$ on $\bb X \times \bb R$, we have 
\begin{align*}
\int_{\bb X \times \bb R} F(x, t) \mu^{g \circ T} (dx, dt)
= \int_{\bb X \times \bb R} F(T^{-1} x, t) \mu^{g} (dx, dt).  
\end{align*}
\end{lemma}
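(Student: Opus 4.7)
The plan is to use the defining limit of the harmonic measure from Theorem \ref{Thm_Radon_Measure} combined with the $T$-invariance of $\nu$, so the proof reduces to a change-of-variables identity. The key observation is the elementary cocycle-type relation
\begin{align*}
S_n(g \circ T)(x) = \sum_{k=0}^{n-1} g(T^{k+1} x) = S_n g(Tx),
\end{align*}
which immediately gives the corresponding identity for the exit times, namely $\tau_t^{g \circ T}(x) = \tau_t^g(Tx)$ for all $x \in \bb X$, $t \in \bb R$ and $n \geq 1$.

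First, I would fix a continuous compactly supported $F$ on $\bb X \times \bb R$ and apply the defining property \eqref{Harmonic_rho_f_Thm000} of $\mu^{g \circ T}$ to write
\begin{align*}
\int_{\bb X \times \bb R} F(x,t)\,  \mu^{g \circ T}(dx, dt)
= \lim_{n \to \infty} \int_{\bb X \times \bb R} F(x,t) S_n(g \circ T)(x) \mathds 1_{\{\tau_t^{g \circ T}(x) > n\}} \nu(dx) dt.
\end{align*}
Next, using the two identities above, the right-hand side becomes the limit of
\begin{align*}
\int_{\bb X \times \bb R} F(x,t) S_n g(Tx) \mathds 1_{\{\tau_t^g(Tx) > n\}} \nu(dx) dt,
\end{align*}
and a change of variable $x \mapsto T^{-1} x$ together with the $T$-invariance of $\nu$ turns it into
\begin{align*}
\int_{\bb X \times \bb R} F(T^{-1} x, t) S_n g(x) \mathds 1_{\{\tau_t^g(x) > n\}} \nu(dx) dt.
\end{align*}

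Finally, since $T^{-1}$ is a homeomorphism of $\bb X$, the function $(x,t) \mapsto F(T^{-1} x, t)$ is again continuous and compactly supported on $\bb X \times \bb R$, so the defining property \eqref{Harmonic_rho_f_Thm000} of $\mu^g$ applies directly to it, and the last display converges to $\int_{\bb X \times \bb R} F(T^{-1} x, t)\, \mu^g(dx, dt)$, giving the desired equality. There is really no hard step here; the only thing to verify carefully is the commutation of the change of variables with the indicator $\mathds 1_{\{\tau_t^g(Tx) > n\}}$ and with $S_n g(Tx)$, which is immediate from the $T$-invariance of $\nu$ and the fact that these quantities are the pullbacks under $T$ of the corresponding quantities in $x$.
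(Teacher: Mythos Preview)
Your proof is correct and follows essentially the same approach as the paper: use the defining limit \eqref{Harmonic_rho_f_Thm000}, the relation $\tau_t^{g\circ T} = \tau_t^g \circ T$ (equivalently $S_n(g\circ T) = (S_n g)\circ T$), and the $T$-invariance of $\nu$ to change variables. The paper's proof is just a terser version of exactly what you wrote.
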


\begin{proof}
By using the relation $\tau_t^{g \circ T} = \tau_t^{g} \circ T$, we get
\begin{align*}
\int_{\bb X \times \bb R} F(x, t) \mu^{g \circ T} (dx, dt)
& = \lim_{n \to \infty}  \int_{\bb X \times \bb R}  F(x, t)  S_n (g \circ T)(x)  \mathds 1_{\{ \tau_t^{g \circ T}(x) > n\}}
 \nu(dx) dt  \notag\\
 & = \lim_{n \to \infty}  \int_{\bb X \times \bb R} F(T^{-1} x, t)  S_n g (x)   \mathds 1_{\{ \tau_t^{g}(x) > n \}}
 \nu(dx) dt   \notag\\
& = \int_{\bb X \times \bb R} F(T^{-1} x, t) \mu^{g} (dx, dt), 
\end{align*}
as desired.
\end{proof}

\begin{lemma}\label{Lem_CondiCLLT004}
Let $g \in \scr B$ be such that $\nu(g) = 0$ and there exists $m \geq 0$ with $g \circ T^m \in \scr B^+$. 
Assume that for any $p \neq 0$ and $q \in \bb R$, 
the function $p g + q$ is not cohomologous to a function with values in $\bb Z$.  
Then, 
for any $(z, t) \in \bb X^+ \times \bb R$
and any continuous compactly supported function $F$ on $\bb X_z^- \times \bb X \times \bb R$, 
we have 
\begin{align*}
& \lim_{n \to \infty}  n^{3/2} \int_{\bb X^-_z}  F \left(y,  T^{-n} y \cdot z,  t + \check S_n g(y \cdot z)  \right) 
   \mathds 1_{ \left\{ \check \tau_t^g (y \cdot z) >n - 1 \right\}} \nu^-_z(dy)    \nonumber\\
& =   \frac{2 \check V^{g}(z, t)}{\sqrt{2\pi} \sigma_g^3}  
  \int_{\bb X_z^- \times \bb X \times \bb R}  F \left(y', x', t' \right)  
   \mu^{(-g)}(dx', dt') \check \mu^{g,-}_{z,t} (dy'). 
  \end{align*}
\end{lemma}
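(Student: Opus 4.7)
The plan is to reduce to Lemma~\ref{Lem_CondiCLLT003} by conditioning on the first $m$ past coordinates of $y$, exactly as was done in the proofs of Lemmas~\ref{Lem_Harmonic_Function_V}, \ref{Lem_Harmonic_Function_rho} and Corollary~\ref{Corollary_Markov_chain}. Setting $h = g \circ T^m \in \scr B^+$, one has the identities $\check S_n g = \check S_n h \circ T^{-m}$ and $\check \tau_t^g = \check \tau_t^h \circ T^{-m}$. Using the cylinder decomposition of $\nu^-_z$ and the relation $(y'' \cdot a) \cdot z = T^m(y'' \cdot (a \cdot z))$, the integral $I_n(z,t)$ on the left-hand side of the lemma can be rewritten as
\begin{align*}
I_n(z,t) = \sum_{a \in A^m_z} e^{-S_m \psi(a \cdot z)} & \int_{\bb X^-_{a \cdot z}} \hat F_a \left( y'',\, T^{-n}(y'' \cdot (a \cdot z)),\, t + \check S_n h(y'' \cdot (a \cdot z)) \right) \\
& \qquad \times \mathds 1_{\{\check \tau_t^h(y'' \cdot (a \cdot z)) > n - 1\}} \, \nu^-_{a \cdot z}(dy''),
\end{align*}
where $\hat F_a(y'', x, t') := F(y'' \cdot a,\, T^m x,\, t')$ is continuous and compactly supported, and the shift $T^m$ has been absorbed into the second argument via $T^{-n}((y'' \cdot a) \cdot z) = T^m T^{-n}(y'' \cdot (a \cdot z))$.

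Next, for each fixed $a \in A^m_z$, apply Lemma~\ref{Lem_CondiCLLT003} to $h \in \scr B^+$, with starting point $a \cdot z \in \bb X^+$ and target $\hat F_a$; this yields the limit
$$\frac{2 \check V^h(a \cdot z, t)}{\sqrt{2\pi}\, \sigma_h^3}  \int_{\bb X^-_{a \cdot z} \times \bb X \times \bb R} \hat F_a(y'', x, t') \,\mu^{(-h)}(dx, dt') \,\check \mu^{h, -}_{a \cdot z, t}(dy'').$$
To convert the $\mu^{(-h)}$ integral back into a $\mu^{(-g)}$ integral, one iterates Lemma~\ref{Lem_Shift_Invariance_mu} $m$ times (applied to $-h \circ T^{-k}$ for $k = 0, \dots, m-1$) to obtain $\int G(T^m x, t) \mu^{(-h)}(dx, dt) = \int G(x, t) \mu^{(-g)}(dx, dt)$ for every continuous compactly supported $G$, so that
$$\int \hat F_a(y'', x, t') \mu^{(-h)}(dx, dt') = \int F(y'' \cdot a, x, t') \mu^{(-g)}(dx, dt').$$
Since $S_n g = S_n h \circ T^{-m}$ and $\nu$ is $T$-invariant, one has $\sigma_g = \sigma_h$.

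Finally, sum over $a \in A^m_z$ and use the definition of $\check V^g$ (Lemma~\ref{Lem_Harmonic_Function_V}) together with \eqref{Def_rho_gzt} for $\check \mu^{g, -}_{z, t}$: applying the latter to the test function $\varphi(y') = \int F(y', x, t') \mu^{(-g)}(dx, dt')$, whose restriction-type $\varphi_a(y'') = \varphi(y'' \cdot a)$ is exactly the quantity appearing after the measure conversion, collapses the sum into the right-hand side of the lemma. The only non-routine input beyond Lemma~\ref{Lem_CondiCLLT003} is the shift-invariance formula for $\mu^{(-g)}$, so the main technical obstacle is the careful bookkeeping of how the shift $T^m$ migrates between the target $F$, the Birkhoff sum $\check S_n$, the stopping time $\check \tau_t$, the harmonic function $\check V$ and the harmonic measure $\mu$; once this is in place, the conclusion follows without further analytic input.
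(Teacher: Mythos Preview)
Your proposal is correct and follows essentially the same route as the paper's proof: both condition on the first $m$ past coordinates to reduce to $h = g\circ T^m \in \scr B^+$, invoke Lemma~\ref{Lem_CondiCLLT003} for each cylinder $a \in A^m_z$, convert $\mu^{(-h)}$ back to $\mu^{(-g)}$ via (iterated) Lemma~\ref{Lem_Shift_Invariance_mu}, and reassemble using \eqref{Def_rho_gzt}. The only cosmetic difference is the order in which the shift-invariance step and the summation via \eqref{Def_rho_gzt} are performed.
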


\begin{proof}
As in Lemma \ref{Lem_Harmonic_Function_rho}, 
for $a \in A_z^m$, set $F_a$ to be the function on $\bb X^-_{a \cdot z} \times \bb X \times \bb R$
defined by $F_a (y, x, t) = F (y \cdot a, T^m x, t)$.  
Then we have, by setting $h = g \circ T^m$,  
\begin{align*}
& n^{3/2} \int_{\bb X^-_z}  F \left(y,  T^{-n}(y \cdot z),  \check S_n g (y \cdot z)  \right)
  \mathds 1_{ \left\{ \check \tau_t^g(y \cdot z) >n - 1 \right\} } \nu^-_z(dy)  \notag\\
&  = n^{3/2} \sum_{a \in A^m_z} \exp(- S_m \psi(a \cdot z) )  
  \int_{\bb X^-_{a \cdot z}} 
  F_a \left(y,  T^{-n}( y \cdot (a \cdot z) ),  \check S_n h ( y \cdot (a \cdot z) )  \right)    \notag\\
& \qquad \times \mathds 1_{ \left\{ \check \tau_t^h (y \cdot (a \cdot z)) >n - 1 \right\} } \nu^-_{a \cdot z}(dy).   
\end{align*}
By Lemma \ref{Lem_CondiCLLT003},  as $n \to \infty$, this converges to 
\begin{align*}
&   \sum_{a \in A^m_z} \exp(- S_m \psi(a \cdot z) ) 
\frac{2 \check V^{h}(a \cdot z, t)}{\sqrt{2\pi} \sigma_g^3}    \notag\\
& \qquad \times   \int_{\bb X_{a \cdot z}^- \times \bb X \times \bb R}  F_a \left(y', x', t' \right)  
   \mu^{(-h)}(dx', dt') \check \mu^{h,-}_{a \cdot z,t} (dy'). 
\end{align*}
By \eqref{Def_rho_gzt}, the latter quantity is equal to 
\begin{align*}
\int_{\bb X_{z}^- \times \bb X \times \bb R}  F \left(y', T^m x', t' \right)  
   \mu^{(-h)}(dx', dt') \check \mu^{g,-}_{z,t} (dy'). 
\end{align*}
As $h = g \circ T^m$, 
the conclusion now follows from Lemma \ref{Lem_Shift_Invariance_mu}. 
\end{proof}

Now we can give a result for any function $g$ in $\scr B$. 

\begin{lemma}\label{Lem_CondiCLLT005}
Let $g \in \scr B$ be such that $\nu(g) = 0$. 
Assume that for any $p \neq 0$ and $q \in \bb R$, 
the function $p g + q$ is not cohomologous to a function with values in $\bb Z$.  
Then, 
for any continuous compactly supported function $F$ on $\bb X \times \bb X \times \bb R \times \bb R$, 
we have 
\begin{align*}
& \lim_{n \to \infty}  n^{3/2} \int_{\bb X \times \bb R}  
F \left(x,  T^{-n} x, t,  t + \check S_n g(x)  \right) 
   \mathds 1_{ \left\{ \check \tau_t^g (x) >n - 1 \right\}} \nu(dx) dt    \nonumber\\
& =   \frac{2}{ \sqrt{2\pi}  \sigma_g^3 } 
  \int_{\bb X \times \bb X \times \bb R \times \bb R}  F \left(x, x', t, t' \right)  
   \mu^{(-g)}(dx', dt')  \check \mu^{g} (dx, dt). 
  \end{align*}
\end{lemma}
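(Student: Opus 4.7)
My plan is to imitate the two-step strategy used throughout the paper for extending results from observables depending only on future coordinates to general H\"older continuous observables (compare the passage from Lemma \ref{Lem_harm_func_001} to Theorem \ref{Proposition exist harm func_rho}, and the upgrade of Lemma \ref{Lem_CondiCLT_target2} via Lemma \ref{Lem_Harmonic_Function_rho}). First I would treat the case where $g \in \scr B$ satisfies $g \circ T^m \in \scr B^+$ for some $m \geq 0$, then remove this restriction by the cohomological approximation of Lemma \ref{Lem_Appro_g}.

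For the first step, I would use Lemma \ref{Lem_Fubini} to rewrite the left-hand side as
\begin{align*}
\int_{\bb X^+} \int_{\bb R} \int_{\bb X_z^-} F\left(y\cdot z,\, T^{-n}(y\cdot z),\, t,\, t + \check S_n g(y\cdot z)\right) \mathds 1_{\{\check \tau_t^g(y\cdot z) > n-1\}}\, \nu_z^-(dy)\, dt\, \nu^+(dz).
\end{align*}
For each fixed $(z,t)$, Lemma \ref{Lem_CondiCLLT004} applied to the function $(y', x', t') \mapsto F(y'\cdot z, x', t, t')$ gives the pointwise asymptotic
\begin{align*}
\frac{2 \check V^g(z,t)}{\sqrt{2\pi}\sigma_g^3\, n^{3/2}} \int_{\bb X_z^- \times \bb X \times \bb R} F(y'\cdot z, x', t, t')\, \mu^{(-g)}(dx', dt')\, \check \mu^{g,-}_{z,t}(dy').
\end{align*}
Integrating this pointwise limit over $\nu^+(dz)\, dt$ and using the definition of the harmonic Radon measure $\check \mu^g$ from Lemma \ref{Lemma exist of harm func(copy)Radon} and Theorem \ref{Proposition exist harm func_rho}, under which $\check V^g(z,t)\, \check \mu^{g,-}_{z,t}(dy')\, dt$ on $\bb X_z^- \times \bb R$ disintegrates $\check \mu_z^{g,-}$, yields the desired formula for this restricted class of $g$.

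For the second step, for general $g \in \scr B$ I would invoke Lemma \ref{Lem_Appro_g} to obtain a sequence $(g_m)$ depending on finitely many past coordinates with $\sup_n \|\check S_n g - \check S_n g_m\|_\infty \leq c_1 \alpha^m$. Combining the inclusions
\begin{align*}
\left\{\check \tau_{t - 2c_1 \alpha^m}^{g_m} > n-1\right\} \subseteq \left\{\check \tau_t^g > n-1\right\} \subseteq \left\{\check \tau_{t + 2c_1 \alpha^m}^{g_m} > n-1\right\}
\end{align*}
with the uniform smallness of $\check S_n g - \check S_n g_m$ inside the argument of $F$ and a uniform modulus of continuity of $F$, I can sandwich the quantity of interest between its $g_m$-versions with $t$ shifted by $\pm 2 c_1 \alpha^m$. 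Passing first to $n \to \infty$ using the first step and then to $m \to \infty$ using Lemma \ref{Lem_Continuity_rho_g} for $\check \mu^g$, together with its obvious dual counterpart for $\mu^{(-g)}$ obtained via the flip $\iota$ as in the proof of Theorem \ref{Thm_Radon_Measure}, would conclude.

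The main technical obstacle lies in justifying the interchange of the limit $n \to \infty$ with the outer $\int \nu^+(dz)\, dt$ integration in the first step: Lemma \ref{Lem_CondiCLLT004} only gives convergence pointwise in $(z,t)$, whereas the target is an integrated statement. I expect to handle this by producing an $n$-uniform envelope of order $n^{-3/2}$ for the integrand, which should follow from the effective upper bound \eqref{eqt-A 001} of Theorem \ref{t-A 001} applied after smoothing $F$ in the spirit of Lemma \ref{Lem_HolderNormPsi} and absorbing the $\min_{1\le j \le m} \check S_j g$ factor exactly as in the proof of Lemma \ref{t-BB001}. Once such a uniform domination is in place, the dominated convergence theorem closes both the exchange of integrals in the first step and the final $m \to \infty$ passage.
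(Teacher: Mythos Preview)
Your overall two-step strategy (fibre pointwise via Lemma \ref{Lem_CondiCLLT004}, then approximate general $g$ by the $g_m$ of Lemma \ref{Lem_Appro_g} and sandwich) is exactly the paper's, and the use of Lemma \ref{Lem_Continuity_rho_g} for the $m\to\infty$ passage matches as well. The one substantive difference is how the interchange of the $n\to\infty$ limit with the $\nu^+(dz)\,dt$ integration is justified.

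You propose to manufacture a uniform $O(n^{-3/2})$ envelope from the effective bound \eqref{eqt-A 001}, imitating the two-block argument of Lemma \ref{t-BB001}. This can be made to work, but note that Theorem \ref{t-A 001} is stated only for $g\in\scr B^+$, so for $g$ with $g\circ T^m\in\scr B^+$ you must first condition on the $m$ past coordinates (as in Lemma \ref{Lem_CondiCLLT004}) before invoking it; the resulting bookkeeping is heavier than necessary. The paper takes a lighter route: it uses Fatou's lemma directly for the $\liminf$, and for the $\limsup$ it runs a \emph{reverse Fatou} argument. Namely, it dominates $F$ by a product $G(t)G(t')$, defines $U_n(z,t)$ as the corresponding expression with $g$ replaced by $g_0\in\scr B^+$, observes that $U_n$ converges \emph{uniformly} in $(z,t)$ on compacta by Theorem \ref{Lem_CondiLLT_001}, and then applies Fatou to the nonnegative sequence $U_n - W_n$. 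This avoids re-deriving any quantitative envelope for the approximants $g_m$ and reuses the uniform convergence already established for functions in $\scr B^+$.
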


\begin{proof}
We can assume that the function $F$ is non-negative. 
For $(z,t) \in \bb X^+ \times \bb R$, denote
\begin{align*}
W_n (z,t) =   n^{3/2} \int_{\bb X_z^-}  
F \left(y \cdot z,  T^{-n} (y \cdot z), t,  t + \check S_n g(y \cdot z)  \right) 
   \mathds 1_{ \left\{ \check \tau_t^g (y \cdot z) >n - 1 \right\}} \nu^-_z(dy). 
\end{align*} 
Let $(g_m)_{m \geq 0}$ be the sequence of H\"{o}lder continuous functions as in Lemma \ref{Lem_Appro_g}. 
For any $n, m \geq 0$, we set 
\begin{align*}
F_m^+ (x, x', t, t') & = \sup_{|s'| \leq 2 c_1 \alpha^m} F (x, x', t - 2 c_1 \alpha^m, t'+s'),   \\
F_m^- (x, x', t, t')  & = \inf_{|s'| \leq 2 c_1 \alpha^m} F (x, x', t + 2 c_1 \alpha^m, t'+s'),
\end{align*}
and 
\begin{align*}
& W_{n,m}^+ (z,t) =   n^{3/2} \int_{\bb X_z^-}  
F_m^+ \left(y \cdot z,  T^{-n} (y \cdot z), t,  t + \check S_n g_m(y \cdot z)  \right)   \notag\\
 & \qquad\qquad\qquad\qquad
 \times  \mathds 1_{ \left\{ \check \tau_t^{g_m} (y \cdot z) >n - 1 \right\}} \nu^-_z(dy),    \\
& W_{n,m}^- (z,t) =   n^{3/2} \int_{\bb X_z^-}  
F_m^- \left(y \cdot z,  T^{-n} (y \cdot z), t,  t + \check S_n g_m (y \cdot z)  \right)   \\
& \qquad\qquad\qquad\qquad
  \times  \mathds 1_{ \left\{ \check \tau_t^{g_m} (y \cdot z) >n - 1 \right\}} \nu^-_z(dy). 
\end{align*}
For $z \in \bb X^+$ and $t \in \bb R$, it holds that
\begin{align*}
W_{n,m}^- (z,t - 2 c_1 \alpha^m) 
\leq W_n (z,t)  
\leq  W_{n,m}^+ (z,t + 2 c_1 \alpha^m). 
\end{align*} 
By taking the limit as $n \to \infty$, we get by Lemma \ref{Lem_CondiCLLT004}, 
\begin{align}\label{CondiLLT_Fatou}
& \frac{2 \check V^{g_m}(z, t - 2 c_1 \alpha^m )}{\sqrt{2\pi} \sigma_g^3}  
  \int_{\bb X_z^- \times \bb X \times \bb R}  F_m^- \left(y \cdot z, x', t - 2 c_1 \alpha^m, t'  \right)  \notag\\
 & \qquad\qquad\qquad\qquad\qquad\qquad
  \times  \mu^{(-g_m)}(dx', dt') \check \mu^{g_m,-}_{z,t - 2 c_1 \alpha^m } (dy)   \notag\\
 & \leq \liminf_{n \to \infty} W_n (z,t)  
\leq \limsup_{n \to \infty} W_n (z,t)    \notag\\
& \leq  \frac{2 \check V^{g_m}(z, t + 2 c_1 \alpha^m )}{\sqrt{2\pi} \sigma_g^3}  
  \int_{\bb X_z^- \times \bb X \times \bb R}  F_m^+ \left(y \cdot z, x', t + 2 c_1 \alpha^m, t'  \right)   \notag\\
 & \qquad\qquad\qquad\qquad\qquad\qquad
  \times    \mu^{(-g_m)}(dx', dt') \check \mu^{g_m,-}_{z,t + 2 c_1 \alpha^m } (dy). 
\end{align}
On the one hand, after integrating over $\bb X^+ \times \bb R$ in \eqref{CondiLLT_Fatou} with respect to the product of $\nu^+$ with the Lebesgue measure,  
by Fatou's lemma and Lemma \ref{Lem_CondiCLLT004}, we get 
\begin{align}\label{CondiLLT_Fatou_Lower}
& \frac{2}{ \sqrt{2\pi}  \sigma_g^3 }   \int_{\bb X \times  \bb X \times \bb R \times \bb R} 
   F_m^- \left(x, x', t, t'  \right)   \mu^{(-g_m)}(dx', dt') \check \mu^{g_m} (dx, dt)    \notag\\
& \leq  \liminf_{n \to \infty} 
  n^{3/2} \int_{\bb X \times \bb R}  
F \left(x,  T^{-n} x, t,  t + \check S_n g(x)  \right) 
   \mathds 1_{ \left\{ \check \tau_t^g (x) >n - 1 \right\}} \nu(dx) dt. 
\end{align}
To conclude, we need to show the reverse Fatou property holds. 
To this aim, we choose a non-negative continuous compactly supported function $G$ on $\bb R$
such that for any $(x, x', t, t') \in \bb X \times  \bb X \times \bb R \times \bb R$, 
one has 
$
F_0^+(x, x', t, t') \leq G(t) G(t'). 
$
Then, we get for $(z,t) \in \bb X^+ \times \bb R$, 
\begin{align*}
W_{n,0}^+ (z,t) \leq U_n(z,t) := n^{3/2} G(t) \int_{\bb X_z^-}  G(t + \check S_n g_0(y \cdot z)) 
\mathds 1_{ \left\{ \check \tau_t^{g_0} (y \cdot z) >n - 1 \right\}} \nu^-_z(dy). 
\end{align*}
By Lemma \ref{Lem_CondiLLT_001},  $U_n(z,t)$ converges uniformly in $(z,t) \in \bb X^+ \times \bb R$. 
Therefore, by applying Fatou's lemma to the sequence $U_n(z,t) - W_{n} (z,t)$, we get
by integrating over $\bb X^+ \times \bb R$ in \eqref{CondiLLT_Fatou} with respect to the product of $\nu^+$ with the Lebesgue measure, 
\begin{align}\label{CondiLLT_Fatou_Upper}
& \frac{2}{ \sqrt{2\pi}  \sigma_g^3 }     \int_{\bb X \times  \bb X \times \bb R \times \bb R} 
   F_m^+ \left(x, x', t, t'  \right)   \mu^{(-g_m)}(dx', dt') \check \mu^{g_m} (dx, dt)    \notag\\
& \geq  \limsup_{n \to \infty} 
  n^{3/2} \int_{\bb X \times \bb R}  
F \left(x,  T^{-n} x, t,  t + \check S_n g(x)  \right) 
   \mathds 1_{ \left\{ \check \tau_t^g (x) >n - 1 \right\}} \nu(dx) dt. 
\end{align}
By letting $m \to \infty$,
the conclusion follows from \eqref{CondiLLT_Fatou_Lower}, \eqref{CondiLLT_Fatou_Upper}
and  Lemma \ref{Lem_Continuity_rho_g}.  
\end{proof}

\begin{proof}[Proof of Theorem \ref{Thm-conLLT}]
By the duality lemma (Lemma \ref{Corollary_Duality}), we have 
\begin{align*}
&    \int_{\bb X \times \bb R} F(x, T^nx, t, t + S_n f(x))
    \mathds 1_{\left\{ \tau_t^f(x) >n - 1 \right\} }  \nu(dx) dt   \notag\\
  & \quad  =  \int_{\bb X \times \bb R} F(T^{-n} x,  x, t - \check S_n f(x), t )   
        \mathds 1_{\left\{ \check \tau_{t }^{(-f)}(x) >n - 1 \right\} }  \nu(dx) dt. 
\end{align*}
Now the conclusion follows from Lemma \ref{Lem_CondiCLLT005}. 
\end{proof}


\end{document}